\def\@tocline#1#2#3#4#5#6#7{\relax
  \ifnum #1>\c@tocdepth 
  \else
    \par \addpenalty\@secpenalty\addvspace{#2}%
    \begingroup \hyphenpenalty\@M
    \@ifempty{#4}{%
      \@tempdima\csname r@tocindent\number#1\endcsname\relax
    }{%
      \@tempdima#4\relax
    }%
    \parindent\z@ \leftskip#3\relax \advance\leftskip\@tempdima\relax
    \rightskip\@pnumwidth plus4em \parfillskip-\@pnumwidth
    #5\leavevmode\hskip-\@tempdima
      \ifcase #1
       \or\or \hskip 1em \or \hskip 2em \else \hskip 3em \fi%
      #6\nobreak\relax
    \dotfill\hbox to\@pnumwidth{\@tocpagenum{#7}}\par
    \nobreak
    \endgroup
  \fi}
 \numberwithin{equation}{section}
\newcommand{\R}{{\mathbb R}}       
\newcommand{\N}{{\mathbb N}}       %
\newcommand{\Z}{{\mathbb Z}}       
\newcommand{\DD}{{\mathcal D}}
\newcommand{\LL}{{\mathcal L}}
\newcommand{\om}{{\Omega}}
\newcommand{\hm}{{\omega}}
\newcommand{\dist}{ \operatorname{dist}}
\newcommand{\diam}{ \operatorname{diam}}
\newcommand{\bmo}{ \operatorname{BMO}}
\newcommand{\ran}{ \operatorname{Ran}}
\newcommand{\supp}{\operatorname{supp}}
\newcommand{\vphi}{{\varphi}}
\newcommand{\ve}{{\varepsilon}}
\newcommand{\vv}{{\vspace{2mm}}}
\newcommand{\vvv}{\vspace{4mm}}
\newcommand{\wt}[1]{{\widetilde{#1}}}
\newcommand{\khalf}{{\mathcal{K}_{\textup{Dini}}}}
\newcommand{\bu}{{\bar{u}}}
\newcommand{\ump}{{u_M^+}}
\newcommand{\umm}{{u^-_{m}}}
\newcommand{\osc}{\mathop{\textup{osc}}}
\newcommand{\ess}{\mathop{\textup{ess}}}
\def\Xint#1{\mathchoice
{\XXint\displaystyle\textstyle{#1}}%
{\XXint\textstyle\scriptstyle{#1}}%
{\XXint\scriptstyle\scriptscriptstyle{#1}}%
{\XXint\scriptscriptstyle\scriptscriptstyle{#1}}%
\!\int}
\def\XXint#1#2#3{{\setbox0=\hbox{$#1{#2#3}{\int}$ }
\vcenter{\hbox{$#2#3$ }}\kern-.58\wd0}}
\def\avint{\Xint-}
\newtheorem{theorem}{Theorem}[section]
\newtheorem{lemma}[theorem]{Lemma}
\newtheorem{corollary}[theorem]{Corollary}
\newtheorem*{lemma*}{Lemma}
\newtheorem*{theorem*}{Theorem}
\theoremstyle{definition}
\newtheorem{definition}[theorem]{Definition}
\newtheorem{example}[theorem]{Example}
\theoremstyle{remark}
\newtheorem{remark}[theorem]{\bf Remark}
\numberwithin{equation}{section}
\newcommand{\RRem}{\begin{rem}}
\newcommand{\erem}{\end{rem}}
\def\d{\partial}
\def\@tocline#1#2#3#4#5#6#7{\relax
  \ifnum #1>\c@tocdepth 
  \else
    \par \addpenalty\@secpenalty\addvspace{#2}%
    \begingroup \hyphenpenalty\@M
    \@ifempty{#4}{%
      \@tempdima\csname r@tocindent\number#1\endcsname\relax
    }{%
      \@tempdima#4\relax
    }%
    \parindent\z@ \leftskip#3\relax \advance\leftskip\@tempdima\relax
    \rightskip\@pnumwidth plus4em \parfillskip-\@pnumwidth
    #5\leavevmode\hskip-\@tempdima
      \ifcase #1
       \or\or \hskip 1em \or \hskip 2em \else \hskip 3em \fi%
      #6\nobreak\relax
    \dotfill\hbox to\@pnumwidth{\@tocpagenum{#7}}\par
    \nobreak
    \endgroup
  \fi}
\def\Cap{\mathop\mathrm{Cap}}
\def\bN{{\mathbb{N}}}
\def\bK{{\mathbb{K}}}
\newcommand{\divv}{{\text{{\rm div}}}}
\def\lec{\lesssim}
\def\loc{\textup{loc}}
\def\lec{\lesssim}
\def\Lip{\textup{Lip}}
\begin{document}

\title[Elliptic equations with lower order terms]{Regularity theory and Green's function for elliptic equations with lower order terms in unbounded domains}


\author[Mourgoglou]{Mihalis Mourgoglou}

\address{Mihalis Mourgoglou\\
Departamento de Matem\'aticas, Universidad del Pa\' is Vasco, Barrio Sarriena s/n 48940 Leioa, Spain and\\
Ikerbasque, Basque Foundation for Science, Bilbao, Spain.
}
\email{michail.mourgoglou@ehu.eus}

\subjclass[2010]{35A08, 35B50, 35B51, 35B65, 35J08, 35J15, 35J20, 35J25, 35J67, 35J86.}
\thanks{The author was supported   by IKERBASQUE and partially supported by the grants MTM-2017-82160-C2-2-P and PID2020-118986GB-I00 of the Ministerio de Econom\'ia y Competitividad (Spain),  and by the grants IT-1247-19 and IT-1615-22 (Basque Government).
}

\begin{abstract}
We consider elliptic operators in divergence form with lower order terms of the form $Lu =  -\divv (A \cdot \nabla  u + b u ) - c \cdot \nabla u - du$, in an open set $\Omega \subset \mathbb{R}^n$, $n \geq 3$, with possibly infinite Lebesgue measure. We assume that the $n \times n$ matrix $A$ is uniformly elliptic with real, merely bounded and possibly non-symmetric coefficients,  and either $b, c \in L^{n,\infty}_{\textup{loc}}(\om)$ and $d \in L_{\textup{loc}}^{\frac{n}{2}, \infty}(\Omega)$, or $|b|^2, |c|^2, |d| \in  \mathcal{K}_{\textup{loc}}(\Omega)$, where  $\mathcal{K}_{\textup{loc}}(\Omega)$ stands for the local Stummel-Kato class. Let ${\mathcal{K}_{\textup{Dini}}}(\Omega)$ be a variant of $\mathcal{K}(\Omega)$ satisfying a Carleson-Dini-type condition. We develop a De Giorgi/Nash/Moser theory for solutions of $Lu = f - \divv g$, where $ f$ and $|g|^2  \in  {\mathcal{K}_{\textup{Dini}}}(\Omega)$ if, for $q \in [n, \infty)$,  any of the following assumptions holds: (i) $|b|^2, |d| \in {\mathcal{K}_{\textup{Dini}}}(\Omega)$ and either $c \in L^{n,q}_{\textup{loc}}(\Omega)$ or $|c|^2  \in  \mathcal{K}_{\textup{loc}}(\Omega)$; (ii) $\divv b +d \leq 0$ and either $b+c \in L^{n,q}_{\textup{loc}}(\Omega)$ or $|b+c|^2  \in  \mathcal{K}_{\textup{loc}}(\Omega)$; (iii) $-\divv c + d \leq 0$ and  $|b+c|^2 \in {\mathcal{K}_{\textup{Dini}}}(\Omega)$.   We also prove a Wiener-type criterion for boundary regularity. Assuming global conditions on the coefficients,  we show that the variational Dirichlet problem is well-posed and, assuming $-\divv c +d \leq 0$, we  construct the Green's function associated with $L$ satisfying quantitative estimates. Under the additional hypothesis $|b+c|^2  \in  \mathcal{K}'(\Omega)$, we show that it satisfies global pointwise bounds and also construct the Green's function associated with the formal adjoint  operator of $L$. An important feature of our results is that all the estimates are scale invariant and independent of $\Omega$, while we do not assume  smallness of the norms of the coefficients or coercivity of the associated bilinear form.
\end{abstract}

\maketitle

\tableofcontents

\section{Introduction}

In the present paper we will deal with elliptic equations of the form
 \begin{equation}\label{operator}
 L u= -\divv (A \cdot \nabla  u + b u ) - c\cdot \nabla u - du=0
\end{equation}
in an open set   $\Omega \subset \R^{n}$, $n \geq  3$, where  $A(x)= (a_{ij}(x))_{i, j =1}^{n}$ is a matrix with entries $a_{ij}: \om \to \R$, for $i,j \in \{1, 2, \dots, n\}$,  $b,c: \Omega \to \R^{n}$ are vector fields, and $d: \Omega \to \R$ a real-valued function. Our standing assumptions are the following:

There exist $0 < \lambda < \Lambda < \infty$, so that
\begin{align}\label{eqelliptic1}
&\lambda|\xi|^2\leq \langle A(x) \xi,\xi\rangle,\quad \mbox{ for all $\xi \in \R^n$ and a.e. $x\in\Omega$,}\\
&\langle A(x) \xi,\eta \rangle  \leq \Lambda |\xi| |\eta|, \quad \mbox{ for all $\xi, \eta \in \R^n$ and a.e. $x\in\Omega$,} \label{eqelliptic2}\\
&|b|^2, |c|^2, |d| \in  \mathcal{K}_\loc(\om) \quad \textup{or} \quad b, c \in L^{n,\infty}_\loc(\om), d \in L^{\frac{n}{2}, \infty}_\loc(\om),\label{eq:elliptic3}
\end{align}
where $ \mathcal{K}_\loc(\om)$ and $L^{n,\infty}_\loc(\om)$ stand for the local Stummel-Kato class and the local weak-$L^n$ space respectively (see Definitions \ref{def:Kato} and \ref{def:Lor-space})\footnote{Our original assumptions were $b , c \in L^{n}(\om)$ and $d \in L^{\frac{n}{2}}(\om)$.  The extension to weak Lebesgue spaces is due to an observation of G. Sakellaris in \cite{Sak}; a more detailed discussion can be found at the end of the introduction.}. In several cases, we will also need to assume one of the following negativity conditions:
\begin{equation}
 \int_{\Omega} (d\,\vphi -  b\cdot \nabla \vphi) \leq 0,\quad \textup{for all}\,\,0\leq  \vphi \in C^\infty_0(\Omega), \label{negativity}
  \end{equation}
 or
 \begin{equation}
\int_{\Omega}  (d\,\vphi +  c\cdot \nabla \vphi)  \leq 0,\quad  \textup{for all}\,\,0\leq  \vphi \in C^\infty_0(\Omega).\label{negativity2}
  \end{equation}
If \eqref{negativity} (resp. \eqref{negativity2}) holds we will  say that the $bd$ (resp. $cd$) negativity condition  is satisfied. If we reverse the inequality signs we will say that the $bd$ or $cd$ positivity condition  is satisfied.

\vv

The objective of the current manuscript is to generalize the standard theory of elliptic PDE of the form $-\divv A \nabla u =0$ in  open sets $\om \subset \R^n$, $n \geq 3$, with  possibly  infinite Lebesgue measure, to equations of the form \eqref{operator} under the aforementioned standing assumptions. In particular, we aim to show {\bf scale invariant} {\it a priori} local estimates (Caccioppoli inequality, local boundedness and weak Harnack inequality), interior and  boundary regularity for solutions of \eqref{operator}, the weak maximum  principle,  well-posedness of the Dirichlet and obstacle problems, and finally to construct the  Green's function for our operator satisfying several quantitative estimates.  It is important to highlight that neither the bilinear form associated with the elliptic equation  is  coercive, nor the norms of the coefficients are small, which is one of the main technical difficulties.

\vv

We would like to point out  that  we will only state the theorems in the main body of the paper,  just before their proofs. Nevertheless, the reader can find a detailed description of our results  in the introduction.

\vv

Let us give a brief overview of our results. In section \ref{subsec:caccio} we prove  the standard interior and boundary Caccioppoli's inequality under either negativity condition (Theorems \ref{thm:subCaccioppoli}, \ref{thm:Caccioppoli2}, and \ref{thm:boundCaccioppoli}), while, in section \ref{sec:dirichlet},  having global assumptions on the coefficients, we show the well-posedness of the generalized Dirichlet problem \eqref{boundvar} satisfying the estimate \eqref{eq:Dir-estimate-nonzero}, as well as  the validity of the weak maximum principle (Theorem \ref{thm:maxprin-bd}). This maximum principle  allows us to solve the obstacle problem in bounded domains (Theorem \ref{thm:v.i.-unilateral-lower}). Then we assume that one of the following conditions hold: 
\begin{enumerate}
\item$|b|^2, |d| \in \khalf(\om)$ and either $|c|^2  \in  \mathcal{K}_\loc(\om)$ or $c \in L^{n,q}_\loc(\om)$, for  $q \in [n, \infty)$; \label{it:intro-1}
\item $\divv b +d \leq 0$ and either $|b+c|^2  \in  \mathcal{K}_\loc(\om)$ or $b+c \in L^{n,q}_\loc(\om)$, for  $q \in [n, \infty)$; \label{it:intro-2}
\item $-\divv c + d \leq 0$ and  $|b+c|^2 \in {\mathcal{K}_{\textup{Dini}}}(\Omega)$ (for the definition see \eqref{eq:Dini-a_i}). \label{it:intro-3}
\end{enumerate}
 In section \ref{subs:refinedCaccio}, we demonstrate that the refined Caccioppoli inequality holds in the interior and the boundary (Theorems \ref{thm:bdry-exp-subCaccioppoli1} and \ref{thm:int-exp-subCaccioppoli1}), which leads to the local boundedness of subsolutions (Theorem \ref{thm:Moser}) and the weak Harnack inequality for non-negative supersolutions (Theorem \ref{thm:reverse-Holder-bu}) both in the interior and at the boundary.  In section \ref{subs:regularity} we prove interior and boundary regularity for solutions and  finally, assuming the $cd$-negativity condition and either $b+c \in L^{n,q}(\om)$, for  $q \in [n, \infty)$, or $|b+c|^2  \in  \mathcal{K}'(\om)$, we use the aforementioned results to construct  the Green's function associated with the operator  $L$ satisfying several quantitative estimates. Under the additional hypothesis $|b+c|^2  \in  \mathcal{K}'(\om)$, we show global pointwise bounds  and construct the Green's function associated with the formal adjoint  operator of $L$. All our estimates are scale invariant and independent of the Lebesgue measure of the domain.

\vv

We now briefly review the history of work in this area for linear elliptic equations in divergence form with merely bounded leading coefficients and singular lower order terms.  The generalized Dirichlet problem in the Sobolev space $W^{1,2}$ is well-posed if there exists a unique $u \in W^{1,2}(\om)$ such that $Lu=f+\divv g$ and $u - \phi \in W^{1,2}_0(\om)$ for fixed $\phi \in W^{1,2}(\om)$ and $f, g_i \in L^2(\om)$. Moreover, there exists a constant $C_{\phi,f, g}$ so that  the global estimate $\| u\|_{W^{1,2}(\om)} \lesssim C_{\phi,f, g}$ holds. For operators without lower order terms this problem has a long history and we refer to \cite[p.214]{GiTr} and the references therein for details. In \textit{bounded} domains, in the presence of lower order terms, Ladyzhenskaya and Ural'tseva \cite{LU} and Stampacchia \cite{St2} proved well-posedness  of the generalized Dirichlet problem assuming conditions related to the coercivity of the operator or smallness  of the norms of the lower order coefficients. This was quite restrictive as, for example, the ``bad" terms coming from the lower order coefficients can be absorbed in view  of  smallness. Gilbarg and Trudinger  \cite{GiTr} gave an extension of the previous results replacing the smallness conditions by  the assumptions $b, c, d \in L^\infty(\om)$ assuming either \eqref{negativity} or \eqref{negativity2}. In fact,  they only need $b, c \in L^s(\om)$ and $d \in L^{s/2}(\om)$, for some $s>n$. Recently, Kim and Sakellaris  \cite{KSa}, generalized  it to operators whose coefficients are in the critical Lebesgue space. Unfortunately, in all those results, the implicit constant in the global estimate  depends on the Lebesgue measure of $\om$ and thus, they cannot be extended to unbounded domains by approximation. On the other hand, in unbounded domains with possibly infinite Lebesgue measure, already in 1976, Bottaro and Marina \cite{BM} proved that, if $b, c \in L^n(\om)$, $d \in L^{n/2}(\om)+L^\infty(\om)$, and $ \divv b + d \leq \mu <0$, then the generalized Dirichlet problem is well-posed. To our knowledge, this was the first paper establishing well-posedness  in such generality. Using the same method, Vitanza and Zamboni \cite{ViZa},  showed well-posedness of the same problem when $|b|^2, |c|^2, |d|  \in  \mathcal{K}'(\om)$.

\vv

The local pointwise estimates find their roots in De Giorgi's celebrated paper \cite{DeG} on the H\"older continuity of solutions of elliptic equations of the form $-\divv A \nabla u=0$, where Theorems \ref{thm:Moser} (i) and \ref{thm:Holder-cont} were proved in this special case (see also \cite{Na}). A few years later, Moser gave a new proof of De Giorgi's theorem in \cite{Mos1}. The same results were extended in equations of the form \eqref{operator} by Morrey \cite{Mor} when $b, c \in L^q$ and $d \in L^{q/2}$, for $q>n$ and Stampacchia \cite{St1} (in more special cases).  Moser also  established the weak Harnack inequality for solutions of $-\divv A \nabla u=0$ in \cite{Mos2}, while Stampacchia \cite{St2} proved all the a priori estimates for equations of the form \eqref{operator}  with $c \in L^n$ and $|b|^2, d \in L^s$, $s>n/2$, assuming that \eqref{negativity} holds and the radius of the balls are sufficiently small so that the respective norms of the lower order coefficients on those balls are small themselves. If the lower order coefficients are in the Stummel-Kato class $\mathcal{K}(\om)$ with sufficiently small norms, one can find such results in \cite{CFG} and \cite{Ku} (see the references therein as well). Under the  assumptions $b,c \in L^n$, and $d \in L^{\frac{n}{2}}$,  Kim and Sakellaris \cite{KSa} also established local boundedness for subsolutions of the equation \eqref{operator} satisfying either \eqref{negativity} or \eqref{negativity2} and $b+c \in L^s$, $s >n$ (with implicit constants dependent on the Lebesgue measure of $\om$). They also constructed a counterexample showing that if \eqref{negativity2} holds, it is necessary to have an additional hypothesis on $b+c$ (see \cite[Lemma 7.4]{KSa}). 

\vv

Proving the boundary regularity of solutions to the generalized Dirichlet problem with data $\phi \in W^{1,2}(\om) \cap C(\overline \om)$ has been an important problem in the area and stems back to the work of Wiener for the Laplace operator \cite{Wi}. Wiener characterized the points $\xi \in \d \om$ that a solution converges continuously to the boundary  in terms of the capacity of the complement of the domain in the balls centered at $\xi$. The proof was tied to the pointwise bounds of the Green's function and so were its generalizations to elliptic equations. In particular, Littman, Stampacchia and Weinberger \cite{LSW} constructed the Green's function in a bounded domain for equations $-\divv A \nabla u=0$, where $A$ is real and symmetric,  proving such a  criterion and later, Gr\"uter and Widman \cite{GW}  extended their results to operators with possibly non-symmetric  $A$. For equations with lower order coefficients in bounded domains, Stampacchia \cite{St2} showed a Wiener-type criterion in sufficiently small balls centered at the boundary of $\om$. On the other hand, Kim and Sakellaris \cite{KSa} succeeded to construct the Green's function with pointwise bounds (which was their main goal) following the method of Gr\"uter and Widman, assuming either \eqref{negativity2} and $b+c \in L^n$, or \eqref{negativity} and $b+c \in L^s$, $s >n$. This is the best known result in this setting in  domains with finite Lebesgue measure. In this case though, the construction of the Green's function was not used to conclude  boundary regularity. For elliptic systems in unbounded domains,  Hofmann and Kim \cite{HK} constructed the Green's function  assuming that their solutions satisfy the interior {\it a priori } estimates of De Giorgi/Nash/Moser. They also showed boundary H\"older continuity of the solution of the Dirichlet problem with $C^\alpha(\overline \om)$ data under the stronger assumption of Lebesgue measure density condition of the complement of $\om$ in the balls centered at $\d \om$ (see also \cite{KK}).  Recently,  Davey, Hill and Mayboroda \cite{DHMa} extended \cite{HK} to systems with lower order terms in $b \in L^q$, $c \in L^s $ and $d \in L^{t/2}$, with $\min \{q, s, t\}>n$, whose associated bilinear form is coercive. For lower order coefficients in the Stummel-Kato class in domains with $C^{1,1}$ boundary, the Green’s function was constructed in \cite{IR}, while in \cite{ZhZh}, elliptic systems were considered, assuming though smallness on the norms and coercivity.

\vv

Let us now discuss our methods. Inspired  by the treatment of the Dirichlet problem in \cite{BM} and specifically the use of Lemma \ref{lem:main-splitting-Ln}, we are able to extend their results to operators with either negativity assumption (as opposed to $-\divv b + d \leq \mu<0$) by requiring solvability in the Sobolev space $Y^{1,2}$ instead of $W^{1,2}$ with non-divergence interior data in $L^{\frac{2n}{n+2}}$ instead of $L^2$. This is the ``correct" Sobolev space in unbounded domains and had already  appeared in \cite{MZ} and in connection with the  Green's function in \cite{HK}. The main difficulty lies on the fact that when we are proving the global bounds for the solution of the Dirichlet problem, we arrive to an estimate where the term
$$
\|b+c\|_{L^{n,q}(\om)}\|\nabla u\|_{L^2(\om)}^2 
$$ 
should be absorbed. But unless one has smallness of $\|b+c\|_{L^{n,q}(\om)}$ this is impossible. To deal with this issue, we use Lemma  \ref{lem:main-splitting-Ln} and split the domain in a finite number of subsets $\om_i$ where the norm ${L^{n,q}(\om_i)}$ norm of $b+c$ becomes small. We also write $u$ as a finite sum of $u_i$ so that $\nabla u_i \subset \om_i$ and,  loosely speaking, the term above can be hidden.   An iteration argument is then required, which concludes  the desired result. An approximation argument on the data and the domain yields the desired well-posedness. The same considerations apply to prove the weak maximum principle for subsolutions with either negativity condition, which, in turn, allows us  to solve the unilateral variational poblem and thus, the obstacle problem in bounded domains. As a corollary we obtain that the minimum of two subsolutions of the inhomogeneouus equation $Lu = f - \divv g$ is also a subsolution. 

\vv

Moving further to the proof of Caccioppoli inequality, some serious difficulties arise. Up to now,  Caccioppoli's inequality was unknown with so  general conditions, since it could be solved only for balls $r \leq 1$ and then rescale. This resulted to the appearance of the Lebesgue measure of the domain in the constants and so, it could not serve our purpose for scale invariant estimates. To overcome this important obstacle, we had to make a technically challenging adaptation of the  method that solves the Dirichlet problem. The idea to use this iteration method to prove standard and refined Caccoppoli inequalities is novel and turns out to be the most important ingredient that overcomes the necessity for smallness of the norms of the coefficients in order  to develop a De Giorgi/Nash/Moser theory for so general operators.

\vv

To prove local boundedeness, weak Harnack inequality, interior and boundary regularity, we have to make a non-trivial adaptation of the arguments of Gilbarg and Trudinger \cite[pp. 194--209]{GiTr}. To do so, we are required to prove a refined version of Caccioppoli inequality (Theorems \ref{thm:bdry-exp-subCaccioppoli1}-\ref{thm:int-exp-subCaccioppoli1}), which in \cite{GiTr} was immediate. This turns out to be an even more demanding task than the proof of Caccioppoli inequality itself. Once we obtain them, we  show Lemma \ref{lem:caccio-w-moser}, which is the building block of a Moser-type iteration argument. For this lemma, we need an embedding inequality (see Corollary  \ref{cor:Kato-emb-glob}) with constants independent of the domain, which we prove, since we were not able to find it in the literature (with constants independent of the domain). The use of the Stummel-Kato class $\mathcal{K}(\om)$ as an appropriate class of functions for the interior data and the lower order coefficients is not new and has its roots to Schr\"odinger operators with singular potentials (see \cite{Ku} and the references therein). Although, in our case, due to the counterexample of Kim and Sakellaris \cite{KSa} (see Example \ref{ex:c1}), $|b+c|^2$ should be in appropriate subspace of it satisfying a Carleson-Dini-type condition.  In fact,  a $\tfrac{1}{2}$-Dini condition on the Stummel-Kato modulus was imposed in \cite{RZ} to prove local boundedness of subsolutions for certain quasi-linear equations, but their constants  depended on $\om$.  Our Moser-type iteration argument in the proof of Theorem \ref{thm:Moser} follows their ideas, but to get scale invariant estimates, it is necessary to come up with the condition \eqref{eq:Kato-dini} and deal with some technical details  that required attention already in the original proof. In  Example \ref{ex:d>0}, we also show that a negativity condition is necessary to obtain local boundedness.

\vv 

Regarding interior and boundary regularity, as is customary, we  go through an application of the weak Harnack inequality. But for this, we need the positivity condition to hold which would force us to assume $L1 =0$, or  equivalently $-\divv b +d =0$. But since this would lead to a significant restriction on the class of operators that our theorems would apply, we incorporate $-\divv(b u)$ and $d u$ to the interior data $-\divv g$ and $f$ respectively. The ``new" equation  has the form
$$
\wt L u=-\divv(A \nabla u) - c\nabla u= (f+d u) - \divv( g-b u),
$$
for which it is true that $\wt L 1=0$. The price we have to pay is to impose the additional assumptions $|b|^2$ and $ |d| \in \khalf(\om)$ (for interior regularity and boundary regularity under the CDC condition).  Of course,  we require $u$ to be locally bounded as well and thus, we need to assume one of the assumptions \ref{it:intro-1}-\ref{it:intro-3}.  It is interesting to see that the proof of Theorem \ref{thm:wiener-osc} (ii),  where we are proving a Wiener type criterion for boundary regularity, is quite laborious as it requires a modification of the original argument in \cite{GiTr} (which  is not obvious without the capacity density condition) and a new way of handling the second term $\Sigma_2$ in the iteration scheme.  Moreover we have to assume a slightly stronger condition, i.e., that $|f|, |d| \in \mathcal{K}_{\textup{Dini}, \delta}(\om)$ and $|b|^2, |g|^2 \in \mathcal{K}_{\textup{Dini}, \delta/2}(\om)$ for some $\delta\in(0,1)$. To our knowledge, this is the first Wiener-type criterion for boundary regularity of solutions for equations with lower order coefficients with so general assumptions. Moreover, the interior regularity is also new  in the case that the radii of the balls we consider are not small (and thus, we do not have smallness of the norms of the coefficients). Let us comment here that one could try to prove boundary regularity following \cite{GW} or even \cite{HKM}, but in both cases, there would only be treated   solutions  of equations with no right hand-side and $b_i=d=0$, $1 \leq i \leq n$. This is because of the need of lower pointwise bounds for the Green's function or  equivalently a Harnack inequality, which, in this situation, only holds  for equations of the form $Lu = -\divv A\nabla u - c \nabla u =0$. 

\vv

Finally,  having proved all the  results above, we are in a position to construct the Green's function  using the method of Hofmann and Kim \cite{HK} along with its variant of Kang and Kim \cite{KK}, where the main ingredients are the well-posedness of the Dirichlet problem, local boundedness, Caccioppoli's inequality, and maximum principle, while, for the approximating operators, we also use the interior continuity for solutions of equations with lower order coefficients that satisfy $|b|^2$, $|c|^2$, $|d| \in \khalf(\om)$. We would not need an approximation argument if it wasn't for the lack of continuity in the general case. This creates some trouble in the proof of $G(x,y)=G^t(y,x)$ (and nowhere else),  where $G^t$ stands for the Green's function associated with $L^t$, the formal adjoint of $L$. It is important to point out that the pointwise bounds for $G$ do not hold unless local boundedness of subsolutions of $L^t u =0$ is true; in view of Example \ref{ex:c1}, an additional condition on $b+c$ is necessary. In our case, this will be $|b+c|^2 \in \khalf(\om)$ as before. Remark that, since $\om$ may have infinite Lebesgue measure, we can assume $\om=\R^n$ and construct the fundamental solution.

\vv

\noindent{\bf Related results:} An interesting result, which is very related to our work, was obtained simultaneously and independently by Georgios Sakellaris.  The first version of our paper and \cite{Sak} were uploaded on ArXiv.org the same day (9th of April 2019).  His primary goal was to construct Green's functions for elliptic operators of the form \eqref{operator} in general domains under either negativity condition that satisfy scale invariant pointwise bounds. Then, he applies them to obtain global and local boundedness  for solutions to equations with interior data in the case \eqref{negativity2}. To do this, it was required $b+c$ to be in a scale invariant space, which for the author was the Lorentz space $L^{n,1}(\om)$ (as opposed to $|b+c|^2\in\khalf(\om)$ we identified). His method is totally different than ours and is based on delicate estimates for decreasing rearrangements. In fact, he first proves the existence of Green's functions via various approximations and then  uses their properties to obtain {\it a priori estimates}; our method follows the exact opposite direction.  Our paper and \cite{Sak} are complementary  since,  apart from the major differences in the approach, the conditions $|b+c|^2\in \khalf(\om)$ and $|b+c|\in L^{n,1}(\om)$ are not comparable.  Indeed,  if $g(x)=|x|^{-1} (-\log|x|)^{-3} {\bf 1}_{B}(x)$, where $B:=B(0,\tfrac{1}{e})$,  then  $g \in L^{n,1}(B)$ such that  $g^2 \notin  \mathcal{K}_{\textup{Dini}, \alpha}(B)$ for any $\alpha>0$ (see \cite{Sak}), while, in Example \ref{ex:dini-not-lorentz}, we show that there exists a non-negative function $f \in \mathcal{K}_{\textup{Dini}, \alpha}(\R^n_+) \setminus L^{p,q}(\R^n_+)$ for any $\alpha>0$, $p>0$ and $q \in (0, \infty]$, and so $h:=\sqrt f \notin L^{n,1}(\R^n_+)$ and $h^2\in  \mathcal{K}_{\textup{Dini}, \alpha}(\R^n_+)$.  We would like to note here that Sakellaris  observed that, due to a Lorentz-Sobolev embedding theorem and density,  \eqref{negativity} or \eqref{negativity2} can be applied assuming that $b, c \in L^{n, \infty} (\om)$, $d \in L^{n/2, \infty} (\om)$. Although our original assumptions were $b, c \in L^{n} (\om)$, $d \in L^{n/2} (\om)$, and the constants depended on $\|b+c \|_{L^{n} (\om)}$ (the same dependence as in \cite{Sak}), while working the details of the case $|b+c|^2 \in \mathcal{K}(\om)$, we realized that our method extends almost unchanged when $b+c \in  L^{n, q} (\om)$, for  $q \in [n, \infty)$, which is a slight improvement compared to our previous results and the ones in \cite{Sak}. We claim no credit though for the idea to use the Lorentz-Sobolev embedding theorem, which we learned from \cite{Sak}.

Around  a year after  the last version of the present manuscript was uploaded on ArXiv.org (26th of April 2019),  Sakellaris uploaded  \cite{Sak2} on the same preprint server (28th of May 2020), where,  under the assumptions of \cite{Sak},  he obtains interior and boundary Harnack inequalities and,  under smallness assumptions on the norms of the coefficients,  he further proves interior and boundary Moser's estimates as well as interior local continuity.

	\vv
	\noindent{\bf Data availability statement:} Data sharing is not applicable to this article as no new data were created or analyzed in this study.
	
	\vv

\vvv

\noindent{\bf Acknowldegements.} We would like to thank Georgios Sakellaris for making his paper available to us and for helpful discussions. We are also grateful to him  for spotting a gap in our previous proof  of \eqref{eq:Green-symmetry}.  We would also like to thank the anonymous referee for their careful reading and their suggestions, which have contributed to improve the readability of the paper.

\section{Preliminaries}

We will write $a\lesssim b$ if there is $C>0$ so that $a\leq Cb$ and $a\lesssim_{t} b$ if the constant $C$ depends on the parameter $t$. We write $a\approx b$ to mean $a\lesssim b\lesssim a$ and define $a\approx_{t}b$ similarly. If $B_r(x)$ is a ball of radius $r$ and center $x \in \overline{\om}$, we will denote $\om_r(x)= B_r(x) \cap \om$.

\subsection{Sobolev space}

\begin{definition}
If $1\leq p<n$ and $p^*=\frac{np}{n-p}$, we define the Sobolev spaces $Y^{1,p}(\om)$ and  $W^{1,p}(\om)$ to be the space of all weakly differentiable functions $u \in L^{p^*}(\om)$ and  $L^p(\om)$ respectively,  whose weak derivatives are functions in  $L^p(\om)$. We endow these spaces with the respective norms
\begin{align*}
\| u\|_{Y^{1,p}(\om) }&= \| u \|_{ L^{p^*}(\om)} + \| \nabla u \|_{ L^{p}(\om)}\\
\| u\|_{W^{1,p}(\om) }&= \| u \|_{ L^{p}(\om)} + \| \nabla u \|_{ L^{p}(\om)}.
\end{align*}
\end{definition}
We say that $u \in Y^{1,2}_{\loc}(\om)$ (resp.  $u \in W^{1,2}_{\loc}(\om)$) if  $u \in Y^{1,2}(K)$ (resp.  $u \in W^{1,2}(K)$) for any compact $K \subset \om$.  We also define $Y^{1,p}_0(\om)$ and $W^{1,p}_0(\om)$ as the closure of $C^\infty_c(\om)$ in $Y^{1,p}(\om)$ and $W^{1,p}(\om)$ respectively, and denote their dual spaces by  $Y^{-1,p'}(\om)$ and $W^{-1,p'}(\om)$, where $p'$ is the H\"older conjugate of $p$. 

By Sobolev embedding theorem,  it is clear that $W^{1,p}_0(\om) \subset Y^{1,p}_0(\om)$, while if $\om $ has finite Lebesgue measure they are in fact equal. See, for instance, Theorem 1.56 and Corollary 1.57 in \cite{MZ}. Moreover,  $Y^{1,p}_0(\R^n)=Y^{1,p}(\R^n)$ (see e.g. Lemma 1.76 in \cite{MZ}). We will denote by $2_*= \frac{2n}{n+2}$ the dual Sobolev exponent for $p=2$.

For $u\in Y_{\loc}^{1,2}(\Omega)$ and $ \vphi \in C^\infty_c(\Omega)$, the  bilinear form which corresponds to the elliptic operator \eqref{operator} is given by
\begin{equation}\label{bilinear}
\LL(u, \vphi )= \int_{\Omega}(A\nabla u + du)\cdot \nabla \vphi  - (c\cdot \nabla u+ du)\,\vphi.
\end{equation}
which, by the embedding given in \eqref{eq:cor-Kato-emb-glob} or the one in \cite[p.6 and Lemma 2.2]{Sak}, is well-defined if \eqref{eq:elliptic3} holds. For the same reasons we can use \eqref{negativity} and \eqref{negativity2} with $Y^{1,\frac{n}{n-1}}_0(\om)$ functions. 

When we write  $Lu=f-\divv g$,  where $f\in L^{1}_\loc(\om)$ and $g \in L^1_\loc(\om;\R^n)$,  we mean that it holds  ``in the weak sense", i.e.,
$$
\LL (u,v) = \int_{\Omega} f v + g \cdot \nabla v, \quad \textup{for all} \,\, v \in C^\infty_c(\om).
$$
If  $f\in L^{2_*}(\om)$ and $g \in L^2(\om)$, we can extend it by density to $ v \in  Y^{1,2}_0(\om)$.

\vv

In the sequel we will require a notion of supremum and infimum of a function in $Y^{1,2}(\om)$ at the boundary of an open set $\om  \subset \R^n$ since such a  function is not necessarily continuous all the way to  the boundary. Let $Y$ denote either $Y^{1,2}(\om)$ or $W^{1,2}(\om)$ and $Y_0$ be either  $Y_0^{1,2}(\om)$ or $W_0^{1,2}(\om)$. 
\begin{definition}\label{def:boundary-sup-inf}
Given a function $u \in Y$, we say that $u \leq 0$ on $\d \om$ if  $u^+ \in Y_0$. If $u$ is continuous in a neighborhood of $\d \om$ then $u \leq 0$ on $\d \om$ in the Sobolev sense if $u \leq 0$ in the pointwise sense. In the same way $u \geq 0$ if $-u \leq 0$ and $u \leq w$  if $u -w \leq 0$.  We define the boundary supremum and infimum of $u$ as
$$
\sup_{\partial \Omega} u = \inf \{ k \in \R: (u-k)^+ \in Y_0\} \quad \textup{and} \quad  \inf_{\d \om}  u = - \sup_{\d \om} (- u).
$$
\end{definition}

\begin{definition}\label{def:vanish-subset}
Let $E \subset \overline{\om}$ and $u \in Y$. We say that $u \leq 0$ on $E$ if $u^+$ is  the limit in $Y$-norm of a sequence of $C^\infty_c(\overline \om \setminus E)$. Then $u \geq 0$ and $u \leq v$ can be defined naturally. Moreover, if $\om$ has finite Lebesgue measure.
$$
\sup_{E} u = \inf \{ k \in \R: u \leq k \,\, \textup{on}\,\, E\} \quad \textup{and} \quad  \inf_{E}  u = - \sup_{\d \om} (- u).
$$
\end{definition}
If $E = \d \om$ the two definitions above coincide.  

\vv

We record  some results for Sobolev functions that we will need later. Their proofs can be found in \cite{MZ} and/or in \cite{HKM} for functions in $W^{1,2}(\om)$ or $W_0^{1,2}(\om)$. Although, one can make the obvious modifications to prove them for $Y^{1,2}(\om)$  or $Y_0^{1,2}(\om)$.

\begin{lemma} \label{lem:sobolev-u-constant}
If $\om \subset \R^n$ is open and connected,  $u \in Y$  and $ \nabla u = 0$ a.e. in $\om$, then $u $ is a constant in $\om$. If we also assume $u \in Y_0$, then $u=0$.
\end{lemma}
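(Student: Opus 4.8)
The statement is the classical fact that a weakly differentiable function with vanishing gradient on a connected open set is constant, adapted to the spaces $Y^{1,2}$ and $W^{1,2}$; the only mild subtlety is that we must not invoke boundedness of $\om$ or finiteness of its measure. First I would reduce to a local statement: on any ball $B \ssubset \om$ we have $u \in W^{1,2}(B)$ with $\nabla u = 0$ a.e.\ in $B$, so by the standard mollification argument (convolve $u$ with a smooth approximate identity; the gradient of the mollification is the mollification of $\nabla u$, hence zero, so each mollification is constant on slightly smaller balls, and letting the mollification parameter tend to $0$ shows $u$ is a.e.\ equal to a constant on $B$). This is exactly the content available in \cite[]{MZ} and \cite[]{HKM} for $W^{1,2}_{\loc}$, and since $Y^{1,2}(\om) \subset W^{1,2}_{\loc}(\om)$ and $W^{1,2}(\om) \subset W^{1,2}_{\loc}(\om)$, it applies verbatim here.

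Next I would globalize using connectedness: cover $\om$ by balls $B_i \ssubset \om$ on each of which $u$ equals a constant $c_i$ a.e.; whenever $B_i \cap B_j$ has positive measure the constants agree, $c_i = c_j$. Define an equivalence relation on the index set by declaring $i \sim j$ if $c_i = c_j$; the union of the balls in one equivalence class is open, and the complement (the union of balls in all other classes) is also open, so by connectedness of $\om$ there is a single class. Hence $u$ is a.e.\ equal to one constant $c$ throughout $\om$. This step uses nothing about the measure of $\om$.

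For the second assertion, suppose in addition $u \in Y_0$, i.e.\ $u$ (equal a.e.\ to the constant $c$) lies in the closure of $C^\infty_c(\om)$ in the $Y^{1,2}$-norm (or $W^{1,2}$-norm). If $c \neq 0$, then the constant function $c$ has $\| c \|_{L^{2^*}(\om)} = |c|\,|\om|^{1/2^*}$ in the $Y$ case, or $\| c \|_{L^2(\om)} = |c|\,|\om|^{1/2}$ in the $W$ case. In the $W^{1,2}$ case, if $|\om| = \infty$ this norm is $+\infty$, contradicting $u \in W^{1,2}(\om)$ from the outset, so necessarily $|\om| < \infty$ and then $u \in W^{1,2}_0(\om)$ forces $\int_\om u\, \vphi_k \to 0$ along an approximating sequence... more cleanly: take $\vphi_k \in C^\infty_c(\om)$ with $\vphi_k \to u$ in $Y$ (resp.\ $W$); then $\vphi_k \to c$ in $L^{2^*}$ (resp.\ $L^2$), but each $\vphi_k$ has compact support, so for any fixed ball $B$, $\| c \|_{L^{2^*}(B)} = \lim_k \| \vphi_k\|_{L^{2^*}(B)} \le \lim_k \|\vphi_k\|_{L^{2^*}(\om)} = \|c\|_{L^{2^*}(\om)}$ — this does not immediately give a contradiction. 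The clean argument: $Y^{1,2}_0(\om) \hookrightarrow L^{2^*}(\om)$ by Sobolev, and a nonzero constant is not in $L^{2^*}(\R^n)$, hence not in $L^{2^*}(\om)$ unless $|\om| < \infty$; if $|\om| < \infty$ then $Y^{1,2}_0(\om) = W^{1,2}_0(\om)$, and on a set of finite measure a constant $c \in W^{1,2}_0(\om)$ must vanish because the trace-type/Poincaré inequality $\|\vphi\|_{L^2(\om)} \lesssim_{\om} \|\nabla \vphi\|_{L^2(\om)}$ (valid for $W^{1,2}_0$ on finite-measure sets) gives $|c|\,|\om|^{1/2} \le C \cdot 0 = 0$. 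Either way $c = 0$.

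The main obstacle is purely bookkeeping: keeping the two scales of spaces ($Y$ vs.\ $W$) and the possibly-infinite measure of $\om$ straight, and invoking the right form of the Poincaré/Sobolev inequality in the $Y_0$ step without accidentally using a constant that depends on $\om$ in a way the paper wants to avoid — but here, for this lemma only, $\om$-dependence is harmless since the conclusion is qualitative. No genuinely hard analysis is involved; everything reduces to the cited results in \cite{MZ} and \cite{HKM} plus a connectedness argument.
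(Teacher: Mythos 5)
Your argument is correct. The paper itself gives no proof beyond citing \cite{MZ} (Corollary 1.42) for constancy and a modification of \cite{HKM} (Lemma 1.17) for the vanishing, so your mollification-plus-connectedness argument is exactly the standard content behind those citations, and your treatment of the $Y_0$/$W_0$ step is sound. One simplification worth noting: the case distinction on $|\om|$ in the second part is unnecessary. Since $u\in Y_0^{1,2}(\om)$ (and $W_0^{1,2}(\om)\subset Y_0^{1,2}(\om)$), the Sobolev inequality for $Y_0^{1,2}$ — Lemma \ref{lem:Lor-Sobolev-emb-p} in the paper, with constant depending only on $n$ — gives directly
\[
\|u\|_{L^{2^*}(\om)}\leq C_s\|\nabla u\|_{L^2(\om)}=0,
\]
hence $u=0$ a.e., with no reference to whether $\om$ has finite measure and no Poincar\'e inequality with an $\om$-dependent constant. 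Also, the false start in the middle of your $Y_0$ discussion (the approximating-sequence attempt you correctly abandon) should be deleted from a final write-up; only the closing argument is needed.
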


\begin{proof}
The fact that $u$ is a constant can be found in \cite[Corollary 1.42]{MZ}, while the second part can proved by a slight modification of the proof of \cite[Lemma 1.17]{HKM}.
\end{proof}

\vv

\begin{lemma}[\cite{MZ}, Corollary 1.43] \label{lem:Sobolev-maxmin}
If $u, v \in Y$ (resp. $Y_0$) then $ \max(u, v)$ and  $ \min(u, v)$ are in $Y$ (resp. $Y_0$) and 
\begin{equation*}
\nabla \max(u, v)(x)=
\begin{dcases}
\nabla u  &,\textup{if}\,\, u\geq v\\
\nabla v &,\textup{if}\,\, v \geq u\
\end{dcases},
\end{equation*}
\begin{equation*}
\nabla \min(u, v)(x)=
\begin{dcases}
\nabla v  &,\textup{if}\,\, u\geq v\\
\nabla u &,\textup{if}\,\, v \geq u\
\end{dcases}.
\end{equation*}
In particular, $\nabla u=\nabla v$  a.e. on the set $\{x\in \om:u(x)=v(x) \}$.
\end{lemma}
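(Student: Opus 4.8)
The statement to prove is Lemma~\ref{lem:Sobolev-maxmin}, which asserts that $Y$ (resp.\ $Y_0$) is closed under $\max$ and $\min$, together with the explicit formulas for the gradients. Since $\min(u,v) = -\max(-u,-v)$ and $\max(u,v) = u + (v-u)^+ = u + (u-v)^-$, everything reduces to the single claim: if $w \in Y$ then $w^+ \in Y$ with $\nabla w^+ = \chi_{\{w>0\}}\nabla w$, and the same for $Y_0$. So the plan is to establish this truncation lemma and then derive the stated formulas by elementary algebra on the set decomposition $\{u\ge v\}\cup\{v\ge u\}$.

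\textbf{Step 1: The positive part belongs to $Y$.} Given $w\in Y$, I would approximate the function $t\mapsto t^+$ by the smooth convex functions $F_\varepsilon(t) = \sqrt{t^2+\varepsilon^2}-\varepsilon$ for $t\ge -$ something, or more cleanly $F_\varepsilon(t) = (t^2+\varepsilon^2)^{1/2}-\varepsilon$ on $t>0$ and $0$ on $t\le 0$ (the standard regularization used in \cite{GiTr}, \cite{MZ}). The chain rule for Sobolev functions (valid for $C^1$ functions with bounded derivative composed with $W^{1,p}_{\loc}$ functions) gives $F_\varepsilon(w)$ weakly differentiable with $\nabla F_\varepsilon(w) = F_\varepsilon'(w)\nabla w$. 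Since $|F_\varepsilon'|\le 1$ and $F_\varepsilon(w)\to w^+$ pointwise with $F_\varepsilon(w)\le w^+$, and $F_\varepsilon'(w)\nabla w \to \chi_{\{w>0\}}\nabla w$ pointwise and dominated by $|\nabla w|\in L^2$, I pass to the limit: $w^+\in L^{2^*}(\om)$ (resp.\ $L^2$) by the bound $0\le F_\varepsilon(w)\le w^+ \le |w|$ and dominated convergence, and the weak derivatives converge in $L^2$, so $w^+\in Y$ with $\nabla w^+ = \chi_{\{w>0\}}\nabla w$. For the $Y_0$ statement: if $w\in Y_0$, take $\varphi_k\in C^\infty_c(\om)$ with $\varphi_k\to w$ in $Y$; then $F_\varepsilon(\varphi_k)$ has compact support (since $F_\varepsilon(0)=0$... actually $F_\varepsilon$ need not vanish near $0$ — better to use $F_\varepsilon(t)$ supported in $\{t>0\}$, e.g.\ $F_\varepsilon(t)=0$ for $t\le\varepsilon$), and a diagonal argument over $k$ and $\varepsilon$ together with the $Y$-convergence just established shows $w^+ \in Y_0$. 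This is the place where one must be slightly careful, since compact support is not automatically preserved; the standard fix is to truncate so that $F_\varepsilon$ vanishes in a neighborhood of $0$.

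\textbf{Step 2: Deduce the formulas for $\max$ and $\min$.} Write $\max(u,v) = v + (u-v)^+$. Since $u,v\in Y$ implies $u-v\in Y$, Step~1 gives $(u-v)^+\in Y$, hence $\max(u,v)\in Y$ (and $\in Y_0$ when $u,v\in Y_0$), with
\[
\nabla \max(u,v) = \nabla v + \chi_{\{u>v\}}\nabla(u-v) = \chi_{\{u>v\}}\nabla u + \chi_{\{u\le v\}}\nabla v.
\]
On the set $\{u=v\}$ we have $\nabla u = \nabla v$ a.e.\ — this is exactly the well-known fact that the weak gradient of a Sobolev function vanishes a.e.\ on any level set, applied to $u-v$ and the level $0$; it follows from Step~1 applied to both $(u-v)^+$ and $(u-v)^-$, whose gradients $\chi_{\{u>v\}}\nabla(u-v)$ and $-\chi_{\{u<v\}}\nabla(u-v)$ must sum to $\nabla(u-v)$, forcing $\nabla(u-v)=0$ a.e.\ on $\{u=v\}$. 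This reconciles the two branches of the displayed formula on the overlap and yields the ``in particular'' clause. The formula for $\min(u,v)$ follows from $\min(u,v) = -\max(-u,-v)$, or directly from $\min(u,v) = v - (v-u)^+$.

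\textbf{Main obstacle.} The only genuinely delicate point is preserving membership in the homogeneous space $Y$ (rather than $W^{1,2}$) and in $Y_0$: one needs $w^+\in L^{2^*}$, which follows from $0\le w^+\le|w|\le|u|+|v|\in L^{2^*}$, and one needs the density/compact-support argument for $Y_0$ to go through with the correct choice of approximating function $F_\varepsilon$. Everything else is the classical Stampacchia-type chain rule argument; as the authors note, the results are in \cite{MZ}/\cite{HKM} for $W^{1,2}$ and require only ``obvious modifications'' for $Y^{1,2}$, the modification being precisely the observation that $L^{2^*}$ membership is inherited by truncations since $|w^+|\le|w|$ pointwise.
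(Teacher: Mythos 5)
The paper gives no proof of this lemma at all: it is quoted verbatim as Corollary 1.43 of \cite{MZ}, so there is nothing internal to compare against. Your argument is the standard truncation-and-chain-rule proof and is essentially the one in \cite{MZ}; it is correct. The reduction $\max(u,v)=v+(u-v)^+$, the formula $\nabla w^+=\chi_{\{w>0\}}\nabla w$, and the derivation of the ``in particular'' clause from $\nabla(u-v)=\nabla(u-v)^+-\nabla(u-v)^-$ on $\{u=v\}$ are all sound, and you correctly note that the only adaptation needed for $Y^{1,2}$ versus $W^{1,2}$ is the pointwise bound $|w^+|\le|w|\in L^{2^*}$. The one place you flag as delicate --- preserving membership in $Y_0$ through the approximation, where compact support of $F_\varepsilon(\varphi_k)$ and the convergence of $\chi_{\{\varphi_k>0\}}\nabla\varphi_k$ both need care --- can be bypassed entirely by invoking the paper's own Theorem \ref{lemma:lip0integral}(ii): $t\mapsto t^+$ is Lipschitz with $f(0)=0$, so $(u-v)^+\in Y_0$ with $\|(u-v)^+\|_Y\le\|u-v\|_Y$ comes for free, and $\max(u,v)=v+(u-v)^+\in Y_0$ follows immediately. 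That shortcut would make Step 1 for $Y_0$ a one-line deduction rather than a diagonal argument.
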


\vv

\begin{theorem}[\cite{MZ}, Theorem 1.74] \label{lemma:lip0integral}
Let $\om \subset \R^n$ be an open set and let f be a Lipschitz  function such that $f(0)=0$. 
\begin{itemize}
\item[(i)] If $u \in W^{1,1}_{\loc}(\om)$ then $f \circ u  \in W^{1,1}_{\loc}(\om)$. Moreover, for a.e. $x \in \om$, we have that either
$$
\nabla (f \circ u)(x)= f'(u(x)) \nabla u(x),
$$
or 
$$
\nabla (f \circ u)(x)= \nabla u(x) =0.
$$
\item[(ii)] If $u \in Y_0$, then $f \circ  u \in Y_0$ and
$$
\| f \circ  u\|_{Y} \leq \|f'\|_{L^\infty(\om)}\| u\|_{Y}.
$$
\end{itemize}
\end{theorem}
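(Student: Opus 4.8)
The plan is to handle (i) by a standard approximation/truncation argument, and then derive (ii) from (i) together with the norm estimate for the gradient and a density argument. For part (i): since the statement is local, fix a ball $B \ssubset \om$ and work on $B$, where $u \in W^{1,1}(B)$. First I would mollify, setting $u_\ve = u * \rho_\ve$, so $u_\ve \in C^\infty$ and $u_\ve \to u$ in $W^{1,1}(B')$ for any $B' \ssubset B$, and (passing to a subsequence) $u_\ve \to u$ and $\nabla u_\ve \to \nabla u$ a.e. Since $f$ is Lipschitz with $f(0)=0$, we have $|f(t)| \le \|f'\|_\infty |t|$, so $f \circ u_\ve \to f \circ u$ in $L^1(B')$ and $f\circ u \in L^1$. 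The smooth chain rule gives $\nabla(f\circ u_\ve) = f'(u_\ve)\nabla u_\ve$, and $|f'(u_\ve)\nabla u_\ve| \le \|f'\|_\infty |\nabla u_\ve|$, a sequence equi-integrable (indeed convergent in $L^1$), so by Vitali's theorem it suffices to identify the a.e. limit of $f'(u_\ve)\nabla u_\ve$. The subtlety is that $f'$ need not be continuous, so $f'(u_\ve(x)) \not\to f'(u(x))$ pointwise in general; this is the step I expect to be the main obstacle.

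To get around the discontinuity of $f'$, I would use the classical fact that $\nabla u = 0$ a.e. on any level set $\{u = \lambda\}$, and more generally that for a.e. $x$ either $x$ is a point of approximate continuity of $\nabla u$ where additionally $u$ is ``approximately affine with nonzero gradient,'' or $\nabla u(x) = 0$. Concretely: let $N$ be the set where $f$ fails to be differentiable; $|N|_{\R} = 0$ by Rademacher. On the set $\{u \in N\}$ one shows $\nabla u = 0$ a.e. (this is the coarea-type lemma: $\int_{\{u\in N\}} |\nabla u| = \int_N \HH^{n-1}(\{u=\lambda\} \cap \cdot)\,d\lambda = 0$ since $|N|=0$). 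On the complementary set $\{u \notin N\}$, by Egorov/Lusin one can cover a.e. point by sets on which $u_\ve$ converges uniformly and $u$ takes values in a set where $f'$ is approximately continuous, forcing $f'(u_\ve) \to f'(u)$ off a null set, hence $f'(u_\ve)\nabla u_\ve \to f'(u)\nabla u$ a.e. there. Combining the two regions yields, for a.e. $x$, either $\nabla(f\circ u)(x) = f'(u(x))\nabla u(x)$ or $\nabla u(x) = \nabla(f\circ u)(x) = 0$, which is exactly the dichotomy claimed. (Alternatively, one cites Marcus–Mizel or Ziemer directly; but I would include the sketch above for self-containedness.)

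For part (ii): assume $u \in Y_0 = Y_0^{1,2}(\om)$ (the argument for $W_0^{1,2}$ is identical). By definition there is a sequence $\vphi_k \in C^\infty_c(\om)$ with $\vphi_k \to u$ in $Y^{1,2}(\om)$. Since $f$ is Lipschitz with $f(0)=0$ and $\vphi_k$ has compact support, $f\circ \vphi_k \in \Lip_c(\om) \subset Y_0$, and by (i), $\nabla(f\circ\vphi_k) = f'(\vphi_k)\nabla\vphi_k$ a.e. (up to the null set where both sides vanish), so that $|\nabla(f\circ\vphi_k)| \le \|f'\|_\infty |\nabla\vphi_k|$ pointwise a.e. Moreover $|f\circ\vphi_k| \le \|f'\|_\infty|\vphi_k|$, hence
$$
\|f\circ\vphi_k\|_{Y^{1,2}(\om)} = \|f\circ\vphi_k\|_{L^{2^*}(\om)} + \|\nabla(f\circ\vphi_k)\|_{L^2(\om)} \le \|f'\|_{L^\infty(\om)}\,\|\vphi_k\|_{Y^{1,2}(\om)}.
$$
The same Lipschitz bound gives $f\circ\vphi_k \to f\circ u$ in $L^{2^*}(\om)$. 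For the gradients, $\nabla\vphi_k \to \nabla u$ in $L^2$, so along a subsequence $\nabla\vphi_k \to \nabla u$ a.e. with an $L^2$-dominating function; using part (i)'s formula and the dominated convergence theorem (with dominant $\|f'\|_\infty \cdot (\text{dominant for }\nabla\vphi_k)$) together with the a.e.\ identification of the limit as in the proof of (i), one checks $\nabla(f\circ\vphi_k) \to f'(u)\nabla u = \nabla(f\circ u)$ in $L^2(\om)$ along that subsequence. Hence $f\circ\vphi_k \to f\circ u$ in $Y^{1,2}(\om)$, which shows $f\circ u \in Y_0$ and, passing to the limit in the displayed inequality, $\|f\circ u\|_{Y} \le \|f'\|_{L^\infty(\om)}\|u\|_{Y}$. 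The only delicate point here is again the a.e.\ identification of the gradient limit, handled by the same dichotomy argument from (i); everything else is bookkeeping with the Lipschitz estimate.
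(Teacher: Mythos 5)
The paper gives no proof of this statement --- it is quoted verbatim as \cite[Theorem 1.74]{MZ} --- so your proposal is measured against the standard argument in the literature rather than against anything in the text. Your architecture (prove (i) locally by approximation, deduce (ii) by approximating $u$ by $C^\infty_c$ functions) is the right one, and you correctly locate the crux in the discontinuity of $f'$.

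Your resolution of that crux, however, does not work as written. On $\{u\notin N\}$ you argue that approximate continuity of $f'$ at $u(x)$, together with $u_\ve(x)\to u(x)$, forces $f'(u_\ve(x))\to f'(u(x))$. It does not: approximate continuity of $f'$ at $t_0=u(x)$ only says that $\{t:\,|f'(t)-f'(t_0)|>\epsilon\}$ has density zero at $t_0$, and the sequence $u_\ve(x)$ is free to land inside that density-zero set, so $f'(u_\ve(x))$ need not converge; uniform convergence of $u_\ve$ on a Lusin set does not help, since the obstruction lives in the target $\R$, not in $\om$. (Mere differentiability of $f$ at $u(x)$, which is all that $u(x)\notin N$ gives, is weaker still.) The standard repair is to mollify $f$ rather than $u$: with $f_\delta=f\ast\rho_\delta$ one has $f_\delta'\to f'$ at every Lebesgue point of $f'$, hence $f_\delta'(u(x))\to f'(u(x))$ for a.e.\ $x$ outside $u^{-1}(N')$ with $|N'|=0$, while on $u^{-1}(N')$ the level-set lemma you already quote gives $\nabla u=0$ a.e.; then $\nabla(f_\delta\circ u)=f_\delta'(u)\nabla u$ passes to the limit by dominated convergence and yields exactly the stated dichotomy. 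The same gap resurfaces in (ii): your claim that $\nabla(f\circ\vphi_k)\to f'(u)\nabla u$ strongly in $L^2$ requires controlling $|f'(\vphi_k)-f'(u)|\,|\nabla u|$, which fails for the same reason, and strong convergence can genuinely break down. The robust route for (ii) needs less: $f\circ\vphi_k$ is bounded in $Y^{1,2}(\om)$ by the Lipschitz estimate and converges to $f\circ u$ in $L^{2^*}(\om)$, so a subsequence converges weakly in $Y^{1,2}(\om)$ to $f\circ u$; since $Y_0$ is a strongly closed convex subspace it is weakly closed, giving $f\circ u\in Y_0$, and the norm bound follows from weak lower semicontinuity of the norm.
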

Remark that it is necessary to have $f(0)=0$ when $\om$ is unbounded. For example, if $f(t) = 1$, then $f \circ u\not \in Y^{1,2}(\om)$. 

\vv

\begin{lemma}\label{cor:liploc}
	Let $\om \subset \R^n$ be an open set and let $f: \R \to \R$ be a function in $\Lip(\R)$. If $u \in Y$, then $f \circ \, u \in Y_{\loc}$.
\end{lemma}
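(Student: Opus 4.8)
The plan is to reduce this local statement to the global one already proved in Theorem 1.74 (Lemma \ref{lemma:lip0integral}), by the usual cut-off trick. First I would fix an arbitrary ball $B = B_r(x_0)$ with $2B = B_{2r}(x_0) \ssubset \om$; since $Y_{\loc}$ is defined through such balls, it suffices to show that $f \circ u$ and its weak gradient belong to $L^2(B)$ together with the $L^2$ bound on $\nabla(f\circ u)$ on $B$. Pick $\eta \in C_c^\infty(\om)$ with $\eta \equiv 1$ on $B$, $\supp \eta \subset 2B$, $0 \le \eta \le 1$. Then $\eta u \in Y_0^{1,2}(\om)$ (this is immediate since $\eta u$ has compact support in $\om$ and $u \in Y_{\loc}^{1,2}$, so $\eta u \in W^{1,2}$ with compact support, hence in $Y_0^{1,2}$ by the embedding $W^{1,2}_0 \subset Y^{1,2}_0$ recorded after the definition of these spaces).

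The obstacle is that $f$ does not satisfy $f(0) = 0$ in general, so I cannot apply part (ii) of Lemma \ref{lemma:lip0integral} directly to $f$. The fix is to write $f = \tilde f + f(0)$ where $\tilde f(t) = f(t) - f(0)$, so $\tilde f \in \Lip(\R)$ with $\tilde f(0) = 0$ and $\|\tilde f'\|_{L^\infty} = \|f'\|_{L^\infty}$. Applying part (ii) of Lemma \ref{lemma:lip0integral} to $\tilde f$ and the function $\eta u \in Y_0$ gives $\tilde f \circ (\eta u) \in Y_0$ with $\|\tilde f \circ (\eta u)\|_Y \le \|f'\|_{L^\infty(\om)} \|\eta u\|_Y < \infty$. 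On $B$ we have $\eta u = u$, and since gradients of Sobolev functions are determined locally (and using part (i) of Lemma \ref{lemma:lip0integral}, or Lemma \ref{lem:Sobolev-maxmin}-type locality), $\tilde f \circ (\eta u) = \tilde f \circ u$ a.e. on $B$ with $\nabla(\tilde f \circ (\eta u)) = \nabla(\tilde f \circ u)$ a.e. on $B$. Hence $\tilde f \circ u \in W^{1,2}(B)$, and therefore $f \circ u = \tilde f \circ u + f(0) \in W^{1,2}(B)$ with $\nabla(f \circ u) = \nabla(\tilde f \circ u)$ on $B$; note the constant $f(0)$ is harmless on the \emph{bounded} set $B$, which is exactly why we localize.

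Since $B = B_r(x_0)$ with $2B \ssubset \om$ was arbitrary and such balls exhaust $\om$, we conclude $f \circ u \in Y_{\loc}^{1,2}(\om) = Y_{\loc}$, which is the claim. I expect the only genuinely delicate point to be the bookkeeping that justifies replacing $u$ by the compactly supported $\eta u$ before invoking the global lemma — i.e.\ checking that $\eta u \in Y_0^{1,2}(\om)$ and that composition with $\tilde f$ commutes with restriction to $B$ at the level of weak gradients — but this is routine given the locality of weak differentiation and the results quoted above; everything else is formal.
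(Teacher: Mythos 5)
Your argument is correct, and since the paper states this lemma without proof (it is treated as a routine consequence of Theorem \ref{lemma:lip0integral}), there is nothing to clash with. The two points you flag as delicate are indeed the right ones and you handle them properly: $\eta u$ has compact support and lies in $W^{1,2}(\om)$ (using $u\in L^{2^*}\subset L^2$ on $\supp\eta$), hence $\eta u\in Y_0$ by Lemma \ref{lem:sobolev-van-trace}(i), so part (ii) of Theorem \ref{lemma:lip0integral} applies to $\tilde f=f-f(0)$; and locality of weak derivatives lets you transfer the conclusion from $\tilde f\circ(\eta u)$ back to $\tilde f\circ u$ on $B$. One remark: the cut-off is not actually needed. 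Part (i) of Theorem \ref{lemma:lip0integral} applied to $\tilde f$ already gives $\tilde f\circ u\in W^{1,1}_{\loc}(\om)$ together with the pointwise bound $|\nabla(f\circ u)|=|\nabla(\tilde f\circ u)|\leq \|f'\|_{L^\infty}|\nabla u|\in L^2(\om)$, while $|f\circ u|\leq |f(0)|+\|f'\|_{L^\infty}|u|$ shows $f\circ u\in L^2_{\loc}(\om)$ because $u\in L^{2^*}(\om)\subset L^2_{\loc}(\om)$; this yields $f\circ u\in W^{1,2}_{\loc}(\om)=Y_{\loc}$ in two lines. Your version buys nothing extra here, but it is sound.
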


\vv

\begin{lemma}[\cite{HKM}, Theorem 1.25] \label{lem:sobolev-van-trace}
Let $\om \subset \R^n$ be an open set and  $u \in Y$.
\begin{itemize}
\item[(i)] If $u$ has compact support, then $u \in Y_0$.
\item[(ii)]  If $v \in Y_0$ and $0 \leq u \leq v$ a.e.in $\om$, then $u \in Y_0$.
\item[(iii)]  If $v \in Y_0$ and $|u| \leq |v|$ a.e. in $\om \setminus K$, where $K$ is a
compact subset of $\om$, then $u \in Y_0$.
\end{itemize}
\end{lemma}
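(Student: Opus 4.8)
The plan is to reproduce the classical argument for $W^{1,p}_0$ (as in \cite[Theorem~1.25]{HKM}), the only additional point being that the $Y^{1,2}$-norm carries the extra term $\|u\|_{L^{2^*}(\om)}$, so we must check $L^{2^*}$-convergence at each approximation step; this costs nothing because every comparison function we use differs from $u$ by a quantity dominated pointwise a.e.\ by a sequence that converges to $0$ in $Y$. Throughout, $Y$ is either $Y^{1,2}(\om)$ or $W^{1,2}(\om)$; I argue the first (harder) case, the second being identical with every $L^{2^*}$-statement deleted. For part (i), if $u\in Y$ has compact support $K\ssubset\om$, extend $u$ by $0$ to $\wt u$ on $\R^n$; testing against $\phi\in C^\infty_c(\R^n)$ after inserting a cutoff $\equiv1$ near $K$ shows $\wt u\in Y^{1,2}(\R^n)$ with $\nabla\wt u$ the zero-extension of $\nabla u$. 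Mollifying with $\rho_\ve$, one has $\wt u*\rho_\ve\to\wt u$ in $L^{2^*}(\R^n)$ and $\nabla(\wt u*\rho_\ve)\to\nabla\wt u$ in $L^2(\R^n)$, and for $\ve<\dist(K,\d\om)$ the function $\wt u*\rho_\ve$ lies in $C^\infty_c(\om)$; hence $u\in Y_0$.

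For part (ii), I first reduce to $v\ge0$: indeed $v^+\in Y_0$ by Lemma~\ref{lem:Sobolev-maxmin}, and $0\le u\le v$ forces $u\le v^+$. Pick $v_j\in C^\infty_c(\om)$ with $v_j\to v$ in $Y$; since $|v_j^+-v^+|\le|v_j-v|$ and $v_j^+\to v^+$ in $Y$ by the usual truncation estimate for the gradient, we may relabel and assume $v\ge0$, $v_j\ge0$, each $v_j$ compactly supported (hence $v_j\in Y_0$ by (i)). Put $u_j:=\min(u,v_j)\in Y$; it is supported in $\supp v_j$, so $u_j\in Y_0$ by (i). Since $u\le v$,
\[
|u-u_j|=(u-v_j)^+\le(v-v_j)^+\le|v-v_j|\longrightarrow0\quad\text{in }L^2\text{ and }L^{2^*},
\]
while $\nabla u-\nabla u_j=\chi_{\{u>v_j\}}(\nabla u-\nabla v_j)$ by Lemma~\ref{lem:Sobolev-maxmin}, so
\[
\|\nabla u-\nabla u_j\|_{L^2(\om)}\le\|\nabla v-\nabla v_j\|_{L^2(\om)}+\big\|\chi_{\{u>v_j\}}(\nabla u-\nabla v)\big\|_{L^2(\om)}.
\]
The first term vanishes; for the second, pass to a subsequence with $v_j\to v$ a.e.\ --- on $\{u<v\}$ this gives $\chi_{\{u>v_j\}}\to0$ a.e., while on $\{u=v\}$ one has $\nabla u=\nabla v$ a.e.\ (Lemma~\ref{lem:Sobolev-maxmin}), so the integrand tends to $0$ a.e.\ and is dominated by $|\nabla u-\nabla v|\in L^2$; dominated convergence, applied along every subsequence, yields the claim. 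Thus $u_j\to u$ in $Y$, so $u\in Y_0$.

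For part (iii), fix $\eta\in C^\infty_c(\om)$ with $0\le\eta\le1$ and $\eta\equiv1$ on a neighborhood of $K$, and split $u=\eta u+(1-\eta)u$. Then $\eta u\in Y$ has compact support, so $\eta u\in Y_0$ by (i). The function $w:=(1-\eta)u$ vanishes near $K$, hence $|w|\le|v|$ a.e.\ on $\om$ (on $\om\setminus K$ because $|w|\le|u|\le|v|$, on $K$ because $w=0$). Since $|v|\in Y_0$ by Lemma~\ref{lem:Sobolev-maxmin} and $0\le w^\pm\le|v|$, part (ii) gives $w^\pm\in Y_0$, so $w\in Y_0$ and therefore $u\in Y_0$.

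The only delicate step is the gradient convergence in (ii): showing $\chi_{\{u>v_j\}}(\nabla u-\nabla v)\to0$ in $L^2$ requires both the a.e.\ coincidence $\nabla u=\nabla v$ on $\{u=v\}$ and a dominated-convergence argument after extracting an a.e.-convergent subsequence. Everything else --- in particular the whole passage from $W^{1,2}$ to $Y^{1,2}$ --- is routine, since all the error terms are bounded pointwise a.e.\ by $|v-v_j|$, which converges to $0$ in $L^{2^*}$ as well as in $L^2$.
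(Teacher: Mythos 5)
Your proof is correct; the paper itself gives no argument here but simply cites \cite{HKM} and remarks that the $W^{1,2}$ proof adapts with obvious modifications to $Y^{1,2}$, and your write-up is exactly that classical truncation--mollification argument, with the extra $L^{2^*}$-convergence checks handled correctly (each error term is dominated pointwise by $|v-v_j|$) and the one genuinely delicate step --- the gradient convergence via $\nabla u=\nabla v$ a.e.\ on $\{u=v\}$ plus dominated convergence along a.e.-convergent subsequences --- done properly.
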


\vv

\subsection{Stummel-Kato class}
\begin{definition}\label{def:Kato}
Let $f \in L^1_{\textup{loc}}(\R^n)$,  and set
\begin{equation}\label{eq:Kato-def}
\vartheta(f, r):= \sup_{x \in \R^n}  \left( \int_{B_r(x) } \frac{|f(y)|}{|x-y|^{n-2}} \,dy\right), \quad \textup{ for }\,\, r >0,
\end{equation}
We will denote by $\vartheta_{\om}(f, r):=\vartheta(f \chi_\om, r)$, for $r >0$. We define the Stummel-Kato class $\mathcal{K}$ and its variant $\mathcal{K}'$ as follows:
\begin{align}
&\widehat{\mathcal{K}}(\om)= \{ f \in L^1_\loc(\om):\vartheta_{\om}(f,r)< \infty, \,\,\textup{for each}\,\, r>0\},\\
&\mathcal{K}(\om)= \{ f \in L^1_\loc(\om):\lim_{r \to 0} \vartheta_{\om}(f,r)=0 \,\,\textup{and}\,\,\vartheta_{\om}(f,r)< \infty, \,\,\textup{for}\,\, r>0\},\label{eq:Kato-lim}\\
&\mathcal{K}'(\om)= \{f \in L^1(\om):\lim_{r \to 0} \vartheta_{\om}(f,r)=0 \,\,\textup{and}\,\,\vartheta_{\om}(f):=\sup_{r >0} \vartheta_{\om}(f,r)< \infty\}.
\end{align}
We will write that $f\in \widehat{\mathcal{K}}_{\loc}(\om)$ (resp. $\mathcal{K}_{1,\loc}(\om)$) if $f \in \widehat{\mathcal{K}}(D)$ (resp. $\mathcal{K}(D)$) for any bounded open set $D \subset \R^{n+1}$ so that $\overline{D} \subset \om$.  If $\om$ is bounded, 
$$\vartheta_{\om}(f)= \sup_{r \in (0,2 \diam(\om))}\vartheta_{\om}(f, r),$$
 and so  $\mathcal{K}(\om)=\mathcal{K}'(\om)$.
\end{definition}

\vv

It is easy to see that, by a simple covering argument, there exists a  dimensional constant $C_{\textup{db}}>0$ so that  
\begin{equation}\label{eq:kato-doubling}
\vartheta_{\om}(f, r) \leq C_{\textup{db}} \vartheta_{\om}(f, r/2) \quad \textup{for every}\,\, r>0.
\end{equation}
Therefore, since $\vartheta_{\om}(f, r)$ is non-decreasing in $r$, there exists  $c>0$ so that
$$
c:=\frac{\ln 2}{\,C_{\textup{db}}} \leq  \frac{1}{\vartheta_\om(f,r)}\int_{r/2}^r \vartheta_\om(f,t) \frac{dt}{t} \leq  \frac{1}{\vartheta_\om(f,r)}\int_{0}^r \vartheta_\om(f,t) \frac{dt}{t}.
$$

\vvv

Let us recall that  that a function $f \in L^1_\loc(\om)$ is in  the Morrey space $\mathcal{M}^{\lambda}(\om)$, if
\begin{equation}\label{eq:def-Morrey}
\sup_{r>0} \sup_{B_r \subset \R^n} \frac{1}{r^\lambda} \int_{B_r \cap \om} |f(x)| \,dx < \infty.
\end{equation}
that a function $f \in L^1_\loc(\om)$ is in the generalized Morrey space $\mathcal{M}^{\vphi}(\om)$ with modulus $\vphi$ if 
\begin{equation}\label{eq:def-Morrey}
\sup_{r>0} \sup_{B_r \subset \R^n}\frac{1}{\vphi(r)} \frac{1}{r^{n-2}} \int_{B_r \cap \om} |f(x)| \,dx < \infty \quad \textup{and}\quad \int_0^1 \vphi(t)\,\frac{dt}{t}<\infty.
\end{equation}
By \cite[Lemma 1.1]{RZ1},  $ \mathcal{M}^{n-2+\ve}(\om) \subset \mathcal{K}(\om)$, for any $\ve \in(0,2)$,  since for every $f \in \mathcal{M}^{n-2+\ve}(\om)$, it holds that 
$$
\vartheta_{\om}(f, r)  \lec r^{n-2+\ve} \| f\|_{\mathcal{M}^{n-2+\ve}(\om)},
$$  
while, if  $f \in \mathcal{K}(\om)$ and $\int_0^1\vartheta_\om(f, t) \frac{dt}{t}<\infty$,  then it is straightforward to see that $f \in \mathcal{M}^{\vartheta_\om(f,\cdot)}(\om)$ since 
$$
\int_{B(x,r)\cap \om } |f(y)|\,dy \leq r^{n-2} \vartheta_\om(f,r).
$$

\vvv

For fixed $r >0$, we  define the space
$$
L^1_{\loc, r}(\om)=\left\{ f \in L^1_\loc(\om): \| f\|_{L^1_{\loc,r}(\om)}:=\sup_{x \in \R^n} \|f\|_{L^1(B(x, r) \cap \om)} <\infty \right\},
$$
which clearly contains $\widehat{\mathcal{K}}(\om)$.
One case see that $\| \cdot \|_{L^1_{\loc,r}(\om)}$ is a norm on $L^1_{\loc, r}(\om)$ and $\vartheta_{\om}( \cdot, r)$ is a norm on $\widehat{\mathcal{K}}(\om)$ and  $\mathcal{K}(\om)$. Analogously, $\vartheta_{\om}( \cdot)$ is a norm on $\mathcal{K}'(\om)$. In the next lemma we provide an elementary proof of the fact that those spaces are complete.

\begin{lemma}
$L^1_{\loc, r}(\om), \widehat{\mathcal{K}}(\om), \mathcal{K}(\om),$ and $\mathcal{K}'(\om)$ are Banach spaces. 
\end{lemma}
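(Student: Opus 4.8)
The plan is to prove completeness of each of the four spaces simultaneously, since the argument is essentially the same in all cases: in each case we have a set of locally $L^s$ functions equipped with a norm of the form $N(f)$, where $N$ is either $\|\cdot\|_{L^s_{\loc,r}(\om)}$, or $\vartheta_{\om,s}(\cdot,r)$ for fixed $r>0$, or $\vartheta_{\om,s}(\cdot)$, and we must show that a Cauchy sequence $(f_k)$ converges in $N$-norm to an element of the space. First I would extract a pointwise-a.e.\ limit candidate: the key observation is that for any fixed ball $B=B_\rho(x_0)\subset\om$ (with $\overline B\subset\om$ if we are in the $\loc$ version, or any ball if $\om=\R^n$ etc.), control of $N(f_k-f_j)$ forces $(f_k)$ to be Cauchy in $L^s(B)$. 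For the $L^s_{\loc,r}$ norm this is immediate once $\rho\le r$ and then one covers a general $B$ by finitely many balls of radius $r$; for the Stummel--Kato norms one uses that on a fixed ball $B_\rho(x_0)$ one has the lower bound $|x_0-y|^{n-2}\le (2\rho)^{n-2}$ for $y\in B_\rho(x_0)$, hence
$$
\int_{B_\rho(x_0)}|f_k-f_j|^s\,dy\le (2\rho)^{n-2}\int_{B_\rho(x_0)}\frac{|f_k-f_j|^s}{|x_0-y|^{n-2}}\,dy\le (2\rho)^{n-2}\,\vartheta_{\om,s}(f_k-f_j,r)^s
$$
as long as $\rho\le r$ (and for $\mathcal K_s'$ one may even take $r=\infty$ in the sense that $\vartheta_{\om,s}(f_k-f_j)$ bounds the right-hand side for every $\rho$). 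Covering $\om$ by a countable family of such balls and using completeness of $L^s$ on each, together with a diagonal argument, yields a measurable function $f$ on $\om$ with $f_k\to f$ in $L^s_\loc(\om)$ and, after passing to a subsequence, a.e.

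Next I would upgrade this to convergence in the relevant norm and membership in the space. Fix $\ve>0$ and choose $K$ so that $N(f_k-f_j)<\ve$ for $k,j\ge K$. For the $L^s_{\loc,r}$ case: for every $x\in\R^n$, Fatou's lemma applied to $|f_k-f_j|^s\chi_{B(x,r)\cap\om}$ as $j\to\infty$ gives $\|f_k-f\|_{L^s(B(x,r)\cap\om)}\le\ve$ for $k\ge K$; taking the supremum over $x$ gives $\|f_k-f\|_{L^s_{\loc,r}(\om)}\le\ve$, and in particular $f\in L^s_{\loc,r}(\om)$ since $\|f\|\le\|f_K\|+\ve$. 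For the Stummel--Kato norms the argument is identical with $|f_k-f_j|^s\chi_{B(x,r)\cap\om}$ replaced by the weighted integrand $\frac{|f_k-f_j|^s}{|x-y|^{n-2}}\chi_{B(x,r)\cap\om}$ (resp.\ without the restriction $r$, taking a sup over $r$ afterward, for $\mathcal K_s'$): Fatou in the variable $j$ gives $\big(\int_{B_r(x)}\frac{|f_k-f|^s}{|x-y|^{n-2}}\big)^{1/s}\le\ve$, then sup over $x$ (and, for $\mathcal K_s'$, sup over $r>0$). Hence $\vartheta_{\om,s}(f_k-f,r)\le\ve$ (resp.\ $\vartheta_{\om,s}(f_k-f)\le\ve$), which gives both norm convergence and the finiteness clause in the definition of $\widehat{\mathcal K}_s,\mathcal K_s,\mathcal K_s'$.

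The remaining point, and the one requiring a little care, is to verify that $f$ satisfies the \emph{limit} conditions built into the definitions of $\mathcal K_s(\om)$ and $\mathcal K_s'(\om)$, namely $\lim_{r\to0}\vartheta_{\om,s}(f,r)=0$, and for $\mathcal K_s'$ also $f\in L^s(\om)$. This is where I expect the main (minor) obstacle to lie, but it is handled by a standard $\ve/2$ split: given $\eta>0$, pick $k$ with $\vartheta_{\om,s}(f-f_k,r)\le\eta/2$ for all $r\le 1$ (possible by the previous paragraph, since the bound there was uniform in $r\le$ the fixed radius), then use $f_k\in\mathcal K_s(\om)$ to choose $r_0$ with $\vartheta_{\om,s}(f_k,r)\le\eta/2$ for $r<r_0$; subadditivity of $\vartheta_{\om,s}(\cdot,r)$ gives $\vartheta_{\om,s}(f,r)\le\eta$ for $r<r_0$, as desired. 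For $\mathcal K_s'(\om)\subset L^s(\om)$, completeness of $L^s(\om)$ itself shows the limit lies in $L^s(\om)$ (the $L^s(\om)$-norm is dominated by a multiple of $\vartheta_{\om,s}(\cdot)$ only on bounded $\om$, so here one instead notes $(f_k)$ is Cauchy in $L^s(\om)$ because... actually one uses that on $\mathcal K_s'$ the functions are by definition in $L^s(\om)$ and $\|f_k-f_j\|_{L^s(\om)}$ is controlled: split $\om$ into the unit-scale cover and use $\|g\|_{L^s(B(x,1)\cap\om)}^s\le 1\cdot\vartheta_{\om,s}(g,1)^s$ on each ball, but since there are infinitely many balls one instead observes directly that $f_k\to f$ in $L^s_\loc$ and $\sup_k\|f_k\|_{L^s(\om)}<\infty$ forces $f\in L^s(\om)$ by Fatou). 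Collecting these, $f$ belongs to the space in question and $f_k\to f$ in norm, so each space is complete; being also a normed vector space, it is a Banach space. $\square$
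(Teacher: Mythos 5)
Your argument for $L^s_{\loc,r}(\om)$, $\widehat{\mathcal K}_s(\om)$ and $\mathcal K_s(\om)$ is correct, and it takes a more hands-on route than the paper's: the paper proves completeness of $L^s_{\loc,r}(\om)$ by identifying it with a direct sum of the spaces $L^s(Q\cap\om)$ over a dyadic grid, and then obtains $\widehat{\mathcal K}_s(\om)$ by combining the continuous embedding $\|f\|_{L^s_{\loc,r}}\le r^{(n-2)/s}\vartheta_{\om,s}(f,r)$ with Fatou and the abstract criterion that a normed space continuously embedded in a Banach space with closed unit ball is itself Banach; $\mathcal K_s(\om)$ is then dismissed as a closed subspace. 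Your direct Fatou-plus-$\ve/2$ argument reaches the same conclusions and has the merit of actually verifying the closedness of the condition $\lim_{\rho\to0}\vartheta_{\om,s}(f,\rho)=0$, which the paper leaves implicit; the point you need, and correctly invoke, is that $\vartheta_{\om,s}(\cdot,\rho)$ is nondecreasing in $\rho$, so convergence in the fixed-scale norm controls all smaller scales uniformly.

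The treatment of $\mathcal K_s'(\om)$, however, has a genuine gap, precisely at the step where you hesitate. The assertion $\sup_k\|f_k\|_{L^s(\om)}<\infty$ does not follow from Cauchyness in the norm $\vartheta_{\om,s}(\cdot)$: when $\om$ is unbounded the only available comparison is $\|f\|^s_{L^s(B_\rho(x)\cap\om)}\le\rho^{n-2}\vartheta_{\om,s}(f,\rho)^s$, whose constant blows up as $\rho\to\infty$, so $\vartheta_{\om,s}(\cdot)$ does not dominate any multiple of the global $L^s$ norm. Concretely, take $s=1$, $\om=\R^n$, $2<\alpha\le n$, and $f(y)=(1+|y|)^{-\alpha}$: the truncations $f_k=f\chi_{B(0,k)}$ all belong to $\mathcal K_1'(\R^n)$ and form a Cauchy sequence, since a split into the regions $|y-x|\ge|y|/2$ and $|y-x|<|y|/2$ gives $\vartheta_{\R^n,1}(f-f_k)\lesssim k^{2-\alpha}\to0$, yet $\|f_k\|_{L^1}\to\infty$ and the a.e.\ limit $f$ is not in $L^1(\R^n)$ while satisfying $\vartheta_{\R^n,1}(f)<\infty$ and $\vartheta_{\R^n,1}(f,\rho)\lesssim\rho^2\to0$. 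So the clause $f\in L^s(\om)$ in the definition of $\mathcal K_s'(\om)$ is simply not closed under $\vartheta_{\om,s}$-limits on unbounded domains, and no patch along the lines you sketch can recover it; the statement is safe when $\om$ is bounded (where $\|f\|_{L^s(\om)}\le(2\diam\om)^{(n-2)/s}\vartheta_{\om,s}(f,2\diam\om)$ and indeed $\mathcal K_s=\mathcal K_s'$), or if the clause $f\in L^s(\om)$ is weakened to $f\in L^s_{\loc}(\om)$. You should be aware that the paper's own proof passes over the same point: for $\mathcal K_s'$ it only checks via Fatou that the limit has finite $\vartheta_{\om,s}$-norm, not that it is globally $L^s$. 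Everything else in your write-up is sound.
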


\begin{proof}
To simplify  our notation, for fixed $r>0$, we will denote 
$$X_1=L^1_{\loc, r}(\om), \quad X_2=\widehat{\mathcal{K}}(\om),  \quad X_3=\mathcal{K}(\om),\quad \textup{and} \quad X_4=\mathcal{K}'(\om).
$$
We first prove that $X_1$ is complete.  Indeed, there exists $k \in \Z$ such that $2^k< r \leq 2^{k+1}$, and let $Q \in \DD_k(\R^n)$ be the dyadic grid in $\R^n$ that consists of cubes of sidelength $2^k$ and notice that, by easy geometric considerations,
$$
\|f\|_{L^1_{\DD_k}(\om)}:=\sup_{Q \in \DD_k} \|f\|_{L^1(Q \cap \om)} \approx_n \| f\|_{X_1}.
$$
In addition, $L^1_{\DD_k}(\om)$ is the direct sum $\bigoplus_{Q \in \DD_k} X_Q$ of the Banach spaces $X_Q=L^s(Q \cap \om)$ with norm $\sup_Q \| \cdot\|_{L^1(Q \cap \om)}$. In this case, the completeness is preserved and thus,  $L^1_{\DD_k}(\om)$ is a Banach space as well, which readily implies that $X_1$ is a Banach space.

Now, we will  show that $X_2$ is a Banach space. Let 
$$
B_{X_2}=\{ f \in X_2: \|f\|_{X_2} \leq 1\}
$$
be the closed unit ball in $X_2$, and let $f_k$ be a Cauchy sequence in $X_2$. It is easy to see that $\| f \|_{X_1} \leq r^\frac{n-2}{s}\vartheta_{\om}(f,r)= r^{n-2} \|f\|_{X_2} $, and  by the completeness of $X_1$, there exists $f \in X_1$ such that $f_k \to f$ in $X_1$. By Fatou's lemma,
$$
\vartheta_{\om}(f, r) \leq  \liminf_{k \to \infty} \vartheta_{\om}(f_k, r) \leq 1,
$$
and so $f \in B_{X_2}$. Therefore, since $X_1$ is a Banach  space and the embedding of $X_2$ in $X_1$ is continuous, by \cite[Proposition 14.2.3]{Gar}, we deduce that $X_2$ is Banach as well.  It is easy to see that $X_3$ is a closed subspace of $X_2$, and thus, Banach, while, if we replace $X_1$ by $X_2$ and $X_2$ by $X_4$ in the  argument  above, we infer that $X_4$ is Banach space as well. 
\end{proof}

\vv

\subsection{Carleson-Dini Stummel-Kato class}

For any $\epsilon>0$,  we define
\begin{equation}\label{eq:theta'}
\vartheta_{\epsilon, \om}(f,r)=\vartheta_\om(f,r)+\epsilon\, r,
\end{equation}
which is strictly  increasing, continuous, and satisfies the same properties as $\vartheta_\om(f,r)$. Therefore, it is invertible  with continuous and strictly increasing inverse ${\vartheta}^{-1}_{\epsilon,\om}(f, r)$. It is clear that $\vartheta_{\epsilon,\om}(f,\cdot)$ also satisfies the doubling condition \eqref{eq:kato-doubling} with constant $\max(C_{db},2)$. 

\begin{definition}\label{def:dini-kato}
If $\alpha >0$, we say that a function $f \in \widehat{\mathcal{K}}(\om)$ is in the {\it Careslon-Dini Stummel-Kato class} $\mathcal{K}_{\textup{Dini},\alpha}(\om)$  if it satisfies
\begin{equation}\label{eq:dini-kato}
\int_0^{r} \vartheta_{\om}(f, t)^\alpha\,\frac{dt}{t} \leq C\,\vartheta_{\om}(f,r)^\alpha,
\end{equation}
for every $r>0$.  and we denote
\begin{align}\label{eq:Kato-dini}
C_{f,\om,\alpha}&:=\sup_{r>0}\frac{1}{ \vartheta_{\om}(f,r)^\alpha}  \int_0^{r} \vartheta_{\om}(f, t)^\alpha\,\frac{dt}{t}.
\end{align}
 If $\alpha=1$ then we write that $f \in \khalf(\om)$ and $C_{f,\om}:=C_{f,\om,1}$.
\end{definition}

\vv

\begin{example} \label{ex:dini}
Let $e_j = (\delta_{1j},\dots, \delta_{nj} )$, for $j \in \{1,\dots,n\}$ be the orthonormal basis of $\R^n$ and, for any $k\in\{1,2, \dots, 2^{n}\}$,  let us denote  $\vec \lambda_k=(\lambda^1_k, \dots, \lambda^n_k)\neq 0$ to be  the distinct vectors such that $\lambda^i_k=0$ or $1$ for $ i \in \{1,\dots,n\}$.   For $k\in\{1, 2,\dots,2^{n}\}$ and $j \in \bN$,  define the distinct points in $\R^n$ by
$$
x_k := \sum_{i=1}^n \lambda^i_k \,e_i \quad \textup{and}\quad y^j_k:= 2^j x_k.
$$
Define now
$$
f(x)={\bf 1}_{B(0,\frac{1}{8})}(x)+\sum_{j=1}^\infty \sum_{k=1}^{2^{n}} {\bf 1}_{B(y^j_k,2^{j-3} )}(x).
$$
Note that the balls $B(y^j_k,2^{j-3} )$ are mutually disjoint and thus,  $|f(x)| \leq 1$ for any $x \in \R^n$.  So,  for fixed $r>0$ and  every $x \in \R^n$,
\begin{align*}
\int_{B(x,r)} \frac{|f(y)|}{|x-y|^{n-2}}dy &\leq \int_{B(x,r)} \frac{1}{|x-y|^{n-2}}dy =c_nr^2,
\end{align*}
which implies that $\vartheta_{\R^n}(f, r)\lec r^2$.  For the reverse inequality,  if $r\geq 1$ remark that there exists a positive integer $j_0$ such that $2^{j_0-1} \leq r <2^{j_0}$.  Then if we set $x_1=y_1^{j_0+3}$,
$$
\vartheta_{\R^n}(f, r) \geq \int_{B(x_1, r)} \frac{|f(y)|}{|x_1-y|^{n-2}}dy \geq \int_{B(x_1,r)} \frac{1}{|x_1-y|^{n-2}}dy=c_n\,r^2.
$$
For $r < 1$, 
$$
\vartheta_{\R^n}(f, r) \geq \int_{B(0,  r/8)} \frac{|f(y)|}{|y|^{n-2}}dy= \frac{c_n}{64} \,r^2.
$$
Therefore,  $\vartheta_{\R^n}(f, r) \approx r^2$ for any $r>0$, and so, for any $\alpha >0$, it holds
$$
\int_0^r \vartheta_{\R^n}(f,t)^\alpha \frac{dt}{t} \approx\int_0^r \ t^{2\alpha-1} \,dt= r^{2\alpha} \approx \vartheta_{\R^n}(f,r)^\alpha,
$$
which implies that $f \in \mathcal{K}_{\textup{Dini},\alpha}(\R^n)$.  If $\R^n_+:=\{(x_1, \dots, x_n) \in \R^n: x_1>0\}$,  by  similar arguments,  we can show that $f \in  \mathcal{K}_{\textup{Dini},\alpha}(\R^n_+)$ for any $\alpha>0$,
\end{example}

\vv
The next lemma is easy to prove by a simple change of variables and we leave the routine details to the interested reader.
\begin{lemma}\label{lem:changevariable}
Let $\om \subset \R^n$ be an open set and $f \in \khalf(\om)$.  For $\rho>0$,  set $f_\rho(x)=\rho\, f(\rho\,x)$ for any $x \in D_\rho:= \rho^{-1}\om$. Then the following hold:
\begin{itemize}
\item[(i)] If $\lambda>0$, then $\vartheta_{\om}(\lambda\,f,t)=\lambda \vartheta_{\om}(f, t)$,  for any $t>0$ and $C_{\lambda\, f,\om} = C_{f,\om}$.
\item[(ii)] $ \vartheta_{D_\rho}(f_\rho,t)=\vartheta_{\om}(f, \rho\,t)$, for any $t>0$. 
\item[(iii)]$C_{f_\rho,D_\rho} = C_{f,\om}$.
\end{itemize}
Moreover, if $g \in \khalf(\om)$,  and we set $g_\rho(x)=\rho\, g(\rho\,x)$, $V=|f|+|g|$,  and $V_\rho=|f_\rho|+|g_\rho|$,  then $V \in \khalf(\om)$ and
$$
C_{V_\rho,D_\rho} = C_{V,\om} \leq 2C_{f,\om}+2C_{g,\om}.
$$
\end{lemma}

\vvv

\subsection{Sobolev embedding and Interpolation inequalities}

The following considerations can be found in \cite[p.416]{Ku} and are based on an inequality proved by Simon in \cite[p.455]{Sim}. Assume that $f \in \mathcal{K}(\om)$ and let
\begin{equation}\label{eq:molifier}
\psi \in C^\infty_c(\R^n),\quad 0 \leq \psi \leq 1,\quad \psi = 0\,\, \textup{in}\,\, \R^n \setminus B(0,1), \quad\textup{and} \quad \int \psi =1.
\end{equation}
For $\delta>0$, set $\psi_\delta(x)= \delta^{-n} \psi(\delta^{-1} x)$ and define 
\begin{equation}\label{def:mol-f-delta}
f_\delta = f \ast \psi_\delta.
\end{equation}
 Then, if $G \subset \om$, $r>0$ and $0<\delta \leq r$, we have
\begin{align}\label{eq:Kato-molifier}
\vartheta_G((f {\bf 1}_G)_\delta, r) &\leq \vartheta((f {\bf 1}_G)_\delta, r)  \leq  \vartheta(f {\bf 1}_G, r) +  \vartheta(f {\bf 1}_G, \delta)  \\
&\leq 2 \vartheta(f {\bf 1}_G, r) \leq 2 \vartheta(f, r). \notag
\end{align}
Thus, for a ball $B_r$ so that $B_{2r} \subset \om$ and $0<\delta<r$, we also obtain 
\begin{equation}\label{eq:Kato-molifier-balls}
\vartheta_{B_r}(f_\delta, r) \leq \vartheta_{B_r}((f {\bf 1}_{B_{2r}})_\delta, r) \leq 2 \vartheta_{B_{2r}}(f, r).
\end{equation}
Moreover, if $|g|^2 \in \mathcal{K}(\om)$,
\begin{equation}\label{eq:Kato-molifier-b2}
\vartheta(|g_\delta|^2, r) \leq \vartheta(|g|^2, r) +  \vartheta(|g|^2, \delta) \leq 2 \vartheta( |g|^2, r).
\end{equation}
It is useful to remark that if 
\begin{equation}\label{def:Omega-delta}
 \om_\delta= \{x \in \om: \dist(x, \om^c)>\delta\} \cap B(0, \delta^{-1}),
\end{equation}
  then $\vartheta((f {\bf 1}_{\om_\delta})_\delta, r)=\vartheta_\om((f {\bf 1}_{\om_\delta})_\delta, r)$.
  
  \vv
  
  In the next lemma we use an argument from \cite{ViZa}.
 \begin{lemma}\label{lem:kato-approx-0} 
If $f \in \mathcal{K}(\om)$ and $\rho>0$, it holds  that $\vartheta_\om( (f {\bf 1}_{\om_\delta})_\delta- f), \rho) \to 0$, as $\delta \to 0$. If $f \in \mathcal{K}'(\om)$, then  $\vartheta_\om( (f {\bf 1}_{\om_\delta})_\delta- f) \to 0$, as $\delta \to 0$.
  \end{lemma}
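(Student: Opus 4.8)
The plan is to prove the two statements separately but with essentially the same mechanism: controlling $\vartheta_\om\big((f\chi_{\om_\delta})_\delta - f,\rho\big)$ by splitting $f$ into a ``good'' part where the convolution is harmless and a ``small tail'' that is controlled using the defining property of the Stummel--Kato class. First I would fix $\rho>0$ and $\varepsilon>0$. Since $f\in\mathcal{K}(\om)$, we have $\lim_{t\to0}\vartheta_\om(f,t)=0$, so there is $r_0>0$ with $\vartheta_\om(f,t)<\varepsilon$ for all $t\le r_0$; by the doubling property \eqref{eq:kato-doubling} we may as well take $r_0\le\rho$. The idea is then to decompose $f = f\chi_{B(0,N)\cap\{|f|\le M\}} + f\chi_{E_{M,N}}$ where $E_{M,N}$ is the remaining (unbounded or large-value) set, choosing $M,N$ so that $f\chi_{E_{M,N}}$ has $\vartheta_\om$-norm at small scale below $\varepsilon$ — this uses that the single kernel integral $\int_{B_{r_0}(x)}\frac{|f(y)|}{|x-y|^{n-2}}\,dy$ is uniformly small after removing a bounded high-concentration set, which follows from absolute continuity of the integral together with the definition of $\mathcal{K}(\om)$. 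On the bounded bounded-value piece $g:=f\chi_{B(0,N)\cap\{|f|\le M\}}$, the mollification $(g\chi_{\om_\delta})_\delta$ converges to $g$ in $L^1$ (indeed in every $L^p$, $p<\infty$) as $\delta\to0$, and on a fixed ball $B_{r_0}(x)$ the kernel $|x-y|^{-(n-2)}$ is in $L^{p'}$ locally for $p'$ close to $1$ from above when $p$ is large; so by Hölder $\vartheta_\om\big((g\chi_{\om_\delta})_\delta - g,r_0\big)\to0$, and a covering/uniformity-in-$x$ check (the $L^1$ convergence is global since $g$ has compact support) makes this uniform in the center $x$.

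Next I would assemble the estimate: for $\delta$ small,
\[
\vartheta_\om\big((f\chi_{\om_\delta})_\delta - f,\rho\big) \le \vartheta_\om\big((g\chi_{\om_\delta})_\delta - g,\rho\big) + \vartheta_\om\big(((f-g)\chi_{\om_\delta})_\delta - (f-g),\rho\big),
\]
where in the second term I further bound, using \eqref{eq:Kato-molifier} applied with the set $G=\om_\delta$ (so that $(f-g)\chi_{\om_\delta}$ restricted appropriately), $\vartheta\big(((f-g)\chi_{\om_\delta})_\delta,\rho\big)\le 2\,\vartheta\big((f-g)\chi_{\om_\delta},\rho\big)\le 2\,\vartheta_\om(f-g,\rho)$, and then replace $\rho$ by $r_0$ — here I must be careful that $\vartheta_\om(\cdot,\rho)$ and $\vartheta_\om(\cdot,r_0)$ differ only by a bounded doubling factor via \eqref{eq:kato-doubling}, so smallness at scale $r_0$ transfers to scale $\rho$ up to a harmless dimensional constant. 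Choosing $M,N$ first to make $\vartheta_\om(f-g,r_0)\lesssim\varepsilon$, then $\delta$ small to make the mollification error of $g$ less than $\varepsilon$, gives the conclusion for $f\in\mathcal{K}(\om)$.

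For the $\mathcal{K}'(\om)$ statement the argument is identical except that all occurrences of $\vartheta_\om(\cdot,r)$ are replaced by the full $\vartheta_\om(\cdot)=\sup_{r>0}\vartheta_\om(\cdot,r)$: now $f\in L^1(\om)$ so the truncation $g=f\chi_{B(0,N)\cap\{|f|\le M\}}$ converges to $f$ in $L^1(\om)$ as $M,N\to\infty$, and $\lim_{r\to0}\vartheta_\om(f,r)=0$ together with $\sup_r\vartheta_\om(f,r)<\infty$ lets one split the supremum over $r$ into $r\le r_0$ (handled by the small-scale decay) and $r>r_0$ (where the kernel $|x-y|^{-(n-2)}$ over $B_r(x)$ is not integrable globally, so one instead uses the global $L^1$ smallness of $f-g$ together with the uniform bound $\sup_{x}\int_{B_r(x)\setminus B_{r_0}(x)}\frac{dy}{|x-y|^{n-2}}\cdot(\text{sup of }|f-g|)$ — more precisely one uses that for $|x-y|\ge r_0$ the kernel is bounded by $r_0^{-(n-2)}$, so $\vartheta_\om(h,r)\le\vartheta_\om(h,r_0)+r_0^{-(n-2)}\|h\|_{L^1(\om)}$ for every $r$). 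The main obstacle, and the place requiring the most care, is establishing the uniformity in the center $x\in\R^n$ of the mollification error for the good piece $g$: one wants $\sup_x \vartheta$-type quantities to go to zero, and since $g$ has compact support one can reduce to a fixed large ball and use dominated convergence, but the interplay with the cutoff $\chi_{\om_\delta}$ — which removes a $\delta$-neighborhood of $\partial\om$ and everything outside $B(0,\delta^{-1})$ — must be checked to not interfere, which is exactly why the problem is phrased with $(f\chi_{\om_\delta})_\delta$ rather than the naive mollification, and why \eqref{eq:Kato-molifier} and the remark following \eqref{def:Omega-delta} are recorded just beforehand.
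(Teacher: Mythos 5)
Your decomposition $f=g+(f-g)$ with $g=f\chi_{B(0,N)\cap\{|f|\le M\}}$ has a genuine gap in the first case, $f\in\mathcal{K}(\om)$: the step ``choose $M,N$ first to make $\vartheta_\om(f-g,r_0)\lesssim\ve$'' is false in general. The quantity $\vartheta_\om(\cdot,r_0)$ is a supremum over \emph{all} centers $x\in\R^n$, and membership in $\mathcal{K}(\om)$ gives no decay at spatial infinity and no uniform integrability. Concretely, take $\om=\R^n$ and $f=\sum_k \chi_{B(x_k,1)}$ with $|x_k|\to\infty$ well separated; then $\vartheta(f,r)\lesssim r^2$ so $f\in\mathcal{K}(\R^n)$, yet $\vartheta\big(f\chi_{\R^n\setminus B(0,N)},r_0\big)=\vartheta(\chi_{B(0,1)},r_0)>0$ for every $N$. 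The height truncation fails too: thin annuli $A_k$ of unit radius and thickness $\epsilon_k\to0$ around $x_k\to\infty$, with $f=\epsilon_k^{-1}$ on $A_k$, give $f\in\mathcal{K}(\R^n)$ while $\vartheta(f\chi_{\{|f|>M\}},r_0)$ stays bounded below for all $M$. A second, related error is the claim that smallness at scale $r_0$ transfers to scale $\rho$ ``up to a harmless dimensional constant'' via \eqref{eq:kato-doubling}: the transfer costs $C_{\textup{db}}^{\log_2(\rho/r_0)}\approx(\rho/r_0)^{C}$, which blows up precisely because $r_0$ is chosen small depending on $\ve$. The only way to pass from $r_0$ to $\rho$ is the annulus bound $\vartheta_\om(h,\rho)\le\vartheta_\om(h,r_0)+r_0^{2-n}\sup_x\|h\|_{L^1(B_\rho(x)\cap\om)}$, which again requires a \emph{local $L^1$} estimate uniform in $x$, not a Kato estimate at scale $r_0$.

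Your second half ($f\in\mathcal{K}'(\om)$) is essentially sound: there $f\in L^1(\om)$, so $\|f-g\|_{L^1(\om)}\to0$ as $M,N\to\infty$, and the bound $\vartheta_\om(h,r)\le\vartheta_\om(h,r_0)+r_0^{2-n}\|h\|_{L^1(\om)}$ that you write at the end closes the argument (this is also the content of the absolute-continuity Lemma \ref{lem:abs.cont. norms}). But the paper's proof of the first statement avoids truncating $f$ altogether: it splits scales, controls $r\le r_0$ by \eqref{eq:Kato-molifier} (which bounds $\vartheta_\om((f\chi_{\om_\delta})_\delta,r_0)$ by $2\vartheta_\om(f,r_0)$ for the \emph{full} $f$), and reduces the range $r_0<r\le\rho$ to $r_0^{2-n}\sup_x\|(f\chi_{\om_\delta})_\delta-f\|_{L^1(B(x,\rho)\cap\om)}$, which is then handled by $L^1_{\loc}$ convergence of mollifications. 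If you want to repair your argument for $\mathcal{K}(\om)$, you must keep $f$ intact and control that local $L^1$ error directly, rather than trying to make the tail small in the Kato norm.
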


  \begin{proof}
 Fix $\rho>0$ and note that by \eqref{eq:Kato-lim}, for  $\ve>0$, we can find  $r_0<\rho$, so that $\vartheta_\om(f ,r_0)< \frac{\ve}{6}$. Note that by \eqref{eq:Kato-molifier},  for $0<\delta<r_0$,  we have that $ \vartheta_\om( (f {\bf 1}_{\om_\delta})_\delta- f, r_0) \leq 3  \vartheta_\om( f, r_0)$.  Thus,
  \begin{align*}
\vartheta_\om( (f {\bf 1}_{\om_\delta})_\delta- f, \rho) &\leq \vartheta_\om( (f {\bf 1}_{\om_\delta})_\delta - f, r_0) \\ 
&+ \sup_{x \in \R^n} \int_{(B(x,r) \setminus B(x,r_0)) \cap \om} \frac{|(f {\bf 1}_{\om_\delta})_\delta (y) -f(y)|}{|x-y|^{n-2}} \,dy \\
 &\leq \ve/2+{r_0^{2-n}} \sup_{x \in \R^n} \int_{B(x,\rho) \cap \om} |(f {\bf 1}_{\om_\delta})_\delta (y) -f(y)|\,dy\\
 &=:\ve/2+{r_0^{2-n}} I_\rho.
  \end{align*}
  As $ \vartheta_\om( (f {\bf 1}_{\om_\delta})_\delta- f, \rho) \leq 3  \vartheta_\om( f, \rho)<\infty$, for $0<\delta<\rho$,  there exists $x_0\in \R^n$ such that 
  $$
  I_\rho \leq 2  \int_{B(x_0,\rho) \cap \om} |(f {\bf 1}_{\om_\delta})_\delta (y) -f(y)|\,dy.
  $$
Now, using that $ (f {\bf 1}_{\om_\delta})_\delta \to f$ in $L^1_\loc(\om)$, there exists  $\delta>0$ such that  $\delta<\min(r_0,\rho)$ and 
$$
\int_{B(x_0,\rho) \cap \om} |(f {\bf 1}_{\om_\delta})_\delta (y) -f(y)|\,dy< 4^{-1}{r_0^{n-2}} \ve.
$$
Collecting all the estimates we obtain that $\vartheta_\om( (f {\bf 1}_{\om_\delta})_\delta- f, \rho)  <\ve$. The proof  for $f \in \mathcal{K}'(\om)$ is the same.
 \end{proof}

\vv

\begin{lemma}\label{lem:Kato-emb-loc}
If $f \in \mathcal{K}(B_r)$, there exists a constant $c_1>0$ depending only on $n$ such that for any $r>0$ and  $u \in W^{1,2}(B_r)$, it holds 
\begin{equation}\label{eq:Kato-emb-loc}
\int_{B_r} |u|^2 f \leq c_1\,\vartheta_{B_r}(f, r) \left( \|\nabla u\|^2_{ L^{2}(B_r)}+ \frac{1}{r^2} \| u\|_{L^2(B_r)}^2 \right).
\end{equation}
\end{lemma}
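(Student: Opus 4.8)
The plan is to reduce to smooth $u$ by density of $C^\infty(\overline{B_r})$ in $W^{1,2}(B_r)$ and Fatou's lemma applied to the left side, and then to split $u=u_{B_r}+(u-u_{B_r})$, where $u_{B_r}$ is the average of $u$ on $B_r$. The constant part is immediate: bounding $|x-y|^{2-n}$ from below by $r^{2-n}$ on $B_r$ (with $x$ the centre) gives $\int_{B_r}|f|\le r^{n-2}\vartheta_{B_r}(f,r)$, which together with $|u_{B_r}|^2\lesssim_n r^{-n}\|u\|_{L^2(B_r)}^2$ produces the term $c_n\,\vartheta_{B_r}(f,r)\,r^{-2}\|u\|_{L^2(B_r)}^2$. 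For the oscillation I would invoke the classical pointwise Sobolev representation on the convex set $B_r$ (\cite[Lemma~7.16]{GiTr}):
\[
|u(x)-u_{B_r}|\le c_n\int_{B_r}\frac{|\nabla u(y)|}{|x-y|^{n-1}}\,dy=:c_n\,I_1g(x)\quad\text{for a.e. }x\in B_r,\quad g:=|\nabla u|\chi_{B_r},
\]
which reduces the whole lemma to the trace inequality
\[
\int_{B_r}(I_1g)^2\,d\mu\le C_n\,\vartheta_{B_r}(f,r)\,\|g\|_{L^2(B_r)}^2,\qquad d\mu:=|f|\chi_{B_r}\,dx .
\]

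For the trace inequality, the decisive observation is that the Stummel--Kato quantity is in fact an $L^\infty$ bound for the Newtonian potential of $\mu$:
\[
\|I_2\mu\|_{L^\infty(\mathbb{R}^n)}:=\sup_{x\in\mathbb{R}^n}\int_{B_r}\frac{|f(y)|}{|x-y|^{n-2}}\,dy\le C_n\,\vartheta_{B_r}(f,r).
\]
(For $x$ within distance $2r$ of $B_r$ this follows from $B_r\subset B_{3r}(x)$ and the doubling property \eqref{eq:kato-doubling}; for the remaining $x$, from $|x-y|>r$ on $B_r$ and $\|f\|_{L^1(B_r)}\le r^{n-2}\vartheta_{B_r}(f,r)$.) I would then run a $TT^*$ argument. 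Regard $Tg=I_1g$ as an operator $L^2(dx)\to L^2(d\mu)$; its adjoint is $T^*\phi(y)=\int|x-y|^{1-n}\phi(x)\,d\mu(x)$, and the Riesz semigroup identity $\int_{\mathbb{R}^n}|x-y|^{1-n}|y-z|^{1-n}\,dy=c_n|x-z|^{2-n}$ gives $TT^*=c_n\,I_2^\mu$, where $I_2^\mu\phi(x)=\int|x-y|^{2-n}\phi(y)\,d\mu(y)$ on $L^2(d\mu)$. Since $I_2^\mu$ has a nonnegative symmetric kernel, Schur's test yields $\|I_2^\mu\|_{L^2(d\mu)\to L^2(d\mu)}\le\|I_2\mu\|_{L^\infty(\mathbb{R}^n)}$, hence $\|T\|^2=\|TT^*\|\le c_nC_n\,\vartheta_{B_r}(f,r)$, which is exactly the trace inequality. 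A routine truncation of the kernel $|x-y|^{1-n}$ followed by monotone convergence legitimises the $TT^*$ manipulations; collecting the two contributions gives the claim with $c_1$ depending only on $n$.

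The only genuinely non-routine ingredient is the trace inequality, and what makes it go through is precisely the $L^\infty$ bound on $I_2\mu$: the weaker ``$(n-2)$-dimensional'' growth $\mu(B_\rho(x))\lesssim\vartheta_{B_r}(f,r)\,\rho^{n-2}$ (which also follows from $\vartheta_{B_r}(f,r)<\infty$) would not suffice here, since $W^{1,2}$ admits no codimension-$2$ trace in general, whereas the $L^\infty$ bound is exactly what the Schur test consumes after the $TT^*$ reduction. If one prefers to avoid operator language, the same estimate is reached by expanding $(I_1g)^2$ and integrating against $\mu$, using the self-adjointness $\int (I_1h)\,\psi\,d\mu=\int h\,I_1(\psi\,d\mu)$ together with the semigroup identity to pass from $I_1$ to $I_2$, and then two applications of Cauchy--Schwarz with the bound $\|I_2\mu\|_{L^\infty}\le C_n\,\vartheta_{B_r}(f,r)$.
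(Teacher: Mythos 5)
Your proof is correct. Note first that the paper does not actually prove this lemma: it defers entirely to display (12) in the proof of Lemma 2.1 of \cite{Ku} (with a pointer to \cite[Lemma 7.3]{Sch}), so you are supplying an argument the paper only cites. What you write is a sound, self-contained version of the classical Stummel--Kato embedding proof, and its skeleton matches the cited one: split off the average (handled by $\int_{B_r}|f|\le r^{n-2}\vartheta_{B_r}(f,r)$ and Cauchy--Schwarz), control the oscillation by the subrepresentation formula $|u(x)-u_{B_r}|\lesssim_n I_1(|\nabla u|\chi_{B_r})(x)$ on the convex set $B_r$, and reduce to a trace inequality for $I_1$ into $L^2(|f|\chi_{B_r}\,dx)$. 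Your key observation --- that $\vartheta_{B_r}(f,r)$ controls $\sup_x\int_{B_r}|f(y)||x-y|^{2-n}\,dy$ via the doubling property \eqref{eq:kato-doubling} near $B_r$ and the trivial bound far from it --- is exactly what makes the argument close, and the $TT^*$ identity $TT^*=c_nI_2^\mu$ via the Riesz composition formula (valid since $n\ge3$) together with the Schur test is a clean packaging of the bilinear estimate that Kurata and Schechter carry out by hand with two applications of Cauchy--Schwarz; your truncation remark adequately justifies the operator manipulations, and all constants are dimensional. The only blemishes are cosmetic: $\dist(x,B_r)\le 2r$ gives $B_r\subset B_{4r}(x)$ rather than $B_{3r}(x)$ (irrelevant after doubling), and the reduction to smooth $u$ is not even needed since \cite[Lemma 7.16]{GiTr} already applies to $W^{1,1}(B_r)$.
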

\begin{proof}
This inequality can be found in the proof of Lemma 2.1 in \cite{Ku} (display (12), p. 416). It is stated with slightly different assumptions but an inspection of the proof reveals that \eqref{eq:Kato-emb-loc} is also true. For a similar inequality see  Lemma 7.3 in \cite{Sch}.
\end{proof}

\vv

Note that if we set $f=f_\delta$ in \eqref{eq:Kato-emb-loc} and use \eqref{eq:Kato-molifier-balls}, we can see that for $0<\delta<r$, 
\begin{equation}\label{eq:Kato-emb-loc-molifier}
\int_{B_r} |u|^2 f_\delta \leq 2 c_1\,\vartheta_{B_{2r}}(f, r) \left( \|\nabla u\|^2_{ L^{2}(B_r)}+ \frac{1}{r^2} \| u\|_{L^2(B_r)}^2 \right),
\end{equation}
where $c_1$ is independent of $\delta$.
\vv


\begin{lemma}\label{lem:Kato-emb-glob}
If $f \in \mathcal{K}(\R^n)$, then, there exists a constant $c_2>0$ depending only on $n$ such that for any $\ve>0$ and  $u \in W^{1,2}(\R^n)$, it holds 
\begin{equation}\label{eq:Kato-emb-glob}
\int_{\R^n} |u|^2 f \leq \ve \, \|\nabla u\|^2_{ L^{2}(\R^n)}+ \frac{\ve}{{\vartheta}^{-1}_{\epsilon,\R^n}(f, c_2^{-1} \ve)^2}\, \| u\|_{L^2(\R^n)}^2 .
\end{equation}
\end{lemma}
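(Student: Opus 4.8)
The strategy is to localize the estimate: I would apply the local embedding of Lemma \ref{lem:Kato-emb-loc} on a grid of balls of a common radius $r$ (to be fixed at the very end as a function of $\ve$), and then sum using the bounded overlap of the grid. Since the right-hand side of \eqref{eq:Kato-emb-loc} only involves $|f|$, I may as well replace $f$ by $|f|$ throughout (note $\int|u|^2 f\le\int|u|^2|f|$ and $\vartheta_{\R^n}(|f|,\cdot)=\vartheta_{\R^n}(f,\cdot)$), so assume $f\ge0$.

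\emph{Localization and summation.} Fix $r>0$ and pick a lattice of points $\{x_j\}_{j}\subset (r n^{-1/2})\Z^n$, so that the half-open cubes of side $r n^{-1/2}$ centered at the $x_j$ tile $\R^n$, each such cube $Q_j$ satisfies $Q_j\subset B_r(x_j)$, and the balls $\{B_r(x_j)\}_j$ have overlap bounded by a dimensional constant $N=N(n)$. By Lemma \ref{lem:Kato-emb-loc} applied on each $B_r(x_j)$ (and using $\vartheta_{B_r(x_j)}(f,r)=\vartheta(f\chi_{B_r(x_j)},r)\le\vartheta(f,r)=\vartheta_{\R^n}(f,r)$),
\[
\int_{B_r(x_j)}|u|^2 f\le c_1\,\vartheta_{\R^n}(f,r)\Big(\|\nabla u\|_{L^2(B_r(x_j))}^2+\tfrac1{r^2}\|u\|_{L^2(B_r(x_j))}^2\Big).
\]
Summing over $j$, bounding $\int_{\R^n}|u|^2 f=\sum_j\int_{Q_j}|u|^2 f\le\sum_j\int_{B_r(x_j)}|u|^2 f$ on the left, and using $\sum_j\chi_{B_r(x_j)}\le N$ on the right, one gets
\[
\int_{\R^n}|u|^2 f\le c_1 N\,\vartheta_{\R^n}(f,r)\Big(\|\nabla u\|_{L^2(\R^n)}^2+\tfrac1{r^2}\|u\|_{L^2(\R^n)}^2\Big).
\]
Set $c_2:=c_1 N$, a constant depending only on $n$.

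\emph{Choice of $r$.} Since $f\in\mathcal{K}(\R^n)$ we have $\vartheta_{\R^n}(f,r)<\infty$ for all $r$ and $\vartheta_{\R^n}(f,r)\to0$ as $r\to0$; hence $\vartheta'_{\R^n}(f,r)=\vartheta_{\R^n}(f,r)+\epsilon r$ from \eqref{eq:theta'} is continuous, strictly increasing, tends to $0$ as $r\to0^+$ and to $\infty$ as $r\to\infty$, so it is a bijection of $(0,\infty)$ onto itself with continuous increasing inverse ${\vartheta'}^{-1}_{\R^n}(f,\cdot)$. Given $\ve>0$, choose $r:={\vartheta'}^{-1}_{\R^n}(f,\,c_2^{-1}\ve)>0$, so that $\vartheta_{\R^n}(f,r)\le\vartheta'_{\R^n}(f,r)=c_2^{-1}\ve$. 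With this $r$, the coefficient $c_2\,\vartheta_{\R^n}(f,r)$ of $\|\nabla u\|_{L^2(\R^n)}^2$ is $\le\ve$, while the coefficient $c_2\,\vartheta_{\R^n}(f,r)/r^2$ of $\|u\|_{L^2(\R^n)}^2$ is $\le\ve/r^2=\ve/{\vartheta'}^{-1}_{\R^n}(f,c_2^{-1}\ve)^2$, which is exactly \eqref{eq:Kato-emb-glob}.

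\emph{Main obstacle.} The only point requiring care is that a genuine inverse of $\vartheta_{\R^n}(f,\cdot)$ need not exist — it may be merely nondecreasing, possibly flat or discontinuous — which is precisely why the regularized quantity $\vartheta'_{\R^n}(f,r)=\vartheta_{\R^n}(f,r)+\epsilon r$ of \eqref{eq:theta'} is used and why its inverse (rather than $\vartheta^{-1}$) appears in the statement; besides this, one must make sure the overlap constant in the covering is purely dimensional so that $c_2=c_2(n)$ only. The summation itself is routine.
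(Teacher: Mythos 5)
Your proof is correct and follows essentially the same route as the paper: cover $\R^n$ by a lattice of balls with dimensional overlap constant $N$, apply the local embedding of Lemma \ref{lem:Kato-emb-loc} on each ball, sum, and choose $r={\vartheta'}^{-1}_{\R^n}(f,c_2^{-1}\ve)$ with $c_2=Nc_1$. Your added care about replacing $f$ by $|f|$ and about why the regularized modulus $\vartheta'$ is needed for invertibility are sensible refinements of the same argument.
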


\begin{proof}
We cover $\R^n$ with balls $B(z_j, r)$, with center all the points $z_j$ so that $nz_j/r$ have integer coordinates. It is clear that each point $x \in \R^n$ is contained in at most $N$ balls $B(z_j, 2r)$, where $N$ is a positive constant depending only on the dimension $n$. Fix $\ve>0$ and choose $r>0$ small enough so that $\vartheta_{\epsilon,\R^n}(f,r)=(N c_1)^{-1}\ve$, where $c_1$ is the constant in \eqref{eq:Kato-emb-loc}. Thus, using $\vartheta_{B_r}(f, r)  \leq {\vartheta}_{\epsilon,\R^n}(f, r) $ and  \eqref{eq:Kato-emb-loc}, we have that 
\begin{align*}
\int_{\R^n} |u|^2 f \leq \sum_{j=1}^\infty \int_{B(z_j, r)} |u|^2 f  &\leq  \sum_{j=1}^\infty \frac{\ve}{N} \left( \int_{B(z_j, r)} |\nabla u|^2+\frac{1}{r^2}\int_{B(z_j, r)} | u|^2 \right)\\
& \leq \ve\int_{\R^n} |\nabla u|^2+ \frac{\ve}{r^2}\int_{\R^n} | u|^2,
\end{align*}
which, if we set $c_2=N c_1$, implies \eqref{eq:Kato-emb-glob}.  
\end{proof}

\vv

An immediate corollary of the latter theorem, which will be used in Section \ref{sec:local-est}, is the following:
\begin{corollary}\label{cor:Kato-emb-glob}
If $f \in \mathcal{K}(\om)$, then, there exists a constant $c_2>0$ depending only on $n$ such that for any $\ve>0$ and  $u \in W_0^{1,2}(\om)$, it holds 
\begin{equation}\label{eq:cor-Kato-emb-glob}
\int_{\om} |u|^2 f \leq \ve  \|\nabla u\|^2_{ L^{2}(\om)}+ \frac{\ve}{{\vartheta}^{-1}_{\epsilon,\om}(f, c_2^{-1} \ve)^2} \| u\|_{L^2(\om)}^2.
\end{equation}
\end{corollary}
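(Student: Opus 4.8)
The plan is to deduce Corollary~\ref{cor:Kato-emb-glob} from Lemma~\ref{lem:Kato-emb-glob} by a trivial extension-by-zero argument, so the only real work is bookkeeping about the two constants $\vartheta'$ and ${\vartheta'}^{-1}$ once we pass from $\R^n$ to $\om$. First I would take $u \in W^{1,2}_0(\om)$ and let $\tilde u$ denote its extension by zero to $\R^n$; since $W^{1,2}_0(\om)$ is the $W^{1,2}$-closure of $C^\infty_c(\om)$, we have $\tilde u \in W^{1,2}(\R^n)$ with $\|\nabla \tilde u\|_{L^2(\R^n)} = \|\nabla u\|_{L^2(\om)}$ and $\|\tilde u\|_{L^2(\R^n)} = \|u\|_{L^2(\om)}$ (first for test functions, then by density). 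Similarly, set $\tilde f = f\chi_\om$; by the very definition $\vartheta_{\om}(f,r) = \vartheta(f\chi_\om,r) = \vartheta_{\R^n}(\tilde f, r)$, and in particular $\tilde f \in \mathcal{K}(\R^n)$ since $f \in \mathcal{K}(\om)$, so Lemma~\ref{lem:Kato-emb-glob} applies to $\tilde u$ and $\tilde f$.

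Next I would simply rewrite the resulting inequality in terms of $\om$-quantities. We have
\[
\int_\om |u|^2 f = \int_{\R^n} |\tilde u|^2 \tilde f \leq \ve \|\nabla \tilde u\|_{L^2(\R^n)}^2 + \frac{\ve}{{\vartheta'}^{-1}_{\R^n}(\tilde f, c_2^{-1}\ve)^2}\|\tilde u\|_{L^2(\R^n)}^2,
\]
and replacing $\tilde u$ by $u$ and noting that ${\vartheta'}_{\R^n}(\tilde f, r) = \vartheta_{\R^n}(\tilde f, r) + \epsilon r = \vartheta_\om(f,r) + \epsilon r = {\vartheta'}_\om(f,r)$, hence ${\vartheta'}^{-1}_{\R^n}(\tilde f, \cdot) = {\vartheta'}^{-1}_\om(f,\cdot)$, gives exactly \eqref{eq:cor-Kato-emb-glob} with the same dimensional constant $c_2$. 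The only point deserving a line of comment is that $\vartheta'$ depends on the auxiliary parameter $\epsilon>0$ of \eqref{eq:theta'}, so one should fix the same $\epsilon$ on both sides; this is harmless since $\epsilon$ is an arbitrary small parameter we carry along, and the statement of the Corollary is understood with the same choice as in Lemma~\ref{lem:Kato-emb-glob}.

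I do not expect any serious obstacle here: the content is entirely in Lemma~\ref{lem:Kato-emb-glob} (which itself rests on the local estimate Lemma~\ref{lem:Kato-emb-loc} and a Besicovitch-type covering of $\R^n$ with bounded overlap), and the Corollary is a one-paragraph corollary as its name suggests. If I wanted to be maximally careful, the one spot to verify is the density step: a priori ${\vartheta'}^{-1}_\om(f, c_2^{-1}\ve)$ is a fixed positive number once $\ve$ and $\epsilon$ are fixed, so the inequality is linear-in-$\|u\|^2$-type and passes to the $W^{1,2}$-limit along $C^\infty_c(\om)\ni u_k \to u$ without any issue (both sides are continuous in the $W^{1,2}$-norm, the left side because $|u|^2 f$ is controlled by $\|u\|_{W^{1,2}}^2$ via \eqref{eq:Kato-emb-loc} applied locally, or simply because the right-hand side already bounds it uniformly). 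Hence the proof is: extend by zero, invoke Lemma~\ref{lem:Kato-emb-glob}, translate the constants, done.
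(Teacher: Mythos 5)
Your proof is correct and is exactly the argument the paper has in mind: the paper states this as an ``immediate corollary'' of Lemma \ref{lem:Kato-emb-glob} without writing a proof, and the extension-by-zero of $u\in W^{1,2}_0(\om)$ together with the identity $\vartheta_{\R^n}(f\chi_\om,r)=\vartheta_\om(f,r)$ (which is the definition of $\vartheta_\om$) is the intended one-paragraph deduction. Your remarks on the fixed auxiliary parameter $\epsilon$ in $\vartheta'$ and on the density step are accurate but not needed beyond what you wrote.
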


  \vv

\begin{remark}\label{rem:kato-emb-molifier}
In view of \eqref{eq:Kato-emb-loc-molifier}, it is easy to see that \eqref{eq:Kato-emb-glob} and \eqref{eq:cor-Kato-emb-glob} still hold if we replace $f$ by $f_\delta$ on the left hand-side and keep the same term on the right hand-side. 
\end{remark}

\vv

The remark  above, combined with \eqref{eq:Kato-molifier-b2} and (the proofs of) Lemmas \ref{lem:Kato-emb-loc} and \ref{lem:Kato-emb-glob}, and Corollary \ref{cor:Kato-emb-glob}, leads to the following corollary which will be crucial in an approximation argument we will need later.
\begin{corollary}\label{cor:Kato-emb-glob-moli}
If $|g|^2 \in \mathcal{K}(\om)$, then there exists a constant $c'_2>0$ depending only on $n$ such that for any $\ve>0$ and  $u \in W_0^{1,2}(\om)$ it holds 
\begin{equation}\label{eq:cor-Kato-emb-glob-moli}
\int_{\om} |u|^2 |(g {\bf 1}_{\om_\delta})_\delta|^2 \leq \ve  \|\nabla u\|^2_{ L^{2}(\om)}+ \frac{\ve}{{\vartheta}^{-1}_{\epsilon,\om}(|g|^2, {c'_2}^{-1}\ve)^2} \| u\|_{L^2(\om)}^2.
\end{equation}
\end{corollary}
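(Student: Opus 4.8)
The plan is to deduce \eqref{eq:cor-Kato-emb-glob-moli} from the already‐established mollified embedding of Remark \ref{rem:kato-emb-molifier}, applied to the function $f=|g|^2\in\mathcal K(\om)$, after a single pointwise domination. Write $h:=|(g\chi_{\om_\delta})_\delta|^2$ for the integrand on the left of \eqref{eq:cor-Kato-emb-glob-moli}. The only genuinely new ingredient is the pointwise inequality $h\le(|g|^2)_\delta$ on $\R^n$ (here, as in \eqref{def:mol-f-delta}, $|g|^2$ is extended by zero outside $\om$ before mollifying); once this is in hand, \eqref{eq:cor-Kato-emb-glob-moli} follows by monotonicity of the integral together with a direct citation.

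To obtain the pointwise bound, fix $x\in\R^n$ and use that $\psi_\delta\ge0$, $\int\psi_\delta=1$, so that by the Cauchy--Schwarz (equivalently, Jensen's) inequality
\[
|(g\chi_{\om_\delta})_\delta(x)|^2=\Big|\int g(y)\,\chi_{\om_\delta}(y)\,\psi_\delta(x-y)\,dy\Big|^2\le\int|g(y)|^2\,\chi_{\om_\delta}(y)\,\psi_\delta(x-y)\,dy.
\]
Since $\om_\delta\subset\om$ (recall \eqref{def:Omega-delta}) we have $\chi_{\om_\delta}\le\chi_\om$, hence the right‐hand side is at most $\big(|g|^2\chi_\om\big)_\delta(x)=(|g|^2)_\delta(x)$. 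Thus $h\le(|g|^2)_\delta$ pointwise on $\R^n$; this is the analogue, with $g$ replaced by $g\chi_{\om_\delta}$, of the Stummel--Kato estimate \eqref{eq:Kato-molifier-b2}.

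With this, the proof is immediate: for $u\in W^{1,2}_0(\om)$ and $\ve>0$, monotonicity of the integral followed by Remark \ref{rem:kato-emb-molifier} (that is, \eqref{eq:cor-Kato-emb-glob} with $f$ replaced by $f_\delta$, applied to $f=|g|^2$) gives
\[
\int_\om|u|^2\,h\le\int_\om|u|^2\,(|g|^2)_\delta\le\ve\,\|\nabla u\|^2_{L^2(\om)}+\frac{\ve}{{\vartheta'}^{-1}_\om(|g|^2,c_2^{-1}\ve)^2}\,\|u\|^2_{L^2(\om)},
\]
which is exactly \eqref{eq:cor-Kato-emb-glob-moli}. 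Equivalently, one may bypass the Remark and re-run the covering argument in the proof of Lemma \ref{lem:Kato-emb-glob} (hence of Corollary \ref{cor:Kato-emb-glob}) directly for the function $h$: by the first step and \eqref{eq:Kato-molifier} one has $\vartheta_{\R^n}(h,r)\le\vartheta\big((|g|^2\chi_{\om_\delta})_\delta,r\big)\le2\,\vartheta_\om(|g|^2,r)$ for every $r>0$, so ${\vartheta'}_{\R^n}(h,r)\le2\,{\vartheta'}_\om(|g|^2,r)$; applying the local embedding \eqref{eq:Kato-emb-loc} on each ball of the cover and choosing the radius $r$ via ${\vartheta'}_\om(|g|^2,\cdot)$ then yields the claim, the harmless factor $2$ being absorbed into the dimensional constant $c_2$ and the argument of ${\vartheta'}^{-1}_\om$.

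I do not expect any substantial obstacle here: the analytic content is entirely contained in Lemma \ref{lem:Kato-emb-glob} and Remark \ref{rem:kato-emb-molifier}, and this corollary is a bookkeeping assembly whose only delicate points are (i) the treatment of the square of a vector‐valued mollification, which is precisely why one passes through $|(g\chi_{\om_\delta})_\delta|^2\le(|g|^2)_\delta$ rather than attempting to mollify $|g|^2$ itself, and (ii) in the second route, keeping track of the factor $2$ relating $\vartheta_{\R^n}(h,\cdot)$ to $\vartheta_\om(|g|^2,\cdot)$ so that the argument of ${\vartheta'}^{-1}_\om$ in the final $L^2$‐coefficient matches \eqref{eq:cor-Kato-emb-glob} exactly.
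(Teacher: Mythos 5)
Your proposal is correct and follows essentially the same route as the paper, which obtains the corollary by combining Remark \ref{rem:kato-emb-molifier} with the mollifier estimate \eqref{eq:Kato-molifier-b2} and the (proofs of the) embedding Lemmas \ref{lem:Kato-emb-loc} and \ref{lem:Kato-emb-glob}. Your explicit pointwise Jensen domination $|(g\chi_{\om_\delta})_\delta|^2\le(|g|^2\chi_{\om_\delta})_\delta\le(|g|^2)_\delta$ is a clean way of making precise the step the paper leaves implicit in \eqref{eq:Kato-molifier-b2}, and the bookkeeping of the factor $2$ into $c_2$ matches what the paper does.
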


\vv

\begin{lemma}\label{lem:Kato-emb-grad-Br}
If $f $ is supported in a ball $B_r$ and $f \in \mathcal{K}(\R^n)$, there exists a constant $C_s'>0$ depending only on $n$ such that, if  $u \in Y^{1,2}(\R^n)$, it holds 
\begin{equation}\label{eq:Kato-emb-grad-Br}
\int_{\R^n} |u|^2 f \leq C_s'\,\vartheta_{\R^n}(f, r)  \|\nabla u\|^2_{ L^{2}(\R^n)}.
\end{equation}
\end{lemma}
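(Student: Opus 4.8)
The plan is to deduce this global estimate from the local Stummel--Kato embedding of Lemma~\ref{lem:Kato-emb-loc}, trading the zeroth-order term $r^{-2}\|u\|_{L^2(B_r)}^2$ that it produces for $\|\nabla u\|_{L^2(\R^n)}^2$ by means of the Sobolev inequality on $\R^n$. This trade is exactly what makes the constant scale invariant, and it is available precisely because $u\in Y^{1,2}(\R^n)$ and $f$ has compact support.

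First I would reduce everything to the ball $B_r$. Since $\supp f\subset B_r$ we have $\int_{\R^n}|u|^2 f=\int_{B_r}|u|^2 f$ and $\vartheta_{B_r}(f,r)=\vartheta(f\chi_{B_r},r)=\vartheta(f,r)=\vartheta_{\R^n}(f,r)$; moreover $\vartheta_{B_r}(f,\rho)\le\vartheta_{\R^n}(f,\rho)\to 0$ as $\rho\to0$, so $f\in\mathcal{K}(B_r)$. Because $u\in Y^{1,2}(\R^n)\subset L^{2^*}(\R^n)$ with $\nabla u\in L^2(\R^n)$ and $B_r$ is bounded, the restriction $u|_{B_r}$ lies in $W^{1,2}(B_r)$, so Lemma~\ref{lem:Kato-emb-loc} applies and yields
\[
\int_{\R^n}|u|^2 f\;\le\; c_1\,\vartheta_{\R^n}(f,r)\Big(\|\nabla u\|_{L^2(B_r)}^2+\tfrac1{r^2}\,\|u\|_{L^2(B_r)}^2\Big).
\]
Next I would estimate the last term. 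By Hölder's inequality on the finite-measure set $B_r$ (using $\tfrac12-\tfrac1{2^*}=\tfrac1n$ and $|B_r|=c_n r^n$) one gets $\|u\|_{L^2(B_r)}\le |B_r|^{1/n}\|u\|_{L^{2^*}(B_r)}=c_n^{1/n}\,r\,\|u\|_{L^{2^*}(B_r)}$, and then the Sobolev (Gagliardo--Nirenberg) inequality on $\R^n$, which holds on all of $Y^{1,2}(\R^n)=Y^{1,2}_0(\R^n)$, gives $\|u\|_{L^{2^*}(B_r)}\le\|u\|_{L^{2^*}(\R^n)}\le S_n\|\nabla u\|_{L^2(\R^n)}$. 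Hence $\tfrac1{r^2}\|u\|_{L^2(B_r)}^2\le c_n^{2/n}S_n^2\,\|\nabla u\|_{L^2(\R^n)}^2$, while trivially $\|\nabla u\|_{L^2(B_r)}^2\le\|\nabla u\|_{L^2(\R^n)}^2$; substituting into the displayed inequality and setting $C_s':=c_1\,(1+c_n^{2/n}S_n^2)$, which depends only on $n$, completes the argument.

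I do not expect a genuine obstacle here: the only point requiring a little care is the justification that the $\|u\|_{L^2(B_r)}$ term unavoidably appearing in Lemma~\ref{lem:Kato-emb-loc} can be absorbed into $\|\nabla u\|_{L^2(\R^n)}^2$ with no loss of scaling. This works because on the bounded ball $B_r$ the $L^2$-norm is controlled by the $L^{2^*}$-norm up to a factor $r$ that cancels the $r^{-1}$, and the $L^{2^*}$-norm is globally controlled by the gradient since $u$ belongs to the homogeneous-type space $Y^{1,2}(\R^n)$; compact support of $f$ is what lets us localize before invoking the Sobolev inequality.
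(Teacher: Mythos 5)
Your argument is correct. Each step checks out: since $\supp f\subset B_r$ one indeed has $\vartheta_{B_r}(f,\rho)=\vartheta(f\chi_{B_r},\rho)=\vartheta_{\R^n}(f,\rho)$, so $f\in\mathcal{K}(B_r)$ and Lemma \ref{lem:Kato-emb-loc} applies to $u|_{B_r}\in W^{1,2}(B_r)$; the H\"older exponent computation $1-2/2^*=2/n$ gives $\|u\|_{L^2(B_r)}\le |B_r|^{1/n}\|u\|_{L^{2^*}(B_r)}$ with $|B_r|^{1/n}\approx r$, which exactly cancels the $r^{-2}$ in the zeroth-order term; and the Sobolev inequality on $Y^{1,2}(\R^n)=Y^{1,2}_0(\R^n)$ (Lemma \ref{lem:Lor-Sobolev-emb-p}) then absorbs everything into $\|\nabla u\|_{L^2(\R^n)}^2$ with a constant depending only on $n$. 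The route is, however, different from the paper's: the paper disposes of this lemma by citing Theorem 1.79 of \cite{MZ} together with the proof of Lemma 3 of \cite{Za}, i.e.\ it invokes an external trace/embedding result for the Stummel--Kato (Morrey-type) class rather than deriving the bound internally. Your derivation is more self-contained and arguably preferable in context, since it reuses the paper's own local embedding Lemma \ref{lem:Kato-emb-loc} and makes transparent why the estimate is scale invariant (the factor $r$ from H\"older on the ball cancels the $r^{-1}$ from the local lemma); the citation approach buys brevity and avoids re-proving what is already in the literature, but hides exactly the absorption mechanism you spell out.
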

\begin{proof}
This follows from the combination of  \cite[Theorem 1.79]{MZ} and the proof of \cite[Lemma 3]{Za}. 
\end{proof}

\vv

\begin{lemma}\label{lem:Kato-emb-grad-om}
If $f \in \mathcal{K}'(\om)$, there exists a constant $C_s'>0$ depending only on $n$ such that, if  $u \in Y_0^{1,2}(\om)$, it holds 
\begin{equation}\label{eq:Kato-emb-grad-om}
\int_{\om} |u|^2 f \leq C_s'\,\vartheta_{\om}(f)  \|\nabla u\|^2_{ L^{2}(\om)}.
\end{equation}
\end{lemma}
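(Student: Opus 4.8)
The plan is to reduce the statement to the whole space and then run the classical Riesz‑potential/Schur‑test argument that underlies the form boundedness of Stummel--Kato potentials; this is essentially the scheme behind Lemma~\ref{lem:Kato-emb-grad-Br} (compare \cite[Theorem 1.79]{MZ} and \cite{Za}), the only new twist being that the global norm $\vartheta_\om(f)$ takes the place of the local one.

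\textbf{Step 1 (reduction to $\R^n$).} Since $u\in Y_0^{1,2}(\om)$, extending it by zero — call it $u$ again — gives an element of $Y_0^{1,2}(\R^n)=Y^{1,2}(\R^n)$ (see \cite[Lemma 1.76]{MZ}) with $\|\nabla u\|_{L^2(\R^n)}=\|\nabla u\|_{L^2(\om)}$. Put $F=f\chi_\om$; by the very definition of $\vartheta_\om$ via \eqref{eq:Kato-def} one has $F\in\mathcal{K}'(\R^n)$ with $\vartheta_{\R^n}(F)=\vartheta_\om(f)$, and $\int_\om|u|^2 f=\int_{\R^n}|u|^2 F$. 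So it is enough to prove
\[
\int_{\R^n}|u|^2 F\ \le\ C_s'\,\vartheta_{\R^n}(F)\,\|\nabla u\|_{L^2(\R^n)}^2
\]
for every $F\in\mathcal{K}'(\R^n)$ and $u\in Y^{1,2}(\R^n)$.

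\textbf{Step 2 (pointwise bound and the trace inequality).} For $u\in C_c^\infty(\R^n)$ the radial fundamental theorem of calculus yields $|u(x)|\le c_n\,I_1(|\nabla u|)(x)$, where $I_1 h(x)=\int_{\R^n}|x-y|^{1-n}h(y)\,dy$, so $\int|u|^2 F\le c_n^2\int(I_1 g)^2\,d\nu$ with $g=|\nabla u|\in L^2(\R^n)$ and $d\nu=F\,dx$. Writing $\Theta(\nu):=\sup_{x\in\R^n}\int_{\R^n}|x-y|^{2-n}\,d\nu(y)$, monotone convergence (letting $r\to\infty$ in \eqref{eq:Kato-def}) gives $\Theta(\nu)=\vartheta_{\R^n}(F)$, and the crux is
\[
S:=\int_{\R^n}(I_1 g)^2\,d\nu\ \le\ c_n\,\Theta(\nu)\,\|g\|_{L^2(\R^n)}^2 .
\]
To get this, expand $(I_1 g)^2=(I_1 g)\cdot(I_1 g)$, use Fubini to shift one kernel onto $d\nu$, apply Cauchy--Schwarz in the leftover $dz$-integral, expand the square, and invoke $\int_{\R^n}|x-z|^{1-n}|x'-z|^{1-n}\,dz=c_n|x-x'|^{2-n}$ (here the hypothesis $n\ge 3$ is used). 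Symmetrising the resulting double $\nu$-integral via $ab\le\tfrac12(a^2+b^2)$ and bounding by $\Theta(\nu)$ produces $S\le c_n^{1/2}\|g\|_{L^2}\big(\Theta(\nu)S\big)^{1/2}$, whence $S\le c_n\,\Theta(\nu)\|g\|_{L^2}^2$ — provided $S<\infty$. Combining, $\int|u|^2 F\le c_n^3\,\vartheta_{\R^n}(F)\,\|\nabla u\|_{L^2}^2$ for $u\in C_c^\infty(\R^n)$.

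\textbf{Step 3 (finiteness, then density) and the main obstacle.} For $u\in C_c^\infty(\R^n)$ the function $I_1 g$ is bounded and $F\in L^1(\R^n)$ (built into the definition of $\mathcal{K}'$), so $S<\infty$ and the absorption in Step 2 is legitimate. For general $u\in Y^{1,2}(\R^n)=Y_0^{1,2}(\R^n)$, take $u_k\in C_c^\infty(\R^n)$ with $u_k\to u$ in $Y^{1,2}$; applying the inequality to $u_k-u_m$ shows $(u_k)$ is Cauchy in $L^2(F\,dx)$, hence converges there to $u$ (identifying the limit with the a.e.\ limit obtained along a subsequence from $L^{2^*}$-convergence), and passing to the limit gives the claim with, say, $C_s'=c_n^3$. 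The single delicate point is exactly the a priori finiteness of $S$ that must be in hand before one can absorb it — this is why the argument is done first for smooth compactly supported $u$ and only then extended by density; the only other item to record is the Riesz‑kernel convolution identity $I_1\ast I_1=c_n I_2$, which is where $n\ge 3$ enters.
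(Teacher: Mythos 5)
Your argument is correct, but it is organized differently from the paper's. The paper disposes of this lemma in two lines: it truncates, setting $f_k=f\chi_{B(0,k)}$, applies Lemma \ref{lem:Kato-emb-grad-Br} (the ball-supported case, which is itself quoted from \cite{MZ} and \cite{Za}) to get $\int_\om|u|^2f_k\le C_s'\,\vartheta_\om(f,k)\|\nabla u\|_{L^2(\om)}^2\le C_s'\,\vartheta_\om(f)\|\nabla u\|_{L^2(\om)}^2$, and then lets $k\to\infty$ by dominated convergence. You instead prove the global inequality on $\R^n$ directly, re-deriving the content of the cited results via the pointwise bound $|u|\lesssim I_1(|\nabla u|)$, the convolution identity $I_1\ast I_1=c_nI_2$, and the symmetrized Schur/absorption estimate $S\le c_n^{1/2}\|g\|_{L^2}(\Theta(\nu)S)^{1/2}$. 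What your route buys is self-containedness and the avoidance of the truncation step altogether; its cost is that you must justify the a priori finiteness of $S$ before absorbing, which you correctly do by first restricting to $u\in C_c^\infty$ (where $I_1g$ is bounded and $F\in L^1$ by the definition of $\mathcal{K}'$) and then extending by density, noting that your inequality applied to differences makes $(u_k)$ Cauchy in $L^2(F\,dx)$. Two cosmetic points: the measure in the Schur argument should be $d\nu=|F|\,dx$ so that it is nonnegative (the statement's left-hand side is then controlled via $\int|u|^2f\le\int|u|^2|f|$, consistently with $\vartheta$ being defined through $|f|$ in \eqref{eq:Kato-def}), and the Cauchy--Schwarz is most cleanly applied in the $dy$-variable pairing $g$ against the remaining kernel; neither affects the validity of the proof.
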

\begin{proof}
Let $B_k:= B(0,k)$ and $f_k=f {\bf 1}_{B_k}$. Then, since $|f_k| \leq |f|$ and $f_k \to f$ pointwisely, by Lemma \ref{lem:Kato-emb-grad-Br}, we have that
$$
\int_{\om} |u|^2 f_k \leq C_s'\,\vartheta_{\om}(f, k)  \|\nabla u\|^2_{ L^{2}(\om)} \leq C_s' \,\vartheta_{\om}(f)  \|\nabla u\|^2_{ L^{2}(\om)},
$$
which, by  the dominated convergence theorem, concludes the proof of \eqref{eq:Kato-emb-grad-om}.
\end{proof}

  \vv  

\subsection{Lorentz spaces}

\begin{definition}\label{def:Lor-space}
If $f$ is a measurable function we define the distribution function
$$
d_{f, \om}(t)=|\{ x \in \om: |f(x)|>t\}|, \quad t>0,
$$
and its decreasing rearrangement by
$$
f^*(t) = \inf\{s >0: d_{f, \om}(t) \leq s\}.
$$
If $p \in (0, \infty)$ and $q \in (0, \infty]$, we can define the Lorentz semi-norm
\begin{equation}\label{eq:Lorentz-seminorm}
\| f\|_{L^{p,q}(\om)}=
\begin{dcases}
p^\frac{1}{q}\left( \int_0^\infty \left( t \,d_{f, \om}(t)^\frac{1}{p} \right)^q\,\frac{dt}{t} \right)^{\frac{1}{q}} &,\textup{if}\,\, q <\infty\\
\sup_{t>0} t \,d_{f, \om}(t)^\frac{1}{p}  &,\textup{if}\,\, q= \infty.
\end{dcases},
\end{equation}
If $\|f\|_{L^{p,q}(\om)}<\infty$,  we will say that $f$ is in the Lorentz space $(p,q)$ and write $f \in L^{p,q}(\om)$.  This is quasi-norm and $({L^{p,q}(\om)},\|\cdot\|_{L^{p,q}(\om)})$ is a quasi-Banach space.
\end{definition}

We can also define 
$$
\| f\|_{L^{(p,q)}(\om)}=
\begin{dcases}
\left( \int_0^\infty \left( t^\frac{1}{p} \,f^{**}(t) \right)^q \right)^{\frac{1}{q}}  \,\frac{dt}{t}&,\textup{if}\,\, q <\infty\\
\sup_{t>0} t^\frac{1}{p} \,f^{**}(t)  &,\textup{if}\,\, q= \infty.
\end{dcases},
$$
which, for $p \in (1, \infty)$ and $q \in [1, \infty]$, is a norm and  it holds that
\begin{equation}\label{Lorentz-equiv-norm}
\| f\|_{L^{p,q}(\om)}\leq \| f\|_{L^{(p,q)}(\om)} \leq \frac{p}{p-1} \| f\|_{L^{p,q}(\om)}.
\end{equation}
If we equip $L^{p,q}(\om)$ with this norm, it becomes a Banach space (see \cite[Lemma 4.5 and Theorem 4.6]{BeSh}).  We will write $f \in L_\loc^{p,q}(\om)$  if $f \in L^{p,q}(\om')$ for any bounded open set $\om' \subset \om$.

We record that 
\begin{enumerate}
\item If $0 < p, r \leq \infty$ and  $0<q \leq \infty$,
$$
\| |f|^r \|_{L^{p,q}(\om)} =\| f \|^r_{L^{pr,qr}(\om)};
$$
\item If $0 < p \leq \infty$ and  $0<q_2<q_1 \leq \infty$,
\begin{equation}\label{eq:Lor-inclusion}
\| f \|_{L^{p,q_1}(\om) }\lesssim_{p,q_1,q_2} \| f \|_{L^{p,q_2}(\om)};
\end{equation}
\item If $0<p,q,r \leq \infty$, $0<s_1,s_2 \leq \infty$, $1/p + 1/q=1/r$, and $1/s_1 + 1/s_2=1/s$,
\begin{equation}\label{eq:Lor-Holder}
\| f g  \|_{L^{r,s}(\om) }\lesssim_{p,q, s_1,s_2} \| f \|_{L^{p,s_1}(\om)}\| g \|_{L^{q,s_2}(\om)}.
\end{equation}
\end{enumerate}
We refer to \cite[Chapter 4]{BeSh} and \cite[Chapter 1]{Gra} for the proofs.
It is worth noting that
$$
L^{\frac{n}{2},1}(\om) \subset \mathcal{K}'(\om),
$$
while, for $n \geq 3$, $\mathcal{K}(\om)$ and $L^{\frac{n}{2},q}(\om)$, $q \geq n$, are not comparable.

\vv

\begin{example}\label{ex:dini-not-lorentz}
If $f$ is the function of Example \ref{ex:dini}, then it is easy to see that 
\begin{equation*}
d_{f, \R^n}(t)=
\begin{cases}
0 &,\textup{if}\,\, t>1\\
+\infty &,\textup{if}\,\, t\in (0,1].
\end{cases}
\end{equation*}
and,  by definition,  for every $p>0$ and $q \in (1, \infty)$,
$$
\|f\|^q_{L^{p,q}(\R^n)} =p \int_0^1 d_{f, \R^n}(t)^\frac{q}{p} t^{q-1}\,dt \geq 2^{q-1}p \int_{1/2}^1 d_{f, \R^n}(t)^\frac{q}{p} \,dt =+ \infty.,
$$
while for every $p>0$ and $q\in  (0,1]$,
$$
\|f\|^q_{L^{p,q}(\R^n)} =p \int_0^1 d_{f, \R^n}(t)^\frac{q}{p} t^{q-1}\,dt \geq p \int_{0}^1 d_{f, \R^n}(t)^\frac{q}{p} \,dt =+ \infty.
$$
It is clear that $\|f\|_{L^{p,\infty}(\R^n)} =+\infty$. Therefore,  $f \in  \mathcal{K}_{\textup{Dini},\alpha}(\R^n)\setminus L^{p,q}(\R^n)$ for any $\alpha>0$, $p>0$,  and $q \in(0,\infty]$.  Similarly,  one can show that $f \in  \mathcal{K}_{\textup{Dini},\alpha}(\R^n_+)\setminus L^{p,q}(\R^n_+)$ for any $\alpha>0$, $p>0$,  and $q \in(0,\infty]$. 
\end{example}

\vv 

\begin{definition}\label{def:abs.cont.}
Let $\{E_k\}_{k=1}^\infty$ be a sequence of measurable subsets of $\om$. We will write $E_k \to \emptyset$ a.e. if ${\bf 1}_{E_k} \to 0$  a.e. in $\om$, which is equivalent to $|\limsup_{k \to \infty} E_k|=0$.

We will say that a function $f$ in a Banach  function space $X$ (see \cite[Definition 6.5]{Saw}) has  {\it absolutely continuous norm}  in $X$ if $\|f {\bf 1}_{E_k} \|_X \to 0$ for every sequence $\{E_k\}_{k \geq 1}$ such that $E_k \to \emptyset$ a.e. The set of all functions in $X$ of absolutely continuous norm is denoted by $X_a$. If $X_a=X$, then the space itself is said to have {\it absolutely continuous norm}.  In this case, simple functions supported on a set of finite Lebesgue measure are dense in $X$.
\end{definition}

 Record that $L^{p,q}(\om)$, for $p\in (1, \infty)$ and $q \in [1, \infty)$, is a Banach function space (see \cite[p.219, Theorem 4.6]{BeSh}).

  \vv
  
\begin{lemma} \label{lem:abs.cont. norms}
Let $f \in X$ where $X=\mathcal{K}'(\om)\,\,\textup{or}\,\, L^{p,q}(\om)$, $1 <p< \infty$ and $1 \leq  q < \infty$. If $\| \cdot \|_X$ stands for either  $\vartheta_\om(\cdot)$ or $\| \cdot\|_{L^{p,q}(\om)}$, then $X$ has absolutely continuous norm. In fact, for every $\ve>0$, there exists $\delta>0$ such that
\begin{equation}\label{eq:abs.cont. norms}
\textup{if}\,\, E\subset \om\,\, \textup{with}\,\, |E|< \delta,\,\, \textup{then}\quad  \| f {\bf 1}_E\|_X<\ve.
\end{equation}
\end{lemma}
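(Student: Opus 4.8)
The plan is to handle the two spaces separately, and in each case to reduce the statement "$E_k\to\emptyset$ a.e. implies $\|f\chi_{E_k}\|_X\to 0$" to the quantitative form \eqref{eq:abs.cont. norms}, namely that small Lebesgue measure forces small norm. The equivalence of the two formulations is standard: if \eqref{eq:abs.cont. norms} holds and $E_k\to\emptyset$ a.e., then since $f$ is locally integrable (indeed $f\in X\subset L^1_{\loc}$) one has $|E_k\cap B|\to 0$ for every ball $B$; combined with a tail estimate coming from the global finiteness of the norm (see below), this gives $\|f\chi_{E_k}\|_X\to 0$. So the heart of the matter is \eqref{eq:abs.cont. norms}.

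For $X=L^{p,q}(\om)$ with $1<p<\infty$, $1\le q<\infty$: first I would note $f\in L^{p,q}(\om)$ means $t\,d_f(t)^{1/p}\in L^q((0,\infty),dt/t)$, and I would split $f=f\chi_{\{|f|>N\}}+f\chi_{\{|f|\le N\}}$. For the first piece, $\|f\chi_{\{|f|>N\}}\|_{L^{p,q}(\om)}^q = p\int_0^\infty (t\,d_{f\chi_{\{|f|>N\}}}(t)^{1/p})^q\,\tfrac{dt}{t}$, and since $d_{f\chi_{\{|f|>N\}}}(t)\le d_f(\max(t,N))$, dominated convergence in the integral representation gives that this is $<\ve/2$ once $N=N(\ve)$ is large. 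For the truncated piece, if $|E|<\delta$ then $d_{f\chi_{E}\chi_{\{|f|\le N\}}}(t)\le\min(|E|,d_f(t))$ and is $0$ for $t\ge N$, so
\[
\|f\chi_E\chi_{\{|f|\le N\}}\|_{L^{p,q}(\om)}^q
= p\int_0^{N}\bigl(t\,d_{f\chi_E\chi_{\{|f|\le N\}}}(t)^{1/p}\bigr)^q\,\frac{dt}{t}
\le p\,|E|^{q/p}\int_0^{N} t^{q-1}\,dt
= |E|^{q/p}\,N^{q},
\]
which is $<(\ve/2)^q$ for $\delta$ small depending on $N,\ve$. Using the equivalence \eqref{Lorentz-equiv-norm} transfers this to the norm $\|\cdot\|_{L^{(p,q)}(\om)}$ if one prefers, up to the factor $p/(p-1)$.

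For $X=\mathcal{K}'(\om)$ with norm $\vartheta_\om(\cdot)$: here I would use the defining property of $\mathcal K'$ that $\lim_{r\to 0}\vartheta_\om(f,r)=0$. Given $\ve>0$, pick $r_0>0$ with $\vartheta_\om(f,r_0)<\ve/2$. Then for any $E$ and any $x$,
\[
\int_{B_{r_0}(x)\cap E}\frac{|f(y)|}{|x-y|^{n-2}}\,dy\le \vartheta_\om(f,r_0)<\ve/2,
\]
so only the "far" part $\int_{(B_\rho(x)\setminus B_{r_0}(x))\cap E}\frac{|f(y)|}{|x-y|^{n-2}}\,dy$ over all scales $\rho>r_0$ needs control; there the kernel is bounded by $r_0^{2-n}$, so this term is at most $r_0^{2-n}\int_{E}|f|$. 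Since $f\in\mathcal K'(\om)\subset L^1(\om)$, absolute continuity of the Lebesgue integral gives $\delta>0$ with $\int_E|f|<r_0^{n-2}\ve/2$ whenever $|E|<\delta$, and taking the supremum over $x$ and over $\rho$ yields $\vartheta_\om(f\chi_E)<\ve$. (This is exactly the splitting already used in the proof of Lemma 2.13, so I would simply cite that computation.)

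The main obstacle is the $\mathcal K'$ case, specifically making sure the supremum over \emph{all} radii $\rho>0$ (not just large ones) is genuinely controlled; the trick is that scales $\rho\le r_0$ are handled for free by the choice of $r_0$ via $\vartheta_\om(f\chi_E,\rho)\le\vartheta_\om(f,\rho)\le\vartheta_\om(f,r_0)<\ve/2$, so one never needs smallness of $\int_E|f|$ at small scales — only at the fixed scale $r_0$, where the weight is bounded and ordinary absolute continuity of $\int|f|$ suffices. For the Lorentz case the only subtlety is that $q$ may be large, so one must keep the power $|E|^{q/p}$ and the explicit $N^q$ factor rather than working with the quasinorm directly; everything else is dominated convergence in the distribution-function integral.
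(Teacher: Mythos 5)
Your argument is correct, but it is genuinely different from what the paper does: the paper's ``proof'' of this lemma is a pure citation (to Vitanza--Zamboni for $\mathcal{K}'(\om)$ and to Bennett--Sharpley, Corollaries 4.3 and 4.8, for $L^{p,q}(\om)$), whereas you supply a self-contained two-step argument in each case. Your Lorentz computation (truncate at height $N$, kill the tall part by dominated convergence in the distribution-function integral using $d_{f\chi_{\{|f|>N\}}}(t)\le d_f(\max(t,N))$, and bound the truncated part by $|E|^{q/p}N^q$ up to a constant) is sound; two small points: the last equality should carry the factor $\tfrac{p}{q}$, i.e.\ $p\int_0^N t^{q-1}\,dt=\tfrac{p}{q}N^q$, and to recombine the two pieces you do need the quasi-triangle inequality, which as you note is supplied by the equivalence \eqref{Lorentz-equiv-norm} since $p>1$. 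Your $\mathcal{K}'$ argument is exactly the near/far splitting already used in Lemma \ref{lem:kato-approx-0} and in Lemma \ref{lem:main-splitting-Kato} ($\vartheta_\om(f\chi_E,\rho)\le 2\vartheta_\om(f,r_0)+r_0^{2-n}\int_E|f|$ for $\rho>r_0$, and $\le\vartheta_\om(f,r_0)$ for $\rho\le r_0$), combined with absolute continuity of $\int|f|$ for $f\in L^1(\om)$; this is correct and is essentially the content of the cited Vitanza--Zamboni lemma. You are also right that \eqref{eq:abs.cont. norms} alone does not yield absolute continuity in the sense of Definition \ref{def:abs.cont.} when $|\om|=\infty$, since $E_k\to\emptyset$ a.e.\ does not force $|E_k|\to 0$; your extra tail estimate ($\|f\chi_{\om\setminus B_R}\|_X\to 0$ as $R\to\infty$, which holds because $q<\infty$ in the Lorentz case and because $f\in L^1(\om)$ in the Kato case) is precisely what closes that gap. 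The net effect of your approach is a proof that is longer but verifiable within the paper, and which makes explicit where the hypotheses $q<\infty$ and $f\in L^1(\om)$ enter.
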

\begin{proof}
For $\mathcal{K}'(\om)$ this was proved in \cite[Lemma 2.2]{ViZa}, while for $L^{p,q}(\om)$ it follows from \cite[p. 23, Corollary  4.3]{BeSh} and \cite[p. 221, Corollary  4.8]{BeSh}. 
\end{proof}

  \vv

\begin{lemma}[\cite{Cos1}, Theorem V4]\label{Lorentz-molifier}
 Let $f \in L^{p,q}(\om)$, with $p \in (1, \infty)$ and $q \in [1, \infty)$, and for $\delta>0$, let $\om_\delta$  be as in \eqref{def:Omega-delta}. 	Then, it holds that 
$$
\| (f {\bf 1}_{\om_\delta})_\delta \|_{ L^{p,q}(\om)} \leq C_{p,q} \, \| f \|_{ L^{p,q}(\om)}\quad \textup{and}\quad \| (f {\bf 1}_{\om_\delta})_\delta -f \|_{ L^{p,q}(\om)} \to 0.
$$
\end{lemma}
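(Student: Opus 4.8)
\textbf{Proof plan for Lemma \ref{Lorentz-molifier}.}

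The statement is a standard mollification fact for Lorentz spaces, so the plan is to reduce it to two ingredients: boundedness of convolution on $L^{p,q}$ and an $L^p_{\loc}$-type approximation argument, while being careful about the truncation to $\om_\delta$. First I would record that $f\chi_{\om_\delta}$ is a function on $\R^n$ (extended by zero) with $\|f\chi_{\om_\delta}\|_{L^{p,q}(\R^n)}\le \|f\|_{L^{p,q}(\om)}$, since $d_{f\chi_{\om_\delta}}(t)\le d_f(t)$ pointwise and hence the decreasing rearrangement only decreases, making the seminorm \eqref{eq:Lorentz-seminorm} non-increasing. Then, since $\psi_\delta\ge 0$ with $\int\psi_\delta=1$, Minkowski's integral inequality in the form for the Lorentz norm $\|\cdot\|_{L^{(p,q)}}$ (which is a genuine norm for $p\in(1,\infty)$, $q\ge 1$ by \eqref{Lorentz-equiv-norm}) gives
$$
\|(f\chi_{\om_\delta})_\delta\|_{L^{(p,q)}(\R^n)}=\Bigl\|\int \psi_\delta(y)\,\tau_y(f\chi_{\om_\delta})\,dy\Bigr\|_{L^{(p,q)}}\le \int \psi_\delta(y)\,\|\tau_y(f\chi_{\om_\delta})\|_{L^{(p,q)}}\,dy=\|f\chi_{\om_\delta}\|_{L^{(p,q)}},
$$
using translation invariance of the rearrangement. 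Converting back to $\|\cdot\|_{L^{p,q}}$ via \eqref{Lorentz-equiv-norm} produces the first inequality with $C_{p,q}=p/(p-1)$, and one notes $(f\chi_{\om_\delta})_\delta$ is supported in $\om$ by the choice of $\om_\delta$ in \eqref{def:Omega-delta} together with $\supp\psi_\delta\subset B(0,\delta)$, so it makes sense to measure its norm over $\om$.

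For the convergence statement I would use the density, guaranteed by Lemma \ref{lem:abs.cont. norms}, of bounded functions supported on sets of finite measure in $L^{p,q}(\om)$ when $q<\infty$. Given $\ve>0$, pick such a $g$ with $\|f-g\|_{L^{p,q}(\om)}<\ve$. Split
$$
(f\chi_{\om_\delta})_\delta-f=\bigl((f-g)\chi_{\om_\delta}\bigr)_\delta-(f-g)+\bigl((g\chi_{\om_\delta})_\delta-g\bigr).
$$
The first two terms are controlled by $(C_{p,q}+1)\|f-g\|_{L^{p,q}(\om)}\le (C_{p,q}+1)\ve$ using the uniform bound just proved (applied to $f-g$) and the trivial inequality $\|(f-g)\chi_{\om_\delta}\|_{L^{p,q}}\le\|f-g\|_{L^{p,q}}$. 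For the last term, $g$ is bounded with compact support (up to $|E|<\delta_0$ small, absorb a small Lorentz-norm tail using Lemma \ref{lem:abs.cont. norms} again), so $g\chi_{\om_\delta}\to g$ in $L^\infty$-bounded fashion and $(g\chi_{\om_\delta})_\delta\to g$ in $L^p(\R^n)$ by classical mollifier theory, with all functions supported in a fixed bounded set; on a fixed-measure set the $L^{p,q}$ quasi-norm is dominated by a constant times the $L^{p'}$ norm for any $p'>p$ (or one interpolates), hence $\|(g\chi_{\om_\delta})_\delta-g\|_{L^{p,q}(\om)}\to 0$ as $\delta\to0$. Combining, $\limsup_{\delta\to0}\|(f\chi_{\om_\delta})_\delta-f\|_{L^{p,q}(\om)}\le (C_{p,q}+1)\ve$, and $\ve$ is arbitrary.

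The main obstacle is the bookkeeping around the truncation set $\om_\delta=\{x\in\om:\dist(x,\om^c)>\delta\}\cap B(0,\delta^{-1})$: one must check that the mollified function genuinely lives in $\om$ (so the claimed inequalities over $\om$ are meaningful) and that $\chi_{\om_\delta}\to 1$ a.e. on $\om$ as $\delta\to 0$, which is what feeds the dominated-convergence/absolute-continuity step for the $g$-term and the density reduction. The only mild subtlety for $q<\infty$ is that simple functions of finite-measure support are dense — this is exactly Lemma \ref{lem:abs.cont. norms} (equivalently, $L^{p,q}$ has absolutely continuous norm for $q<\infty$) — so the argument genuinely breaks for $q=\infty$, consistent with the hypothesis $q\in[1,\infty)$. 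Everything else is the standard Minkowski-inequality-for-convolutions plus $\ve/3$ routine, so I would not belabor those computations; citing \cite{Cos1} for the precise constant $C_{p,q}$ is also acceptable.
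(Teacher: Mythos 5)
Your argument is correct. Note, however, that the paper does not actually prove this lemma: it is stated with a bare citation to \cite[Theorem V4]{Cos1}, so there is no in-paper proof to match. What you have written is a complete, self-contained replacement: the uniform bound via Minkowski's integral inequality for the genuine norm $\|\cdot\|_{L^{(p,q)}}$ (equivalently, Young's inequality $\|h\ast\psi_\delta\|_{L^{p,q}}\le\|\psi_\delta\|_{L^1}\|h\|_{L^{p,q}}$ up to the constant $p/(p-1)$ from \eqref{Lorentz-equiv-norm}), together with the monotonicity $d_{f\chi_{\om_\delta}}\le d_f$ and the support observation that mollifying a function supported in $\om_\delta$ at scale $\delta$ stays inside $\om$; and the convergence via the standard density-plus-uniform-boundedness scheme, where the hypothesis $q<\infty$ enters exactly through the absolute continuity of the norm (Lemma \ref{lem:abs.cont. norms}), the fact that $\om_\delta\uparrow\om$ feeds the dominated-convergence step for the truncation, and the embedding $L^{p'}\hookrightarrow L^{p,q}$ on sets of finite measure for $p'>p$ reduces the smooth-approximation step to classical $L^{p'}$ mollifier theory. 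All of these steps are sound, and the reduction correctly breaks down at $q=\infty$, consistent with the stated hypotheses. The only thing the citation to \cite{Cos1} buys that your argument does not is the precise value of $C_{p,q}$, which is irrelevant for the paper's use of the lemma (in the approximation argument for the Green's function), so your proof would serve equally well.
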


\vv

In the following definitions and lemmas we follow \cite{Sak}.

\begin{definition} We define $ Y^{1,(p,q)}_0(\om)$, for $1< p <n$ and $1 \leq q \leq \infty$, to be the closure of $C^\infty_c(\om)$ under the semi-norm
 $$
 \| u\|_{Y^{1,(p,q)}_0(\om)} =  \| u\|_{L^{\frac{np}{n-p},q}(\om)} + \| \nabla u\|_{L^{p,q}(\om)}.
 $$
\end{definition}

\begin{lemma}\label{lem:Lor-Sobolev-emb-p}
If $u \in Y^{1,(p,q)}_0(\om)$, there exists a constant $C_s>0$ depending on $n$ such that 
\begin{equation}\label{eq:Lor-Sobolev-emb-p}
\| u \|_{L^{\frac{np}{n-p},q}(\om)} \leq C_s \| \nabla u\|_{L^{p,q}(\om)}.
\end{equation}
If $u \in Y^{1,2}_0(\om)$, the same is true for $p=q=2$.
\end{lemma}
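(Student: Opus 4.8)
The statement is the Lorentz–Sobolev embedding $\|u\|_{L^{\frac{np}{n-p},q}(\om)}\le C_s\|\nabla u\|_{L^{p,q}(\om)}$ for $u\in Y^{1,(p,q)}_0(\om)$, together with the familiar $p=q=2$ case for $Y^{1,2}_0(\om)$. The plan is to reduce to the case $u\in C^\infty_c(\om)$ by density (this is exactly how $Y^{1,(p,q)}_0(\om)$ is defined: both sides are continuous semi-norms, so an inequality valid on the dense subspace $C^\infty_c(\om)$ passes to the closure). Then, extending $u$ by zero, it suffices to prove the inequality for $u\in C^\infty_c(\R^n)$ on all of $\R^n$; the value of the constant depends only on $n$ (and is uniform in $\om$), which is the point we care about.

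The core of the argument is the classical pointwise bound via the Riesz potential: for $u\in C^\infty_c(\R^n)$ one has $|u(x)|\lesssim_n I_1(|\nabla u|)(x)$, where $I_1 g(x)=\int_{\R^n}\frac{g(y)}{|x-y|^{n-1}}\,dy$ is the Riesz potential of order $1$. Then I would invoke the boundedness of $I_1$ from $L^{p,q}(\R^n)$ to $L^{\frac{np}{n-p},q}(\R^n)$ for $1<p<n$ and $1\le q\le\infty$, which is the Lorentz-space refinement of the Hardy–Littlewood–Sobolev inequality (O'Neil's theorem; see \cite[Chapter 1]{Gra}). This yields
$$
\| u \|_{L^{\frac{np}{n-p},q}(\om)} \le \| I_1(|\nabla u|)\|_{L^{\frac{np}{n-p},q}(\R^n)} \lesssim_{n} \| \nabla u\|_{L^{p,q}(\R^n)} = \| \nabla u\|_{L^{p,q}(\om)},
$$
with a constant depending only on $n$ (for fixed $p,q$; and since in the applications $p,q$ are in a fixed range, one may absorb that dependence). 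For the last sentence, the case $p=q=2$ is just the standard $\dot W^{1,2}\hookrightarrow L^{2^*}$ Sobolev inequality on $\R^n$, which is again obtained from the same Riesz-potential estimate with $L^2\to L^{2^*}$, or simply quoted (e.g.\ Theorem 1.79 in \cite{MZ}).

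The main (really the only) obstacle is making sure the constant is genuinely independent of $\om$ — but this is automatic once one extends by zero and works on $\R^n$, since $C^\infty_c(\om)\subset C^\infty_c(\R^n)$ and the $L^{p,q}(\om)$ (semi-)norms of $u$ and $\nabla u$ agree with their $\R^n$-counterparts after zero-extension. One technical point worth a line is that convergence in the $Y^{1,(p,q)}_0(\om)$ semi-norm controls $\|u\|_{L^{\frac{np}{n-p},q}}$, so the limiting function genuinely lies in $L^{\frac{np}{n-p},q}(\om)$ and the inequality is not vacuous; this follows because $L^{\frac{np}{n-p},q}$ is a (quasi-)Banach space and a Cauchy sequence in the semi-norm is in particular Cauchy in each of the two pieces. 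Everything else is a direct citation of O'Neil's convolution/Lorentz inequality and the pointwise Riesz bound, so I would keep the write-up short.
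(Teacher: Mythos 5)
Your proposal is correct, but it is genuinely more self-contained than what the paper does: the paper simply cites the literature for both parts (Costea's Lemma 4.2(i) for general $(p,q)$ and Sakellaris's Lemma 2.2 for $p=q=2$), whereas you reconstruct the proof via the pointwise Riesz-potential bound $|u|\lesssim_n I_1(|\nabla u|)$ combined with O'Neil's convolution inequality $I_1:L^{p,q}\to L^{\frac{np}{n-p},q}$. That is exactly the mechanism behind the cited results, and your density/zero-extension reduction and the remark that the limit function genuinely lands in $L^{\frac{np}{n-p},q}$ are both sound; the uniformity of the constant in $\om$ is indeed automatic after extension to $\R^n$. One small imprecision to fix: for $p=q=2$ the assertion is $\| u\|_{L^{2^*,2}(\om)}\leq C_s\|\nabla u\|_{L^2(\om)}$, and since $L^{2^*,2}\subsetneq L^{2^*,2^*}=L^{2^*}$ this is strictly stronger than the "standard" Sobolev inequality $\dot W^{1,2}\hookrightarrow L^{2^*}$ — the refined target space is actually used later in the paper (Lemma \ref{lem:Lor-Sob-uw}). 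Your argument still covers it, because O'Neil applied with $p=q=2$ gives $I_1:L^{2,2}\to L^{2^*,2}$, but you should not describe it as "just" the classical Sobolev embedding or suggest it can be quoted from a plain $L^2\to L^{2^*}$ statement.
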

\begin{proof} The proof of the first part can be found in \cite[Theorem 4.2(i)]{Cos2} and of the second one in  \cite[Lemma 2.2]{Sak}.
\end{proof}

  \vv

\begin{lemma}\label{lem:Lor-Sob-uw}
 If $u, w \in Y_0^{1,2}(\om)$, then  $u w  \in Y^{1,(\frac{n}{n-1},1)}_0(\om)$ and, in particular, it holds that
\begin{equation}\label{eq:Lor-Sob-uw}
\| u w  \|_{L^{\frac{n}{n-2},1}(\om)} \leq 2 C_s^2 \| \nabla u \|_{L^2(\om)}\| \nabla w \|_{L^2(\om)}.
\end{equation} 
\end{lemma}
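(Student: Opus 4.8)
The plan is to deduce Lemma~\ref{lem:Lor-Sob-uw} from the Lorentz--Sobolev embedding (Lemma~\ref{lem:Lor-Sobolev-emb-p}) applied with the exponents $p = \frac{n}{n-1}$ and $q = 1$, together with the product/H\"older inequality for Lorentz spaces \eqref{eq:Lor-Holder}. First I would establish that $uw \in Y^{1,(\frac{n}{n-1},1)}_0(\om)$: since $u, w \in Y^{1,2}_0(\om)$, each of them is a limit in $Y^{1,2}_0(\om)$-norm of functions in $C^\infty_c(\om)$, so one approximates $u$ by $u_k$ and $w$ by $w_k$ in $C^\infty_c(\om)$, notes $u_k w_k \in C^\infty_c(\om)$, and checks that $u_k w_k$ is Cauchy in the $Y^{1,(\frac{n}{n-1},1)}_0(\om)$ semi-norm using the estimates below; its limit is then $uw$ (after passing to a subsequence converging a.e.). This gives membership in the closure, hence $uw \in Y^{1,(\frac{n}{n-1},1)}_0(\om)$, and simultaneously validates the chain rule $\nabla(uw) = u\,\nabla w + w\,\nabla u$ in this space.

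The core estimate is the bound on $\|\nabla(uw)\|_{L^{\frac{n}{n-1},1}(\om)}$. Writing $\nabla(uw) = u\,\nabla w + w\,\nabla u$ and using the quasi-triangle inequality, it suffices to bound $\|u\,\nabla w\|_{L^{\frac{n}{n-1},1}(\om)}$ and symmetrically $\|w\,\nabla u\|_{L^{\frac{n}{n-1},1}(\om)}$. Apply \eqref{eq:Lor-Holder} with the factorization $\frac{n-1}{n} = \frac{n-2}{2n} + \frac12$ at the level of first indices and $1 = \frac12 + \frac12$ at the level of second indices: this gives
\[
\| u\,\nabla w\|_{L^{\frac{n}{n-1},1}(\om)} \lesssim \| u\|_{L^{\frac{2n}{n-2},2}(\om)}\,\| \nabla w\|_{L^{2,2}(\om)}.
\]
Now $L^{2,2}(\om) = L^2(\om)$ isometrically (up to the standard normalization constant), so $\|\nabla w\|_{L^{2,2}(\om)} \approx \|\nabla w\|_{L^2(\om)}$; and by Lemma~\ref{lem:Lor-Sobolev-emb-p} with $p=q=2$ (the $Y^{1,2}_0$ case), $\| u\|_{L^{2^*,2}(\om)} = \|u\|_{L^{\frac{2n}{n-2},2}(\om)} \le C_s\|\nabla u\|_{L^2(\om)}$. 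Combining, $\|u\,\nabla w\|_{L^{\frac{n}{n-1},1}(\om)} \lesssim C_s \|\nabla u\|_{L^2(\om)}\|\nabla w\|_{L^2(\om)}$, and the same for the other term. Then by the embedding Lemma~\ref{lem:Lor-Sobolev-emb-p} applied to $uw \in Y^{1,(\frac{n}{n-1},1)}_0(\om)$ (with $p=\frac{n}{n-1}$, $q=1$, so $\frac{np}{n-p} = \frac{n}{n-2}$), we get $\| uw\|_{L^{\frac{n}{n-2},1}(\om)} \le C_s\|\nabla(uw)\|_{L^{\frac{n}{n-1},1}(\om)} \lesssim C_s^2\|\nabla u\|_{L^2}\|\nabla w\|_{L^2}$, which is \eqref{eq:Lor-Sob-uw} up to tracking the constant to get the factor $2C_s^2$.

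The main obstacle I anticipate is not any single deep step but bookkeeping: one must be careful that \eqref{eq:Lor-Holder} is being invoked with a legitimate choice of indices (all in the allowed ranges, with the reciprocal identities holding exactly) and that the absolute constant in the Lorentz--H\"older inequality is dimensional and independent of $\om$ — which is exactly what \eqref{eq:Lor-Holder} asserts, with constant depending only on the exponents. A secondary point requiring care is the density/approximation argument: to conclude $uw$ lies in the \emph{closure} $Y^{1,(\frac{n}{n-1},1)}_0(\om)$ rather than merely satisfying the inequality formally, one applies the product estimate to differences $u_k w_k - u_j w_j = (u_k - u_j)w_k + u_j(w_k - w_j)$ to see the sequence is Cauchy, which is routine once the core bilinear estimate above is in hand. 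I would also double-check the normalization constant relating $\|\cdot\|_{L^{2,2}(\om)}$ to $\|\cdot\|_{L^2(\om)}$ so that the final constant comes out as stated; this is where the explicit ``$2$'' in front of $C_s^2$ is absorbed.
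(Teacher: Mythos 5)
Your proposal is correct and follows essentially the same route as the paper: the same bilinear estimate $\|u\,\nabla w\|_{L^{\frac{n}{n-1},1}(\om)}\lesssim \|u\|_{L^{\frac{2n}{n-2},2}(\om)}\|\nabla w\|_{L^{2}(\om)}$ via \eqref{eq:Lor-Holder} and Lemma \ref{lem:Lor-Sobolev-emb-p}, followed by approximation of $u,w$ by $C^\infty_c(\om)$ functions and the Lorentz--Sobolev embedding for $uw$. The only (immaterial) difference is that you pass to the limit by showing $u_k w_k$ is Cauchy in the $Y^{1,(\frac{n}{n-1},1)}_0(\om)$ semi-norm, whereas the paper extracts a weak-* convergent subsequence and uses lower semicontinuity of the norm.
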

\begin{proof} 
Here we follow the scheme of the proof of \cite[Lemma 2.2]{Sak}.
Since both $u $ and $w $ belong to $Y^{1,2}_0(\om)$, we can use \eqref{eq:Lor-Sobolev-emb-p} and \eqref{eq:Lor-Holder} to deduce that
\begin{equation}\label{eq:Lor-sob-w-nabla w}
\| w \nabla u \|_{L^{\frac{n}{n-1},1}(\om)} \leq \| \nabla u \|_{L^2(\om)} \| w \|_{L^{{\frac{2n}{n-2}},2}(\om)}\leq C_s \| \nabla u \|_{L^2(\om)}\| \nabla w \|_{L^2(\om)}.
\end{equation}
The analogous estimate holds if we switch the roles of $w$ and $u$.
Since $u, w \in Y^{1,2}_0(\om)$, there exist sequences  $\{ \phi_k\}_{k\geq 1}, \{ \psi_k\}_{k\geq 1}  \subset C^\infty_c(\om)$ such that $\phi_k \to u $ and $\psi_k \to w $ in $Y^{1,2}_0(\om)$. By Lemma  \ref{lem:Lor-Sobolev-emb-p}, we can find a subsequence of $ \phi_k \psi_k$ that is weakly-* convergent in  $Y^{1,(\frac{n}{n-1},1)}_0(\om)$ to some $v \in Y^{1,(\frac{n}{n-1},1)}_0(\om)$. But since $v \in L^{\frac{n}{n-2},1}(\om) \subset L^{\frac{n}{n-2}}(\om)$, it holds that $v= u w $ in $L^{\frac{n}{n-2},1}(\om)$. Thus,
\begin{align*}
\| u w  \|_{L^{\frac{n}{n-2},1}(\om)} &\leq \liminf_{k \to \infty} \| \phi_k \psi_k\|_{L^{\frac{n}{n-2},1}(\om)}\\
&  \leq C_s  \liminf_{k \to \infty} \|\nabla ( \phi_k \psi_k)\|_{L^{\frac{n}{n-1},1}(\om)} \leq 2C_s^2 \| \nabla u \|_{L^2(\om)}\| \nabla w \|_{L^2(\om)},
\end{align*}
where in the last step we used the same argument as in \eqref{eq:Lor-sob-w-nabla w} and the strong convergence of $ \phi_k$ and $ \psi_k $ in $Y^{1,2}_0(\om)$.
\end{proof}

  \vv

\begin{lemma}[\bf Embedding inequality]\label{lem:Lor-Sobolev-emb} 
Let $ h \in L^{n, q}(\om)$, for  $q \in [n , \infty] $,  $u \in Y^{1,2}(\om)$ and  $w \in Y_0^{1,2}(\om)$. Then if $D \subset \om$ is a Borel set, there exists a  constant $C_{s,q}>0$ (depending only on $n$ and $q$) such that
\begin{equation}\label{eq:Lorentz-Sobolev-emb}
\left|\int_D h \nabla u w \right| \leq C_{s,q}  \|h\|_{L^{n, q}(D)} \|\nabla u\|_{L^2(D)} \|\nabla w\|_{L^2(\om)}.
\end{equation}
\end{lemma}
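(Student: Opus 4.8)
The plan is to deduce \eqref{eq:Lorentz-Sobolev-emb} from a short chain of H\"older-type inequalities in Lorentz spaces together with the Lorentz--Sobolev embedding of Lemma \ref{lem:Lor-Sobolev-emb-p}; no smallness or coercivity is involved, and the constant will depend only on $n$ and $q$. First I would reduce the estimate to the whole of $\om$: replacing $h$ by $h\chi_D$ and $\nabla u$ by $(\nabla u)\chi_D$ does not change their distribution functions, so every Lorentz quasi-norm of the truncated functions over $\om$ equals the corresponding Lorentz quasi-norm over $D$, and it suffices to bound $\int_\om |h|\,|\nabla u|\,|w|$ under the extra assumption that $h$ and $\nabla u$ vanish off $D$. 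Note that $\nabla u\in L^2(\om)$ since $u\in Y^{1,2}(\om)$, and $w\in Y^{1,2}_0(\om)$ is exactly the regularity needed to invoke the embedding below.

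Next, by the ordinary Cauchy--Schwarz inequality, $\int_\om |h|\,|\nabla u|\,|w|\le \|\nabla u\|_{L^2(D)}\,\|hw\|_{L^2(D)}$, so the task is reduced to estimating $\|hw\|_{L^2(D)}=\|hw\|_{L^{2,2}(D)}$. I would apply the Lorentz H\"older inequality \eqref{eq:Lor-Holder} with the exponent identities $\tfrac1n+\tfrac{n-2}{2n}=\tfrac12$ (for the primary exponents) and $\tfrac1q+\tfrac1s=\tfrac12$ (for the secondary exponents), which yields $\|hw\|_{L^{2,2}(D)}\lesssim_{n,q}\|h\|_{L^{n,q}(D)}\,\|w\|_{L^{\frac{2n}{n-2},s}(D)}$ with $\tfrac1s=\tfrac12-\tfrac1q$. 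Since $q\ge n\ge3$ we have $0<\tfrac1s\le\tfrac12$, hence $s\ge2$; therefore the Lorentz inclusion \eqref{eq:Lor-inclusion} gives $\|w\|_{L^{\frac{2n}{n-2},s}(D)}\le\|w\|_{L^{\frac{2n}{n-2},s}(\om)}\lesssim_{n,q}\|w\|_{L^{\frac{2n}{n-2},2}(\om)}$, and the last quantity is $\le C_s\|\nabla w\|_{L^2(\om)}$ by Lemma \ref{lem:Lor-Sobolev-emb-p} applied with $p=q=2$. Concatenating the three displays produces \eqref{eq:Lorentz-Sobolev-emb}.

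The case $q=\infty$ (weak-$L^n$) is included, taking $s=2$ and reading \eqref{eq:Lor-Holder} with the first secondary exponent equal to $\infty$; here no inclusion step is needed. The only point that requires attention is the bookkeeping of the secondary Lorentz indices, and specifically the verification that $s\ge2$, so that after a single application of \eqref{eq:Lor-inclusion} one lands at the endpoint index $2$ where the embedding of Lemma \ref{lem:Lor-Sobolev-emb-p} is available; this is precisely the place where the hypothesis $q\ge n$ (with $n\ge3$) enters. I do not expect any genuine obstacle beyond this arithmetic.
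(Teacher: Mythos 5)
Your proof is correct and is essentially the paper's argument: the paper simply states that the lemma "follows from \eqref{eq:Lor-Holder}, \eqref{eq:Lor-Sobolev-emb-p}, and \eqref{eq:Lor-inclusion}," and your write-up is a careful expansion of exactly that chain (Cauchy--Schwarz, Lorentz H\"older with $\tfrac1q+\tfrac1s=\tfrac12$, the inclusion $L^{\frac{2n}{n-2},2}\subset L^{\frac{2n}{n-2},s}$ valid since $s\ge 2$, and the Lorentz--Sobolev embedding for $w\in Y_0^{1,2}(\om)$). The index bookkeeping, including the verification that $q\ge n\ge 3$ forces $s\ge 2$ and the $q=\infty$ endpoint, is accurate.
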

\begin{proof}
This follows from \eqref{eq:Lor-Holder}, \eqref{eq:Lor-Sobolev-emb-p}, and \eqref{eq:Lor-inclusion}.
\end{proof}

  \vv

\begin{remark}\label{rem:bd-cd}
In \cite[eq. (2.9)]{Sak}, it was observed  that if $b, c \in L^{n, \infty}(\om)$ and $d \in L^{\frac{n}{2}, \infty}(\om)$, \eqref{negativity} and \eqref{negativity2} hold if $\vphi \in  Y^{1,(\frac{n}{n-1},1)}_0(\om)$.
\end{remark}

\vv

\subsection{Two auxiliary lemmas}

The next lemma was stated  in  \cite{RZ}.    The  proof as written in  \cite{RZ} is not totally correct since $\hm$ is not absolutely continuous. We overcome this obstacle by an approximation argument. 
\begin{lemma}\label{lem:modulus-dini}
Let $\hm:(0, \infty) \to (0,\infty)$ be a strictly increasing and continuous function such that $\lim_{r \to 0^+} \hm(r) =0$ and $\lim_{r \to \infty} \hm(r) = +\infty$.  Let $\tau \in (0.1)$, $c>0$, and $q \geq 1$, and set 
\begin{equation}\label{eq:Dini-a_i}
b_k= c\,\tau^{kq} \quad \textup{and}\quad a_k=  b_k^{1/q} \log \hm^{-1}( b_k).
\end{equation}
Then it holds
\begin{equation}\label{eq:Dini-sum}
-\sum_{k=1}^\infty a_k  \leq \frac{1}{1-\tau}\int_0^{\hm^{-1}(c)} \hm(t)^{1/q} \frac{dt}{t}.
\end{equation}
\end{lemma}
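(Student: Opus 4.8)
The plan is to estimate the sum $-\sum_{k=1}^\infty a_k = -\sum_{k=1}^\infty b_k^{1/q}\log\hm^{-1}(b_k)$ by comparing it term by term with an integral. First I would observe that since $\hm$ is increasing and continuous with $\hm(r)\to 0$ as $r\to 0^+$, its inverse $\hm^{-1}$ is well-defined, increasing, and continuous, with $\hm^{-1}(s)\to 0$ as $s\to 0^+$; in particular for $s<1$ we have $\hm^{-1}(s)<\hm^{-1}(1)$, so $\log\hm^{-1}(s)$ is negative once $\hm^{-1}(s)<1$, i.e.\ once $s$ is small enough, which is consistent with the left-hand side being a sum of $-a_k$ with $a_k<0$. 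Writing $r_k=\hm^{-1}(b_k)$, note that $b_k = c\,\tau^{kq}$ is a geometric sequence decreasing to $0$, hence $r_k$ decreases to $0$ as well, and $b_k = \hm(r_k)$.

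The key step is to bound each term $-b_k^{1/q}\log r_k = b_k^{1/q}\log(1/r_k)$ by an integral over the interval $(r_{k+1},r_k)$ (or $(r_k, r_{k-1})$, whichever orientation makes the monotonicity work). Since $b_k^{1/q} = c^{1/q}\tau^k$, consecutive terms satisfy $b_{k+1}^{1/q} = \tau\, b_k^{1/q}$, so the geometric ratio is exactly $\tau$, which is where the factor $\frac{1}{1-\tau}$ will come from via $\sum_k \tau^k$-type telescoping. Concretely, I would try to show
\[
-a_k = b_k^{1/q}\log\frac{1}{r_k} \leq \frac{1}{1-\tau}\int_{r_{k+1}}^{r_k} \hm(t)^{1/q}\,\frac{dt}{t}
\]
or a summable variant of it: on $(r_{k+1},r_k)$ we have $\hm(t)^{1/q}\ge \hm(r_{k+1})^{1/q} = b_{k+1}^{1/q} = \tau b_k^{1/q}$ by monotonicity of $\hm$, and $\int_{r_{k+1}}^{r_k}\frac{dt}{t} = \log(r_k/r_{k+1})$. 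The delicate point is that $\log(1/r_k)$ is not the same as $\log(r_k/r_{k+1})$, so a direct interval-by-interval comparison needs the ratios $r_k/r_{k+1}$ to be controlled, which is not guaranteed. The cleaner route, which I expect is the intended one, is an Abel summation / summation-by-parts: write $\log(1/r_k) = \sum_{j\ge k}\log(r_{j+1}/r_j)^{-1} = \sum_{j\ge k}\log(r_j/r_{j+1})$ (using $r_j\downarrow 0$ so this telescopes to $\log(1/r_k)$ only if $r_j\to 0$, which holds), then interchange the order of summation:
\[
-\sum_{k\ge 1} a_k = \sum_{k\ge 1} b_k^{1/q}\sum_{j\ge k}\log\frac{r_j}{r_{j+1}} = \sum_{j\ge 1}\Big(\sum_{k=1}^{j} b_k^{1/q}\Big)\log\frac{r_j}{r_{j+1}}.
\]
Now $\sum_{k=1}^j b_k^{1/q} = c^{1/q}\sum_{k=1}^j\tau^k \le \frac{c^{1/q}\tau}{1-\tau} \le \frac{1}{1-\tau}\,b_j^{1/q}\cdot\frac{1}{\tau^{j-1}}$... this needs care; better: $\sum_{k=1}^j\tau^k = \tau^j\sum_{i=0}^{j-1}\tau^{-i}$, which is not bounded by a constant times $\tau^j$. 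Instead bound $\sum_{k=1}^j b_k^{1/q} \le \frac{b_1^{1/q}}{1-\tau}$ uniformly; but we want $b_j^{1/q}$ to appear. The right bound is $\sum_{k=1}^{j} b_k^{1/q} \le b_j^{1/q}\sum_{k=1}^j \tau^{k-j} \le \frac{b_j^{1/q}}{1-\tau}$ since $\tau^{k-j}\le$ ... no, $\tau^{k-j}\ge 1$ for $k\le j$. Hence the natural pairing is the other way: bound $\log(1/r_k)$ from above and compare $b_k^{1/q}$ with an integrand near $r_k$. The honest statement I would prove is: on $(r_{k+1}, r_k)$, $\hm(t)^{1/q}\frac{1}{t}\,dt$ integrates to at least $\tau b_k^{1/q}\log(r_k/r_{k+1})$, and then sum a telescoped version of $b_k^{1/q}\log(1/r_k)$ using $b_k^{1/q}=\tau^{k-1}b_1^{1/q}$ to absorb the discrepancy between $\log(1/r_k)$ and $\sum_{j\ge k}\log(r_j/r_{j+1})$ — the geometric decay $\tau^{k-1}$ makes the rearranged double sum converge and produces exactly $\frac{1}{1-\tau}\int_0^{r_1}\hm(t)^{1/q}\frac{dt}{t}$, and since $b_1 = c\tau^q \le$ something $\le 1$ (as $c,\tau\in(0,1)$), $r_1 = \hm^{-1}(b_1)\le\hm^{-1}(1)$, giving the stated bound.

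The main obstacle is exactly this bookkeeping: matching the geometric factor $\tau^k$ in $b_k^{1/q}$ against the logarithmic increments $\log(r_k/r_{k+1})$ so that the constant comes out as $\frac{1}{1-\tau}$ and not something worse, and making sure the upper limit of integration can be taken as $\hm^{-1}(1)$ rather than $r_1=\hm^{-1}(b_1)$ (which is immediate since $b_1<1$ and $\hm^{-1}$ is increasing, so the integrand being nonnegative lets us enlarge the interval). Everything else — invertibility of $\hm$, monotonicity, telescoping — is routine. I would present it as a single chain of inequalities: $-\sum a_k \le \sum_k b_k^{1/q}\log(1/r_k)$, rewrite via the integral lower bound $b_k^{1/q}\log(r_k/r_{k+1})\le \tau^{-1}\int_{r_{k+1}}^{r_k}\hm(t)^{1/q}\frac{dt}{t}$, apply Abel summation to pass from $\log(1/r_k)$ to the increments with the $\frac{1}{1-\tau}$ factor emerging from $\sum_{k}\tau^{k}$, and conclude.
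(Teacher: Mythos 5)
Your overall strategy --- pair $b_k^{1/q}$ with a logarithmic increment between consecutive points $r_k=\hm^{-1}(b_k)$, use monotonicity of $\hm$ to compare $\hm(r_{k+1})^{1/q}\log(r_k/r_{k+1})$ with $\int_{r_{k+1}}^{r_k}\hm(t)^{1/q}\,\frac{dt}{t}$, and extract the factor $\frac{1}{1-\tau}$ from the exact geometric relation $b_{k+1}^{1/q}=\tau b_k^{1/q}$ --- is the right one, and it is essentially what the paper does. But the proposal as written does not close, for a concrete reason: the identity $\log(1/r_k)=\sum_{j\geq k}\log(r_j/r_{j+1})$ is false. Since $r_j\downarrow 0$, the partial sums telescope to $\log(r_k/r_{N+1})\to+\infty$, so that tail sum diverges. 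The correct telescoping goes the other way: $\log(1/r_k)=\log(1/r_1)+\sum_{j=1}^{k-1}\log(r_j/r_{j+1})$. With your orientation, the interchange of summation produces the inner sums $\sum_{k\leq j}b_k^{1/q}$, which are bounded below by a positive constant and do not decay in $j$ --- you noticed this yourself mid-proof --- and your final paragraph then only describes the difficulty (``absorb the discrepancy\dots the geometric decay makes the rearranged double sum converge'') without resolving it; the ``discrepancy'' you propose to absorb is infinite.

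The missing step is exactly the reversed orientation. With $\log(1/r_k)=\log(1/r_1)+\sum_{j<k}\log(r_j/r_{j+1})$, Fubini gives inner sums $\sum_{k>j}b_k^{1/q}=\frac{b_{j+1}^{1/q}}{1-\tau}$, so the weight attached to $\log(r_j/r_{j+1})$ is $\frac{1}{1-\tau}\hm(r_{j+1})^{1/q}$, which is at most $\frac{1}{1-\tau}\inf_{(r_{j+1},r_j)}\hm^{1/q}$; hence each term is at most $\frac{1}{1-\tau}\int_{r_{j+1}}^{r_j}\hm(t)^{1/q}\frac{dt}{t}$ and the sum is controlled by $\frac{1}{1-\tau}\int_0^{\hm^{-1}(1)}\hm(t)^{1/q}\frac{dt}{t}$. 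This is precisely the paper's computation in compressed form: it considers $\tau a_k-a_{k+1}=b_{k+1}^{1/q}\bigl(\log\hm^{-1}(b_k)-\log\hm^{-1}(b_{k+1})\bigr)$, bounds $b_{k+1}^{1/q}\leq t^{1/q}$ on $(b_{k+1},b_k)$, sums the resulting integrals to $\int_0^{c} t^{1/q}(\log\hm^{-1})'(t)\,dt=\int_0^{\hm^{-1}(c)}\hm(s)^{1/q}\frac{ds}{s}$, and recovers $\sum_k a_k$ from $\sum_k(\tau a_k-a_{k+1})$ via the geometric-series identity. (Both routes in fact leave a boundary term of the form $\frac{b_1^{1/q}}{1-\tau}\log(1/r_1)$ that the paper's stated telescoping identity silently drops; but the essential gap in your writeup is the orientation of the Abel summation, which as written rests on a divergent series.)
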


\begin{proof}
Note that $\hm$ is one-to-one and its inverse $\hm^{-1}$  is also strictly increasing and continuous.  If we define $\hm_\delta$ as in \eqref{def:mol-f-delta} in $\R$, then $\hm_\delta$ is  strictly increasing and smooth satisfying 
$$
\lim_{t \to 0} \hm_\delta(t)=\int \psi_\delta(-s)\, \hm(s)\,ds=:\alpha_\delta \in [0, \hm(\delta)].
$$
Therefore,  $\hm_\delta^{-1}$ is also strictly increasing  and smooth on $\ran(\hm_\delta)$, the range of $\hm_\delta$. 
As $\lim_{\delta \to 0} \hm_{\delta}(t) =\hm(t)$ locally uniformly in $(0,\infty)$\footnote{Just pointwise convergence is enough here.},  it is not hard to show that $\lim_{\delta \to 0} \hm^{-1}_{\delta}(r) =\hm^{-1}(r)$ for all $r \in \ran(\hm)=(0,\infty)$.  Indeed, let $\ve>0$ and $r >0$. Then, by the continuity of $\hm$ in $(0,\infty)$, there exists $ \delta'=\delta'(\ve,r)>0$ such that
$$
|\hm^{-1}(r+\delta') - \hm^{-1}(r)|<\ve \quad \textup{and}\quad |\hm^{-1}(r-\delta') - \hm^{-1}(r)|<\ve.
$$
For any sequence  $\{\delta_n\}_{n=1}^\infty$ such that $\delta_n \to 0$ as $n \to \infty$,  it holds that  $\lim_{n \to \infty} \hm_{\delta_n} = \hm$,  and so there exists $n_0>0$ such that for every $n>n_0$,
$$
| \hm_{\delta_n} (\hm^{-1}(r+\delta') ) - (r+\delta') | <\delta' \quad \textup{and}\quad | \hm_{\delta_n} (\hm^{-1}(r-\delta') ) - (r-\delta') | <\delta'.
$$
Therefore,
$$
\hm_{\delta_n} (\hm^{-1}(r+\delta') ) >r  \quad \textup{and}\quad \hm_{\delta_n} (\hm^{-1}(r-\delta') ) < r,
$$
which, using that $\hm_{\delta_n}^{-1}$ is strictly increasing in $(0,\infty)$,  implies that
$$
\hm_{\delta_n}^{-1}(r) \in [\hm^{-1}(r-\delta'), \hm^{-1}(r+\delta')]
$$
and thus,  $|\hm_{\delta_n}^{-1}(r) -\hm^{-1}(r)| < \ve$. This concludes the proof of $\lim_{\delta \to 0} \hm_\delta^{-1}=\hm$ pointwisely.

For any fixed positive $N \in \bN$,  it holds that
\begin{align*}
\sum_{k=0}^N (\tau a_k - a_{k+1}) &= \sum_{k=0}^N  b_{k+1}^{1/q} (\log \hm^{-1}(b_k) - \log \hm^{-1}(b_{k+1}))\\
&=\lim_{\delta \to 0} \sum_{k=0}^N  b_{k+1}^{1/q} (\log \hm_\delta^{-1}(b_k) - \log \hm_\delta^{-1}(b_{k+1}))\\
&= \lim_{\delta \to 0} \sum_{k=0}^N  b_{k+1}^{1/q}  \int_{b_{k+1}}^{b_k} \frac{1}{\hm_\delta^{-1}(t)} \frac{1}{\hm_\delta'(\hm_\delta^{-1}(t))} \,dt\\
&\leq  \lim_{\delta \to 0}\sum_{k=0}^N \int_{b_{k+1}}^{b_k} \frac{ t^{1/q} }{\hm_\delta^{-1}(t)} \frac{1}{\hm_\delta'(\hm_\delta^{-1}(t))} \,dt\\
&= \lim_{\delta \to 0}\int_{b_{N+1}}^{c} \frac{ t^{1/q} }{\hm_\delta^{-1}(t)} \frac{1}{\hm_\delta'(\hm_\delta^{-1}(t))} \,dt =  \lim_{\delta \to 0} \int_{\hm_\delta^{-1}(b_{N+1})}^{\hm_\delta^{-1}(c)} \hm_\delta(t)^{1/q} \frac{dt}{t}.
\end{align*}

Remark that $\hm^{-1}(b_{N+1})>0$.   For $\eta>0$,  there exists $\delta_0=\delta(\eta, c, b_{N+1})>0$ such that  for every $\delta <\delta_0$, 
$$
|\hm_\delta^{-1}(c)-\hm^{-1}(c)|<\eta \quad  \textup{and} \quad |\hm_\delta^{-1}(b_{N+1})-\hm^{-1}(b_{N+1})|<\eta.
$$
Therefore, for  $\delta<\delta_0$, 
\begin{align*}
\int_{\hm_\delta^{-1}(b_{N+1})}^{\hm_\delta^{-1}(c)} \hm_\delta(t)^{1/q} \frac{dt}{t} \leq \int_{\hm^{-1}(b_{N+1})-\eta}^{\hm^{-1}(c)+\eta} \hm_\delta(t)^{1/q} \frac{dt}{t}.
\end{align*}
Now, by the local uniform convergence of $\hm_\delta$,  we can find $0<\delta_1 \leq \delta_0$ such that for every $\delta<\delta_1$,  it holds that $|\hm_\delta(t) -\hm(t)|<\eta$ for every $t  \in [\hm^{-1}(b_{N+1})-\eta, \hm^{-1}(c)+\eta]$. Therefore,  for  $\delta<\delta_1$, we infer that
\begin{align*}
\sum_{k=0}^N  b_{k+1}^{1/q} (\log \hm_\delta^{-1}(b_k) &- \log \hm_\delta^{-1}(b_{k+1})) \leq \int_{\hm^{-1}(b_{N+1})-\eta}^{\hm^{-1}(c)+\eta} \hm_\delta(t)^{1/q} \frac{dt}{t}\\
& \leq \eta^{1/q}\ \,\log\frac{\hm^{-1}(c) +\eta}{\hm^{-1}(b_{N+1}) -\eta} + \int_{\hm^{-1}(b_{N+1})-\eta}^{\hm^{-1}(c)+\eta} \hm(t)^{1/q} \frac{dt}{t},
\end{align*}
which, by taking $\delta \to 0$, implies that
$$
\sum_{k=0}^N (\tau a_k - a_{k+1})  \leq   \eta^{1/q}\ \,\log\frac{\hm^{-1}(c) +\eta}{\hm^{-1}(b_{N+1}) -\eta}  + \int_{\hm^{-1}(b_{N+1})-\eta}^{\hm^{-1}(c)+\eta} \hm(t)^{1/q} \frac{dt}{t}.
$$
Since $\eta$ is arbitrary, we may take $\eta \to 0$ and deduce that 
$$
\sum_{k=0}^N (\tau a_k - a_{k+1})  \leq \int_{\hm^{-1}(b_{N+1})}^{\hm^{-1}(c)} \hm(t)^{1/q} \frac{dt}{t}\leq \int_0^{\hm^{-1}(c)} \hm(t)^{1/q} \frac{dt}{t}.
$$
If we take limits as $N \to \infty$,  we get
$$
\sum_{k=0}^\infty (\tau a_k - a_{k+1})  \leq \int_0^{\hm^{-1}(c)} \hm(t)^{1/q} \frac{dt}{t},
$$ 
which, combined with the equality
\begin{equation*}
\sum_{k=0}^\infty (\tau a_k - a_{k+1})= (\tau-1) \sum_{k=0}^\infty a_k,
\end{equation*}
shows \eqref{eq:Dini-sum}. 
\end{proof}

\vv

\begin{lemma}\label{lem:kato-uniformly<1}
Let $\hm:(0, \infty) \to (0,\infty)$ be a strictly increasing and continuous function such that $\lim_{r \to 0^+} \hm(r) =0$.  Assume that  
$$
C_\hm:=\sup_{r>0} \frac{1}{\hm(r)} \int_0^r \hm(t)\, \frac{dt}{t}< \infty \quad \textup{and}\quad \hm(2r) \leq c_0 \hm(r), \,\,\textup{for any}\,\, r>0.
$$
Then 
\begin{equation}
\sup_{t \in (0,\infty)} \frac{\hm(t)}{\hm(2t)} <1.
\end{equation}
\end{lemma}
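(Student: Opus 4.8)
\emph{Plan.} I would argue by contradiction, pitting the uniform Dini bound against the doubling inequality. Set
\[
\Phi(r):=\int_0^r\Big[\frac{\hm(t)}{\hm(r)}\Big]^{1/2}\frac{dt}{t},
\]
so that the hypothesis reads $\sup_{r>0}\Phi(r)=C_\hm<\infty$. From the doubling condition $\hm(2\rho)\le c_0^{-1}\hm(\rho)$ one gets the uniform lower bound
\[
\Phi(\rho)\ \ge\ \int_{\rho/2}^{\rho}\Big[\frac{\hm(t)}{\hm(\rho)}\Big]^{1/2}\frac{dt}{t}\ \ge\ (\log 2)\Big[\frac{\hm(\rho/2)}{\hm(\rho)}\Big]^{1/2}\ \ge\ (\log 2)\,c_0^{1/2},\qquad\text{for all }\rho>0 .
\]
Now suppose, for contradiction, that $\sup_{t\in(0,r)}\hm(t)/\hm(2t)=1$. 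Since $\hm$ is continuous and strictly increasing the ratio is $<1$ at every individual point, so the supremum is not attained; and a maximizing sequence cannot stay in a compact subset of $(0,r]$, since a subsequential limit $t_*$ would satisfy $\hm(t_*)=\hm(2t_*)$. Hence there is a sequence $t_j\downarrow 0$ with $x_j:=\big[\hm(t_j)/\hm(2t_j)\big]^{1/2}\to 1$.

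\emph{The key computation.} For each $j$, split the Dini integral of $\hm$ at the scale $2t_j$:
\[
\Phi(2t_j)=\int_0^{t_j}\Big[\frac{\hm(t)}{\hm(2t_j)}\Big]^{1/2}\frac{dt}{t}+\int_{t_j}^{2t_j}\Big[\frac{\hm(t)}{\hm(2t_j)}\Big]^{1/2}\frac{dt}{t}
= x_j\,\Phi(t_j)+\int_{t_j}^{2t_j}\Big[\frac{\hm(t)}{\hm(2t_j)}\Big]^{1/2}\frac{dt}{t},
\]
using $\hm(t)/\hm(2t_j)=x_j^{2}\,\hm(t)/\hm(t_j)$ on $(0,t_j)$. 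On $[t_j,2t_j]$ one has $\hm(t)\ge\hm(t_j)$, so the last integral is at least $(\log 2)\,x_j$; combining this with the doubling lower bound for $\Phi(t_j)$ gives $C_\hm\ge\Phi(2t_j)\ge x_j\big(\Phi(t_j)+\log 2\big)$. Iterating the same splitting \emph{downward} from $t_j$ — each step costing only a factor $[\hm(2^{-k-1}t_j)/\hm(2^{-k}t_j)]^{1/2}\ge c_0^{1/2}$, plus an additional mass $\ge(\log 2)c_0^{1/2}$ — improves the lower bound to $\Phi(t_j)\ge(\log 2)\,c_0^{1/2}/(1-c_0^{1/2})$, and therefore
\[
C_\hm\ \ge\ \Phi(2t_j)\ \ge\ x_j\Big(\frac{(\log 2)\,c_0^{1/2}}{1-c_0^{1/2}}+\log 2\Big)\ =\ x_j\,\frac{\log 2}{1-c_0^{1/2}} .
\]

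\emph{Conclusion and the main obstacle.} Letting $j\to\infty$ (so $x_j\to1$) and recalling $\Phi(2t_j)\le C_\hm$, the last display puts us exactly on the boundary of a contradiction; the remaining and genuinely delicate point is to show that a scale at which $\hm(\cdot)/\hm(2\cdot)$ is nearly $1$ must push $\Phi(2t_j)$ \emph{strictly} beyond its supremum $C_\hm$. Concretely, I would try to upgrade $\Phi(2t_j)\ge x_j(\Phi(t_j)+\log 2)$ by showing that the near-flatness of $\hm$ on $[t_j,2t_j]$ propagates enough through the iteration to force $\Phi(t_j)$ itself to come within $\log 2$ of $C_\hm$ along (a subsequence of) the $t_j$ — equivalently, that such "flat" scales necessarily accumulate near a scale where the Dini integral is nearly maximal — so that $\limsup_j\Phi(2t_j)\ge\limsup_j\Phi(t_j)+\log 2>C_\hm$. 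This quantitative book-keeping, tracking precisely how the doubling constant $c_0$ and the Dini constant $C_\hm$ constrain one another along the sequence, is the heart of the matter; everything preceding it is the routine reduction.
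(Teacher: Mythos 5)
Your proposal is not a complete proof, and you say so yourself: the ``genuinely delicate point'' you defer is the entire content of the lemma, and the part you do carry out cannot be pushed to a contradiction. The inequality $C_\hm \ge x_j\,\frac{\log 2}{1-c_0^{1/2}}$ that your computation produces is simply a true relation between the two constants in the hypotheses (the paper derives essentially the same lower bound on the Dini integral from doubling alone, in the display following the doubling inequality for $\vartheta_\om$), so letting $x_j\to1$ yields nothing. The structural reason your route is blocked is that you exploit near-flatness of $\hm$ only on the single dyadic step $[t_j,2t_j]$ and then fall back on the crude doubling bound $c_0^{1/2}$ at all smaller scales; the resulting geometric series $\sum_k c_0^{k/2}$ converges, so $\Phi(t_j)$ is bounded below by a fixed finite constant no matter how close $x_j$ is to $1$. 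Flatness at one isolated scale costs the Dini integral only an $O(1)$ amount and can never overflow $C_\hm$.

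The missing idea --- and the paper's actual argument --- is to use near-flatness at \emph{all} sufficiently small scales simultaneously, not along a sequence. Since $\hm(t)/\hm(2t)$ is continuous and $<1$ on every compact subinterval, a maximizing sequence must tend to $0$; the paper reads this as $\lim_{t\to0}\hm(t)/\hm(2t)=1$, i.e.\ for every $\ve>0$ there is $\rho>0$ with $\hm(t)>(1-\ve)\hm(2t)$ for \emph{every} $t<\rho$. Telescoping over $N$ consecutive dyadic scales gives $\hm(2^{-m}\rho)>(1-\ve)^{m}\hm(\rho)$, so the dyadic Dini sum dominates $\sum_{m<N}(1-\ve)^{m/2}$, which tends to $\bigl(1-(1-\ve)^{1/2}\bigr)^{-1}\approx 2/\ve$ as $N\to\infty$; choosing $\ve$ small compared with $C_\hm^{-1}$ gives the contradiction. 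Note the contrast with your estimate: the ratio of the geometric series becomes $(1-\ve)^{1/2}\to1$ rather than the fixed $c_0^{1/2}$, so the lower bound diverges instead of saturating. Your proposed repair (showing that flat scales accumulate near a scale of nearly maximal Dini integral) is not what is needed; what must be established is that the flat scales fill out whole dyadic blocks near $0$, which is exactly what the passage from ``$\sup=1$'' to the limit statement supplies. If you want to complete the proof along your lines, that upgrade from a sequence of flat scales to an interval of flat scales is the step you must justify.
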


\begin{proof}
Since $\hm$ is strictly  increasing and doubling, we have that 
$$
c_0^{-1} \leq \frac{\hm(t)}{\hm(2t)}  < 1, \quad \textup{for every}\,\,t>0.
$$
This inequality and the continuity of $\hm$   in $(0, \infty)$ imply that 
$$
\sup_{t \in (0,\infty)}  \frac{\hm(t)}{\hm(2t)} = 1  \Leftrightarrow \lim_{t \to 0}  \frac{\hm(t)}{\hm(2t)}= 1.
$$
Assume that $\lim_{t \to 0}  \frac{\hm(t)}{\hm(2t)}=1$. Then, by continuity, if we fix $\ve < (4 \,c_0\,C_\hm)^{-1}$, there exists $\rho>0$ such that for  $t< \rho$ it holds that  $ \hm(t) >( 1-\ve)\,\hm(2t)$. If we apply this for $t_m= 2^{-m}\rho$,  $m=0,1,\dots, N-1$, the Dini condition yields
$$
\frac{1- ( 1-\ve)^N}{\ve} \hm(\rho) = \sum_{m=0}^{N-1} ( 1-\ve)^m \hm(\rho) < \sum_{m=0}^{N-1} \hm(2^{-m}\rho) \leq 2\,c_0\,C_\hm \,\hm(\rho).
$$
Letting $N\to \infty$, we get $\ve^{-1} \leq 2\,c_0\,C_\hm$ which is a contradiction.
\end{proof}

\vvv

\subsection{The splitting  lemmas}

The following lemma will be used repeatedly in this manuscript and for the case $p=q=n$ was  proved in \cite{BM}. We extend it to the case of Lorentz spaces $L^{p,q}(\om)$ with $1 < p \leq q < \infty$. 

\begin{lemma}\label{lem:main-splitting-Ln}
 Let $\om \subset \R^{n}$ be an open set, $u \in Y^{1,2}(\om)$, $h \in L^{p,q}(\om)$, for $1 < p \leq q < \infty$ and $a > 0$. Then there exist mutually disjoint measurable sets $\Omega_i \subset \om$ and functions $u_i \in Y^{1,2}(\Omega)$ for $1 \leq i \leq \kappa$ with the following properties:
\begin{enumerate}
\item $\|h\|_{L^{p,q}(\om_i)} =a$, for $1 \leq i \leq \kappa-1$, and  $\|h\|_{L^{p,q}(\om_\kappa)} \leq a $,\label{1exist}
\item $\{ x \in \om: \nabla u_i \neq 0\} \subset \om_i$,\label{1bis-exist}
\item $\nabla u = \nabla u_i$ in $\om_i$, \label{2exist}
\item $|u_i| \leq |u|$,\label{3bis-exist}
\item $uu_i \geq 0$,\label{3exist}
\item $u = \sum_{i=1}^{m}u_i$, \label{4exist}
\item $u_i  \nabla u   = \left(\sum_{j=1}^{i}\nabla u_j\right)u_i $, \label{5exist}
\item $u \nabla u_i = \left(\sum_{j=i}^{\kappa}u_j\right)\nabla u_i $, \label{6exist}
\end{enumerate}
and $\kappa$ has the upper bound
$$\kappa \leq a^{-q}\|h\|_{L^{p,q}(\om)}^q+1.$$
If $u \in Y_0^{1,2}(\om)$,  then $u_i\in Y_0^{1,2}(\om)$ for $1 \leq i \leq \kappa$. 
\end{lemma}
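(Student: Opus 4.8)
I would realise the $\Omega_i$ as ``shells'' between successive level sets of $|u|$ and the $u_i$ as the corresponding truncations of $u$, choosing the cut levels greedily so that $\|h\|_{L^{p,q}(\Omega_i)}$ equals exactly $a$ for $i<\kappa$; the bound on $\kappa$ then comes from a superadditivity property of $E\mapsto\|h\chi_E\|_{L^{p,q}}^q$. Fix a Borel representative of $u$ (finite a.e., since $u\in L^{2^*}(\om)$), let $\infty=t_0\ge t_1\ge\cdots\ge t_{\kappa-1}\ge t_\kappa=0$ be a non-increasing sequence to be chosen, and set
\[
u_i=f_i\circ u,\qquad f_i(s)=\operatorname{sgn}(s)\,\bigl(\min(|s|,t_{i-1})-\min(|s|,t_i)\bigr),
\]
with $\Omega_i$ essentially the shell $\{t_i<|u|<t_{i-1}\}$ --- ``essentially'' signalling that the $\nabla u$-null level sets $\{|u|=t_j\}$ and $\{u=0\}$ may be distributed among the $\Omega_i$'s as convenient. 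Each $f_i$ is odd, $1$-Lipschitz, satisfies $f_i(0)=0$, $|f_i(s)|\le|s|$, and $f_i'(s)=\chi_{\{t_i<|s|<t_{i-1}\}}$ for a.e.\ $s$; hence, by Theorem \ref{lemma:lip0integral}(i) (or Lemma \ref{cor:liploc}) together with $|u_i|\le|u|\in L^{2^*}(\om)$ and $|\nabla u_i|=\chi_{\{t_i<|u|<t_{i-1}\}}|\nabla u|\le|\nabla u|\in L^2(\om)$, we get $u_i\in Y^{1,2}(\om)$; and if $u\in Y^{1,2}_0(\om)$, then $u_i\in Y^{1,2}_0(\om)$ by Theorem \ref{lemma:lip0integral}(ii) (or Lemma \ref{lem:sobolev-van-trace}(ii) applied to $0\le u_i^{\pm}\le|u|$).

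\textbf{Choosing the cut levels --- the hard part.} If $\|h\|_{L^{p,q}(\om)}\le a$, I would take $\kappa=1$, $\Omega_1=\om$, $u_1=u$. Otherwise I peel: suppose $\Omega_1,\dots,\Omega_{i-1}$ have been chosen, each of $h$-norm exactly $a$, with complement $R_{i-1}$ (a sub-level region of $|u|$, up to part of one level set); if $\|h\chi_{R_{i-1}}\|_{L^{p,q}}\le a$, stop and set $\Omega_i=R_{i-1}$; otherwise lower the cut level and carve off from $R_{i-1}$ a shell $\Omega_i$ with $\|h\chi_{\Omega_i}\|_{L^{p,q}}=a$. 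The delicate point is the \emph{exact} normalization: the monotone function $t\mapsto\|h\chi_{R_{i-1}\cap\{|u|\ge t\}}\|_{L^{p,q}}$ may jump over the value $a$ at a level set $\{|u|=t^\ast\}$ (or at $\{u=0\}$) of positive measure. To handle this I would use that Lebesgue measure is non-atomic and that, by Lemma \ref{lem:abs.cont. norms}, $L^{p,q}(\om)$ ($q<\infty$) has absolutely continuous norm: sweeping through an increasing family $\{S_\lambda\}$ of subsets of the offending level set, the map $\lambda\mapsto h\chi_{A\cup S_\lambda}$ is continuous in $L^{p,q}(\om)$ (since $(A\cup S_{\lambda'})\triangle(A\cup S_\lambda)\subset S_{\lambda'}\triangle S_\lambda$ has small measure), hence $\lambda\mapsto\|h\chi_{A\cup S_\lambda}\|_{L^{p,q}}$ is continuous, and the intermediate value theorem yields a set of $h$-norm exactly $a$. (If the residual $h$-mass sits entirely on $\{u=0\}$, one splits $\{u=0\}$ into finitely many pieces of $h$-norm $a$ by the same sweep; the associated $u_i$ are then $\equiv0$, which is harmless for all the stated properties.) This normalization step, combined with checking that splitting a $\nabla u$-null level set among several $\Omega_i$'s disturbs none of \eqref{1bis-exist}, \eqref{2exist}, \eqref{4exist}, \eqref{5exist}, \eqref{6exist}, is the main obstacle.

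\textbf{Termination and the count.} By \eqref{eq:Lorentz-seminorm}, $\|h\chi_E\|_{L^{p,q}(\om)}^q=p\int_0^\infty t^{q-1}\,|\{x\in E:|h(x)|>t\}|^{q/p}\,dt$. Since $q\ge p$, the map $x\mapsto x^{q/p}$ is superadditive, while $E\mapsto|\{x\in E:|h|>t\}|$ is additive over disjoint sets, so $E\mapsto\|h\chi_E\|_{L^{p,q}}^q$ is superadditive over disjoint sets. Therefore
\[
(\kappa-1)\,a^q=\sum_{i=1}^{\kappa-1}\|h\|_{L^{p,q}(\Omega_i)}^q\le\|h\|_{L^{p,q}(\om)}^q<\infty,
\]
which both forces the peeling to stop after finitely many steps and yields $\kappa\le a^{-q}\|h\|_{L^{p,q}(\om)}^q+1$.

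\textbf{Checking the properties.} Property \eqref{1exist} is built into the construction. Since $\nabla u_i=\chi_{\{t_i<|u|<t_{i-1}\}}\nabla u$ a.e.\ (level sets of $u$ being $\nabla u$-null), we get \eqref{1bis-exist} and \eqref{2exist}, the latter holding also on the level-set part of $\Omega_i$, where $\nabla u$ and $\nabla u_i$ both vanish a.e. Properties \eqref{3bis-exist} and \eqref{3exist} follow from $0\le\operatorname{sgn}(u)\,u_i=\min(|u|,t_{i-1})-\min(|u|,t_i)\le|u|$. Property \eqref{4exist} is the telescoping $\sum_{i=1}^\kappa u_i=\operatorname{sgn}(u)(\min(|u|,t_0)-\min(|u|,t_\kappa))=u$. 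Finally, the partial sums telescope to $\sum_{j=1}^i u_j=\operatorname{sgn}(u)(|u|-t_i)^+$ and $\sum_{j=i}^\kappa u_j=\operatorname{sgn}(u)\min(|u|,t_{i-1})$; since $u_i$ vanishes on $\{|u|\le t_i\}$ we get $u_i\nabla u=u_i\chi_{\{|u|>t_i\}}\nabla u=u_i\sum_{j=1}^i\nabla u_j$, which is \eqref{5exist}, and since $\min(|u|,t_{i-1})=|u|$ on $\{t_i<|u|<t_{i-1}\}\supset\{\nabla u_i\neq0\}$ we get $u\,\nabla u_i=\bigl(\sum_{j=i}^\kappa u_j\bigr)\nabla u_i$, which is \eqref{6exist}. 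The $Y^{1,2}_0(\om)$ assertion was recorded in the first paragraph.
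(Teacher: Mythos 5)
Your proof is correct and follows essentially the same route as the paper: truncations of $u$ between greedily chosen level sets, exact normalization of $\|h\|_{L^{p,q}(\Omega_i)}$ via absolute continuity of the Lorentz norm, superadditivity of $E\mapsto\|h\chi_E\|_{L^{p,q}}^q$ (using $q\ge p$) for the bound on $\kappa$, and telescoping for the algebraic identities. The only difference is cosmetic: the paper defines the shells inside $\{\nabla u\neq 0\}$, where every level set of $|u|$ is Lebesgue-null, so the relevant map $k\mapsto\|h\chi_{\Omega(k,t)}\|_{L^{p,q}}$ is already continuous and your intermediate sweep through positive-measure level sets becomes unnecessary.
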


\begin{proof}
If $0 \leq k < t \leq \infty$, we define
\begin{equation}\label{eq:L(omkinfty)finite}
\Omega(k,t):= \{x \in \Omega: k < |u| \leq  t, \nabla u \not = 0 \},
\end{equation}
and by Chebyshev's inequality,  for $k>0$, it holds
$$|\om(k, t)| \leq |\om(k, \infty)|\leq k^{-2^*}\| u\|_{L^{2^*}}^{2^*}< \infty.$$
Let us  define the function $f: [0, \infty]^2 \to [0, \infty)$  by
$$f(k, t) = |\{ k < |u| \leq t, \nabla u \not = 0\}|.$$
We will show that $f(\cdot, t)$ is  continuous  in $[0, \infty)$ for any fixed $t \in (0, \infty]$.

To this end, fix $t \in (0, \infty]$ and $k < t$, and let $\{k_\ell\}_{\ell \in \bN}$ be a positive decreasing sequence so that $k_\ell \to k$. Thus, 
$$f(k,t) = |\Omega(k,t)|= \big|\bigcup_{\ell=1}^{\infty}\Omega(k_\ell,t) \big| = \lim_{\ell \to \infty}f(k_\ell,t),$$
which gives right continuity. Consider now an increasing sequence of positive numbers $\{k_l\}_{l \in \bN}$ so that $k_l \to k$. Then  
$$
\bigcap_{l=1}^{\infty}\Omega(k_l,t) = \Omega(k,t) \cup \{x \in \om: |u|= k, \nabla u \not = 0 \}.
$$
By Lemma \ref{lem:Sobolev-maxmin}, we get  $\big|\{x \in \om: |u|= t, \nabla u \not = 0 \}\big|=0$, and thus, since $|\om(k_1, \infty)| < \infty$, we infer that
$$f(k,t) = |\Omega(k,t) | = \big|\bigcap_{l=1}^{\infty}\Omega(k_l,t) \big| = \lim_{l \to \infty} \big|\Omega(k_l,t) \big|,$$
which implies left continuity of $f(\cdot, t)$ and consequently continuity. 

If we set 
\begin{equation*}
\sigma(x)=
\begin{dcases}
1 &,\textup{if}\,\, x>0\\
-1&,\textup{if}\,\,x<0
\end{dcases},
\end{equation*}
we define 
\begin{equation*}
F_{k,t}(u)=
\begin{dcases}
(t-k)\sigma(u), & |u| > t \\
u- k \sigma(u),&  k< |u| \leq t \\
0, & |u| \leq k,
\end{dcases}
\quad\textup{and}\quad
F_{k,\infty}(u)=
\begin{dcases}
u-k \sigma(u), & |u| > k \\
0, & |u| \leq k
\end{dcases}.
\end{equation*}

For fixed $k, t \in [0, \infty]$, $F_{k,t} \in \Lip (\R)$ and $F_{k,t}(0)=0$, and thus, since $u \in Y^{1,2}(\om)$ (resp. $Y_0^{1,2}(\om)$), by Lemma \ref{lemma:lip0integral}, $F_{k,t}(u) \in Y^{1,2}(\om)$ (resp. $Y_0^{1,2}(\om)$). 

 Recall that the $L^{p,q}$-norm is absolutely continuous  by  Lemma \ref{lem:abs.cont. norms} and thus, since, for any fixed $t \in [0, \infty]$, ${\bf 1}_{\om(k,t)} \to 0$ a.e. as $k \to t$, we will have that $\| h {\bf 1}_{\om(k,t)}\|_{L^{p,q}(\om)} \to 0$. For $1< p \leq q < \infty$, let us define
  \begin{equation}
 H(k,t) :=  \int_0^\infty s^q \,d_{h {\bf 1}_{\om(k,t)}}(s)^\frac{q}{p} \,\frac{ds}{s}.
 \end{equation}
 
 If  $H(0, \infty)\leq  a^q$,  then  we set $\om_1= \{x\in \om: \nabla u \not = 0\}$ and $u_1 = u$. Suppose now that $H(0, \infty) > a^q$, and thus, by the absolute  continuity of $L^{p,q}$, there exists $k_1 > 0$ such that
$$H(k_1, \infty) = a^q.$$
If $H(0, k_1) \leq a^q$, we set $\om_1= \om(k_1, \infty)$ and $\om_2= \om(0, k_1)$, and $u_1=F_{k_1, \infty}(u)$ and $u_2=F_{0, k_1}(u)$. If, on the other hand,  $H(0, k_1) \geq a^q$,  there exists $k_2 \geq 0$ so that
$$H(k_2, k_1) = a^q.$$
If we iterate, there exists $j_0 \in \bN$ so that $H(k_i, k_{i-1}) = a^q$, if $1 \leq i <j_0$, and $H(0, k_{j_0}) \leq a^q$, where $k_0=+\infty$. Indeed, if there were infinitely many $i$ so that $H(k_i, k_{i-1}) = a^q$, then, since $\{\om(k_i, k_{i-1})\}_{i\geq 1}$ are disjoint, we would have 
$$
\infty=  \sum_{i=1}^\infty a^q = \sum_{i=1}^\infty H(k_i,k_{i-1}) \leq \int_0^\infty s^q \,d_{h {\bf 1}_{\om(0, \infty)}}(s)^\frac{q}{p} \,\frac{ds}{s}  \leq \| h\|_{L^{p,q}(\om)}^q<\infty,
$$
which is a contradiction. Here we used that $p \leq q $ and that for disjoint sets $A$ and $B$ it holds that 
$$
|\{ x \in A: |f|>t\}|+ |\{ x \in B: |f|>t\}| \leq |\{ x \in A \cup B: |f|>t\}|.
$$
 The same argument  gives us $j_0 \,a^q \leq  \|h\|_{L^{p,q}(\om)}$, that is, $ j_0 \leq a^{-q}\|h\|_{L^{p,q}(\om)}^q$. 

If we set $\kappa=j_0+1$ and $k_{\kappa}=0$, for $i \in \{1, \dots, \kappa\}$, we define 
$$
\om_i=\om(k_i, k_{i-1})\quad \textup{ and} \quad u_i=F_{k_i, k_{i-1}}(u).
$$ 
We have already shown \eqref{1exist}, so it remains to prove that \eqref{1bis-exist}-\eqref{6exist} hold as well. 

Firstly, \eqref{1bis-exist}, \eqref{2exist}, and \eqref{3bis-exist} are clear by definition, while \eqref{3exist} follows by simple computations; indeed,  note first that $u u_i=0$ whenever $|u| < k_{i}$. In the set where  $|u| > k_{i-1} > k_{i}$, we have that
$$uu_i = u \, \sigma(u) \,(k_{i-1}-k_i) =|u| \,(k_{i-1}-k_i)  \geq 0,$$
while, when $ k_{i}<|u| \leq k_{i-1}$,
$$uu_i = u^2- \sigma(u)\,u\,k_{i}=|u| (|u| - k_{i}) \geq 0.$$ 
This concludes the proof of \eqref{3exist}.

 For \eqref{4exist} and \eqref{5exist}, we may rewrite $u_j= u_{k_j, \infty}-u_{k_{j-1, \infty}}$, in view of which, we have
\begin{equation}\label{eq:mainlemma-sum-tele}
\sum_{j=1}^{i}u_j=  F_{k_1, \infty}(u)+\sum_{j=2}^{i} (F_{k_j, \infty}(u)-F_{k_{j-1}, \infty}(u)) =F_{k_i, \infty}(u).
\end{equation}
 In the case $i= \kappa $, we have
$$\sum_{j=1}^{\kappa } u_j = F_{k_\kappa, \infty}(u) = u,$$
yielding \eqref{4exist}.
By definition, $\nabla u_{k_i, \infty} = \nabla u$, when $|u| > k_i$ (i.e.,  in the support of $u_i$), while $u_i = 0$, whenever $|u| \leq k_i$.
and so, \eqref{5exist} follows from  \eqref{eq:mainlemma-sum-tele}.
Since $\{\nabla u_i \neq 0 \} \subset \om_i$ we can use \eqref{4exist} to get
$$ 
 u \nabla u_i = u_i \sum_{j=1}^{\kappa } \nabla  u_j= \nabla u_i \sum_{j=i}^{\kappa}  u_j.
$$
This concludes the proof of the lemma.
\end{proof}

  \vv
The direct analogue of this lemma for the space $\mathcal{K}'(\om)$ was proved in \cite{ViZa} but it is not stated as such. For the reader's convenience we will give a sketch of the proof. 
\begin{lemma}\label{lem:main-splitting-Kato} 
Let $\om \subset \R^{n}$ be an open set, $u \in Y^{1,2}(\om)$ (resp. $ Y_0^{1,2}(\om)$), $h \in \mathcal{K}'(\Omega)$ and $a > 0$. Then, there exist mutually disjoint measurable sets $\Omega_i \subset \om$ and functions $u_i \in Y^{1,2}(\Omega)$ (resp. $ Y_0^{1,2}(\om)$), for $1 \leq i \leq \kappa$, satisfying  \eqref{1bis-exist}-\eqref{6exist}, so that 
$$
 \vartheta_\om (h {\bf 1}_{\om_i})=a^2, \,\,\textup{for}\,\,1 \leq i \leq \kappa-1,\quad \textup{and}\quad \vartheta_\om (h {\bf 1}_{\om_\kappa} )\leq a^2.
$$
If $\rho_0>0$ is such that  $ \vartheta_\om (h, \rho_0)= a^2/4$, then  $\kappa$ has the upper bound
$$
 \kappa \leq   1 + 2 \,  a^{-2} \,{\rho_0^{2-n}} \|h\|_{L^1(\om)}.
$$
If $\om$ is a bounded open set contained in a ball $B_r$,  we can assume $h \in \mathcal{K}(\Omega)$ replacing $ \vartheta_\om (\cdot)$ by $ \vartheta_\om (\cdot,r)$.
\end{lemma}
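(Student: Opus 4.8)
The plan is to transcribe the proof of Lemma~\ref{lem:main-splitting-Ln} almost word for word, replacing the Lorentz quasi-norm $\|\cdot\|_{L^{p,q}(\om)}$ by the Stummel--Kato norm $\vartheta_\om(\cdot)$, and to supply Stummel--Kato analogues of the only two places where properties of the norm are genuinely used: the continuity in $k$ of the cutting function, and the upper bound on the number of pieces.

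First I would set, for $0\leq k<t\leq\infty$, $\Omega(k,t)=\{x\in\om: k<|u|\leq t,\ \nabla u\neq 0\}$ as in \eqref{eq:L(omkinfty)finite}; Chebyshev's inequality gives $|\Omega(k,t)|\leq k^{-2^*}\|u\|_{L^{2^*}(\om)}^{2^*}<\infty$, and the continuity of $f(k,t):=|\Omega(k,t)|$ in the variable $k$ (with $t$ fixed) follows exactly as in that proof, the level set $\{|u|=k,\ \nabla u\neq 0\}$ being null by Lemma~\ref{lem:Sobolev-maxmin}. The cut functions are again $u_i=F_{k_i,k_{i-1}}(u)$ with $F_{k,t}$ as there; since $F_{k,t}\in\Lip(\R)$ and $F_{k,t}(0)=0$, Lemma~\ref{lemma:lip0integral} keeps each $u_i$ in $Y^{1,2}(\om)$ (resp.\ $Y_0^{1,2}(\om)$), and properties \eqref{1bis-exist}--\eqref{6exist} come out of the identical telescoping identities, with the norm playing no role.

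The first new ingredient is the continuity in $k$ of $h(k,t):=\vartheta_\om(|h|\chi_{\Omega(k,t)})$. Since $\vartheta_\om(\cdot)$ is a norm on $\mathcal{K}'(\om)$, for $k'\geq k$ one has $|h(k,t)-h(k',t)|\leq\vartheta_\om\big(|h|\chi_{\Omega(k,t)\setminus\Omega(k',t)}\big)$, and by the continuity of $f$ the set $\Omega(k,t)\setminus\Omega(k',t)$ has measure $f(k,t)-f(k',t)\to 0$ as $k'\to k$; the absolute continuity of the norm of $\mathcal{K}'(\om)$ (Lemma~\ref{lem:abs.cont. norms}, in particular \eqref{eq:abs.cont. norms}) then forces $\vartheta_\om(|h|\chi_E)\to 0$, which is the continuity we want. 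The same fact gives $h(k,\infty)\to 0$ as $k\to\infty$, because $|\Omega(k,\infty)|\to 0$. With this in hand the recursion runs verbatim: if $h(0,\infty)\leq a^2$ take $\kappa=1$, $\om_1=\Omega(0,\infty)$, $u_1=u$; otherwise pick $k_1$ with $h(k_1,\infty)=a^2$ by the intermediate value theorem and keep cutting, at each step inspecting whether the remaining bottom piece $\Omega(0,k_{i-1})$ still has $\vartheta_\om$-norm exceeding $a^2$, and if so producing $\om_i=\Omega(k_i,k_{i-1})$ with $\vartheta_\om(|h|\chi_{\om_i})=a^2$.

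The second new ingredient---the step I expect to need the most care---is the termination of the recursion together with the explicit bound on $\kappa$, and this is where $r_0$ enters. For $g$ supported in $\om$ and any $r\geq r_0$, splitting the ball $B_r(x)$ into $B_{r_0}(x)$ and the annulus $B_r(x)\setminus B_{r_0}(x)$ and using $|x-y|^{2-n}\leq r_0^{2-n}$ on the annulus gives $\vartheta_\om(g,r)\leq\vartheta_\om(g,r_0)+r_0^{2-n}\|g\|_{L^1(\om)}$, hence $\vartheta_\om(g)\leq\vartheta_\om(g,r_0)+r_0^{2-n}\|g\|_{L^1(\om)}$. Applied to $g=|h|\chi_{\om_i}$, and using $\vartheta_\om(|h|\chi_{\om_i},r_0)\leq\vartheta_\om(h,r_0)=a^2/4$, this yields $a^2\leq\tfrac{a^2}{4}+r_0^{2-n}\int_{\om_i}|h|$, i.e.\ $\int_{\om_i}|h|\geq\tfrac{3}{4}a^2 r_0^{n-2}$ for each of the $\kappa-1$ full pieces. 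As the $\om_i$ are pairwise disjoint subsets of $\om$ and $h\in\mathcal{K}'(\om)\subset L^1(\om)$, summing gives $(\kappa-1)\tfrac{3}{4}a^2 r_0^{n-2}\leq\|h\|_{L^1(\om)}$, whence $\kappa\leq 1+2a^{-2}r_0^{2-n}\|h\|_{L^1(\om)}$; in particular the recursion stops after finitely many steps and the last piece $\om_\kappa=\Omega(0,k_{\kappa-1})$ satisfies $\vartheta_\om(|h|\chi_{\om_\kappa})\leq a^2$. Finally, for bounded $\om\subset B_r$ one has $\mathcal{K}(\om)=\mathcal{K}'(\om)$ with $\vartheta_\om(\cdot)\approx\vartheta_\om(\cdot,r)$, so the whole argument goes through unchanged after replacing $\vartheta_\om(\cdot)$ by $\vartheta_\om(\cdot,r)$. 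The only real obstacle is checking that the level-set continuity survives the passage to the Stummel--Kato norm and that the $r_0$-splitting is applied to the correct sets; everything else is a transcription of the proof of Lemma~\ref{lem:main-splitting-Ln}.
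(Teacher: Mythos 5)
Your proof is correct and follows essentially the same route as the paper: run the stopping-time construction of Lemma \ref{lem:main-splitting-Ln} with $h(k,t)=\vartheta_\om(|h|\chi_{\Omega(k,t)})$, using absolute continuity of the $\mathcal{K}'$-norm for the intermediate-value step, and bound $\kappa$ by showing each full piece carries $L^1$-mass at least a fixed multiple of $a^2r_0^{n-2}$. Your direct annulus splitting $\vartheta_\om(g)\leq\vartheta_\om(g,r_0)+r_0^{2-n}\|g\|_{L^1(\om)}$ even gives a marginally better constant than the paper's (which quotes the factor-$2$ estimate from Lemma \ref{lem:kato-approx-0}), and still yields the stated bound $\kappa\leq 1+2a^{-2}r_0^{2-n}\|h\|_{L^1(\om)}$.
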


\begin{proof}
Using the same notation as before, we define
$$
H(k,t) = \vartheta_\om (h {\bf 1}_{\om(k,t)}).
$$
Making the same stopping time argument with respect to the condition $h(k,t) =a^2$ and noticing that  we only used the absolute continuity of the norm, we can reason as in the proof of Lemma \ref{lem:main-splitting-Ln}. The only difference lies on the estimate of $\kappa$ since we cannot linearize it as we did in the previous case.

Let us first show that the stopping process results to  a finite number of  sets. Indeed, arguing as in the proof of Lemma \ref{lem:kato-approx-0}, we can find $\rho_0$ so that $ \vartheta_\om (h, \rho_0)= a^2/4$ so that 
$$
a^2= \vartheta_\om ( h {\bf 1}_{\om_i}) \leq 2  \vartheta_\om (h {\bf 1}_{\om_i}, \rho_0) + {\rho_0^{2-n}} \int_{\om_i} |h| {\bf 1}_{\om_i}\,dy \leq \frac{a^2}{2} + {\rho_0^{2-n}} \int_{\om_i} |h| {\bf 1}_{\om_i}\,dy.
$$
So, if assume that there infinite many $\om_i$, we can sum in $i$ as before and get
$$
\infty \leq {\rho_0^{2-n}} \sum_i \int_{\om_i} |h| {\bf 1}_{\om_i}\,dy \leq  {\rho_0^{2-n}} \| h\|_{L^1(\om)},
$$
which is a contradiction. If $j_0$ is the number of $i$'s for which $\vartheta_\om (h {\bf 1}_{\om_i})=a^2$, the same argument will give the bound
$$
j_0 \leq 2 a^{-2} {\rho_0^{2-n}} \| h\|_{L^1(\om)}.
$$
\end{proof}

  \vv
  
\begin{remark}
It is  interesting to see that the bound on $\kappa$, although at a first glace does not seem to be scale invariant,  in fact it is (with the correct scaling).  Indeed, let $h_r=r^2 h(rx)$ in the open set  $\om_r = r^{-1} \om$. Then, by making  the change of variables $y = rx$  we have that
\begin{align*}
{\rho_0^{2-n}} \| h_r\|_{L^1(\om_r)} = (\rho_0 \,r)^{2-n}  \| h\|_{L^1(\om)} .
\end{align*}
Now, recall that $\rho_0$ was chosen  so that $\vartheta_{\om_r} ( h_r, \rho_0) = a^2/4$, which, by the same change of variables, implies that $\vartheta_{\om} ( h, \rho_0\, r) = a^2/4$. Note that  if $\vartheta_{\om} ( h, \cdot)$ is invertible, we  have that $\rho_0 \,r = \vartheta_{\om}^{-1}( h, a^2/4)$.
\end{remark}

\vv

\subsection{Variational capacity }

\begin{definition}
Let $\Omega \subset \R^n$ be  open and  $E \subset \Omega$. If we  set 
$$
\bK_E(\om):=\{w \in Y^{1,2}_0(\Omega): E \subset \{w \geq 1 \}^\textup{o} \}
$$
then we define the {\it (variational) capacity of the condenser} $(E, \om)$ as
$$
\Cap(E, \om)= \inf_{w \in \bK_E} \int_\om |\nabla w|^2.
$$

The following properties of capacity   verify that it is a Choquet capacity and satisfies the axioms considered by Brelot. A proof can be found for instance in Theorem 2.3 in \cite{MZ}. 

\begin{itemize}
\item[(i)] If $E \subset \om$ is compact,
$$
\Cap(E, \om)= \inf\left\{ \int_\om |\nabla w|^2 : w \in C^\infty_c(\om), \,\, u \geq 1 \,\,\textup{in}\,\, E\right\}.
$$
\item[(ii)] If $E \subset \om$ is open, 
$$
\Cap(E, \om)=\sup_{\textup{compact} \,\,K \subset E} \Cap(K, \om).
$$
\item[(iii)] If $ E_1 \supset E_2 \supset \dots$ is a sequence of compact subsets of $\om$,
$$
\Cap\Big( \bigcap_{j \geq 1} E_j, \om \Big) = \lim_{j \to \infty} \Cap(E_j, \om).
$$
\item[(iv)] If $ E_1 \subset E_2 \subset \dots$ is a sequence of arbitrary subsets of $\om$,
$$
\Cap\Big( \bigcup_{j \geq 1} E_j, \om \Big) = \lim_{j \to \infty} \Cap(E_j, \om).
$$
\item[(v)] If $ E_1, E_2 \subset \dots$ are arbitrary subsets of $\om$, then
$$
\Cap\Big( \bigcup_{j \geq 1} E_j, \om \Big)  \leq  \sum_{j \geq 1} \Cap(E_j, \om).
$$
\end{itemize}
\end{definition} 

\vv

\section{Interior and boundary Caccioppoli inequality}\label{sec:caccio}

In sections \ref{sec:caccio}-\ref{sec:dirichlet} we will be dealing with subsolutions and supersolutions of the equation
\begin{equation}\label{eq:Lu=f-divg}
Lu=-\divv (A\nabla u + b u) -c \nabla u - du = f - \divv g,
\end{equation}
where $ f \in L_{\textup{loc}}^{1}(\om)$ and  $g \in L_{\textup{loc}}^2(\om; \R^n)$.


\subsection{Standard Caccioppoli inequality}\label{subsec:caccio}

\begin{theorem}[\bf Caccioppoli inequality I] \label{thm:subCaccioppoli}
Let $u \in Y_{\loc}^{1,2}(\om)$ be either a solution  or a non-negative subsolution of \eqref{eq:Lu=f-divg} and $ f \in L_\loc^{2_*}(\om)$. Assume also that   \eqref{negativity} is satisfied and either {(i)} $b+c \in L^{n,q}_\loc(\om)$, for $q \in [n, \infty)$, or {(ii)} $|b+c|^2 \in \mathcal{K}_\loc(\om)$.  For a  non-negative function $\eta \in C_c^{\infty}(\Omega)$, we let $\om'$ be a bounded open set  such that $\supp \eta \subset \om' \Subset \om$. Then it holds
\begin{equation}\label{Caccio-sub-1}
\|\eta \nabla u \|^2_{L^2(\om')}  \lesssim  \|u \nabla \eta \|^2_{L^2(\om')}+ \|f\eta\|_{L^{2_*}(\om')}^2 +\|g \eta\|_{L^2(\om')}^2,
\end{equation}
where the implicit constant depends only on $\lambda$,  $\Lambda$, and also either on  $C_{s,q}$ and $\|b+c\|_{L^{n,q}(\om')}$, for $q \geq n$ under  assumption (i), or  $C_s'$ and $\vartheta_{\om'}(|b+c|^2, 2 \diam \om')$ under  assumption (ii)\footnote{ Recall that $C_s'$ and  $C_{s,q}$  are the constants in Lemmas   \ref{lem:Kato-emb-grad-Br}  and \ref{lem:Lor-Sobolev-emb} respectively.}.
\end{theorem}

\begin{proof}
We will only treat the case that $u$ is a non-negative subsolution of \eqref{eq:Lu=f-divg} as the proof when $u$ is a solution is almost identical and is omitted. Notice that since $K:= \supp\eta$  is a compact subset of $\om$, we can always find a bounded open set $\om'$ such that $K \subset \om' \Subset \om$, and as $u \in Y^{1,2}_\loc(\om)$, it holds that $u \in Y^{1,2}(\om')$. Working in $\om'$ instead of $\om$, we may assume, without  loss of generality, that $u \in Y^{1,2}(\om)$. Moreover, $u$ is clearly a subsolution in any open subset of $\om$.  For simplicity, let us preserve the notation $\om$ instead of $\om'$.

We first assume that $b+c \in L^{n,q}(\om')$. Apply Lemma $\ref{lem:main-splitting-Ln}$ to the function $u$, for $p= n$, $q \geq n$, $h= b+c$, and $a = \frac{\lambda}{8C_{s,q}}$, where $C_{s,q}$ is the constant in \eqref{eq:Lorentz-Sobolev-emb}, to find $\om_i \subset \om$ and  $u_i \in Y^{1,2}(\om)$,  $1 \leq i \leq \kappa$, satisfying  \eqref{1exist}--\eqref{6exist}. Note that  $\eqref{3exist}$ tells us that $u_i$ and $u$ have the same sign, and so, the functions $\eta^2 u_i \in Y^{1,2}_0(\om)$ are non-negative. Thus, using that $u$ is a subsolution for \eqref{eq:Lu=f-divg} we have
\begin{align*}
\int_\om f (\eta^2u_i) + \int_\om  g \nabla(\eta^2 u_i)&\geq \int_{\om}A \nabla u \nabla(\eta^2 u_i) + bu\nabla (\eta^2u_i) -c\nabla u(\eta^2 u_i)-du (\eta^2u_i)  \\
&= \int_{\om}A \nabla u \nabla(\eta^2 u_i) + b \nabla (\eta^2 u u_i) -(b+c)  \nabla u \eta^2u_i-d \eta^2 u u_i\\
&\geq \int_{\om}A \nabla u \nabla(\eta^2 u_i) -(b+c)  \nabla u \eta^2u_i,
\end{align*}
where in the last inequality we used \eqref{3exist}, Lemma \eqref{lem:Lor-Sob-uw}, Remark \ref{rem:bd-cd}, and \eqref{negativity}. In view of $\eqref{2exist}$ and $\eqref{4exist}$, the latter inequality can be written as
\begin{multline}\label{eq:twosums}
\int_{\om_i} A \nabla u_i \nabla u_i \eta^2 \leq -2\int_{\om}A\nabla u \nabla \eta u_i \eta   + \sum_{j=1}^{i}\int_{\om_j}(b+c)\nabla u_j \eta^2 u_i\\
 +\int_\om f (\eta^2u_i) + \int_\om  g \nabla(\eta^2 u_i) =:\textup{I}_1(i)+\textup{I}_2(i)+\textup{I}_3(i)+\textup{I}_4(i).
\end{multline}
By \eqref{eqelliptic1} we get
\begin{equation}\label{eq:lambdabound}
 \lambda \|\eta \nabla u_i\|_{L^2}^2  \leq  \int_{\om_i}A \nabla u_i \nabla u_i \eta^2,
\end{equation}
while, by  H\"older's inequality,
\begin{align}
|\textup{I}_1(i)|  &\leq 2\Lambda \| \eta \nabla u \|_{L^2} \| u_i \nabla \eta  \|_{L^2}. \label{eq:I1(i)}
\end{align}

If we apply \eqref{eq:Lorentz-Sobolev-emb} and Young's inequality, along with  the fact that $ \|b+c\|_{L^{n,q}(\om_j)} \leq \frac{\lambda}{8C_{s,q}}$ for any $1 \leq j \leq \kappa$,  we get that
 \begin{align}
 \textup{I}_2(i) &=  \int_{\om_i} (b+c) \nabla u_i \eta^2 u_i  +   \sum_{j=1}^{i-1 }\int_{\om_j}(b+c) \nabla u_j \eta^2 u_i\notag \\
& \leq  C_{s,q} \frac{\lambda}{8C_{s,q}}  \| \eta \nabla u_i \|_{L^2}  \| \nabla(u_i \eta)\|_{L^{2}} +  C_{s,q} \frac{\lambda}{8C_{s,q}}  \sum_{j=1}^{i-1} \|\eta \nabla u_j \|_{L^2}  \| \nabla(u_i \eta)\|_{L^{2}}\notag\\
 &\leq \frac{3\lambda}{16}  \|\eta \nabla u_i \|^2_{L^2}+  \frac{\lambda}{16} \| u_i \nabla \eta\|_{L^2}^2+  \frac{\lambda}{16}( \|u_i \nabla\eta \|_{L^2} + \|\eta \nabla u_i  \|_{L^2})^2\notag \\ 
 &+  \frac{\lambda}{16}  \left( \sum_{j=1}^{i-1}\|   \eta \nabla u_j\|_{L^2} \right)^2\notag\\
 &\leq\frac{5\lambda}{16} \|\eta \nabla u_i \|^2_{L^2}+\frac{3\lambda}{16} \|u_i \nabla \eta\|^2_{L^2} +  \frac{\lambda}{16}  \left( \sum_{j=1}^{i-1}\|   \eta \nabla u_j\|_{L^2} \right)^2.\label{eq:I2(i)}
 \end{align}
 
By H\"older's, Sobolev's and Young's inequalities we obtain
\begin{align} \label{eq:I34(i)}
 \textup{I}_3(i) +\textup{I}_4(i) 
\leq & \frac{C_{s,q}^2}{4 \delta} \|f \eta \|_{L^{2_*}}^2 + \frac{1}{2 \delta} \|g \eta \|_{L^{2}}^2+2 \delta \| u_i \nabla  \eta\|^2_{L^{2}} + 2 \delta\| \eta \nabla u_i\|^2_{L^2}.
 \end{align}
Choosing $\delta= \frac{\lambda}{32}$ in \eqref{eq:I34(i)}, we can combine  \eqref{eq:twosums},  \eqref{eq:lambdabound},  \eqref{eq:I1(i)}, and \eqref{eq:I2(i)} and infer that
 \begin{align*}
 \frac{ 3 \lambda}{8} \|\eta \nabla u_i \|^2_{L^2} 
 &\leq   \left( \frac{4 \Lambda^2}{\lambda} + \frac{\lambda}{4} \right) \|u_i \nabla \eta\|^2_{L^2} +  \frac{\lambda}{16}  \left( \sum_{j=1}^{i-1}\|   \eta \nabla u_j\|_{L^2} \right)^2  + \frac{16}{\lambda}\|g \eta \|_{L^{2}}^2\\
 &+\frac{16\,C_{s,q}^2}{\lambda} \|f \eta \|_{L^{2_*}}^2,
  \end{align*}
which implies that there exist positive constants  $C_1$, $C_2$ and $ C_3$ depending on $\lambda$, $\Lambda$ and $C_{s,q}$ so that 
\begin{align}
\|\eta \nabla u_i \|_{L^2} & \leq C_1 \|u_i \nabla \eta\|_{L^2} + C_2 \left( \|f \eta \|_{L^{2_*}}+ \|g \eta \|_{L^{2}} \right) +  \sum_{j=1}^{i-1}\|   \eta \nabla u_j\|_{L^2}.\\
&+ C_3 \| \eta \nabla u \|_{L^2}^{1/2} \| u_i \nabla \eta  \|_{L^2}^{1/2}.
\end{align}
Note that  the constant  the sum is multiplied with is indeed $1$, which is convenient in the iteration argument below.
If we denote $C_0:=\max(C_1, C_2, C_3)$,
$$
 x_j:=\|\eta \nabla u_j \|_{L^2}, \,\,\textup{and}\,\, y_0:=\|u \nabla \eta\|_{L^2}+ \| \eta \nabla u \|_{L^2}^{1/2} \| u \nabla \eta  \|_{L^2}^{1/2} + \|f \eta \|_{L^{2_*}} +\|g \eta \|_{L^{2}},
$$
and use that \eqref{3bis-exist}, the latter inequality can be written as
\begin{align}\label{eq:Caccio-recursive}
 x_1 &\leq C_0\, y_0, \notag\\
 x_i &\leq C_0\, y_0 + \sum_{j=1}^{i-1} x_j, \quad \textup{for} \,\,i=2, \cdots, \kappa.
  \end{align}
By induction,  we get
\begin{align}\label{eq:Caccio-induction}
 x_i  \leq 2^{i-1} \,C_0\, y_0.
  \end{align}
 Indeed, for $i=1$, it holds $x_1 \leq C_0\, y_0$. Assume now that $x_j \leq 2^{j-1}  \,C_0\, y_0$ for all $1 \leq j \leq i-1$. Then, by \eqref{eq:Caccio-recursive} and the induction hypothesis, 
$$
 x_i \leq C_0\, y_0 + C_0\, y_0 \sum_{j=1}^{i-1}  2^{j-1} = 2^{i-1}\,C_0\, y_0.
 $$

Summing \eqref{eq:Caccio-induction} in $i \in \{ 1, \dots, \kappa\}$  we obtain
 \begin{align}\label{eq:Caccio-sum-xi}
\sum_{i=1}^\kappa  x_i  \leq 2^{\kappa} \,C_0\, y_0,
 \end{align}
which, in light of \eqref{4exist}, \eqref{eq:Caccio-sum-xi} and Young's inequality (with a small constant), implies that
$$
\| \eta \nabla u \|_{L^2} \leq \sum_{i=1}^\kappa   \| \eta \nabla u_i \|_{L^2} \leq  4^{\kappa} \,C_0^2\ \left( \| u \nabla \eta\|_{L^2}+ \|f\eta\|_{L^{2_*}} +\|g \eta\|_{L^2} \right).
$$
This concludes our proof when $b+c \in {L^{n,q}(\om;\, \R^n)}$, since  $\kappa$ depends only $\lambda$,  $\Lambda$,  $C_{s,q}$, and also on $\|b+c\|_{L^{n,q}(\om;\, \R^n)}$. 

Let us now prove the same result in the case $|b+c|^2 \in \mathcal{K}(\om')$. We apply Lemma $\ref{lem:main-splitting-Kato}$ to the function $u$, for $h= b+c$, and $a = \frac{\lambda}{8C_s'}$, where $C_s'$ is the constant in \eqref{eq:Kato-emb-grad-Br}, to find $\om_i \subset \om$ and  $u_i \in Y^{1,2}(\om)$,  $1 \leq i \leq \kappa$, satisfying  \eqref{1exist}--\eqref{6exist}. The main argument will be exactly the same as in the previous case will not be repeated. Although, there is a difference coming from the embedding theorem we apply, which is Lemma \ref{lem:Kato-emb-grad-Br} as opposed to Lemma \ref{lem:Lor-Sobolev-emb} we used before. Taking this under consideration, it is enough to handle the term $I_2(i)$.

To this end,  apply Cauchy-Scwharz's inequality, \eqref{eq:Kato-emb-grad-om}, Sobolev's and Young's inequalities, along with  the fact that for any $1 \leq j \leq m$ it holds $ \vartheta_{\om'}(|b+c|^2{\bf 1}_{\om_j}, 2 \diam \om') \leq \frac{\lambda}{8C_s'}$, and get that
\begin{multline}
 \textup{I}_2(i) =  \int_{\om_i} (b+c) \nabla u_i \eta^2 u_i  +   \sum_{j=1}^{i-1 }\int_{\om_j}(b+c) \nabla u_j \eta^2 u_i\notag \\
\leq   C_s'   \| \nabla(u_i \eta)\|_{L^{2}}  \left( \vartheta^{1/2}_{\om'}(|b+c|^2{\bf 1}_{\om_i})   \| \eta \nabla u_i\|_{L^2} +   \sum_{j=1}^{i-1 }\vartheta^{1/2}_{\om'}(|b+c|^2{\bf 1}_{\om_j})  \|\eta \nabla u_j \|_{L^2} \right) \notag\\
 \leq  \frac{\lambda}{8}  \| \eta \nabla u_i \|_{L^2}  \| \nabla(u_i \eta)\|_{L^{2}} +   \frac{\lambda}{8}  \sum_{j=1}^{i-1} \|\eta \nabla u_j \|_{L^2}  \| \nabla(u_i \eta)\|_{L^{2}}\notag\\
 \leq \frac{\lambda}{16}  \|\eta \nabla u_i \|^2_{L^2}+  \frac{\lambda}{16} \| u_i \nabla \eta\|_{L^2}^2+  \frac{\lambda}{16}( \|u_i \nabla\eta \|_{L^2} + \|\eta \nabla u_i  \|_{L^2})^2 + \frac{\lambda}{16}  \left( \sum_{j=1}^{i-1}\|   \eta \nabla u_j\|_{L^2} \right)^2\notag\\
 \leq\frac{3\lambda}{16} \|\eta \nabla u_i \|^2_{L^2}+\frac{3\lambda}{16} \|u_i \nabla \eta\|^2_{L^2} +  \frac{\lambda}{16}  \left( \sum_{j=1}^{i-1}\|   \eta \nabla u_j\|_{L^2} \right)^2.\label{eq:I2(i)}
\end{multline}
This concludes the proof the Theorem.
\end{proof}

  \vv

\begin{theorem}[\bf Caccioppoli inequality II]\label{thm:Caccioppoli2} 
Let $u \in Y_{\loc}^{1,2}(\om)$ be either a solution  or a non-negative subsolution of \eqref{eq:Lu=f-divg} and $ f \in L_\loc^{2_*}(\om)$. Assume also that   \eqref{negativity2} is satisfied and either (i) $b+c \in L^{n,q}_\loc(\om)$, for $q \in [n, \infty)$, or (ii) $|b+c|^2 \in \mathcal{K}_\loc(\om)$.  For a  non-negative function $\eta \in C_c^{\infty}(\Omega)$, we let $\om'$ be a bounded open set  such that $\supp \eta \subset \om' \Subset \om$. Then it holds
\begin{equation}\label{Caccio-sub-1}
\|\eta \nabla u \|^2_{L^2(\om')}  \lesssim  \|u \nabla \eta \|^2_{L^2(\om')}+ \|f\eta\|_{L^{2_*}(\om')}^2 +\|g \eta\|_{L^2(\om')}^2,
\end{equation}
where the implicit constant depends only on $\lambda$,  $\Lambda$,  and also either on  $C_{s,q}$ and $\|b+c\|_{L^{n,q}(\om')}$, for $q \geq n$ under  assumption (i), or  $C_s'$ and $\vartheta_{\om'}(|b+c|^2, 2 \diam \om')$ under  assumption (ii).
\end{theorem}

\begin{proof}
We only deal with the case that $u$ is a non-negative subsolution \eqref{eq:Lu=f-divg}. As seen in Theorem \ref{thm:subCaccioppoli}, we may assume that $u \in Y^{1,2}(\om)$ and apply  Lemma $\ref{lem:main-splitting-Ln}$ to the function $u$, for $p= n$, $q \geq n$, $h= b+c$, and $a = \frac{\lambda}{8C_{s,q}}$, where $C_{s,q}$ is the constant in \eqref{eq:Lorentz-Sobolev-emb}. Using that $\eta^2 u_i \in Y^{1,2}_0(\om)$ and non-negative, along with the fact that $u$ is a subsolution, we have
\begin{align*}
\int_\om f (\eta^2u_i) + \int_\om  g \nabla(\eta^2 u_i)&\geq \int_{\om}A \nabla u \nabla(\eta^2 u_i) + bu\nabla (\eta^2u_i) -c\nabla u(\eta^2 u_i)-du (\eta^2u_i)  \\
&\geq \int_{\om}A \nabla u \nabla(\eta^2 u_i) -(b+c)   u \nabla(\eta^2u_i),
\end{align*}
where in the last inequality we used \eqref{3exist}, Lemma \eqref{lem:Lor-Sob-uw},  Remark \ref{rem:bd-cd}, and \eqref{negativity2}. In view of $\eqref{2exist}$ and $\eqref{4exist}$, the latter inequality can be written as
\begin{align}
\int_{\om}A \nabla u_i \nabla u_i &\eta^2 \leq  -2\int_{\om}A\nabla u \nabla \eta u_i \eta   + \int_{\om_i}(b+c)\nabla u_i \eta^2 u
+2\int_{\om}(b+c)\nabla \eta u_i u \eta \notag\\ &+\int_\om f (\eta^2u_i) + \int_\om  g \nabla(\eta^2 u_i)=:-2\textup{I}_1(i)+\textup{I}_2(i)  + 2 \textup{I}_3(i)+\textup{I}_4(i)+\textup{I}_5(i).\label{eq:twosums2}
\end{align}

By H\"older's inequality,
\begin{align}
\textup{I}_1(i)  \leq \Lambda \|\eta \nabla u \|_{L^2} \|u_i \nabla \eta \|_{L^2}.
 \label{eq:I1(i)-2}
\end{align}

Using \eqref{6exist} and the fact that $ \|b+c\|_{L^{n,q}(\om_j)} \leq \frac{\lambda}{8C_{s,q}}$ for all $1 \leq j \leq \kappa$, along with \eqref{eq:Lorentz-Sobolev-emb} and Young's inequality, we have 
 \begin{align}
 \textup{I}_2(i) &=  \int_{\om_i} (b+c) \nabla u_i \eta^2 u_i  +  
  \sum_{j=i+1}^{\kappa}\int_{\om_j}(b+c) \nabla u_j \eta^2 u_i \notag\\
& \leq  C_{s,q} \frac{\lambda}{8C_{s,q}}  \| \eta \nabla u_i \|_{L^2}  \| \nabla(u_i \eta)\|_{L^{2}} +  
C_{s,q} \frac{\lambda}{8C_{s,q}}  \sum_{j=i+1}^{\kappa} \|\eta \nabla u_j \|_{L^2}  \| \nabla(u_i \eta)\|_{L^{2}}\notag\\
 &\leq \frac{\lambda}{8}  \|\eta \nabla u_i \|^2_{L^2}+  
 \frac{\lambda}{8} \|\eta \nabla u_i \|_{L^2}  \| u_i \nabla \eta\|_{L^2}\notag\\
 &+  \frac{\lambda}{16}( \|u_i \nabla\eta \|_{L^2}^2 + \|\eta \nabla u_i  \|_{L^2}^2)+ 
 \frac{\lambda}{16} \left( \sum_{j=i+1}^{\kappa}\|\eta \nabla u_j   \|_{L^2} \right)^2\notag\\
 &\leq \frac{\lambda}{4}  \|\eta \nabla u_i \|^2_{L^2}+\frac{ \lambda}{8} \|u_i \nabla \eta\|^2_{L^2} + 
\frac{\lambda}{16} \left( \sum_{j=i+1}^{\kappa}\|\eta \nabla u_j   \|_{L^2} \right)^2.\label{eq:I2(i)-2}
 \end{align}

 If $\delta>0$ is small enough to be chosen, then by similar  (but easier) considerations we get
  \begin{align}
   \textup{I}_3(i) 
&\leq  C_{s,q} \|b+c\|_{L^{n,q}}  \|   u \nabla \eta\|_{L^2}  \| \nabla(u_i \eta)\|_{L^{2}} \\
 &\leq \frac{ C_{s,q}^2}{4 \delta} \|b+c\|^2_{L^{n,q}}   \| u \nabla \eta\|^2_{L^2}+  
  \delta \, \|\eta \nabla u_i \|^2_{L^2} + \delta \,\| u_i \nabla \eta\|^2_{L^2}.
 \label{eq:I3(i)-2}
 \end{align}

If we apply H\"older's, Sobolev's and Young's inequalities we get
  \begin{align}
 \textup{I}_4(i) +\textup{I}_5(i) 
 \leq  \frac{C_{s,q}^2}{4 \rho} \|f \eta \|_{L^{2_*}}^2 &+ \left(1+\frac{1}{ 4 \rho}\right) \|g \eta \|_{L^{2}}^2 \label{eq:I45(i)2} \\
& + (1+2 \rho) \| u_i \nabla  \eta\|^2_{L^{2}} + 2 \rho\| \eta \nabla u_i\|^2_{L^2}. \notag
 \end{align}
 
Choose now  $\delta= \frac{\lambda}{16}$ and $\rho= \frac{\lambda}{8}$. Combining  \eqref{eq:twosums2},  \eqref{eqelliptic1},  \eqref{eq:I1(i)-2}, \eqref{eq:I2(i)-2},  \eqref{eq:I3(i)-2}, and \eqref{eq:I45(i)2}, and using \eqref{3bis-exist}, we can find positive constants $C_1=C_1(\lambda, C_{s,q}, \|b+c\|_{L^{n,q}})$, $C_2=C_2(\lambda, C_{s,q})$ and $C_3=C_3(\lambda)$ so that
 \begin{align*}
\|\eta \nabla u_i \|^2_{L^2} &\leq 2\Lambda \|\eta \nabla u \|_{L^2} \|u \nabla \eta \|_{L^2} + C_1 \|u \nabla \eta\|^2_{L^2} + C_2 \|f \eta \|_{L^{2_*}}^2 \\
 &+ C_3 \|g \eta \|_{L^{2}}^2 + \frac{\lambda}{16} \left( \sum_{j=i+1}^{\kappa}\|\eta \nabla u_j   \|_{L^2} \right)^2.
  \end{align*}
For $j \in \{1,\dots, \kappa\}$, we set
$$
x_j:=\|\eta \nabla u_j \|_{L^2} 
$$
and
\begin{align*}
y_0:= \sqrt{2\Lambda} \|\eta \nabla u \|^{\frac{1}{2}}_{L^2(\om)} \|u \nabla \eta \|^{\frac{1}{2}}_{L^2(\om)} &+ \sqrt{C_1} \|u \nabla \eta\|_{L^2} + \sqrt{C_2}  \|f \eta \|_{L^{2_*}} + \sqrt{C_3} \|g \eta \|_{L^{2}},
\end{align*}
and so, the latter inequality can be written as
\begin{align}\label{eq:Caccio-recursive2}
 x_\kappa &\leq y_0 \,\, \textup{and} \,\, x_i \leq y_0  + \sum_{j=i+1}^{\kappa} x_j, \,\, \textup{for}\,\, i =1,2, \cdots, \kappa-1.
  \end{align}
  By induction, \eqref{eq:Caccio-recursive2} yields  $x_i \leq 2^{\kappa-i} y_0$ for $i =1,2, \cdots, \kappa-1$, and thus, summing over all such $i$, we infer 
\begin{align*}
\| \eta \nabla u \|_{L^2} \leq \sum_{i=1}^\kappa  \| \eta \nabla u_i \|_{L^2} &\leq 2^\kappa  \sqrt{\Lambda} \|\eta \nabla u \|^{\frac{1}{2}}_{L^2} \|u \nabla \eta \|^{\frac{1}{2}}_{L^2}\\
 &+ 2^\kappa \left( \sqrt{C_1} \|u \nabla \eta\|_{L^2} + \sqrt{C_2}  \|f \eta \|_{L^{2_*}} + \sqrt{C_3} \|g \eta \|_{L^{2}} \right),
  \end{align*}
where in the first inequality we used \eqref{4exist}. The theorem readily follows from another application of Young's inequality. This finishes the proof in the case $b+c \in L^{n,q}(\om')$, while the modifications to obtain the result the case $|b+c|^2 \in \mathcal{K}(\om')$ are identical to the ones presented in the proof of Theorem \ref{thm:subCaccioppoli} and are omitted.
\end{proof}

  \vv
	
The proofs of Theorems \ref{thm:subCaccioppoli} and \ref{thm:Caccioppoli2} can easily be adapted to prove the following Caccioppoli inequality at the boundary. 

\begin{theorem}[\bf Caccioppoli inequality at the boundary]\label{thm:boundCaccioppoli}
If $B_r$ is a ball such that $B_r \cap \om \neq \emptyset$, set $\om_r= B_r \cap \om$ and assume that  $u \in Y^{1,2}(\om_r)$ vanishing on $\d\om \cap B_r$ in the sense of definition \ref{def:vanish-subset}. Assume that $ f \in L^{2_*}(\om_r)$, $g \in L^2(\om_r)$ and  either \eqref{negativity} or \eqref{negativity2} holds.  If  either  $b+c \in L^{n,q}(\om_r)$, $q  \in [n, \infty)$, or  $|b+c|^2 \in \mathcal{K}(\om_r)$, and  $u$ is either a solution  or a non-negative subsolution of \eqref{eq:Lu=f-divg} in $\om_r$, then for any  non-negative function $\eta \in C_c^{\infty}(B_r)$ it holds
\begin{equation}\label{eq:bCacciop}
\|\eta \nabla u \|^2_{L^2(\om_r)}  \lesssim  \|u \nabla \eta \|^2_{L^2(\om_r)}+ \|f\eta\|_{L^{2_*}(\om_r)}^2 +\|g \eta\|_{L^2(\om_r)}^2,
\end{equation}
where the implicit constant depends only on $\lambda$,  $\Lambda$, and also either on  $C_{s,q}$ and $\|b+c\|_{L^{n,q}(\om_r)}$, for $q \geq n$, or  $C_s'$ and $\vartheta_{\om_r}(|b+c|^2,  r )$.
\end{theorem}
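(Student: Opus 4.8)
The plan is to run the boundary analogue of the proofs of Theorems \ref{thm:subCaccioppoli} and \ref{thm:Caccioppoli2}: those two proofs already carry all the analytic content, and the only genuinely new issue at the boundary is checking that the functions one tests the (sub)solution against lie in $Y^{1,2}_0(\om_r)$, even though $u$ is only assumed to vanish on the piece $\d\om \cap B_r$ of $\d\om_r$ (and not on $\om \cap \d B_r$).

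I would treat only a non-negative subsolution $u$ (the solution case being identical), fix the non-negative $\eta \in C^\infty_c(B_r)$, and apply Lemma \ref{lem:main-splitting-Ln} to $u \in Y^{1,2}(\om_r)$ with $p=n$, $h=b+c$, $a=\lambda/(8C_{s,q})$ when $b+c \in L^{n,q}(\om_r)$ (or Lemma \ref{lem:main-splitting-Kato} with $h=b+c$, $a=\lambda/(8C_s')$ when $|b+c|^2 \in \mathcal{K}(\om_r)$, using that $\om_r$ is bounded so $\vartheta_{\om_r}(\cdot)$ may be replaced by $\vartheta_{\om_r}(\cdot,r)$ up to a dimensional doubling factor). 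This produces disjoint Borel sets $\om_i \subset \om_r$ and $u_i \in Y^{1,2}(\om_r)$, $1 \le i \le \kappa$, with $|u_i| \le |u|$, $u_i$ of the same sign as $u$, $u=\sum_i u_i$, $\nabla u = \nabla u_i$ on $\om_i \supset \{\nabla u_i \neq 0\}$, the telescoping identities \eqref{5exist}--\eqref{6exist}, and $\kappa$ bounded in terms of $\|b+c\|_{L^{n,q}(\om_r)}$ (resp.\ of $\vartheta_{\om_r}(|b+c|^2,r)$ and $\|b+c\|_{L^1(\om_r)}$).

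The hard part will be the admissibility of the test functions, i.e.\ showing $\eta^2 u_i \in Y^{1,2}_0(\om_r)$ and $\eta^2 u u_i \in Y^{1,(\frac{n}{n-1},1)}_0(\om_r)$, so that the subsolution inequality $\int_{\om_r} f(\eta^2 u_i) + g\cdot\nabla(\eta^2 u_i) \ge \LL(u,\eta^2 u_i)$ applies and \eqref{negativity} (resp.\ \eqref{negativity2}) can be used on the non-negative function $\eta^2 u u_i$ through Remark \ref{rem:bd-cd} and Lemma \ref{lem:Lor-Sob-uw}. For this I would use that $u$ vanishes on $\d\om \cap B_r$ in the sense of Definition \ref{def:vanish-subset}: approximating $u^{\pm}$ in $Y$-norm by functions in $C^\infty_c(\overline{\om_r}\setminus(\d\om\cap B_r))$ and multiplying by $\eta$ (whose compact support in $B_r$ removes the part of $\d\om_r$ lying on $\om \cap \d B_r$) shows $\eta u^{\pm} \in Y^{1,2}_0(\om_r)$, hence $\eta u \in Y^{1,2}_0(\om_r)$; then $0 \le \eta u_i^{\pm} \le \eta u^{\pm}$ together with Lemma \ref{lem:sobolev-van-trace}(ii) gives $\eta u_i \in Y^{1,2}_0(\om_r)$, whence $\eta^2 u_i \in Y^{1,2}_0(\om_r)$, and by Lemma \ref{lem:Lor-Sob-uw} applied to $\eta u,\eta u_i$ also $\eta^2 u u_i \in Y^{1,(\frac{n}{n-1},1)}_0(\om_r)$.

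Once admissibility is established the rest is unchanged bookkeeping (and no localization step is needed since $\om_r$ is already bounded): I would discard the non-negative $b$- (resp.\ $c$-) term using the negativity condition, split the right-hand side into pieces $\mathrm{I}_1(i),\mathrm{I}_2(i),\dots$, bound the left side below by $\lambda\|\eta\nabla u_i\|_{L^2(\om_r)}^2$ via \eqref{eqelliptic1}, and estimate each $\mathrm{I}_j(i)$ by Cauchy--Schwarz/Hölder, the embedding inequality (Lemma \ref{lem:Lor-Sobolev-emb}, resp.\ Lemma \ref{lem:Kato-emb-grad-Br} since $|b+c|^2\chi_{\om_j}$ is supported in $B_r$) combined with the smallness of $\|b+c\|_{L^{n,q}(\om_j)}$ (resp.\ of $\vartheta_{\om_r}(|b+c|^2\chi_{\om_j},r)$) furnished by the splitting, the Sobolev inequality for $\eta u_i \in Y^{1,2}_0(\om_r)$, and Young's inequality with a small parameter. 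The recursion $x_i \le C_0 y_0 + \sum_{j<i} x_j$ under \eqref{negativity}, resp.\ $x_i \le y_0 + \sum_{j>i} x_j$ under \eqref{negativity2}, with $x_j = \|\eta\nabla u_j\|_{L^2(\om_r)}$, is then solved by the same induction to give $\sum_i x_i \lesssim 2^\kappa y_0$, and \eqref{4exist} plus one final Young inequality absorbing the term $\|\eta\nabla u\|_{L^2(\om_r)}^{1/2}\|u\nabla\eta\|_{L^2(\om_r)}^{1/2}$ yields \eqref{eq:bCacciop} with a constant depending only on $\lambda$, $\Lambda$, and on $C_{s,q}$, $\|b+c\|_{L^{n,q}(\om_r)}$ (resp.\ $C_s'$, $\vartheta_{\om_r}(|b+c|^2,r)$) through $\kappa$. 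The one genuine obstacle is the test-function admissibility above; everything else transcribes verbatim from the interior proofs.
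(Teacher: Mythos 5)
Your proposal is correct and follows essentially the same route as the paper: apply the splitting lemma to $u$ in $\om_r$, verify that $\eta^2 u_i \in Y^{1,2}_0(\om_r)$, and then transcribe the interior arguments of Theorems \ref{thm:subCaccioppoli} and \ref{thm:Caccioppoli2}. Your admissibility argument (approximating by $C^\infty_c(\overline{\om_r}\setminus(\d\om\cap B_r))$, cutting off with $\eta$, and invoking Lemma \ref{lem:sobolev-van-trace}(ii)) is exactly the detail the paper asserts without proof, and it is sound.
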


\begin{proof}
We follow the same strategy as before and apply  either Lemma $\ref{lem:main-splitting-Ln}$ to the function $u$ in $\om_r(x)$, for $p= n$, $q \geq n$, $h= b+c$, and $a = \frac{\lambda}{8C_{s,q}}$, where $C_{s,q}$ is the constant in \eqref{eq:Lorentz-Sobolev-emb}, or apply Lemma $\ref{lem:main-splitting-Kato}$ to the function $u$ in $\om_r(x)$, for $h= b+c$, and $a = \frac{\lambda}{8C_s'},$ where $C_s'$ is the constant in \eqref{eq:Kato-emb-grad-Br}.  Thus, we find $\om_i \subset \om_r(x)$ and  $u_i \in Y^{1,2}(\om_r)$ that vanishes on $B_r \cap \d \om$,  for $1 \leq i \leq \kappa$, satisfying  \eqref{1exist}--\eqref{6exist}. Using that the non-negative function $\eta^2 u_i$ is in $Y_0^{1,2}(\om_r(x))$, along with the fact that $u$ is either a solution  or a non-negative subsolution of \eqref{eq:Lu=f-divg} in $\om_r$, we may proceed as in the proofs of  Theorems \ref{thm:subCaccioppoli} and \ref{thm:Caccioppoli2} to obtain \eqref{eq:bCacciop}. We skip the details. \end{proof}

  \vv
  
\begin{remark}
We would like to note that if $b+c \in \mathcal{K}'(\om)$, we can dominate $\vartheta_{\om_r}(|b+c|^2, 2 r )$ by  $\vartheta_{\om}(|b+c|^2)$.
\end{remark}

\vvv

\subsection{Refined Caccioppoli inequality} \label{subs:refinedCaccio}

Let $ m=\inf_{\partial \om \cap B_{r} } u $  and $M=\sup_{\partial \om \cap B_{r} } u $ in the sense of Definition \ref{def:boundary-sup-inf}. Define 
\begin{align*}
\umm(x):= 
\begin{cases}
\inf(u(x),  m) & ,x \in \Omega\\
m&  ,x \in \R^n \setminus \Omega
\end{cases}
\end{align*}
and 
\begin{align*}
\ump(x):= 
\begin{cases}
\sup(u(x), M) & ,x \in \Omega\\
M&  ,x \in \R^n \setminus \Omega
\end{cases}
\end{align*}

  \vv

\begin{theorem}\label{thm:bdry-exp-subCaccioppoli1}
Let $B_r$ be a ball such that $\om_r=B_r \cap \om \neq \emptyset$ and assume  that either  $b+c \in L^{n,q}(\om_r)$, $q \in [n, \infty)$, or  $|b+c|^2 \in \mathcal{K}(\om_r)$.  We also assume  that one of the following holds:
\begin{enumerate}
\item $ \divv b + d \geq 0$, $\beta \in (-\infty, 0)$ and $u \in Y^{1,2}(\om_r)$ is a non-negative $L$-supersolution of \eqref{eq:Lu=f-divg}  in $\om_r$;
\item  $ \divv b + d \leq 0$, $\beta \in (0, \infty)$ and $u \in Y^{1,2}(\om_r)$ is a non-negative $L$-subsolution of \eqref{eq:Lu=f-divg}  in $\om_r$. 
\end{enumerate}
If we set 
\begin{equation*}
 \widehat{\om}_r=
\begin{dcases}
\om^m_r:= \{x\in \om_r: u<m\} &,\textup{in Case} \,(1),\\
\om^M_r:= \{x\in \om_r: u>M\}&,\textup{in Case} \, (2),
\end{dcases}
\end{equation*}
 and for $k>0$ we define
\begin{equation*}
\bu=
\begin{dcases}
\umm +k  &,\textup{in Case} \,(1),\\
\ump +k &,\textup{in Case} \, (2),
\end{dcases}
\quad\textup{and}\quad
\wt \om_r=
\begin{dcases}
\{ x \in \om_{r}: \nabla \umm (x) \neq 0 \}&,\textup{in Case} \,(1),\\
\{ x \in \om_{r}: \nabla \ump (x) \neq 0 \}&,\textup{in Case} \, (2),
\end{dcases}
\end{equation*}
then, there exist constants $C_0$, $C_1$, $C_2$ depending on $\beta$, such that for any  non-negative function $\eta \in C_c^{\infty}(B_r)$ we have
\begin{equation}\label{eq:bdry-exp-Caccio}
\|\eta\, \bu^{\frac{\beta-1}{2}}\, \nabla u \|^2_{L^2(\wt \om_r)}  \lesssim C_0 \|\bu^{\frac{\beta+1}{2}} \nabla \eta \|^2_{L^2( \widehat{\om}_r)}+ \int_{\widehat{\om}_r} \left(C_1|\bar f|+C_2 |\bar g|^2\right)\bu^{\beta+1}  \eta^2, 
\end{equation}
where  $\bar f =|f|/\bu$, $\bar g =|g|/\bu$, and the implicit constant depends on $\lambda$,  $\Lambda$,  and also either on  $C_{s,q}$ and $\|b+c\|_{L^{n,q}(\om_r)}$, for $q \geq n$, or  $C_s'$ and $\vartheta_{\om_r}(|b+c|^2,  r )$.  When $|\beta|>1$,  $C_0=|\beta+1|^{-2}$,  $C_1=|\beta+1|^{-1}$,   and $C_2=1+|\beta+1|^{-2}$, while when $ |\beta |<1$,  $C_0=4^{\kappa}|\beta|^{-2}$ and $C_1=C_2=2^{\kappa} |\beta|^{-1}$, where either $\kappa \leq 1+ \frac{1}{C|\beta|^n}\|b+c\|_{L^{n,q}(\om_r)}^n$ or $\kappa \leq 1 + 2 \,  a^{-2} \,{\rho_0^{2-n}} \|h\|_{L^1(\om_r)}$. In the case $\beta=-1$, $C_0=C_1=C_2=1$.
\end{theorem}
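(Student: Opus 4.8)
The plan is to mimic the structure of the standard Caccioppoli inequalities (Theorems \ref{thm:subCaccioppoli}, \ref{thm:Caccioppoli2}) but with the test function tuned to extract a power of $\bu$. Concretely, in Case (2) (the $\divv b+d\le 0$, $\beta>0$, subsolution case; Case (1) is symmetric after replacing $u$ by $-u$), I would test the weak formulation of $Lu\le f-\divv g$ against $\vphi=\eta^2\,(\bu^{\beta}-k^{\beta})$ when $\beta\ge 1$ (or against a truncated/regularized version $\eta^2(\bu_N^{\beta}-k^\beta)$ with $\bu_N=\min(\bu,N)$ to ensure $\vphi\in Y^{1,2}_0(\om_r)$, letting $N\to\infty$ at the end). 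Note $\vphi\ge 0$ since $\bu\ge k$ and is supported in $\om_r^m=\{u<M\}$ where $\nabla\ump=\nabla u$; also $\vphi$ vanishes on $B_r\cap\d\om$ because $\ump\equiv M$ there. Expanding $\nabla\vphi = \beta\eta^2\bu^{\beta-1}\nabla u + 2\eta\nabla\eta(\bu^\beta-k^\beta)$ and using ellipticity \eqref{eqelliptic1} gives a good term $\lambda\beta\int\eta^2\bu^{\beta-1}|\nabla u|^2$ on the left. The $b$-divergence term $\int b\,u\cdot\nabla\vphi$ is handled by writing $bu\cdot\nabla\vphi$ in terms of $b\cdot\nabla(\text{something involving }\bu^{\beta+1}u)$ and invoking $\divv b+d\le 0$ (as in the proof of Theorem \ref{thm:Caccioppoli2}), so that the sign works in our favor; the leftover is a first-order term $\int (b+c)\cdot\nabla u\,(\text{lower order})$ exactly as before.

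The crucial dichotomy is on $|\beta|$. When $|\beta|>1$ the coefficient $\lambda\beta$ (or its analogue) is bounded below away from $0$ in a scale-invariant way, and the bad first-order term $\int(b+c)\cdot\nabla u\,\eta^2\bu^{\beta}$ can be absorbed using the embedding Lemma \ref{lem:Lor-Sobolev-emb} (resp. Lemma \ref{lem:Kato-emb-grad-Br}) applied to $w=\eta\bu^{(\beta+1)/2}$ — indeed $\nabla(\eta\bu^{(\beta+1)/2})$ produces precisely $\eta\bu^{(\beta-1)/2}\nabla u$ times $\tfrac{\beta+1}{2}$ plus a $\nabla\eta$ term — after which Young's inequality yields the stated constants $C_0=|\beta+1|^{-2}$, $C_1=|\beta+1|^{-1}$, $C_2=1+(|\beta-1|/|\beta+1|)^2$ directly, with \emph{no} splitting needed. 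When $|\beta|<1$, however, the gain $|\beta|$ degenerates, and we cannot absorb the $(b+c)$ term against a small multiple of itself; this is where the splitting machinery of Lemma \ref{lem:main-splitting-Ln} (resp. \ref{lem:main-splitting-Kato}) enters. I would apply the splitting lemma to the function $v=\bu^{(\beta+1)/2}$ (or to $\ump$, carefully tracking that $\nabla v = \tfrac{\beta+1}{2}\bu^{(\beta-1)/2}\nabla u$ on $\wt\om_r$), with threshold $a\sim \lambda|\beta|/C_{s,q}$, obtaining pieces $v_i$ on which $\|b+c\|_{L^{n,q}(\om_i)}$ is small enough to absorb, then run the same finite geometric-sum induction ($x_i\le 2^{i-1}C_0 y_0$, summing to $2^\kappa$) as in Theorem \ref{thm:subCaccioppoli}/\ref{thm:Caccioppoli2}. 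This produces the factor $4^\kappa$ in $C_0$ and $2^\kappa$ in $C_1,C_2$, with $\kappa$ bounded as stated since $\|b+c\|_{L^{n,q}(\om_r)}$ (or $\|b+c\|_{L^1(\om_r)}$ with the Kato normalization) is the only quantity that controls the stopping. The degenerate endpoint $\beta=-1$ is the classical case: the test function becomes $\eta^2/\bu$ (logarithmic), the power $\bu^{\beta-1}=\bu^{-2}$ appears, $\bu^{\beta+1}=1$, and the first-order term integrates cleanly with no splitting, giving $C_0=C_1=C_2=1$.

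The main obstacle I anticipate is \textbf{justifying the test function rigorously and tracking the precise constants through the degenerate regime}. Three technical points need care: (i) $\vphi=\eta^2(\bu^\beta-k^\beta)$ need not lie in $Y^{1,2}_0(\om_r)$ a priori when $\beta>1$ or when $u$ is only locally bounded, so one must truncate $\bu$ at level $N$, apply Lemma \ref{lemma:lip0integral} to the Lipschitz composition, verify the chain rule holds a.e.\ (using Lemma \ref{lem:Sobolev-maxmin} to handle the sets where $u=m$ or $u=M$), and pass $N\to\infty$ by monotone/dominated convergence using that the right-hand side of \eqref{eq:bdry-exp-Caccio} is finite; (ii) the manipulation of the $b$-term to expose $\divv b+d\le 0$ must be done against $C_c^\infty$ approximants and only then passed to the limit, since $\divv b$ is merely a distribution — this is exactly the subtlety that forces the $\khalf$ or $L^{n,q}$ hypotheses on $b+c$ rather than on $b,c$ separately; and (iii) in the $|\beta|<1$ case one must check that the splitting applied to $\bu^{(\beta+1)/2}$ is legitimate, i.e.\ that $\bu^{(\beta+1)/2}\in Y^{1,2}(\om_r)$ (again via Lemma \ref{cor:liploc} and boundedness) and that properties \eqref{1exist}--\eqref{6exist} transfer correctly to the original $u$ through the relation $\nabla(\bu^{(\beta+1)/2})=\tfrac{\beta+1}{2}\bu^{(\beta-1)/2}\nabla u$. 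Once these are in place, the energy estimates, the embedding lemmas, and Young's inequality assemble into \eqref{eq:bdry-exp-Caccio} exactly as in the non-refined case.
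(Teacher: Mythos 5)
There is a genuine gap in the regime $|\beta|>1$. You claim that there the first-order term $\int(b+c)\cdot\nabla u\,\eta^2\bu^{\beta}$ can be absorbed ``directly, with no splitting needed'' via the embedding lemma applied to $w=\eta\bu^{(\beta+1)/2}$. But the embedding produces the factor $C_{s,q}\|b+c\|_{L^{n,q}(\om_r)}$ (or $C_s'\vartheta_{\om_r}(|b+c|^2,r)^{1/2}$) in front of a quantity comparable to $\|\eta\bu^{(\beta-1)/2}\nabla u\|_{L^2}^2$ itself: writing $\bu^{\beta}=\bu^{(\beta-1)/2}\bu^{(\beta+1)/2}$ and expanding $\nabla(\eta\bu^{(\beta+1)/2})$, the leading contribution is $\tfrac{|\beta+1|}{2}C_{s,q}\|b+c\|_{L^{n,q}(\om_r)}\,\|\eta\bu^{(\beta-1)/2}\nabla u\|_{L^2}^2$, while the coercive term from ellipticity is of size $\lambda\tfrac{|\beta+1|^2}{4}\|\eta\bu^{(\beta-1)/2}\nabla u\|_{L^2}^2$ (plus the $J_3$-type term). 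Absorption therefore requires $\|b+c\|_{L^{n,q}(\om_r)}\lesssim\lambda|\beta+1|/C_{s,q}$, which fails for, say, $\beta=-2$ and large norm. Since the whole point of the theorem (and of the paper) is to avoid any smallness assumption on the coefficients, this step cannot be carried out as you describe; the condition $|\beta|>1$ buys you nothing here because both the good and bad terms carry the same power of $|\beta+1|$ up to one factor. The paper in fact applies the splitting lemma in the regime $|\beta|>1$ as well, to the auxiliary function $w=\bu^{(\beta+1)/2}-(m+k)^{(\beta+1)/2}$ with a $\beta$-independent threshold $a=\lambda/(8C_{s,q})$, tests against $\eta^2 w_i$, and runs the same finite induction; the resulting $2^{\kappa}$ factors are then hidden in the implicit constant, which is allowed to depend on $\|b+c\|_{L^{n,q}(\om_r)}$. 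Your proposal for $|\beta|<1$ is closer to the mark (splitting is indeed essential there, and the paper splits $\bu^{\beta}-(m+k)^{\beta}$ with a threshold proportional to $|\beta|$), but as written the $|\beta|>1$ branch of your argument does not close.

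A secondary, smaller issue: the reduction ``Case (1) is symmetric after replacing $u$ by $-u$'' does not work literally, since $-u$ is no longer non-negative and the sign hypotheses $\divv b+d\ge 0$ versus $\le 0$ are also swapped; the two cases must be run in parallel (as the paper does), with the sign of $\tfrac{\beta+1}{2}$ and of the zero-order term checked separately in each.
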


\begin{proof}

We first assume that $u$ is a non-negative supersolution of \eqref{eq:Lu=f-divg} and  $\beta < -1$. 

For $k>0$ we define the auxiliary function
\begin{align*}
w&= \bu^{\frac{\beta+1}{2}}-(m+k)^{\frac{\beta+1}{2}}.
\end{align*}
It is clear  that $w \in Y^{1,2}(\om_r)$ vanishing on $ \d \om \cap B_r$ and so, we can apply Lemma $\ref{lem:main-splitting-Ln}$ to $w$ and $\om_r$ with $p= n$, $q \geq n$, $h= b+c$, and $a = \frac{\lambda}{8C_{s,q}},$ where $C_{s,q}$ is the constant in Sobolev's inequality, to find $w_i \in Y^{1,2}(\om_r)$ that vanishes on $ \d \om \cap B_r$ and $\om_i \subset \wt \om_r$,  $1 \leq i \leq \kappa$, so that  \eqref{1exist}--\eqref{6exist} hold. 

Since  $w_i$ vanishes on $ \d \om \cap B_r$. there is a sequence $\phi_k \in C^\infty_c(\bar \om \setminus (\d \om \cap B_r))$ such that $\phi_k \to w_i$ in $Y^{1,2}(\om)$. Thus, the sequence $\eta^2\phi_k \in C^\infty_c(\om_r)$ converges to $\eta^2 w_i$ in $Y^{1,2}(\om_r)$, which implies that  $\eta^2 w_i \in Y^{1,2}_0(\om_r)$. Note also that, by  \eqref{3exist}, $\eta^2 w_i$ is non-negative.  Thus,  for $i=1,2,\dots \kappa$,
\begin{multline}
\lambda \int_{\om_i}|\nabla w|^2 \,\eta^2 = \lambda \int_{\om_r} |\nabla w_i|^2  \,\eta^2 \\
\leq \int_{\om_r} A \nabla w_i \nabla w_i  \,\eta^2  
= \frac{\beta+1}{2}  \int_{\om_r} A \nabla u \nabla w_i \, \bu^{\frac{\beta-1}{2}} \eta^2 \\
= \frac{\beta+1}{2}  \left(\int_{\om_r} A \nabla u \nabla (w_i \, \bu^{\frac{\beta-1}{2}} \eta^2) - 2 \int_{\om_r} A \nabla u \nabla \eta \, \eta w_i  \bu^{\frac{\beta-1}{2}}  \right) \\
- \frac{\beta+1}{2}  \left(\int_{\om_r} A \nabla u \nabla\bu^{\frac{\beta-1}{2}}  w_i \, \eta^2 \right) 
=: \frac{\beta+1}{2} \left(J_1-J_2-J_3\right). \label{eq:bdryCaccio1-split}
\end{multline}

Let us point out that
\begin{align}\label{eq:bdryCaccio1-wi<w<bu}
0 \leq w_i &\leq w \leq  \bu^{\frac{\beta+1}{2}} 
\end{align}
and
\begin{align}\label{eq:bdryCaccio1-support}
\nabla \bu \,{\bf 1}_{\om_r}= \nabla u\, {\bf 1}_{\om^m_r} \quad& \textup{and} \quad  \{x\in \om_r: w_i \neq 0\}\subset \{x\in \om_r: w \neq 0\}= \om^m_r.
\end{align}

Recalling that  $\beta<-1$ and using \eqref{eq:bdryCaccio1-support}, \eqref{eqelliptic1}, and that $\bu> 0$, we get that
\begin{align}\label{eq:bdryCaccio1-J1}
J_3 &={\frac{\beta-1}{2}} \int_{\om^m_r} A \nabla u \nabla u \, \bu^{\frac{\beta-3}{2}} \eta^2 \leq  \lambda {\frac{\beta-1}{2}}\int_{\om^m_r} |\nabla u|^2 \, \bu^{\frac{\beta-3}{2}} \eta^2 \leq 0,
\end{align}
and thus,  $- \frac{\beta+1}{2} J_3 \leq 0$.  
Moreover, by \eqref{eqelliptic2}, H\"older's inequality, \eqref{eq:bdryCaccio1-wi<w<bu}, and \eqref{eq:bdryCaccio1-support}, 
\begin{align}\label{eq:bdryCaccio1-J2}
\left|J_2 \right| &\leq 2 \Lambda \|\eta \nabla \bu^{\frac{\beta+1}{2}} \|_{L^2(\om^m_r)} \| w_i \nabla \eta\|_{L^2(\om^m_r)}\\
&\leq 2 \Lambda \|\eta \nabla \bu^{\frac{\beta+1}{2}} \|_{L^2(\om^m_r)} \| \bu^{\frac{\beta+1}{2}} \nabla \eta\|_{L^2(\om^m_r)}.\notag
\end{align}
Since $u$ is a supersolution of \eqref{eq:Lu=f-divg}, $\beta+1<0$, and $\divv b - d \geq 0$, we obtain
\begin{align}\label{eq:bdryCaccio1-J1}
J_1 &\geq  \int_{\om_r} (b+c) \, \nabla u \, w_i \, \bu^{\frac{\beta-1}{2}} \eta^2 + \int_{\om_r} f w_i \, \bu^{\frac{\beta-1}{2}} \eta^2  +\int_{\om_r} g \nabla\left(w_i \, \bu^{\frac{\beta-1}{2}} \eta^2 \right)\\
 &=: I_1 +I_2 +I_3,\notag
\end{align}
and so $\frac{\beta+1}{2} J_1 \leq \frac{\beta+1}{2} (I_1 +I_2 +I_3 )$.
As
$$
\nabla\left(w_i \, \bu^{\frac{\beta-1}{2}} \,   \eta^2 \right)= \nabla w_i \, \bu^{\frac{\beta-1}{2}} \, \eta^2+ 2 \nabla \eta \,w_i \,  \eta \,  \bu^{\frac{\beta-1}{2}}+  \nabla  \bu^{\frac{\beta-1}{2}} \,  w_i\eta^2,
$$
we may write $I_3$ as the sum of  three  integrals $I_{31}, I_{32}, I_{33}$ that correspond to the terms on the right hand-side of the latter equality.
So, by Young's inequality (for $\ve$ small enough to be chosen) along with \eqref{eq:bdryCaccio1-wi<w<bu} and \eqref{eq:bdryCaccio1-support}, we get
\begin{align}
\frac{|\beta+1|}{2}|I_{31}| &\leq \ve  \| \nabla w_i \, \eta \|_{L^2(\om_r)}^2 + \frac{|\beta+1|^2}{16 \ve} \int_{\om^m_r} |g|^2 \bu^{\beta-1} \eta^2, \label{eq:bdryCaccio1-I31} \\ 
\frac{|\beta+1|}{2}|I_{32}| &\leq   \| \bu^{\frac{\beta+1}{2}} \nabla \eta \|_{L^2(\om^m_r)}^2 +  \frac{|\beta+1|^2}{ 4} \int_{\om^m_r} |g|^2 \bu^{\beta-1} \eta^2. \label{eq:bdryCaccio1-I32} \\
\frac{|\beta+1|}{2} |I_{33}|&\leq \ve \frac{|\beta+1|^2}{4}  \| \bu^{\frac{\beta-1}{2}}\, \nabla \bu \,\eta \|_{L^2(\om^m_r)}^2 + \frac{|\beta-1|^2}{16\, \ve} \int_{\om^m_r} |g|^2 \bu^{\beta-1} \eta^2\label{eq:bdryCaccio1-I33} \\
|I_2| &\leq \int_{\om^m_r} |f| \bu^\beta \eta^2. \label{eq:bdryCaccio1-I2} 
\end{align}

Moreover, by \eqref{5exist},
\begin{align}\label{eq:bdryCaccio1-I2} 
\frac{\beta+1}{2} I_1 &=\int_{\om_r}(b+c) \nabla w \,w_i \,  \eta^2 \\
&= \int_{\om_i} (b+c) \nabla w_i \, w_i \,  \eta^2 + \sum_{j=1}^{i-1} \int_{\om_j} (b+c) \nabla w_j \,w_i \,  \eta^2=:I_{1}^i +\sum_{j=1}^{i-1} I_1^j.\notag
\end{align}
If we apply \eqref{eq:Lorentz-Sobolev-emb} and Young's inequality,
\begin{align}\label{eq:bdryCaccio1-I2i} 
|I_1^i| &\leq C_{s,q} \|b+c\|_{L^{n,q}(\om_i)}\|\eta \nabla w_i\|_{L^2(\om_r)} \| \nabla( \eta w_i)\|_{L^2(\om_r)}\\
& \leq \frac{3 a C_{s,q}}{2} \|\eta \nabla w_i\|_{L^2(\om_r)}^2 +  \frac{a C_{s,q}}{2} \| w_i \nabla \eta \|_{L^2(\om^m_r)}^2.\notag
\end{align}
Similarly,
\begin{align}\label{eq:bdryCaccio1-I2j} 
&\sum_{j=1}^{i-1} |I_1^j| \leq C_{s,q} \|b+c\|_{L^{n,q}(\om_i)} \| \nabla( \eta w_i)\|_{L^2(\om_r^m)} \sum_{j=1}^{i-1}\|\eta \nabla w_j\|_{L^2(\om_r)}\\
& \leq  a C_{s,q} \|\eta \nabla w_i\|_{L^2(\om_r)}^2 +  a C_{s,q} \| w_i \nabla \eta \|_{L^2(\om_r^m)}^2 +  \frac{a C_{s,q}}{2} \left(\sum_{j=1}^{i-1}\|\eta \nabla w_j\|_{L^2(\om_r)}\right)^2.\notag
\end{align}

Let us set 
$$
x_0=\|  \eta \bu^{\frac{\beta-1}{2}}  \nabla \bu \|_{L^{2}( \om_r^m)}, \,\, x_j=\|  \eta  \nabla w_j \|_{L^{2}(\om_r)}, \,\,y_0=\|  \bu^{\frac{\beta+1}{2}}  \nabla \eta\|_{L^{2}( \om_r^m)},
$$
 and also,  if $\gamma_0:=|\beta+1|/2$,   set
\begin{multline*}
z_0=\| |f|^{\frac{1}{2}}  \bu^{\frac{\beta}{2}} \eta \|_{L^2({\om_r^m})}, \qquad z_1=  \| |g| \bu^\frac{\beta-1}{2} \eta \|_{L^2({\om_r^m})},\,\,\textup{and}\\
C(\ve,\gamma_0):=\left[ \left( (4\ve)^{-1} + 1 \right) \gamma_0^2 + (4\ve)^{-1} (1+\gamma_0)^2 \right]^\frac{1}{2}.
\end{multline*} 
 Then, using this notation, $|\beta -1| /2\leq 1+ \gamma_0$,  and choosing $\alpha$  small enough, depending on $\lambda$, $\Lambda$, $\|b+c\|_{L^n(\om)}$, and $C_{s,q}$, we can collect the inequalities \eqref{eq:bdryCaccio1-split}-\eqref{eq:bdryCaccio1-I2j} and find a constant $C_0$ (depending on $\lambda$, $\Lambda$ and $C_{s,q}$) so that
 $$
 x_i \leq  C_0 ( \sqrt{\gamma_0}  z_0 + C(\ve,\gamma_0) z_1 +  \sqrt{\ve}\,\gamma_0 \,x_0 + y_0) + \sum_{j=1}^{i-1} x_j.
 $$
By the  induction argument that appeared in the proof of Theorem \ref{thm:subCaccioppoli} and \eqref{eq:bdryCaccio1-support}, we can show that
$$
\gamma_0\, x_0=\|\eta\, \nabla w \|^2_{L^2(\om_r)}  \leq C_1 ( \sqrt{\gamma_0}  z_0 + C(\ve,\gamma_0) z_1 +  \sqrt{\ve}\,\gamma_0 \ \,x_0 + y_0),
$$
where $C_1$ depends on $\lambda$, $\Lambda$, $\|b+c\|_{L^{n,q}(\om)}$ and $C_{s,q}$. We may choose $\ve$ small enough compared to $C_1^{-2}$ and use Young's inequality with $\ve$ to deduce
$$
\gamma_0 \,x_0 \leq C_2\, ( y_0 + \sqrt{\gamma_0}  z_0 + (1+\gamma_0^2)^{1/2} z_1)
$$
in order to show \eqref{eq:bdry-exp-Caccio}. The details are omitted.

\vv

We turn our attention to the case that $u$ is a non-negative supersolution of \eqref{eq:Lu=f-divg} and $\beta \in [-1,0)$. For $k>0$ we define the auxiliary function
\begin{align*}
w&= \bu^{\beta}-(m+k)^{\beta}.
\end{align*}
Since  $w \in Y^{1,2}(\om)$ and vanishes on $\d \om \cap B_r$, we apply Lemma \ref{lem:main-splitting-Ln} as in the previous case to $w$ and $\om_r$, for $p= n$, $h= b+c$, and $a$ small enough  depending on $\lambda, \beta,  C_{s,q}$ (to be picked later), to find $w_i \in Y^{1,2}(\om)$ that also vanishes on $\d \om \cap B_r$ and $\om_i \subset \wt \om_r$,  $1 \leq i \leq m$, satisfying  \eqref{1exist}--\eqref{6exist}. By $\eqref{3exist}$ we see that $\eta^2 w_i \in Y^{1,2}_0(\om)$ is non-negative and we may use it as a test function. Therefore,
\begin{align}
\int_{\om_r} f (\eta^2 w_i) &+ \int_{\om_r} g \nabla(\eta^2 w_i) \notag\\
&\leq \int_{\om_r}A \nabla u \nabla(\eta^2 w_i) + bu\nabla (\eta^2 w_i) -c\nabla u(\eta^2 w_i)-du (\eta^2w_i)  \notag\\
&= \int_{\om_r}A \nabla u \nabla(\eta^2 w_i) + b \nabla (\eta^2 u w_i) -(b+c)  \nabla u \eta^2w_i-d \eta^2 u w_i \notag\\
&\leq \int_{\om_r}A \nabla u \nabla(\eta^2 w_i) -(b+c)  \nabla u \eta^2w_i,\label{eq:bdryCaccio1-split-bis}
\end{align}
where in the last inequality we used \eqref{negativity}.
 
At this point let us  recall  \eqref{eq:bdryCaccio1-support} and also record that
\begin{align}\label{eq:bdryCaccio1-wi<w<bu-bis}
0 \leq w_i &\leq w \leq  \bu^{\beta} 
\end{align}
and
\begin{equation}\label{eq:bdryCaccio1-nablawi-bis}
\nabla w_i = \beta \bu^{\beta-1} \nabla u \, {\bf 1}_{\om_i}.
\end{equation}
Therefore, by  \eqref{eq:bdryCaccio1-nablawi-bis} and $\beta<0$, \eqref{eq:bdryCaccio1-split-bis} can be written
\begin{align}\label{eq:bdryCaccio2-split}
\lambda  |\beta| \|\eta\bu^\frac{\beta-1}{2}& \nabla u\|^2_{L^2(\om_i)} \leq  |\beta|\, \int_{\om_i} A \nabla u  \cdot \nabla u \eta^2 \bu^{\beta-1} \leq 2  \int_{\om_r} A \nabla u \cdot \nabla \eta \,w_i \eta  \\
&- \int_{\om_r} (b+c)  \nabla u \eta^2w_i - \int_{\om_r} f (\eta^2 w_i) - \int_{\om_r}  g \nabla(\eta^2 w_i) = \sum_{i=1}^4 I_i.\notag
\end{align}

We apply H\"older's inequality along with \eqref{eq:bdryCaccio1-support} and  \eqref{eq:bdryCaccio1-wi<w<bu-bis} to get
\begin{align}\label{eq:bdryCaccio2-I1}
|I_1| \leq 2  \Lambda \| \eta \bu^\frac{\beta-1}{2} \nabla u\|_{L^2( \om_r^m)} \| \bu^\frac{\beta+1}{2}  \nabla \eta\|_{L^2( \om_r^m)}.
\end{align}
By Young's inequality,  \eqref{eq:bdryCaccio1-wi<w<bu-bis},  and \eqref{eq:bdryCaccio1-nablawi-bis},  it is easy to see that
\begin{align}\label{eq:bdryCaccio2-I34}
|I_3| + |I_4| \leq \int_{\om_r^m} |f| \bu^{\beta} \eta^2  &+ \left(1+ \frac{|\beta| }{4\ve} \right)  \int_{ \om_r^m} |g|^2 \bu^{\beta-1} \eta^2 \\
&+ |\beta| \, \ve\, \int_{\om_i} \bu^{\beta-1} |\nabla u|^2 \eta^2  + \int_{\om_r^m} \bu^{\beta+1} | \nabla \eta|^2.\notag
\end{align}
It only remains to handle $I_2$. At this point we cannot use \eqref{4exist} or \eqref{5exist} as in previous arguments. The reason why is that we do not have $u$ and $u_i$ but rather two different functions $u$ and $w_i$. Although, we can recall that  $\{x\in \om_r: w_i \neq 0\} = \cup_{j=1}^i \om_j$ and thus,  using \eqref{eq:Lorentz-Sobolev-emb}, \eqref{eq:bdryCaccio1-support},  \eqref{eq:bdryCaccio1-wi<w<bu-bis}, $\|b+c\|_{L^{n,q}(\om_j)} \leq a$ for any $j \in \{1,2, \cdots m\}$, and $w_i \bu^{\frac{1-\beta}{2}} \eta \in Y^{1,2}_0(\om_r)$, we get 
\begin{align}\label{eq:bdryCaccio2-I2i}
| I_{2} | &\leq  C_{s,q}   \|b+c\|_{L^{n,q}(\om_i)}  \| \eta \bu^{\frac{\beta-1}{2}} \nabla u\|_{L^2(\cup_{j=1}^i \om_j)}  \| \nabla(w_i \bu^{\frac{1-\beta}{2}} \eta)\|_{L^{2}( \om_r)}\\  
&\leq a C_{s,q}   \| \eta \bu^{\frac{\beta-1}{2}}  \nabla u\|_{L^2(\cup_{j=1}^i \om_j)}  \| \nabla (w_i \bu^{\frac{1-\beta}{2}}\eta)\|_{L^{2}(\om_r)}.\notag
\end{align}
Note that
$$
 \nabla (w_i \bu^{\frac{1-\beta}{2}} \eta)\, {\bf 1}_{ \om_r} =\beta  \eta \bu^{\frac{\beta-1}{2}}  \nabla u \, {\bf 1}_{ \om_i} +w_i \bu^{\frac{1-\beta}{2}}  \nabla \eta+ {\frac{1-\beta}{2}} w_i \bu^{-\frac{\beta+1}{2}} \eta \nabla u.
$$
Also, for $\beta \in [-1, 0)$, it holds $\frac{\beta-1}{2\beta}>0$ and $\frac{\beta+1}{2}>0$. Thus, by \eqref{eq:bdryCaccio1-wi<w<bu-bis},
$$
 w_i \bu^{-\frac{\beta+1}{2}} \leq w_i^{\frac{\beta-1}{2\beta}} \leq \bu^{\frac{\beta-1}{2}}{\bf 1}_{\cup_{j=1}^i \om_j} \quad \textup{and}\quad w_i \bu^{\frac{1-\beta}{2}} \leq \bu^\beta \bu^{\frac{1-\beta}{2}}{\bf 1}_{\cup_{j=1}^i \om_j} \leq  \bu^{\frac{\beta+1}{2}} {\bf 1}_{\om_r^m},
$$
which, in turn, implies that 
\begin{align}\label{eq:bdryCaccio2-grad-wibueta-split}
\| \nabla (w_i \bu^{\frac{1-\beta}{2}} \eta)\|_{L^{2}(\om_r)} \leq |\beta| \|  \eta \bu^{\frac{\beta-1}{2}}  \nabla \bu \|_{L^{2}(\om_i)} &+\|  \bu^{\frac{\beta+1}{2}}  \nabla \eta\|_{L^{2}(\om^m_r)} \\ &+\frac{1-\beta}{2}\|  \eta \bu^{\frac{\beta-1}{2}}  \nabla \bu \|_{L^{2}(\cup_{j=1}^i \om_j)}.\notag
\end{align}

Set now
\begin{multline*}
x_0=\|  \eta \bu^{\frac{\beta-1}{2}}  \nabla \bu \|_{L^{2}(\om_r^m)}, \,\, x_j=\|  \eta \bu^{\frac{\beta-1}{2}}  \nabla \bu \|_{L^{2}(\om_j)}, \,\,y_0=\|  \bu^{\frac{\beta+1}{2}}  \nabla \eta\|_{L^{2}(\om^m_r)},\\
z_0=  \| |f|^{1/2}  \eta \bu^{\frac{\beta}{2}}  \|_{L^{2}(\om_r^m)} \quad \textup{and} \quad z_1 =    \| |g|  \eta \bu^{\frac{\beta-1}{2}}  \|_{L^{2}(\om_r^m)}.
\end{multline*}
With this notation, we can write
\begin{align*}
 &\|  \eta \bu^{\frac{\beta-1}{2}} \nabla \bu \|^2 _{L^{2}(\cup_{j=1}^i \om_j)}= x_i^2+ \sum_{j=1}^{i-1} x_j^2\\
 &\| \nabla (w_i \bu^{\frac{1-\beta}{2}} \eta)\|_{L^{2}(\om_r)} \leq |\beta| x_i + y_0 + \frac{1+|\beta|}{2} \Big(x_i^2+ \sum_{j=1}^{i-1} x_j^2\Big)^{1/2},
\end{align*}
which, in combination with  inequalities  \eqref{eqelliptic1} and  \eqref{eq:bdryCaccio2-split}-\eqref{eq:bdryCaccio2-grad-wibueta-split},  and $|\beta| \leq 1$, implies
\begin{align*}
|\beta| \lambda x_i^2&\leq 2\Lambda x_0 y_0 + a C_{s,q} \Big(x_i^2+ \sum_{j=1}^{i-1} x_j^2\Big)^{1/2} \Big(  |\beta| x_i + y_0 + \Big(x_i^2+ \sum_{j=1}^{i-1} x_j^2\Big)^{1/2}\Big)\\
&+ \Big(  |\beta| \ve \, x_0^2+ y_0^2 + z_0^2+ \Big(1+ \frac{|\beta| }{4\ve} \Big) z_1^2\Big).
\end{align*}
Therefore, if we choose $\alpha$ small enough (depending linearly on $|\beta|$), by Young's inequality, we can find a positive constant $C_0$ depending only on $\lambda, \Lambda$, and $C_{s,q}$ so that
$$
x_i \leq \frac{C_0}{\sqrt{ |\beta}|} \left( ( x_0 y_0)^{1/2} + \sqrt{|\beta| \ve} \, x_0 +(1+\sqrt{|\beta|} ) y_0 +  z_0 +(1+\sqrt{|\beta|} ) z_1 \right)+  \sum_{j=1}^{i-1} x_j,
$$
The proof of \eqref{eq:bdry-exp-Caccio} is concluded by the same iteration argument as in the proof of Theorem \ref{thm:subCaccioppoli} along with the facts that $\cup_{i=1}^\kappa \om_i= \wt \om_r$ and $|\beta|<1$ obtaining
$$
x_0\leq \frac{C_0 2^{\kappa}}{\sqrt{ |\beta}|}\left( ( x_0 y_0)^{1/2} + \sqrt{|\beta| \ve} \, x_0 +  y_0 +  z_0 + 2 z_1 \right),
$$
where $\kappa \leq 1+ \frac{1}{C|\beta|^{n}}\|b+c\|_{L^{n,q}(\om_r)}^n$.  By Young's inequality and if we choose $\ve$ small enough (depending on $\lambda$, $\Lambda$,  $C_0$, and $\kappa$), we obtain \eqref{eq:bdry-exp-Caccio}. The case  $\beta >0$ and $u$  positive subsolution of \eqref{eq:Lu=f-divg} is almost identical and we will not repeat it.

The same reasoning  shows \eqref{eq:bdry-exp-Caccio} when $|b+c|^2 \in \mathcal{K}(\om_r)$ if we use Lemma \ref{lem:main-splitting-Kato}.  The only difference lies on the manipulation of the terms that include $b+c$ and a similar argument can be found  at the end of the proof of Theorem \ref{thm:subCaccioppoli}. The details are omitted.
\end{proof}

  \vv

In fact, if we incorporate $-\divv(b u)$ and $d u$ into the interior data, the same proof  gives the following theorem:
\begin{theorem}\label{thm:bdry-exp-subCaccioppoli1-bis}
If we use the same notation as in Theorem \ref{thm:bdry-exp-subCaccioppoli1} and  either $c \in L^{n,q}(\om_r)$, for $q \in [n, \infty)$ or  $|c|^2 \in \mathcal{K}(\om_r)$, then for any  non-negative function $\eta \in C_c^{\infty}(B_r)$, we have
\begin{equation}\label{eq:bdry-exp-Caccio1-bis}
\|\eta\, \bu^{\frac{\beta-1}{2}}\, \nabla u \|^2_{L^2(\wt \om_r)}  \lesssim C_0 \|\bu^{\frac{\beta+1}{2}} \nabla \eta \|^2_{L^2( \widehat{\om}_r)}+ \int_{\widehat{\om}_r} (C_1 \bar f+C_1 |d|+ C_2\bar g^2 + C_2 |b|^2) \bu^{\beta+1} \eta^2, 
\end{equation}
where $\bar f =|f|/\bu$, $\bar g =|g|/\bu$, and $C_0$, $C_1$, and $C_2$ are the constants given in Theorem \ref{thm:bdry-exp-subCaccioppoli1}. The implicit constant depends on $\lambda$, $\Lambda$, and either on $C_{s,q}$ and  $\|c\|_{L^{n,q}(\om_r)}$, or $C_s'$ and  $\vartheta_{\om_r}(|c|^2, r)$.
\end{theorem}




  \vv

The analogue of Theorem  \ref{thm:bdry-exp-subCaccioppoli1} for the case $-\divv c +d \geq 0$ (or $-\divv c +d \leq 0$) will be a lot easier to prove, as one does not need to handle either the $L^{n,q}$-norm of $b+c$ or the $\mathcal{K}$-norm of $|b+c|^2$ in a delicate way as before. Instead, we will incorporate  $|b+c|^2$ into the interior data side (as in Theorem \ref{thm:bdry-exp-subCaccioppoli1-bis}). It may look surprising bearing in mind the special case $\beta=1$ we proved in Theorem \ref{thm:Caccioppoli2}, but \eqref{eq:bdry-exp-Caccio} cannot hold in this case. The reason is that it is the main ingredient of the proof of local boundedness and weak Harnack inequality and, by Example \eqref{ex:c1}, we know that if $b+c$ does not have any additional hypothesis, solutions may  not be locally bounded.

\begin{theorem}\label{thm:bdry-exp-subCaccioppoli2}
If we replace $ \divv b + d \geq 0$  (or $\divv b + d \leq 0$) with $ -\divv c + d \geq 0$ (or $-\divv c + d  \leq 0$) in the assumptions of Theorem \ref{thm:bdry-exp-subCaccioppoli1} and use the same notation, we can find constants $C_0$, $C_1$, $C_2$ depending on $\beta$, such that for any  non-negative function $\eta \in C_c^{\infty}(B_r)$ we have
\begin{equation}\label{eq:bdry-exp-Caccio2}
\|\eta\, \bu^{\frac{\beta-1}{2}}\, \nabla u \|^2_{L^2(\wt \om_r)}  \lesssim C_0 \|\bu^{\frac{\beta+1}{2}} \nabla \eta \|^2_{L^2(\widehat{\om}_r)}+ \int_{\widehat{\om}_r} (C_1 \bar f+ C_2\bar g^2 + C_2 |b+c|^2) \bu^{\beta+1} \eta^2, 
\end{equation}
where $\bar f =|f|/\bu$, $\bar g =|g|/\bu$, and the implicit constant depends $\lambda$ and $\Lambda$. When $|\beta|>1$,  $C_0=|\beta+1|^{-2}$,  $C_1=|\beta+1|^{-1}$,   and $C_2=1+|\beta+1|^{-2}$,  while when $ |\beta |<1$,  $C_0=|\beta|^{-2}$ and $C_1=C_2=|\beta|^{-1}$. When $\beta=-1$, $C_0=C_1=C_2=1$.
\end{theorem}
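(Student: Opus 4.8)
The plan is to follow the scheme of the proof of Theorem~\ref{thm:bdry-exp-subCaccioppoli1} essentially verbatim, with one structural simplification: since the hypothesis is now a sign condition on $-\divv c+d$ rather than on $\divv b+d$, after the appropriate algebraic rearrangement the first--order coefficients enter only through $(b+c)\,u$ paired with a gradient (never through $(b+c)\nabla u$), and such a term is simply absorbed by Young's inequality and dumped on the right--hand side. In particular no splitting lemma is required, which is exactly why the constants in \eqref{eq:bdry-exp-Caccio2} carry no $\kappa$--dependence and the implicit constant depends only on $\lambda,\Lambda$. By symmetry (replace $u$ by $\ump$ and reverse signs throughout) it suffices to treat Case~$(1)$: $-\divv c+d\ge0$, $\beta<0$, and $u\in Y^{1,2}(\om_r)$ a non-negative supersolution of \eqref{eq:Lu=f-divg} in $\om_r$. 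As in the earlier proof, set $\bu=\umm+k$; then $0<k\le\bu\le m+k$, $\{x\in\om_r:\nabla\bu\neq0\}\subset\om_r^m$, $\nabla\bu=\nabla u\,\chi_{\om_r^m}$, and $0\le u\le\bu$ on $\om_r^m$.

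I would choose the auxiliary function exactly as in Theorem~\ref{thm:bdry-exp-subCaccioppoli1}: $w=\bu^{\beta}-(m+k)^{\beta}$ when $-1\le\beta<0$, and $w=\bu^{\frac{\beta+1}{2}}-(m+k)^{\frac{\beta+1}{2}}$ when $\beta<-1$. In both cases $w\ge0$, $w\in Y^{1,2}(\om_r)$ vanishes on $\d\om\cap B_r$, $\{w\neq0\}=\om_r^m$, and $w\le\bu^{\beta}$ (resp.\ $w\le\bu^{\frac{\beta+1}{2}}$); moreover $\eta^{2}w$ --- and, when $\beta<-1$, also $w\,\bu^{\frac{\beta-1}{2}}\eta^{2}$, used in the second application of the equation --- is a non-negative element of $Y^{1,2}_0(\om_r)$, by the same approximation by functions of $C^\infty_c(\overline\om\setminus(\d\om\cap B_r))$ as there. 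Testing the supersolution inequality against such a $\psi\ge0$ and applying the product rule to the $c$--term gives
\begin{equation*}
\int_{\om_r}A\nabla u\cdot\nabla\psi+\int_{\om_r}(b+c)\,u\cdot\nabla\psi-\int_{\om_r}\bigl[c\cdot\nabla(u\psi)+d\,(u\psi)\bigr]\ \ge\ \int_{\om_r}f\,\psi+\int_{\om_r}g\cdot\nabla\psi.
\end{equation*}
Since $u\psi\ge0$ and $-\divv c+d\ge0$ --- the reverse of \eqref{negativity2}, which by Remark~\ref{rem:bd-cd} and the discussion after \eqref{bilinear} may be applied to $Y^{1,\frac{n}{n-1}}_0$ functions, $u\psi$ being admissible exactly as $\eta^2w_i$ is in Theorem~\ref{thm:bdry-exp-subCaccioppoli1} --- the bracketed integral is nonnegative and may be discarded, leaving
\begin{equation*}
\int_{\om_r}A\nabla u\cdot\nabla\psi+\int_{\om_r}(b+c)\,u\cdot\nabla\psi\ \ge\ \int_{\om_r}f\,\psi+\int_{\om_r}g\cdot\nabla\psi .
\end{equation*}

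From here the computation is the one in Theorem~\ref{thm:bdry-exp-subCaccioppoli1} with the $(b+c)$--terms handled differently. Expanding $\nabla\psi$, ellipticity produces the leading term $|\beta|\lambda\int_{\om_r^m}|\nabla u|^{2}\bu^{\beta-1}\eta^{2}$ when $-1\le\beta<0$ (and $\bigl(\tfrac{\beta+1}{2}\bigr)^{2}\lambda\int_{\om_r^m}|\nabla u|^{2}\bu^{\beta-1}\eta^{2}$ when $\beta<-1$, after the second test), which is precisely the left side of \eqref{eq:bdry-exp-Caccio2}; the $A$--cross term and the $f,g$ terms are estimated by Cauchy--Schwarz and Young exactly as there, yielding the contributions $C_{0}\|\bu^{\frac{\beta+1}{2}}\nabla\eta\|_{L^{2}(\om_r^m)}^{2}$ and $\int_{\om_r^m}(C_{1}\bar f+C_{2}\bar g^{2})\bu^{\beta+1}\eta^{2}$. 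The new term $\int_{\om_r}(b+c)u\cdot\nabla\psi$ splits into a piece carrying $\nabla u$ (of the form $(b+c)u\,(\text{power of }\bu)\,\nabla u\,\eta^{2}$) and a piece carrying $\nabla\eta$; using $u\le\bu$ and $w\le\bu^{\frac{\beta+1}{2}}$ (resp.\ $\bu^{\beta}$) on $\om_r^m$, the first is $\le\ve\int|\nabla u|^{2}\bu^{\beta-1}\eta^{2}+C_{\ve}\int_{\om_r^m}|b+c|^{2}\bu^{\beta+1}\eta^{2}$ (bounding $u^{2}\bu^{\beta-1}\le\bu^{\beta+1}$), and the second is $\le\int_{\om_r^m}|b+c|^{2}\bu^{\beta+1}\eta^{2}+\int_{\om_r^m}\bu^{\beta+1}|\nabla\eta|^{2}$. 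Choosing $\ve$ small relative to $|\beta|\lambda$ (resp.\ $|\beta+1|^{2}\lambda$) absorbs the $\ve$--terms into the left side and leaves exactly \eqref{eq:bdry-exp-Caccio2}; tracking the powers of $|\beta|$ and $|\beta+1|$ through the Young constants gives the stated values of $C_{0},C_{1},C_{2}$, with $\beta=-1$ being the instance $\beta=-1$ of the first alternative. Case~$(2)$ ($-\divv c+d\le0$, $\beta>0$, $u$ a non-negative subsolution) is identical after the obvious sign changes, and the case $|b+c|^{2}\in\mathcal K(\om_r)$ requires no modification at all, since $b+c$ never enters through a splitting or an embedding argument. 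The only points that need care are the admissibility of $u\psi$ as a test function for the weak $cd$--positivity condition near $\d\om$ (handled as indicated above) and the bookkeeping of the $\beta$--dependent constants; there is no analytic difficulty of the kind met in Theorems~\ref{thm:subCaccioppoli}--\ref{thm:bdry-exp-subCaccioppoli1}, which is precisely the content of the statement.
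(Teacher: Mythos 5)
Your proposal is correct and follows essentially the same route as the paper's proof: the same auxiliary function $w=\bu^{\beta}-(m+k)^{\beta}$ (resp.\ $\bu^{\frac{\beta+1}{2}}-(m+k)^{\frac{\beta+1}{2}}$), the same test function $\eta^2 w$, the same use of the sign condition on $-\divv c+d$ to discard the $c\cdot\nabla(u\psi)+d\,u\psi$ term, and the same absorption of $(b+c)u\cdot\nabla\psi$ by Young's inequality with $|b+c|^2\bu^{\beta+1}\eta^2$ sent to the right-hand side, so that no splitting lemma is needed. (The sign in front of the $(b+c)u\cdot\nabla\psi$ term differs from the paper's display, but since that term is estimated in absolute value this is immaterial.)
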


\begin{proof}
We will only give a sketch of the proof. Let us assume that $\beta \in [-1, 0)$. For $k>0$ we define the auxiliary function
\begin{align*}
w&= \bu^{\beta}-(m+k)^{\beta}.
\end{align*}
Since $\eta^2 w \in Y^{1,2}_0(\om_r)$, arguing as in Case $\beta>-1$ in  the proof of the previous theorem and using $-\divv c +d \geq 0$, we get
\begin{align*}
\int_{\om_r} f (\eta^2 w) + \int_{\om_r}  g \nabla(\eta^2 w) \leq \int_{\om_r}A \nabla u \nabla(\eta^2 w) -(b+c)  u  \nabla(\eta^2w).
\end{align*}
Because $\beta<0$ and $ \{x \in \om_r: w\neq 0 \} = \om_r^m$, the latter inequality can be written as
\begin{align}\label{eq:bdryCaccio2-split-cd}
|\beta|\, \int_{\om_r} A \nabla u  \cdot \nabla u \eta^2 \bu^{\beta-1} &\leq 2  \int_{\om_r} A \nabla u \cdot \nabla \eta \,w \eta - \int_{\om_r} (b+c)  u \nabla (\eta^2w) \\
&- \int_{\om_r} f (\eta^2 w) - \int_{\om_r}  g \nabla(\eta^2 w) = \sum_{i=1}^4 I_i.\notag
\end{align}
Note that if we use $0 \leq u \leq \bu$, then $I_1$, $I_3$ and $I_4$ can be bounded as in \eqref{eq:bdryCaccio2-I1} and \eqref{eq:bdryCaccio2-I34}. So, it only remains to handle $I_2$. But as we do not need to use Lemma \ref{lem:main-splitting-Ln} it will be fairly easy to do so.  Indeed, 
$$
I_2= -2 \int_{\om_r^m} (b+c) \nabla \eta w u \eta +|\beta| \int_{\om_r^m} (b+c) \nabla u \eta^2 \bu^{\beta -1} u,
$$
which, in light of Young's inequality with $\ve$ small (to be picked), $w \leq \bu^\beta {\bf 1}_{\om_r^m}$ and  $\beta \in [-1,0)$, implies
$$
|I_2| \leq (1+ |\beta| (4\ve)^{-1} ) \int_{\om_r^m} |b+c|^2 \bu^{\beta+1} \eta^2 +  \int_{\om_r^m} |\nabla \eta|^2 \bu^{\beta+1} + \ve |\beta| \int_{\om_r^m} |\nabla u|^2 \bu^{\beta -1} \eta^2.
$$
If we choose $\ve$ small enough we conclude our result. We may handle the case $\beta <-1$ and $\beta \geq 0$ for subsolutions in a similar fashion adapting the argument in the proof of Theorem \ref{thm:bdry-exp-subCaccioppoli1}. We omit the routine details. 
\end{proof}

  \vv
  
Moreover, the proofs of Theorems \ref{thm:bdry-exp-subCaccioppoli1}, \ref{thm:bdry-exp-subCaccioppoli1-bis}, and \ref{thm:bdry-exp-subCaccioppoli2} can be easily  adapted to get a refined version of Theorems  \ref{thm:subCaccioppoli} and  \ref{thm:Caccioppoli2}. We only state the first one.

\begin{theorem}\label{thm:int-exp-subCaccioppoli1}
Let $B_r$ be a ball of radius $r>0$ so that $\overline B_{r} \subset \om$ and assume  that either  $b+c \in L^{n,q}(B_r)$, $q  \in [n, \infty)$, or  $|b+c|^2 \in \mathcal{K}(B_r)$. If  $u \in Y^{1,2}(B_r)$  and one of the following holds:
\begin{itemize}
\item[(1)] $ \divv b + d \leq 0$ and u is  $L$-subsolution  in $B_r$ and $\beta \in (0, +\infty)$;
\item[(2)] $ \divv b + d \leq 0$ and u is  $L$-supersolution in $B_r$ and $\beta \in (0, +\infty)$;
\item[(3)] $ \divv b + d \geq 0$ and u is a non-negative $L$-supersolution in $B_r$ and $\beta \in (-\infty, 0)$.
\end{itemize}
For $k>0$, we set
\begin{align}\label{eq:int-ref-caccio-w}
\bu=
\begin{dcases}
u^+ + k  \quad &,\textup{in Case} \,(1),\\
u^- + k  \quad &,\textup{in Case} \, (2),\\
u + k  \quad  &,\textup{in Case} \, (3).\\
\end{dcases}
\end{align}
Then, there exist constants $C_0$, $C_1$, $C_2$ depending on $\beta$, such that for any  non-negative function $\eta \in C_c^{\infty}(B_r)$ we have
\begin{equation}\label{eq:inter-exp-Caccio}
\|\eta\, \bu^{\frac{\beta-1}{2}}\, \nabla u \|^2_{L^2(B_r)}  \lesssim C_0 \|\bu^{\frac{\beta+1}{2}} \nabla \eta \|^2_{L^2( B_r)}+ \int_{B_r} \left(C_1|\bar f|+C_2 |\bar g|^2\right)\bu^{\beta+1}  \eta^2, 
\end{equation}
where  $\bar f =|f|/\bu$, $\bar g =|g|/\bu$, and the implicit constant depends on $\lambda$,  $\Lambda$,  and also either on  $C_{s,q}$ and $\|b+c\|_{L^{n,q}(B_r)}$, or  $C_s'$ and $\vartheta_{B_r}(|b+c|^2,  r )$.  When $|\beta|>1$,  $C_0=|\beta+1|^{-2}$,  $C_1=|\beta+1|^{-1}$,   and $C_2=1+|\beta+1|^{-2}$, while when $ |\beta |<1$,  $C_0=4^{\kappa}|\beta|^{-2}$ and $C_1=C_2=2^{\kappa} |\beta|^{-1}$, where either $\kappa \leq 1+ \frac{1}{C|\beta|^n}\|b+c\|_{L^{n,q}(B_r)}^n$ or $\kappa \leq 1 + 2 \,  a^{-2} \,{\rho_0^{2-n}} \|h\|_{L^1(B_r)}$. In the case $\beta=-1$, $C_0=C_1=C_2=1$.
\end{theorem}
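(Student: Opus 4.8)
The plan is to reproduce, essentially verbatim, the proof of Theorem~\ref{thm:bdry-exp-subCaccioppoli1} (together with the device of Theorem~\ref{thm:bdry-exp-subCaccioppoli1-bis} for routing $-\divv(bu)-du$ into the data when needed), now considerably simplified because $\overline B_{r}\subset\om$: every function of the form $\eta^2 v$ with $\eta\in C_c^\infty(B_r)$ and $v\in Y^{1,2}(B_r)$ has compact support in $B_r$, hence by Lemma~\ref{lem:sobolev-van-trace}(i) lies in $Y^{1,2}_0(B_r)\subset Y^{1,2}_0(\om)$; consequently the boundary supremum/infimum machinery of Definition~\ref{def:boundary-sup-inf} and the approximation of the split functions $w_i$ by elements of $C_c^\infty(\overline\om\setminus(\partial\om\cap B_r))$ are not needed. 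The sets $\om^m_r$ and $\wt\om_r$ of the boundary statement are replaced by $B_r$, respectively by $\{x\in B_r:\nabla\bu\ne0\}$, and the three truncations $u^+,u^-,u$ in \eqref{eq:int-ref-caccio-w} play the role of $\umm,\ump$; by Lemma~\ref{lem:Sobolev-maxmin}, in each case $\nabla\bu=\nabla u$ on the set where it does not vanish.

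I would treat $|\beta|>1$ first, as in the $\beta<-1$ part of Theorem~\ref{thm:bdry-exp-subCaccioppoli1}: take the auxiliary function $w=\bu^{(\beta+1)/2}-k^{(\beta+1)/2}$, obtain $w_i,\om_i$ either from Lemma~\ref{lem:main-splitting-Ln} (when $b+c\in L^{n,q}(B_r)$, with $a=\tfrac{\lambda}{8C_{s,q}}$) or from Lemma~\ref{lem:main-splitting-Kato} (when $|b+c|^2\in\mathcal K(B_r)$, with $a=\tfrac{\lambda}{8C_s'}$), and test the (sub/super)solution inequality against $\pm\eta^2 w_i$, the sign dictated by \eqref{3exist} and by that of $\beta+1$. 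The lower-order term is absorbed using $bu\cdot\nabla\varphi-du\,\varphi=b\cdot\nabla(u\varphi)-((\divv b)+d)u\varphi$ in the weak sense, together with the relevant positivity/negativity hypothesis on $\divv b+d$ and the sign $uw_i\ge0$; the self-improving second-order term that appears upon differentiating $\bu^{(\beta-1)/2}$ carries a favourable sign since $\beta^2>1$, and dropping it is precisely what lets the estimate close with the \emph{explicit} constants $C_0=|\beta+1|^{-2}$, $C_1=|\beta+1|^{-1}$, $C_2=1+(|\beta-1|/|\beta+1|)^2$, the value of $\kappa$ (which no longer depends on $\beta$, since $a$ does not) being absorbed into the implicit constant. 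The $b+c$ terms are split via \eqref{5exist} or \eqref{6exist} and estimated by Lemma~\ref{lem:Lor-Sobolev-emb} (resp.\ Lemma~\ref{lem:Kato-emb-grad-Br}), using $\|b+c\|_{L^{n,q}(\om_i)}\le a$ (resp.\ $\vartheta_{B_r}(|b+c|^2\chi_{\om_i},r)\le a$) and Young's inequality. The case $\beta=-1$ needs no splitting at all: one tests directly against $\eta^2\bu^{-1}$, the second-order term produces $+\lambda\|\eta\,\bu^{-1}\nabla u\|^2_{L^2(B_r)}$, and the single $b+c$ term is controlled by Lemma~\ref{lem:Lor-Sobolev-emb}/\ref{lem:Kato-emb-grad-Br} with the \emph{full} norm of $b+c$ and Young's inequality, whence $C_0=C_1=C_2=1$.

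The genuinely iterative range is $|\beta|<1$ ($\beta\in(0,1)$ in Case~(1), $\beta\in(-1,0)$ in Cases~(2)--(3)). Here I would take $w=\bu^{\beta}-k^{\beta}$ and apply the splitting lemma with $a$ proportional to $\lambda|\beta|/C_{s,q}$ (resp.\ $\lambda|\beta|/C_s'$) — which is why the bound on $\kappa$ now degenerates as $\beta\to0$, exactly as stated — and reproduce the chain of estimates of Theorem~\ref{thm:bdry-exp-subCaccioppoli1}. With $x_0=\|\eta\,\bu^{(\beta-1)/2}\nabla\bu\|_{L^2(B_r)}$, $x_j=\|\eta\,\bu^{(\beta-1)/2}\nabla\bu\|_{L^2(\om_j)}$, $y_0=\|\bu^{(\beta+1)/2}\nabla\eta\|_{L^2(B_r)}$ and $z_0$ collecting the data integrals $\int_{B_r}|\bar f|\,\bu^{\beta+1}\eta^2$ and $\int_{B_r}|\bar g|^2\bu^{\beta+1}\eta^2$, this yields
\begin{equation*}
x_i\le \frac{C_0}{\sqrt{|\beta|}}\bigl((x_0y_0)^{1/2}+z_0+y_0\bigr)+\sum_{j=1}^{i-1}x_j,\qquad i=1,\dots,\kappa,
\end{equation*}
the decisive point — as in the proof of Theorem~\ref{thm:subCaccioppoli} — being that the coefficient of $\sum_{j<i}x_j$ is exactly $1$. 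An induction gives $x_i\le 2^{i-1}\cdot(\cdots)$, summation over $i$ together with $\bigcup_i\om_i=\{x\in B_r:\nabla\bu\ne0\}$ gives $x_0\lesssim 2^\kappa|\beta|^{-1/2}(z_0+2^\kappa y_0)$, and one further Young's inequality (to absorb $(x_0y_0)^{1/2}$) produces \eqref{eq:inter-exp-Caccio} with $C_0=4^\kappa|\beta|^{-2}$, $C_1=C_2=2^\kappa|\beta|^{-1}$.

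The main difficulty is bookkeeping rather than anything conceptual: one must (i) pick the splitting parameter $a$ with the correct linear dependence on $|\beta|$ so that the $b+c$ contributions are absorbed with a coefficient small enough to leave the good second-order term positive; (ii) keep the coefficient of the telescoping sum equal to $1$ throughout, so the induction closes with the claimed power of $2^\kappa$; and (iii) track the signs through all three cases and all three ranges of $\beta$ — in particular Case~(2) (supersolution with $\divv b+d\le0$ and $\beta>0$), which is not literally among the cases of Theorem~\ref{thm:bdry-exp-subCaccioppoli1}, is handled by the same adaptation with $\bu=u^-+k$ and test function $w=\bu^{(\beta+1)/2}-k^{(\beta+1)/2}$ (resp.\ $w=\bu^{\beta}-k^{\beta}$ for $\beta\in(0,1)$). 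The Stummel--Kato variant of the statement is obtained throughout by substituting Lemma~\ref{lem:Kato-emb-grad-Br}/\ref{lem:Kato-emb-grad-om} for Lemma~\ref{lem:Lor-Sobolev-emb} and tracking $\vartheta_{B_r}(|b+c|^2,\cdot)$ in place of $\|b+c\|_{L^{n,q}}$, exactly as at the end of the proof of Theorem~\ref{thm:subCaccioppoli}.
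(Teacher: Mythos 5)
Your proposal is correct and matches the paper's intended argument: the paper gives no separate proof of this interior statement, remarking only that the proofs of Theorems \ref{thm:bdry-exp-subCaccioppoli1}--\ref{thm:bdry-exp-subCaccioppoli2} ``can be easily adapted'', and your write-up is precisely that adaptation --- the same auxiliary functions $w=\bu^{(\beta+1)/2}-k^{(\beta+1)/2}$ (resp.\ $w=\bu^{\beta}-k^{\beta}$ for $|\beta|<1$), the same splitting lemmas with the same choices of $a$ (fixed for $|\beta|>1$, proportional to $|\beta|$ for $|\beta|<1$), and the same telescoping recursion with unit coefficient closed by induction. The simplifications you identify (compact support makes $\eta^2 w_i\in Y^{1,2}_0$ automatic, no truncation at $m$ or $M$) and the direct, splitting-free treatment of $\beta=-1$ are all sound and consistent with the stated constants.
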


\vvv

\section{Local estimates and  regularity of solutions up to the boundary}\label{sec:local-est}

In this part we will present  the iterating method of Moser to obtain the following results: 
\begin{itemize}
\item Local boundedness for subsolutions;
\item Weak Harnack inequality for supersolutions;
\item H\"older continuity in the interior for solutions;
\item A Wiener criterion for continuity of solutions at the boundary. 
\end{itemize} 

\vv

\subsection{Local boundedness and weak Harnack inequality} \label{subs:a priori}

Denote $\om_{r_0}=B_{r_0}  \cap \om \neq \emptyset$,  where $r_0 \in (0, \infty]$, and let $f \in \mathcal{K}(\om_{r_0})$ and $|g|^2 \in \mathcal{K}(\om_{r_0})$. Set $$\gamma:= \beta+1$$ and
\begin{align}\label{eq:Moser-k(r)}
k(r):= \vartheta_{\om_{r_0}}(|f|,r) + \vartheta_{\om_{r_0}}(|g|^2, r)^{1/2}, \quad \textup{for any}\,\,r \in (0, {r_0}].
\end{align}
Define 
\begin{equation}\label{eq:Moser-w}
w=
\begin{dcases}
\bu^{\frac{\beta+1}{2}},  \quad \textup{if} \,\, \beta\neq -1\\
\log \bu,  \quad \textup{if} \,\, \beta =-1,
\end{dcases}
\end{equation}
where 
$\bu$ is either the one given in Theorem \ref{thm:bdry-exp-subCaccioppoli1} or in Theorem \ref{thm:int-exp-subCaccioppoli1}, with $$
k=k(r).
$$
Here $B_r$ is a ball of radius $r \in(0, r_0]$ which is either centered at the boundary (as in Theorem \ref{thm:bdry-exp-subCaccioppoli1}) or  such that $B_r \subset \om$ (as in Theorem \ref{thm:int-exp-subCaccioppoli1}). 
We will handle both cases simultaneously and  it should be understood from the context what kind of balls we are referring to.  Set
\begin{equation}
\tilde f= \frac{|f|}{k(r)}, \,\,\,  \tilde g = \frac{ | g|}{k(r)},\,\,\, \textup{and} \,\,\, V= \tilde f + \tilde g^2.
\end{equation}
Notice that for  $k=k(r)$,  we have that $|\bar f| \leq |\tilde f|$  and $|\bar g|\leq |\tilde g|$ and so \eqref{eq:bdry-exp-Caccio},  \eqref{eq:bdry-exp-Caccio1-bis}, \eqref{eq:bdry-exp-Caccio2}, and \eqref{eq:inter-exp-Caccio} hold for $\tilde f$ and $\tilde g$ instead of $\bar f$ and $\bar g$. Moreover,  
\begin{align}\label{eq:v<f+g}
\vartheta_{\om_{r_0}}(V,r)= \frac{1}{k(r)} \sup_{x \in \R^n}  \int_{B(x,r) \cap \om_{r_0}}& \frac{|f(y)|}{|x-y|^{n-2}} \,dy \\
&+\frac{1}{k(r)^2} \sup_{x \in \R^n}  \int_{B(x,r) \cap \om_{r_0}} \frac{|g(y)|^2}{|x-y|^{n-2}} \,dy\leq 2.\notag
\end{align}

\vv

\begin{lemma}\label{lem:caccio-w-moser}
Assume that $B_r$ be a ball such that $\om_r=B_r \cap \om \neq \emptyset$, $r \leq r_0$, and that either  $b+c \in L^{n,q}(\om_{r_0})$, $q \in [ n, \infty)$, or $|b+c|^2 \in \mathcal{K}(\om_{r_0})$.  If $w$ is defined in \eqref{eq:Moser-w},  and $\eta \in C^\infty_c(B_r)$ is non-negative, then the following hold:
If $|\beta|>1$, there exist constants $c'_3>1$ and $c'_4 \in(0,1)$ so that for any $0<\epsilon \leq 1$,
\begin{equation}\label{eq:iterat-moser-1}
\|\eta  w \|_{L^{2^*}(B_r)}  \leq \frac{c'_3 (1+|\gamma|^{-2})}{{\vartheta}^{-1}_{\epsilon,\om_{r_0}}(V, \epsilon \, c'_4\, (1+|\gamma|^{-2})^{-1}  )}\| (\eta + |\nabla \eta|) w \|_{L^2(B_r)}.
\end{equation}
and if, in addition, $|\gamma|>\tfrac{1}{2}$,  there exist $c_3>1$ and $c_4\in(0,1)$ such that
\begin{equation}\label{eq:iterat-moser}
\|\eta  w \|_{L^{2^*}(B_r)}  \leq \frac{c_3}{{\vartheta}^{-1}_{\epsilon,\om_{r_0}}(V, \epsilon \, c_4\, |\gamma|^{-1})}\| (\eta+ |\nabla \eta|) w \|_{L^2(B_r)}.
\end{equation}
If there exists $\beta_0 \in (0,1)$ such that $\beta_0\leq |\beta|<1$, then there exist constants $c_5>1$ and $c_6=c_6(\beta_0) \in(0,1)$ so that
\begin{equation}\label{eq:iterat-moser-b<1}
\|\eta  w \|_{L^{2^*}(B_r)}  \leq \frac{c_5}{{\vartheta}^{-1}_{\epsilon,\om_{r_0}}(V, \epsilon \,c_6\, |\gamma|)}\| (\eta+ |\nabla \eta|) w \|_{L^2(B_r)}.
\end{equation}
The implicit constants are independent of $\epsilon$ and $gamma$.
\end{lemma}

\begin{proof}
If $|\beta|>1$,  for $\ve$ to be chosen, by \eqref{eq:cor-Kato-emb-glob} we have that 
\begin{align} \label{eq:f-Mor-embd}
\int_{\om_r} (| \tilde f| +| \tilde g|^2) w^2 \eta^2 &\leq  c_1 \ve \left( \int_{\om_r} |\nabla( w \eta)|^2 + \frac{1}{{\vartheta}^{-1}_{\epsilon,\om_{r_0}}(V, c_2^{-1} \ve)^2} \int_{\om_r} | w \eta|^2 \right).
\end{align}
By \eqref{eq:f-Mor-embd}, we may rewrite  \eqref{eq:bdry-exp-Caccio} or \eqref{eq:inter-exp-Caccio},
\begin{align*}
\int_{\om_r}  |\eta \nabla w|^2 &\leq C\, |\gamma|^{-2}\, \int_{\om_r}  |\nabla \eta|^2 w^2  + 2\,\ve \,C \,c_1\, (1 + |\gamma|^{-2})  \int_{\om_r} |\nabla( w \eta)|^2\\
& + \ve \,C \,c_1 \,(1 + |\gamma|^{-2}) \frac{1}{{\vartheta}^{-1}_{\epsilon,\om_{r_0}}(V, c_2^{-1} \ve)^2}  \int_{\om_r} | w \eta|^2.
\end{align*}
Therefore, if we choose $\ve =\frac{\epsilon}{ 10 C c_1 (1+|\gamma|^{-2})}<0.1$, we deduce
\begin{align*}
\int_{\om_r}  |\eta \nabla w  |^2 &\leq C\, |\gamma|^{-2}\,   \int_{\om_r}  |\nabla \eta|^2 w^2 +  \frac{1}{5}  \int_{\om_r} |\nabla( w \eta)|^2 +  \frac{1}{10{\vartheta}^{-1}_{\epsilon,\om_{r_0}}(V, c_2^{-1} \ve)^2}\int_{\om_r} | w \eta|^2,
\end{align*}
which, in turn,  since  $C>1$, implies
\begin{align}\label{eq:eq4.6}
\int_{B_r}  | \nabla (w \eta)  |^2 &\leq \frac{10C (1+|\gamma|^{-2})}{3}   \int_{\om_r}  |\nabla \eta|^2 w^2 +  \frac{1}{3{\vartheta}^{-1}_{\epsilon,\om_{r_0}}(V, c_2^{-1} \ve)^2}\int_{\om_r} | w \eta|^2.
\end{align}
Notice that  $ \epsilon< \vartheta_{\epsilon, \om_{r_0}}(V,  1)$ and so $ \vartheta^{-1}_{\epsilon, \om_{r_0}}(V,  \epsilon) \leq 1$. Thus
$$
{\vartheta}^{-1}_{\epsilon,\om_{r_0}}(V, c_2^{-1} \ve)={\vartheta}^{-1}_{\epsilon,\om_{r_0}}\left(V, \epsilon\left(10 C c_1 c_2 \left(1+|\gamma|^{-2}\right)\right)^{-1}\right) \leq  \vartheta_{\epsilon,\om_{r_0}}^{-1}(V,  \epsilon) \leq 1,
$$
which,  if we set $c'_4:=  (10 C c_1 c_2)^{-1}<\tfrac{1}{10}$,  in light of  \eqref{eq:eq4.6},  gives
$$
\| \nabla (w \eta)  \|_{L^2(\om_r)} \leq \frac{(11C/3)(1+|\gamma|^{-2}) }{ {\vartheta}^{-1}_{\epsilon,\om_{r_0}}\left(V, \epsilon \,c'_4  \left(1+|\gamma|^{-2}\right)^{-1}\right)}\| (\eta+ |\nabla \eta|) w \|_{L^2(\om_r)}.
$$
Moreover, if $|\gamma| >\tfrac{1}{2}$,  it holds that $\frac{|\gamma|^2}{1+|\gamma|^2} \geq \frac{1}{10 |\gamma|}$,  and,  if we set $c_4:=\tfrac{c'_4}{10}$,  we can deduce that 
$$
\| \nabla (w \eta)  \|_{L^2(\om_r)} \leq 20C ({\vartheta}^{-1}_{\epsilon,\om_{r_0}}(V, \epsilon \, c_4 |\gamma|^{-1}))^{-1}\| (\eta+ |\nabla \eta|) w \|_{L^2(\om_r)}.
$$
Since $ \eta w \in Y^{1,2}_0(B_r)$,  \eqref{eq:iterat-moser-1} and \eqref{eq:iterat-moser} follow by Sobolev's inequality.

In a similar fashion, for $0< |\beta|<1$,  if we choose $\ve =\frac{\epsilon |\beta|^2}{ 10 C c_1}<\tfrac{1}{10}$,  since $4^\kappa\geq 1$, we obtain
\begin{align*}
\int_{\om_r}  |\eta \nabla w |^2 &\leq \frac{C}{ |\beta|^2} \int_{\om_r}  |\nabla \eta|^2 w^2  + \frac{1}{5}  \int_{\om_r} |\nabla( w \eta)|^2 +  \frac{1}{10{\vartheta}^{-1}_{\epsilon,\om_{r_0}}(V, c_2^{-1} \ve)^2} \int_{\om_r} | w \eta|^2.
\end{align*}
which  entails
\begin{align*}
 \int_{B_r}   | \nabla (w \eta)  |^2 &\leq \frac{10C}{3} \left(1+\frac{ 1}{ |\beta|^2} \right)   \int_{\om_r}  |\nabla \eta|^2 w^2 +  \frac{1}{3{\vartheta}^{-1}_{\epsilon,\om_{r_0}}(V, \epsilon \, c_4'|\beta|^2)^2}\int_{\om_r} | w \eta|^2.
\end{align*}
Thus,  as $0<\beta_0 \leq |\beta|<1$,   we have that $c_2^{-1} \,\ve\geq \epsilon\,\beta_0^2 c'_4\geq \epsilon\, |\gamma|\beta_0^2 c'_4/2$  and so, if we set $c_6:= \beta_0^2 c'_4/2 $,  since $\epsilon\,c_6 < \vartheta_{\epsilon,\om_{r_0}}(V,  c_6) $ and so $\vartheta^{-1}_{\epsilon,\om_{r_0}}(V, \epsilon\,c_6) <  c_6 $, there exists $c_5>1$ (independent of $\beta_0$) such that
\begin{equation}\label{eq:caccio-w-moser}
\|\nabla (\eta w)\|_{L^2(\om_r)} \leq  \frac{c_5}{{\vartheta}^{-1}_{\epsilon,\om_{r_0}}(V,  \epsilon \,c_6 \,|\gamma|)}\| (\eta+ |\nabla \eta|) w \|_{L^2(B_r)}.
\end{equation}
We conclude the proof of \eqref{eq:iterat-moser-b<1} by Sobolev's inequality.
\end{proof}

\vv

\begin{remark}\label{rem:c-d-moser}
Lemma \ref{lem:caccio-w-moser} can be proved  in the cases
\begin{enumerate}
\item  $-\divv c +d \leq 0$ (or $\geq 0$) and  $|b+c|^2 \in \mathcal{K}(\om_{r_0})$,\label{item:moser1}
\item  $|b|^2 \in  \mathcal{K}(\om_{r_0})$ and $d \in \mathcal{K}(\om_{r_0})$, and either $c \in L^{n,q}(\om_{r_0})$, $ q \in [n, \infty)$, or   $|c|^2 \in  \mathcal{K}(\om_{r_0})$.\label{item:moser2}
\end{enumerate}
We set 
\begin{equation}\label{eq:def-Moser-k(r)-cd}
k(r)= 
\begin{dcases}
  \vartheta_{\om_{r_0}}(|f|,r) + \vartheta_{\om_{r_0}}(|g|^2, r)^{1/2} +\vartheta_{\om_{r_0}}(|b+c|^2, r)^{1/2} &, \textup {in Case}\, \ref{item:moser1},\\
 \vartheta_{\om_{r_0}}(|f|,r) + \vartheta_{\om_{r_0}}(|g|^2, r)^{1/2} +\vartheta_{\om_{r_0}}(|b|^2, r)^{1/2} + \vartheta_{\om_{r_0}}(|d|,r)  &,\textup {in Case}\, \ref{item:moser2},
\end{dcases}
\end{equation} 

For $k$ as in \eqref{eq:def-Moser-k(r)-cd},  we   use Theorem \ref{thm:bdry-exp-subCaccioppoli2} and Theorem \ref{thm:bdry-exp-subCaccioppoli1-bis} respectively, and set 
\begin{equation}\label{eq:Moser-k(r)-cd}
V= 
\begin{dcases}
|\tilde f|+|\tilde g|^2+|b+c|^2 &, \textup {in Case}\, \ref{item:moser1},\\
|\tilde f|+|\tilde g|^2+|b|^2 + |d|&,\textup {in Case}\, \ref{item:moser2},
\end{dcases}
\end{equation} 
in order to obtain the same results as in Lemma \ref{lem:caccio-w-moser}.
\end{remark}

\vv

We are now ready to prove the local boundedness of subsolutions.  

\begin{definition}\label{def:N-P-D}
We will say that {\it the condition} $\textup{(N)}_{r_0}$ is satisfied if one the following conditions hold:
\begin{enumerate}
\item $\divv b + d \leq 0$ and $b+c \in L^{n,q}(\om_{r_0})$, $q \in [n, \infty)$ or $|b+c|^2 \in \mathcal{K}(\om_{r_0})$;
\item $-\divv c + d \leq 0$ and $|b+c|^2 \in \khalf(\om_{r_0})$.
\end{enumerate}
Analogously, we will say that {\it the condition} $\textup{(P)}_{r_0}$ is satisfied if we reverse the inequalities in condition  \textup{(N)}. Here, (N) and (P) stand for the negativity and positivity condition respectively. We will also say that  {\it the condition} $\textup{(D)}_r$ is satisfied if $|b|^2 \in  \khalf(\om_r)$,  $d \in  \khalf(\om_r)$, and either $c \in L^{n,q}(\om_{r_0})$, $q \in [n, \infty)$, or $|c|^2 \in \mathcal{K}(\om_{r_0})$.  If the above conditions hold globally, i.e., for $r_0=\infty$ and $\om$ instead of $\om_{r_0}$, we will  drop the subscript ${r_0}$ and simply write $\textup{(N)}$, $\textup{(P)}$, and $\textup{(D)}$.
\end{definition}

\vv

In the next theorem we borrow ideas from \cite{RZ}, although, some details are  different in our case. For example, we had to introduce the auxiliary modulus $\vartheta'_{\om_r}$ to be able to use Lemma \ref{lem:modulus-dini} and define  the appropriate Dini condition that gives constants independent of $\om$. 

\begin{theorem}[\bf Local boundedness]\label{thm:Moser}
Let $B_r$ be a ball such that $B_r \cap \om \neq \emptyset$, for  $r \leq r_0$, and assume that $f,  |g|^2 \in \khalf (\om_{r_0} )$. If $\sigma\in (0,1)$, then the following hold:\\
\textup{(1)} If $u$ is a subsolution of \eqref{eq:Lu=f-divg} in $B_r \cap \om$ and the condition $\textup{(N)}_{r_0}$ or $\textup{(D)}_{r_0}$  holds, then 
\begin{itemize}
\item[(i)]  if $B_r \subset \om$
\begin{equation}\label{eq:int-localbddness-u+}
 \sup_{B_{\sigma r}} u^+ \lesssim (1-\sigma)^{-n/p} \left(r^{-n/p}\,\| u^+ \|_{L^p(B_r)} + k(r) \right);
\end{equation}
\item[(ii)] if $ B_r$ is centered at a point $\xi \in \d \om$, 
\begin{equation}\label{eq:bdry-localbddness-u+}
  \sup_{B_{\sigma r}} \ump \lesssim (1-\sigma)^{-n/p} \left(r^{-n/p}\,\| \ump \|_{L^p(B_r)} + k(r)  \right).
  \end{equation}
  \end{itemize}
  \textup{(2)} If $u$ is a supersolution of \eqref{eq:Lu=f-divg} in $B_r \subset \om$ and the condition $\textup{(P)}_{r_0}$ or $\textup{(D)}_{r_0}$ holds, then 
\begin{itemize}
\item[(i)]  if $B_r \subset \om$
\begin{equation}\label{eq:int-localbddness-u-}
 \sup_{B_{\sigma r}} u^- \lesssim (1-\sigma)^{-n/p} \left(r^{-n/p}\,\| u^- \|_{L^p(B_r)} + k(r) \right).
\end{equation}
\item[(ii)] if $ B_r$ is centered at a point $\xi \in \d \om$, 
\begin{equation}\label{eq:bdry-localbddness-u-}
 \sup_{B_{\sigma r}} (- \umm) \lesssim (1-\sigma)^{-n/p} \left(r^{-n/p}\,\| \umm \|_{L^p(B_r)} + k(r) \right).
\end{equation}
  \end{itemize}
The implicit constants depend only on $p$, $\sigma$, $n$, $\lambda$, $\Lambda$, $C_{|f|, \om_{r_0}}, C_{|g|^2, \om_{r_0}}$ and according to our assumptions, on the following:  a) $C_{s,q}$ and $\|b+c\|_{L^{n,q}(\om_{r_0})}$, or  $C_s'$ and $\vartheta_{\om_{r_0}}(|b+c|^2, r)$, b) $C_{|b+c|^2, \om_{r_0}}$, and c) $C_{|b|^2, \om_{r_0}}$, $C_{|d|, \om_{r_0}}$, and either $C_{s,q}$ and $\|c\|_{L^{n,q}(\om_{r_0})}$, or  $C_s'$ and $\vartheta_{\om_{r_0}}(|c|^2, r)$.
\end{theorem}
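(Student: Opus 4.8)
The plan is to run Moser's iteration scheme on the quantity $w$ defined in \eqref{eq:Moser-w}, using Lemma \ref{lem:caccio-w-moser} (together with Remark \ref{rem:c-d-moser} to cover the $\textup{(D)}_r$ case) as the single analytic input, and to organize the bookkeeping exactly as in \cite[pp.~194--202]{GiTr} but with the Kato-type moduli $\vartheta'_{\om_{r_0}}$ and the Dini constants $C_{|f|,\om_{r_0}}$, $C_{|g|^2,\om_{r_0}}$ replacing the Lebesgue norms of the data. I will treat case (1)(i) in detail and only indicate the (minor) modifications needed for (1)(ii) and for (2); in (2) one simply notes that $u$ is a supersolution of $Lu=f-\divv g$ iff $-u$ is a subsolution of $L(-u) = -f - \divv(-g)$ with the $\divv b+d\le 0$ hypothesis turning into the reverse inequality, so Theorem \ref{thm:int-exp-subCaccioppoli1}(2)--(3) (resp.\ Theorem \ref{thm:bdry-exp-subCaccioppoli1}(1)) applies to $\bu = u^- + k$ (resp.\ $\umm$) with $\beta\in(0,\infty)$ (resp.\ $\beta<0$), and the argument is otherwise identical.

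\textbf{Setup and the iteration step.} Fix $r\le r_0$, set $k = k_\epsilon(r)$ as in \eqref{eq:Moser-k(r)}, $\bu = u^+ + k$, and for a parameter $\gamma = \beta+1 \ge \gamma_0 > 1$ let $w = \bu^{\gamma/2}$. For radii $\sigma r \le r' < r'' \le r$ choose a cutoff $\eta\in C^\infty_c(B_{r''})$ with $\eta\equiv 1$ on $B_{r'}$, $0\le\eta\le1$, $|\nabla\eta|\le 2/(r''-r')$. Lemma \ref{lem:caccio-w-moser} gives
\begin{equation*}
\|\eta w\|_{L^{2^*}(B_r)} \le \frac{c_3}{{\vartheta'}^{-1}_{\om_{r_0}}(V, c_4(1+\gamma^2)^{-1})}\,\|(\eta+|\nabla\eta|)w\|_{L^2(B_r)},
\end{equation*}
and hence, writing $\chi = 2^*/2 = n/(n-2)$ and $\Phi(\gamma,\rho) := \big(\int_{B_\rho} \bu^{\gamma\chi}\big)^{1/(\gamma\chi)}$,
\begin{equation*}
\Phi(\gamma, r')^{\gamma} \le \Big(\frac{C\,(1+\gamma^2)}{(r''-r')\,{\vartheta'}^{-1}_{\om_{r_0}}(V, c_4(1+\gamma^2)^{-1})} + \frac{C(1+\gamma^2)}{{\vartheta'}^{-1}_{\om_{r_0}}(V,\dots)}\Big)^{\gamma}\Phi(\gamma, r'')^{\gamma}
\end{equation*}
after absorbing the $\eta w$ term. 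The point of the auxiliary modulus $\vartheta'$ is precisely that ${\vartheta'}^{-1}_{\om_{r_0}}(V,\cdot)$ is a genuine continuous strictly increasing inverse defined on all of $(0,\infty)$, so these expressions make sense for every $\gamma$; one records that $k(r)\lesssim {\vartheta'}^{-1}$-type bounds control the normalization $V = \tilde f + \tilde g^2$ uniformly.

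\textbf{Iteration and the exponent/radius series.} Iterate with $\gamma_m = 2\chi^m$ (so $\gamma_{m+1} = \chi\gamma_m$) and radii $r_m = \sigma r + 2^{-m}(1-\sigma)r$ decreasing from $r$ to $\sigma r$. Taking logarithms, $\log\Phi(\gamma_{m+1}, r_{m+1}) \le \log\Phi(\gamma_m, r_m) + \gamma_m^{-1}\log\big(C\chi^m(1-\sigma)^{-1} 2^m r^{-1}\, / \, {\vartheta'}^{-1}_{\om_{r_0}}(V, c_4 \chi^{-2m})\big)$. Here is the crucial estimate that makes the constants scale-invariant and independent of $\om$: the sum $\sum_m \gamma_m^{-1}\,\big|\log {\vartheta'}^{-1}_{\om_{r_0}}(V, c_4\chi^{-2m})\big|$ is \emph{finite}, with bound governed by the Dini integral $\int_0^{\cdot} \vartheta'_{\om_{r_0}}(V,t)^{1/2}\,\frac{dt}{t}$. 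This is exactly the content of Lemma \ref{lem:modulus-dini}, applied with $\hm = \vartheta'_{\om_{r_0}}(V,\cdot)$ (which is increasing, continuous, tends to $0$, and doubling by \eqref{eq:kato-doubling}), $\tau = 1/\chi$, $q = 1$: the numbers $a_k = b_k \log\hm^{-1}(b_k)$ with $b_k = c_4\chi^{-2k}$ are summable up to the Dini constant of $V$, which in turn is controlled by $C_{|f|,\om_{r_0}}$ and $C_{|g|^2,\om_{r_0}}$ since $f,|g|^2\in\khalf(\om_r)$. The remaining terms $\sum_m \gamma_m^{-1}(m\log 2 + m\log\chi + \log C + \log((1-\sigma)^{-1}r^{-1}))$ converge to something of the form $C(n,\chi)\big(1 + \log((1-\sigma)^{-1}r^{-1})\big)$, the $\log r^{-1}$ being harmless because it gets multiplied against $\Phi(\gamma_0, r)^{?}$ in a way that reassembles the scale-invariant factor $r^{-n/p}$. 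Passing $m\to\infty$ and using $\Phi(\gamma_m, r_m)\to \sup_{B_{\sigma r}}\bu$ yields
\begin{equation*}
\sup_{B_{\sigma r}} \bu \lesssim (1-\sigma)^{-n/p}\,r^{-n/p}\,\|\bu\|_{L^p(B_r)}
\end{equation*}
first for $p = \gamma_0 = 2\chi > 2$, and then for all $p\in(0,\infty)$ by the standard interpolation/Young trick (for $p$ below the threshold use $\|\bu\|_{L^{p_0}(B_\rho)}\le (\sup_{B_\rho}\bu)^{1-p/p_0}\|\bu\|_{L^p(B_\rho)}^{p/p_0}$ over a chain of radii and absorb). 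Undoing $\bu = u^+ + k_\epsilon(r)$ and letting $\epsilon\to0$ so $k_\epsilon(r)\to k(r)$ gives \eqref{eq:int-localbddness-u+}.

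\textbf{Boundary case and supersolutions.} For (1)(ii) the only change is to use the boundary refined Caccioppoli Theorem \ref{thm:bdry-exp-subCaccioppoli1} (case (2), $\divv b+d\le 0$, $\beta>0$) in place of the interior one, with $\bu = \ump + k_\epsilon(r)$; the function $w=\bu^{\gamma/2}$ still lies in $Y^{1,2}_0(B_r)$ after multiplication by $\eta$ because $\ump$ is constant $=M$ off $\om$ and $w - M^{\gamma/2}$ vanishes on $\d\om\cap B_r$, so Sobolev's inequality applies verbatim. For (2), pass to $-u$ and to $\bu = u^- + k$ or $\bu = -\umm$ (with the $\textup{(P)}_r$ hypothesis, i.e.\ $\divv b+d\ge 0$ or $-\divv c+d\ge0$), invoke the appropriate cases of Theorems \ref{thm:bdry-exp-subCaccioppoli1}, \ref{thm:bdry-exp-subCaccioppoli2}, \ref{thm:int-exp-subCaccioppoli1} with $\beta\in(0,\infty)$, and repeat. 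In all cases, when $\textup{(D)}_r$ holds instead of $\textup{(N)}_r/\textup{(P)}_r$ we use Remark \ref{rem:c-d-moser}(2) and Theorem \ref{thm:bdry-exp-subCaccioppoli1-bis}, with $V$ augmented by $|b|^2 + |d|$; since $|b|^2, |d|\in\khalf(\om_r)$ the Dini constant of $V$ is still finite and the same iteration goes through. \textbf{The main obstacle} I anticipate is the summability-with-uniform-bound of the series $\sum_m \gamma_m^{-1}\log\big(1/{\vartheta'}^{-1}_{\om_{r_0}}(V,c_4\chi^{-2m})\big)$: making this rigorous requires the precise matching of Lemma \ref{lem:modulus-dini}'s hypotheses (the $\epsilon r$ perturbation in $\vartheta'$ is exactly what guarantees invertibility and lets \eqref{eq:theta'-kato} bound the Dini integral of $\vartheta'$ by that of $\vartheta$ plus a controlled term), and care that the resulting constant depends only on $n,p,\sigma,\lambda,\Lambda$ and the listed Dini/coefficient quantities — not on $r$ or $|\om|$. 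Everything else is the routine Moser machinery of \cite{GiTr}.
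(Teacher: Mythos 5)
Your proposal is correct and follows essentially the same route as the paper: Lemma \ref{lem:caccio-w-moser} (and Remark \ref{rem:c-d-moser} for case $\textup{(D)}_r$) as the iteration step, Lemma \ref{lem:modulus-dini} together with \eqref{eq:theta'-kato} to sum the $\log {\vartheta'}^{-1}$ series, and passage $\epsilon\to0$ at the end. The only organizational difference is that the paper first proves the estimate at scale $r=1$ with $\sigma=1/2$ and then recovers general $r$ by rescaling (using that the Dini constants and coefficient norms are scale invariant) and general $\sigma$ by a covering argument, which sidesteps the $\log r^{-1}$ bookkeeping you leave informal; also note that the correct application of Lemma \ref{lem:modulus-dini} is with $q=2$ (so that $b_k^{1/q}=\chi^{-k}$ matches the weight $\gamma_m^{-1}$ and the bound is the $\khalf$ Dini integral), not $q=1$.
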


\begin{proof}
Let us now pick $\eta$ so that, for $0 \leq \sigma_1 < \sigma_2 \leq \frac{1}{2}$,
$$
0 \leq \eta \leq 1,\quad  \eta=1 \,\,\textup{in}\,\, B_{\sigma_1 r},\quad \eta=0\, \,\textup{in}\,\, B_{\sigma_2 r},\quad \|\nabla \eta\|_{\infty} \leq 2/(\sigma_2-\sigma_1)r.
$$
If we set $\chi= \frac{n}{n-2}$ and $k=k(r)$,   then \eqref{eq:iterat-moser} for $r\leq 1$ can be written as
\begin{equation*}
\| w \|_{L^{2 \chi }(B_{\sigma_1 r})} \leq \frac{2 c_3}{(\sigma_2-\sigma_1)r} \, \frac{1}{{\vartheta}^{-1}_{\epsilon, \om_{r_0}}(V, \epsilon \,c_4 \, |\gamma|^{-1}  )}\,\| w \|_{L^2(B_{\sigma_2 r})},
\end{equation*}
which, in turn, implies that 
\begin{align}\label{eq:iterat-moser-pos}
\| \bu \|_{L^{ \gamma \chi }(B_{\sigma_1 r})} \leq \left( \frac{2 c_3}{(\sigma_2-\sigma_1)r}\right)^{2/\gamma} \frac{1}{{\vartheta}^{-1}_{\epsilon, \om_{r_0}}(V, \epsilon \, c_4 \,|\gamma|^{-1}  )^{2/\gamma}}\,\| \bu \|_{L^\gamma(B_{\sigma_2 r})},
\end{align}
if $|\gamma|>\tfrac{1}{2}$ and $u$ is a subsolution. 

For $p>1$ and  any non-negative integer $i$, we set
$$
\gamma_i:= \chi^i \,p=(1+\tfrac{2}{n-2})^i p\geq p >1  \quad \textup{and}\quad \sigma_i:=\frac{1}{2}+\frac{1}{2^{i+1}},
$$
and apply \eqref{eq:iterat-moser-pos} with $\gamma = \gamma_i$, $\sigma_1=\sigma_{i+1}$ and $\sigma_2=\sigma_{i}$ to obtain
\begin{align*}
\| \bu \|_{L^{ \gamma_{i+1} }(B_{\sigma_{i+1}r })} &\leq (2 c_3 2^{i+2}/r)^{2/\gamma_i} \,  \frac{1}{{\vartheta}^{-1}_{\epsilon, \om_{r_0}}(V, \epsilon \, c_4 \gamma_i^{-1}  )^{2/\gamma_i}}\| \bu \|_{L^{\gamma_i}(B_{\sigma_i r})}\\
&=: (K_1/r^{2/p})^{1/\chi^i} \, K_2^{i/\chi^i}\,\frac{1}{{\vartheta}^{-1}_{\epsilon, \om_{r_0}}(V,  c_7\, \chi^{-i} )^{2/p \chi^i}}\,\| \bu \|_{L^{ \gamma_{i} }(B_{\sigma_{i}r })},
\end{align*}
where $K_1=(8\,c_3)^{2/p}$ and  $K_2=2^{2/p}$ and $c_7:=\epsilon \,c_4\,p <1$ (we can choose $c_4$ so that $p\, c_4<1$). Iteration of this inequality leads to 
\begin{align}\label{eq:bu-localbound}
 \sup_{B_{r/2}}\bu \leq (K_1r)^{\sum_i \frac{1}{\chi^i}}\, K_2^{\sum_i \frac{i}{\chi^i}}\, \prod_{i=0}^\infty \frac{1}{{\vartheta}^{-1}_{\epsilon,\om_{r_0}}(V, c_7\, \chi^{-i} )^{2/p \chi^i}}\,\| \bu \|_{L^p(B_r)}.
\end{align}

Thus,  since
\begin{align*}
\log \prod_{i=0}^\infty \frac{1}{{\vartheta}^{-1}_{\epsilon,\om_{r_0}}(V, c_7\, \chi^{-i})^{2/p \chi^i}} = -\frac{2}{ \epsilon\, c_4} \sum_{i=0}^\infty \frac{c_7}{\chi^i} \log {\vartheta}^{-1}_{\epsilon,\om_{r_0}}(V, c_7\,\chi^{-i}),
\end{align*}
we may apply Lemma \ref{lem:modulus-dini} for $\tau = \chi^{-1}$ and $c=c_7$, and by Lemma \ref{lem:changevariable},  we obtain
\begin{align*}
-&\frac{2}{ \epsilon\, c_4} \sum_{i=0}^\infty \frac{c_7}{\chi^i}  \log {\vartheta}^{-1}_{\epsilon, \om_{r_0}}(V, c_7\, \chi^{-i}) \leq \frac{2\chi}{(\chi-1)\epsilon\, c_4} \int_0^{{\vartheta}^{-1}_{\epsilon, \om_{r_0}}(V, c_7)} {\vartheta}_{\epsilon, \om_{r_0}}(V, t) \frac{dt}{t}\\
&=\frac{2\chi}{(\chi-1)\epsilon\, c_4} \int_0^{{\vartheta}^{-1}_{\epsilon, \om_{r_0}}(V, c_7)} {\vartheta}_{ \om_{r_0}}(V, t) \frac{dt}{t} +\frac{2\chi\,\epsilon}{(\chi-1)\epsilon\, c_4} {\vartheta}^{-1}_{\epsilon, \om_{r_0}}(V, c_7)\\
&\leq \frac{2\chi}{(\chi-1)\epsilon\, c_4}  \left( C_{| f|, \om_{r_0}}      {\vartheta}_{ \om_{r_0}}(|\tilde  f|, {\vartheta}^{-1}_{\epsilon, \om_{r_0}}(V, c_7)  ) +             C_{| g|^2, \om_{r_0}}          {\vartheta}_{ \om_{r_0}}(|\tilde  g|^2, {\vartheta}^{-1}_{\epsilon, \om_{r_0}}(V, c_7))  \right)\\
&+\frac{2\chi\,\epsilon}{(\chi-1)\epsilon\, c_4} {\vartheta}^{-1}_{\epsilon, \om_{r_0}}(V, c_7)\\
&\leq \frac{2\chi}{(\chi-1)\epsilon\, c_4}  \left(  \big(  C_{| f|, \om_{r_0}}   +   C_{| g|^2, \om_{r_0}}  \big){\vartheta}_{ \om_{r_0}}(V, {\vartheta}^{-1}_{\epsilon, \om_{r_0}}(V, c_7)) + \epsilon{\vartheta}^{-1}_{\epsilon, \om_{r_0}}(V, c_7)\right)\\
&\leq  \frac{2\chi}{(\chi-1)\epsilon\, c_4}   \max\left( \big(C_{| f|, \om_{r_0}}+C_{| g|^2, \om_{r_0}}  \big),1 \right) {\vartheta}_{ \epsilon,\om_{r_0}}(V, {\vartheta}^{-1}_{\epsilon, \om_{r_0}}(V, c_7))\\
&\leq \frac{2\chi\,c_7}{(\chi-1)\epsilon\, c_4}  \max\left( C_{| f|, \om_{r_0}}+C_{| g|^2, \om_{r_0}},1 \right)= \frac{2\chi\,p}{(\chi-1)}  \max\left( C_{| f|, \om_{r_0}}+C_{| g|^2, \om_{r_0}},1 \right),
\end{align*}
where $C_{| f|,\om_{r_0}}$ and $C_{| g|^2,\om_{r_0}}$ stand for the Carleson-Dini constants \eqref{eq:Kato-dini}.

 By the definition of $\bu$, we get
$$
\sup_{B_{r/2}}  u^+_M \leq \sup_{B_{r/2}}  u^+_M+k(r)  \lesssim r^{-n/p}\,\| \bu \|_{L^p(B_r)} \lec r^{-n/p} \| u^+_M \|_{L^p(B_r)} +k(r),
$$
from which,  \eqref{eq:bdry-localbddness-u+} for $r\leq 1$ follows.  Replacing $u^+_M$ by $u^+$, the same argument shows \eqref{eq:int-localbddness-u+} for $r\leq 1$.

To obtain the desired estimates in any ball of arbitrary radius $r>1$ we use a rescaling argument. Note that $u_r= u(rx)$ is a subsolution (resp. supersolution) of the equation 
$$
-\divv ( A_r \nabla w +  b_r w) - c_r \nabla w - d_r w = f_r - \divv g_r,
$$
 where
\begin{align*}
A_r(x)&=A(rx), \quad b_r(x)=r b(rx), \quad c_r(x)=r c(rx), \quad d_r(x)=r^2d(rx),\\ 
f_r(x)& =r^2 f(rx), \quad g_r(x)= r g(rx).
\end{align*}
 If we set $D_{r}= \frac{1}{r}\om_{r_0}$,  by Lemma \ref{lem:changevariable},  we get that
\begin{align*}
 \|b_r+c_r\|_{L^{n,q}(D_{r})} &= \|b+c\|_{L^{n,q}(\om_{r_0})},\\
  {\vartheta}_{D_{r}}(f_r, 1)={\vartheta}_{\om_{r_0}}(f, r) \quad & {\vartheta}_{D_{r}}(|g_r|^2, 1)={\vartheta}_{\om_{r_0}}(|g|^2, r),
 \end{align*}
 and since the Dini condition is scale invariant, we have
 $$
 C_{f_r, D_{r}}=C_{f,\om_{r_0}} \quad  C_{|g_r|^2, D_{r}}=C_{|g|^2,\om_{r_0}}.
 $$
If we apply \eqref{eq:bu-localbound} to $u_r$ in the domain $D_{r}$, by the  change of variables $y=rx$, we obtain the following estimate:
\begin{align*}
 \sup_{B_{r/2}} \bu =\sup_{B_{1/2}}\bu_r \lesssim \| \bu_r \|_{L^p(B_1)} \approx r^{-n/p}\,\| \bu \|_{L^p(B_r)}.
\end{align*}
Remark that the implicit constants do not depend on $r$.

Moreover, if $0<\sigma  < 1/2$, 
\begin{align*}
 \sup_{B_{\sigma r}} \bu \leq  \sup_{B_{r/2}} \bu &\lesssim  r^{-n/p}\,\| \bu \|_{L^p(B_r)}\\
 &\lesssim (1-\sigma)^{-n/p} r^{-n/p}\,\| \bu \|_{L^p(B_r)}.
\end{align*}
and if $1/2<\sigma  < 1$, then for any ball $B(z, (1-\sigma) r) \subset B_{\sigma r}$, we get
\begin{align*}
 \sup_{B(z, (1-\sigma)r)}\bu \lesssim (1-\sigma)^{-n/p} r^{-n/p}\,\| \bu \|_{L^p(B(z, 2(1-\sigma)r)} \leq (1-\sigma)^{-n/p} r^{-n/p}\,\| \bu \|_{L^p(B_r)}.
\end{align*}
Thus, for any $\sigma \in (0,1)$, we have shown that
\begin{align*}
 \sup_{B_{\sigma r}} \bu &\lesssim (1-\sigma)^{-n/p} r^{-n/p}\,\| \bu \|_{L^p(B_r)},
\end{align*}
  which trivially implies \eqref{eq:int-localbddness-u+} and \eqref{eq:bdry-localbddness-u+}. To show \eqref{eq:int-localbddness-u-} and \eqref{eq:bdry-localbddness-u-}  it suffices to notice that $w=-u$ is a subsolution of $Lw = -f + \divv g $ and use  \eqref{eq:int-localbddness-u+} and \eqref{eq:bdry-localbddness-u+} as $\divv b + d \leq 0$ still holds.
  
 Using Remark \ref{rem:c-d-moser} we can prove the same result under either condition (D) or  $-\divv c + d \leq 0$ and $|b+c|^2 \in \khalf(\om_r)$. We omit the details.
  \end{proof}

\vv

We  turn our attention to the weak Harnack inequality.

\begin{theorem}[\bf Weak Harnack inequality]\label{thm:reverse-Holder-bu}
Let $B_r$ be a ball such that $B_r \cap \om \neq \emptyset$, for  $r \leq r_0$, and assume that $f, |g|^2 \in \khalf (\om_{r_0})$. If  $u$ is a supersolution of \eqref{eq:Lu=f-divg} in $B_r \cap \om$ and the condition $\textup{(P)}_r$ or $\textup{(D)}_r$ is satisfied, then for $0< s<p< \chi=n/n-2$, the following hold:
\begin{itemize}
\item[(i)]  if $B_r \subset \om$
\begin{align}
r^{-n/p} \| u \|_{L^p(B_{r/2})}& \lesssim r^{-n/q} \| u \|_{L^s(B_{r})} + k(r), \label{eq:reverse-Holder-int}\\
r^{-n/p}\| u \|_{L^p(B_{r})} &\lesssim \inf_{B_{r/2}} u + k(r/2).\label{eq:weak-Harnack-int}
\end{align}
\item[(ii)] if $ B_r$ is centered at a point $\xi \in \d \om$, 
\begin{align}
r^{-n/p} \| \umm \|_{L^p(B_{r/2})}& \lesssim r^{-n/s} \| \umm  \|_{L^s(B_{r})} + k(r),\label{eq:reverse-Holder-bdry}\\
r^{-n/p}\| \umm  \|_{L^p(B_{r})} &\lesssim \inf_{B_{r/2}} \umm + k(r/2), \label{eq:weak-Harnack-bdry}
\end{align}
\end{itemize}
The implicit constants depend only on $p$, $s$, $\sigma$, $n$, $\lambda$, $\Lambda$, $C_{|f|, \om_{r_0}}, C_{|g|^2, \om_{r_0}}$ and according to our assumptions, on the following:  a) $C_{s,q}$ and $\|b+c\|_{L^{n,q}(\om_{r_0})}$, or  $C_s'$ and $\vartheta_{\om_{r_0}}(|b+c|^2, r)$, b) $C_{|b+c|^2, \om_{r_0}}$, and c) $C_{|b|^2, \om_{r_0}}$, $C_{|d|, \om_{r_0}}$, and either $C_{s,q}$ and $\|c\|_{L^{n,q}(\om_{r_0})}$, or  $C_s'$ and $\vartheta_{\om_{r_0}}(|c|^2, r)$.
\end{theorem}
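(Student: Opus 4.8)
The plan is to follow the Moser iteration scheme of Gilbarg--Trudinger \cite[pp.\ 194--208]{GiTr}, adapting the two key ingredients that were established earlier: the refined Caccioppoli inequality in the form of Lemma \ref{lem:caccio-w-moser} (together with Remark \ref{rem:c-d-moser} for the cases governed by $\textup{(D)}_r$ or by $-\divv c + d \le 0$ with $|b+c|^2\in\khalf$), and the Dini summation Lemma \ref{lem:modulus-dini}. By the rescaling argument already used in the proof of Theorem \ref{thm:Moser}, it suffices to prove everything for $r=1$; the change of variables $y=rx$ transforms $b,c,d,f,g$ in the stated way, preserves the $L^{n,q}$ norm of $b+c$ and the Dini constants $C_{|f|,\om_{r_0}}$, $C_{|g|^2,\om_{r_0}}$, and turns $\vartheta$ at scale $1$ into $\vartheta$ at scale $r$. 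So the main body of the proof is the unit-scale inequality.

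First I would handle the ``reverse H\"older'' estimates \eqref{eq:reverse-Holder-int} and \eqref{eq:reverse-Holder-bdry}, i.e.\ the passage from an $L^s$ norm to an $L^p$ norm with $0<s<p<\chi=n/(n-2)$. For a non-negative supersolution $u$ one works with $\bu = u^- + k$ (interior) or $\umm+k$ (boundary), where $k=k_\epsilon(1)$, and applies Lemma \ref{lem:caccio-w-moser} with $w=\bu^{\gamma/2}$, $\gamma=\beta+1$, in the range $0<\gamma<\chi$; since $\bu$ is a supersolution one has the analogue of \eqref{eq:iterat-moser-pos} with the exponent $\gamma$ replaced by a positive power less than $1$ (this is the ``small exponents'' branch of Moser's argument, where $|\beta|<1$ gives the $4^\kappa$, $2^\kappa$-type constants of Theorem \ref{thm:int-exp-subCaccioppoli1}). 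Choosing the exponents $\gamma_i=\chi^i s$, cutoff radii $\sigma_i=\tfrac12(1+2^{-i})$, and iterating exactly as in \eqref{eq:bu-localbound}, the product of the factors $\,{\vartheta'}^{-1}_{\om_{r_0'}}(V,c_6\chi^{-2i})^{-2/(s\chi^i)}$ is controlled, via Lemma \ref{lem:modulus-dini} with $\tau=\chi^{-1}$ and the self-improving estimate \eqref{eq:theta'-kato}, by a constant depending only on $C_{|f|,\om_{r_0}}$, $C_{|g|^2,\om_{r_0}}$ (and the structural data). Letting $\epsilon\to0$ so that $k_\epsilon(1)\to k(1)$ gives \eqref{eq:reverse-Holder-int}; the boundary version \eqref{eq:reverse-Holder-bdry} is identical using the $\umm$-variant of the refined Caccioppoli. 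As in Theorem \ref{thm:Moser}, one then upgrades from the ball-halving statement to arbitrary concentric balls $B_{\sigma r}\subset B_{r/2}$ or $B_{r/2}\subset B_{\sigma r}$ by the covering trick, producing the stated $(1-\sigma)$-dependence if one wanted it; here only $\sigma=1/2$ is recorded.

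Next I would prove the genuine weak Harnack bounds \eqref{eq:weak-Harnack-int} and \eqref{eq:weak-Harnack-bdry}, which by the previous step reduce to showing $\| \bu\|_{L^{p}(B_1)}\lesssim \inf_{B_{1/2}}\bu$ for some small $p>0$. This is the step that uses negative exponents $\beta<-1$ and, critically, the logarithmic case $\beta=-1$. For $\beta<-1$ Lemma \ref{lem:caccio-w-moser} iterates (now with cutoffs going in the other direction, from $B_1$ down to $B_{1/2}$) to give $\sup_{B_{1/2}}\bu^{-1}\lesssim \|\bu^{-1}\|_{L^{p}(B_1)}$, i.e.\ $\inf_{B_{1/2}}\bu\gtrsim \|\bu\|_{L^{-p}(B_1)}$, again with constants handled by Lemma \ref{lem:modulus-dini}. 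The remaining, and I expect hardest, point is the ``crossover'' from a small negative power $L^{-p}$ to a small positive power $L^{p}$: one takes $w=\log\bu$ from \eqref{eq:Moser-w}, uses the $\beta=-1$ case of the refined Caccioppoli (where $C_0=C_1=C_2=1$) to bound $\|\nabla(\eta w)\|_{L^2}$ by $\|(\eta+|\nabla\eta|)w\|_{L^2}$ plus the Dini-type contribution of $V$, deduces from Poincar\'e/Sobolev that $w$ lies in $\mathrm{BMO}$-type classes on $B_{3/4}$ with a scale-invariant bound, and then invokes the John--Nirenberg inequality to find $p_0>0$ with $\int_{B_{3/4}} e^{p_0|w-\bar w|}\lesssim 1$; this yields $\|\bu\|_{L^{p_0}(B_{3/4})}\,\|\bu^{-1}\|_{L^{p_0}(B_{3/4})}\lesssim 1$, and chaining the three pieces gives \eqref{eq:weak-Harnack-int}. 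The delicate part is making every constant in the John--Nirenberg step independent of $\om$ and of the scale, which is exactly what the auxiliary modulus $\vartheta'_{\om_{r_0}}$ and condition \eqref{eq:Kato-dini} were introduced to guarantee; one must also check that at the boundary the function $\umm$, extended by the constant $m$ outside $\om$, still satisfies the logarithmic Caccioppoli estimate and hence the same John--Nirenberg bound on balls centered at $\xi\in\d\om$, which follows because $\eta^2 w$ remains an admissible test function in $Y^{1,2}_0(\om_r)$. Finally, the dependence of the implicit constants exactly as listed follows by tracking which of the three branches of Lemma \ref{lem:caccio-w-moser} / Remark \ref{rem:c-d-moser} was used, since each branch contributes precisely the corresponding norm or Dini constant; the details of this bookkeeping are routine and I would omit them.
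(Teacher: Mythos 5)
Your proposal is correct and follows essentially the same route as the paper: a finite Moser iteration between the exponents $s$ and $p$ for the reverse H\"older step, a second iteration over negative exponents to reach $\|\bu\|_{L^{-p}(B_1)}\lesssim\inf\bu$, and the crossover via $w=\log\bu$, the $\beta=-1$ Caccioppoli estimate, Poincar\'e, and John--Nirenberg (the paper phrases this as $e^{sw}$ being an $A_2$ weight), with all constants controlled through $\vartheta'_{\om_{r_0}}$ and Lemma \ref{lem:modulus-dini} exactly as you describe. The only slip is that for the weak Harnack inequality of a non-negative supersolution the relevant auxiliary function is $\bu=u+k$ (Case (3) of Theorem \ref{thm:int-exp-subCaccioppoli1}), not $u^-+k$, which is the choice for bounding the negative part in the local boundedness theorem.
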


\begin{proof}
We shall first prove the reverse H\"older inequality for $\bu$.  Recall first that $\gamma= \beta+1$. If $p< \chi$,  there exists $\delta \in (0,1)$ such that $p= \delta \chi$. For any non-negative integer $i$, we let
$$
\gamma_i= \chi^{-i} \,p\quad \textup{and}\quad \sigma_i=1-\frac{1}{2^{i+1}},
$$
and apply \eqref{eq:iterat-moser-pos} (which is still true as $\beta<0$ when $0<\gamma=\beta+1<1$) with $\gamma = \gamma_i$, $\sigma_1=\sigma_{i}$ and $\sigma_2=\sigma_{i+1}$. If we argue as in the proof of the previous theorem we obtain
\begin{align*}
\| \bu \|_{L^{ \gamma_{i} }(B_{\sigma_{i} })} &\leq K_1^{1/\chi^i} \, K_2^{i/\chi^i}\,\frac{1}{{\vartheta}^{-1}_{\epsilon, \om_{r_0}}(V, c_6 \chi^{-i} )^{2/p \chi^i}}\, \| \bu \|_{L^{\gamma_{i+1}}(B_{\sigma_{i+1} })},
\end{align*}
where $K_1=(4\,c_5)^{2/p}$ and  $K_2=2^{2/p}$ and $c_6 <1$. As $q<p$, there exists $i_0 \in \bN$ such that $ \gamma_{i_0-1} \leq q < \gamma_{i_0-2}$. Thus, if we iterate the latter inequality $i_0$ times we get 
\begin{align}\label{eq:bu-RH}
\| \bu \|_{L^p(B_{1/2})} \lesssim   \| \bu \|_{L^q(B_{1})}.
\end{align}

If $u$ is a supersolution,  then \eqref{eq:iterat-moser-b<1} for $r=1$ implies
\begin{align*}
\| \bu \|_{L^{ -q  }(B_{\sigma_2  })} \leq \| \bu \|_{L^{ -\gamma_{i_0-1}  }(B_{\sigma_2  })}  \leq  K_1 ^{1/\chi^{i_0}} \, K_2^{i/\chi^{i_0}}\,\frac{1}{{\vartheta}^{-1}_{\epsilon,  \om_{r_0}}(V, c_6 \chi^{-{i_0}}  )^{2/p \chi^{i_0}}}\, \| \bu \|_{L^{ \gamma_{i_0}}(B_{\sigma_1 })}.
\end{align*}
By a similar iteration argument as above we can show that for any $q\in (0,\chi)$,
\begin{align}\label{eq:bu-WH-p}
\| \bu \|_{L^{-q}(B_1)} &\lesssim  \inf_{B_{1/4}}\bu.
\end{align}

Set now $w = \log \bu$ and let $B_r(x)$ a ball centered at $x$ of radius $r\leq 1/2 $ so that $B_{2r}(x) \subset B_1$. Let also $\eta \in C^\infty_c(B_{2r}(x))$ so that $\eta=1$ in $B_r(x)$, $\eta =0$ outside $B_{2r}(x)$ and $\|\nabla \eta\|_{\infty} \lesssim 1/r$. Then, by Poincar\'e and H\"older inequalities, along with \eqref{eq:bdry-exp-Caccio} or \eqref{eq:inter-exp-Caccio} for $\beta=-1$ and the fact that $|\bar f \leq |\tilde f|$, $| \bar g| \leq |\tilde g|$,  and $k=k(1)$, we get
\begin{align*}
&\int_{B_r(x)} \Big|w - \avint_{B_r} w \Big| \lesssim r \int_{B_r(x)} |\nabla w| \lesssim r r^{n/2} \left( \int_{B_r(x)} |\nabla w|^2 \right)^{1/2}\\
& \leq  r r^{n/2} \left( \int_{B_{2r}(x)} |\eta \nabla w|^2 \right)^{1/2} \lesssim  r r^{n/2}\left[  \int_{B_{2r}(x)} | \nabla \eta|^2  + \int_{B_{2r}(x)}( |\tilde f| + |\tilde g|^2) \right]^{1/2}\\
& \lesssim   r r^{n/2}\left[  \int_{B_{2r}(x)} | \nabla \eta|^2  + r^{n-2} \int_{B_{2r}(x)} \frac{ |\tilde f(y)| + |\tilde g(y)|^2}{|x-y|^{n-2}}\,dy \right]^{1/2}\\
&\lec  r^n \left[ 1 +  \vartheta_{\om_{r_0}}(|\tilde f|,r) +   \vartheta_{\om_{r_0}}(|\tilde g|^2,r) \right]^{1/2}\\
& =  r^n \left[ 1 + \frac{ \vartheta_{\om_{r_0}}(| f|,r)}{k(1)} +  \frac{  \vartheta_{\om_{r_0}}(| g|^2,r)}{k(1)} \right]^{1/2} \leq  2r^n.
\end{align*}
This shows that, $w \in \bmo(B_1)$ and thus, there exists $s\in (0,1)$ such that $e^{s w}$ is in the class of $A_2$ Muckenhoupt weights in $B_1$. That is,
$$
\left(\int_{B_1} \bu^s \right)^{1/s} \lesssim \left(\int_{B_1} \bu^{-s} \right)^{-1/s}.
$$
This, combined with \eqref{eq:bu-RH} and \eqref{eq:bu-WH-p}, implies that, for any $0<p<\chi$,
$$
\| \bu \|_{L^{p}(B_{1/2})} \lesssim \inf_{B_{1/4}}\bu
$$
and so \eqref{eq:reverse-Holder-int}-\eqref{eq:weak-Harnack-bdry} hold for $r=1$. The general case follows by rescaling.
\end{proof}

\vv

\begin{remark}\label{rem:moser-glob}
If we impose global assumptions (e.g. $|c|^2\in \mathcal{K}'(\om)$ and $|b|^2, |d| \in \khalf(\om)$) on the coefficients and the interior data, then we may take $r_0=\infty$ and all of the constants in Theorems \ref{thm:Moser} and \ref{thm:reverse-Holder-bu} are independent of $r$. In particular, the implicit constants depend  on $p$, $\sigma$, $n$, $\lambda$, $\Lambda$, $C_{s,q}$, $C_{|f|, \om}, C_{|g|^2, \om}$ and according to our assumptions, on the following:  a) $C_{s,q}$ and $\|b+c\|_{L^{n,q}(\om)}$, for $q \in [n,\infty)$, or  $C_s'$ and $\vartheta_{\om}(|b+c|^2)$, b) $C_{|b+c|^2, \om}$, and c) $C_{s,q}$ and $\|c\|_{L^{n,q}(\om)}$, for $q \in [n,\infty)$, or  $C_s'$ and $\vartheta_{\om}(|b+c|^2)$, $C_{|b|^2, \om}$, and $C_{|d|, \om}$.
\end{remark}

\vv

\begin{remark}\label{rem:molifier-moser-indep}
Let $\delta>0$, $\psi_\delta$ be as in \eqref{eq:molifier}, and  $\om_\delta=\{x\in \om: \dist(x, \d\om) >\delta\} \cap B(0,\delta^{-1})$. Define $b_\delta= (b {\bf 1}_{\om_\delta}) \ast \psi_\delta$, $c_\delta= (c {\bf 1}_{\om_\delta}) \ast \psi_\delta$, and $d_\delta= (d {\bf 1}_{\om_\delta}) \ast \psi_{\om_\delta}$. Let us also  define $L_\delta u = -\divv A \nabla u - \divv (b_\delta u) - c_\delta \nabla u - d_\delta u$. If \eqref{negativity} (resp. \eqref{negativity2}) holds for  $b$, $c$ and $d$ in $\om$, then  \eqref{negativity} (resp. \eqref{negativity2}) holds in $\om_\delta$. For a proof see  Lemma 6.9 in \cite{KSa}. Moreover, $\| b_\delta+c_\delta\|_{L^{n,q}(\om)}$ is dominated by $2\| b+c\|_{L^{n,q}((\om)}$ and so, all the constants in the theorems of section \ref{sec:caccio} are independent of $\delta$. In the case that \eqref{negativity} holds, everything works exactly as before. On the other hand, if \eqref{negativity2} is satisfied and  $|b+c|^2 \in \khalf(\om)$, we should use Corollary \ref{cor:Kato-emb-glob} in the proof of Lemma \ref{lem:caccio-w-moser} to obtain bounds which are independent of $\delta$. Theorems \eqref{thm:Moser} and \eqref{thm:reverse-Holder-bu}  for subsolutions and supersolutions of $L_\delta$ in $\om_\delta$ will then follow from the same proofs with estimates uniform in $\delta$.
\end{remark}

\vv

\begin{example}\label{ex:c1}
Let us now refer to the counterexample constructed in  \cite[Lemma 7.4]{KSa}. In particular, the authors defined the operator
$$
-\Delta u - \divv( \delta b  u)=0 \,\,\textup{in}\,\,B(0, e^{-1}), 
$$
where $b(x)=- \frac{x}{|x|^2 | \ln|x| |}$ and $\delta>0$, and showed that the  solution $u = | \ln|x| |^\delta  \in Y^{1,2}(B(0, e^{-1}))$ does not satisfy \eqref{eq:int-localbddness-u+} around $0$. They proved that $b \in L^q(B(0, e^{-1}))$ for any $q >n$  but not in $L^n(B(0, e^{-1}))$. It is not hard to see that $|b|^2 \in \mathcal{K}(B(0, e^{-1}))$ but not in $\khalf(B(0, e^{-1}))$, and thus, assuming $|b+c|^2 \in \mathcal{K}(\om)$ does not suffice to establish local boundedness.  A modification of this example shows that  \eqref{eq:weak-Harnack-int} does not hold  when $|b+c|^2 \in  \mathcal{K}(\om)$. It is important to note that, since $\delta$ can be taken as small as we want, this example shows that assuming the norms to be small  is not enough either.
\end{example}

\vv

\begin{example}\label{ex:d>0}
Adjusting the previous example we can find an operator which does not satisfy neither \eqref{negativity} nor \eqref{negativity2}, for which there exists a non-bounded solution in the ball $B(0, e^{-1})$. Indeed, let
\begin{equation}\label{eq:counter}
 -\Delta u - d  u=0 \,\,\textup{in}\,\,B(0, e^{-1}), \quad  \textup{where}\,\,d(x)= \frac{n-2}{|x|^2 | \ln|x| |}.
\end{equation}
 It is not hard to see that $ d \geq 0$ is  in the Lorentz space  $L^{n/2,q}(B(0, e^{-1}))$, for any $q>1$.  But notice that  $u = | \ln|x| |$  is a solution of \eqref{eq:counter} and $u \in Y^{1,2}(B(0, e^{-1}))$. Since $u$  fails to be bounded around $0$,  the necessity of either \eqref{negativity} or \eqref{negativity2} to prove local boundedness is established. It is interesting to see that $d $ is not  in $\mathcal{K}(B(0, e^{-1}))$ (and thus, it is not in $L^{n/2,1}(B(0, e^{-1}))$ either). 
\end{example}

\vvv

\subsection{Interior and boundary regularity}\label{subs:regularity}

\vv

\begin{theorem}
Let $u$ be a supersolution of \eqref{eq:Lu=f-divg} in $\om$ with $\sup_{\om} u<\infty$ and assume that the condition \textup{(P)} or  \textup{(D)} holds. Then  $u$ has a lower semi-continuous representative satisfying
\begin{equation}\label{eq:lsc-repres}
u(x) = \ess \liminf_{y\to x} u(y) = \lim_{r \to 0} \avint_{B(x,r)} u(y)\,dy, \quad \textup{for all} \,\, x \in \om.
\end{equation}
\end{theorem}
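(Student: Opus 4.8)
The statement is the standard fact that a supersolution has a lower semicontinuous precise representative given by the limit of its averages, and the route is through the weak Harnack inequality (Theorem \ref{thm:reverse-Holder-bu}) exactly as in \cite[Theorem 8.18 and its consequences]{GiTr}. First I would fix $x\in\om$ and a small ball $B(x,r)\Subset\om$. Since $u$ is bounded above, after replacing $u$ by $\sup_\om u - u\geq 0$ (which is a non-negative supersolution of $Lu = -f+\divv g$, with $|{-f}|=|f|$ and $|{-g}|^2=|g|^2$ in $\khalf(\om_r)$, and condition (P) or (D) preserved — if (P) is used one checks the reversed inequalities still hold for the translated equation, which they do since constants are solutions only up to the lower-order terms, but the negativity/positivity structure is unaffected by the sign flip), I may assume $u\geq 0$. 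Then I would apply the weak Harnack inequality \eqref{eq:weak-Harnack-int} with, say, $p=1$ (any $p\in(0,\chi)$ works): for a.e. $x_0$ near $x$ and small $\rho$,
\[
\rho^{-n}\int_{B(x_0,\rho)} u \;\lesssim\; \inf_{B(x_0,\rho/2)} u + k(\rho),
\]
where $k(\rho)=\vartheta_{\om_{r_0}}(|f|,\rho)+\vartheta_{\om_{r_0}}(|g|^2,\rho)^{1/2}\to 0$ as $\rho\to0$ because $f,|g|^2\in\khalf(\om)\subset\mathcal{K}(\om_{r_0})$ (and the Dini subspace is contained in $\mathcal K$, so $\lim_{\rho\to0}\vartheta=0$).

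The second step is to define the representative $\tilde u(x):=\ess\liminf_{y\to x}u(y)$ and to prove the two displayed equalities. For the average side, I would show $\limsup_{r\to0}\avint_{B(x,r)}u \le \ess\liminf_{y\to x}u(y)$ and $\liminf_{r\to0}\avint_{B(x,r)}u\ge \ess\liminf_{y\to x}u(y)$ separately. The first inequality is elementary: $\avint_{B(x,r)}u\le \esssup_{B(x,r)}u$ is not useful, so instead one uses that for any $\mu<\ess\liminf_{y\to x}u(y)$ the set $\{u<\mu\}\cap B(x,r)$ has measure $o(r^n)$ — actually the clean way is Lebesgue-type: $\avint_{B(x,r)}u \le \mu\,|\{u<\mu\}\cap B(x,r)|/|B(x,r)| + (\esssup_{B(x,r)} u^-\text{ part})$... more cleanly, I would fix $\mu$ with $\mu > \ess\liminf$ is the wrong direction; let me instead take $\mu<\ess\liminf_{y\to x}u(y)$, so $|\{y\in B(x,r): u(y)<\mu\}|=0$ for $r$ small, hence $u\ge\mu$ a.e. on $B(x,r)$ and $\avint_{B(x,r)}u\ge\mu$; letting $\mu\uparrow\ess\liminf$ gives $\liminf_{r\to0}\avint_{B(x,r)}u\ge\ess\liminf_{y\to x}u(y)$. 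For the reverse, $\limsup_{r\to0}\avint_{B(x,r)}u\le\ess\liminf_{y\to x}u(y)$: given $\mu>\ess\liminf_{y\to x}u(y)$, the set $E_r=\{y\in B(x,r): u(y)<\mu\}$ has $|E_r|>0$ for all small $r$, and in fact along a sequence $r_j\to 0$ with $|E_{r_j}|/|B(x,r_j)|$ bounded below — here is exactly where the weak Harnack inequality enters: applying \eqref{eq:weak-Harnack-int} to $\mu-u$ restricted appropriately (a non-negative supersolution on $\{u<\mu\}$ is not quite what one has, so instead one applies the weak Harnack inequality to $(\mu+k(r)-u)^+$-type quantities, or better, one runs the standard \cite{GiTr} argument: using \eqref{eq:weak-Harnack-int} on $u$ itself on a ball where one controls $\inf u$ by the average).

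I would carry out this second (harder) direction following \cite[proof of Theorem 8.22]{GiTr}: for $x\in\om$ and $r$ small, by the lower semicontinuity candidate one wants $\avint_{B(x,2r)} u \le \inf_{B(x,r)} u^{\text{precise}} + k(2r)$, which is \eqref{eq:weak-Harnack-int} with $p=1$ and center $x$; hence $\limsup_{r\to 0}\avint_{B(x,r)}u \le \liminf_{r\to0}\big(\inf_{B(x,r/2)}u + k(r/2)\big)$, and since $k\to 0$, the right side is $\le \ess\liminf_{y\to x}u(y)$ once one argues $\inf_{B(x,\rho)}u$ (in the essential sense) converges to $\ess\liminf_{y\to x}u(y)$, which is just the definition. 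Combining the two directions gives
\[
\lim_{r\to 0}\avint_{B(x,r)}u(y)\,dy = \ess\liminf_{y\to x}u(y) =: \tilde u(x)
\]
for \emph{every} $x\in\om$, and in particular the limit of averages exists at every point; call this common value $\tilde u(x)$.

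The third and final step is lower semicontinuity of $\tilde u$ and that $\tilde u = u$ a.e. That $\tilde u = u$ a.e. follows from the Lebesgue differentiation theorem (since $u\in L^1_{\loc}$, $\avint_{B(x,r)}u\to u(x)$ a.e.), so $\tilde u$ is a genuine representative. For lower semicontinuity: fix $x$ and $a<\tilde u(x)$. Since $\tilde u(x)=\lim_r\avint_{B(x,r)}u$, pick $r$ with $\avint_{B(x,r)}u>a+\delta$ for some $\delta>0$. For $z$ close to $x$, $\avint_{B(z,r)}u$ is close to $\avint_{B(x,r)}u$ by continuity of translation in $L^1$, so $\avint_{B(z,r)}u>a$; but then, applying \eqref{eq:weak-Harnack-int} (weak Harnack, $p=1$) on $B(z,2r')$ for suitable small $r'$ centered at $z$, we get $\tilde u(z)=\lim_{\rho}\avint_{B(z,\rho)}u\ge$ (something controlled below by $\avint_{B(z,r)}u$ minus an error $k$). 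More precisely the monotone quantity: by weak Harnack, for $\rho$ small, $\rho^{-n}\int_{B(z,\rho)}u \lesssim \inf_{B(z,\rho/2)}u+k(\rho)$, and iterating/using a covering of $B(z,r)$ by balls $B(z',\rho)$ one gets $\inf_{B(z,r/2)}u \gtrsim \avint_{B(z,r)}u - C k(r)$; hence $\tilde u(z)\ge \essinf_{B(z,r/2)}u \ge c\,\avint_{B(z,r)}u - Ck(r) > a$ for $z$ near $x$ and $r$ small (choosing $r$ first so $k(r)$ is tiny, then the neighborhood of $x$). This shows $\{\tilde u>a\}$ is open, i.e. $\tilde u$ is lower semicontinuous.

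\textbf{Main obstacle.} The delicate point is making the passage ``average on $B(z,r)$ controls $\inf$ on a slightly smaller ball (up to the error $k$)'' uniform in the center $z$ as $z\to x$, and correctly handling the additive error terms $k(\rho)$ (which are not scale-invariantly small but do tend to $0$ with $\rho$, using $f,|g|^2\in\mathcal K$) rather than a multiplicative perturbation — this is where our weak Harnack inequality \eqref{eq:weak-Harnack-int} with its $+k(r/2)$ term differs from the classical homogeneous statement and requires the chaining argument over balls together with first fixing $r$ small (so $k(r)$ is negligible compared to $\delta$) and only afterwards shrinking the neighborhood of $x$. The reduction to $u\ge 0$ and the verification that condition (P) or (D) survives the sign flip $u\mapsto\sup_\om u - u$ is routine but should be stated; note here one genuinely uses $\sup_\om u<\infty$.
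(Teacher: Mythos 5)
Your overall route---the weak Harnack inequality with its vanishing additive error $k(r)$, combined with the elementary identification of the limit of averages with the essential liminf---is the same as the paper's, which follows \cite[Theorem 3.66]{HKM}: fix $B_{2r}\subset\om$, apply Theorem \ref{thm:reverse-Holder-bu}(i) to $u-m_r$ with $m_r=\essinf_{B_r}u$ (finite by the local lower bound of Theorem \ref{thm:Moser}(2)), obtain $0\le \avint_{B_r}(u-m_r)\le C\bigl((m_{r/2}-m_r)+k(r)\bigr)$, and let $r\to0$.

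Two points, however. First, your normalization step has a sign error: if $u$ is a supersolution, then $\sup_\om u-u$ is a non-negative \emph{sub}solution (of the equation with data $LM-f+\divv g$, $M=\sup_\om u$), and the weak Harnack inequality does not apply to subsolutions, so the global reduction to ``$u\ge0$'' as you state it is invalid. The correct reduction is the local one above, $u\mapsto u-m_r$, which is a non-negative supersolution on $B_r$ of the equation with data $(f+dm_r)-\divv(g-bm_r)$; this is where one quietly uses that $|b|^2$ and $|d|$ lie in the right classes, resp.\ the structure of condition (P), so the error term still tends to $0$. Second, your chaining argument for lower semicontinuity is unnecessary: the function $x\mapsto \ess\liminf_{y\to x}u(y)=\sup_{r>0}\essinf_{B(x,r)}u$ is lower semicontinuous for purely measure-theoretic reasons (for $x'$ near $x$ one has $B(x',r-|x-x'|)\subset B(x,r)$, hence $\essinf_{B(x',r-|x-x'|)}u\ge\essinf_{B(x,r)}u$), so once the equality with the limit of averages is established and Lebesgue differentiation gives that this representative equals $u$ a.e., nothing further is required.
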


\begin{proof}
We follow the proof of \cite[Theorem 3.66]{HKM}. Fix a ball $B_r$ centered at $x \in \om$ so that $B_{2r} \subset \om$ and apply Theorem \ref{thm:reverse-Holder-bu} (i) to $u - m_r$, where $m_r=\ess \inf_{B_r} u$. Then, we have
\begin{align*}
 0 \leq \avint_{B_{r}} (u - m_r)  \leq C( ( m_{r/2} -m_r) +  k(r) ).
\end{align*}
Since $C$ is either a constant independent of $r$ and $(m_{r/2} -m_r)  + k(r) \to 0$ as $r \to 0$, by taking limits in the inequality above as $r \to 0$, we obtain
$$
\lim_{r \to 0}  \avint_{B_{r}} (u - m_r) = \ess \liminf_{y \to x}  (u - m_r) = 0,
$$
which implies \eqref{eq:lsc-repres}.
\end{proof}

\vv

Let us now  introduce some notation that we will use in the rest of this section.  For $r \leq r_0/2$ and $r_0 \in (0, \infty]$,  set
 \begin{align}\label{eq:holdcont-k1}
 k_{\epsilon,1}(r)&:=   \vartheta_{\om_{r_0}}(| f|, r) + \big(\sup_{\om_{r}} |u|\big)\,  \vartheta_{\om_{r_0}}(|d|, r) +\epsilon r,\\
\lim_{\epsilon \to 0} k_{\epsilon,1}(r) &=  k_1(r) := \vartheta_{\om_{r_0}}(| f|, r) + \big(\sup_{\om_{r}} |u|\big)\,  \vartheta_{\om_{r_0}}(|d|, r), \label{eq:lim-k1}\\
 k_{\epsilon,2}(r)&:= \vartheta_{\om_{r_0}}(|g|^2, r)^{1/2} +\big( \sup_{\om_{r}} |u|\big) \,\vartheta_{\om_{r_0}}(|b|^2, r)^{1/2}+\epsilon r,\label{eq:holdcont-k2}\\
 \lim_{\epsilon \to 0} k_{\epsilon,2}(r) &=  k_2(r) :=\vartheta_{\om_{r_0}}(|g|^2, r)^{1/2} + \big(\sup_{\om_{r}} |u|\big)\,  \vartheta_{\om_{r_0}}(|b|^2, r)^{1/2},\label{eq:lim-k2}\\
  k_{\epsilon,3}(r)&:= \vartheta_{\om_{r_0}}(|b|^2, r)^{1/2}+ \vartheta_{\om_{r_0}}(|d|, r)+\epsilon r,\label{eq:holdcont-k3}\\
   \lim_{\epsilon \to 0}   k_{\epsilon,3}(r)&= k_{3}(r):= \vartheta_{\om_{r_0}}(|b|^2, r)^{1/2}+ \vartheta_{\om_{r_0}}(|d|, r),\label{eq:holdcont-k3}\\
     k_{\epsilon,4}(r)&:= \vartheta_{\om_{r_0}}(|f|, r)+\vartheta_{\om_{r_0}}(|g|^2, r)^{1/2}+ \epsilon r,\label{eq:holdcont-k4}\\
   \lim_{\epsilon \to 0}   k_{\epsilon,4}(r)&= k_{4}(r):= \vartheta_{\om_{r_0}}(|g|^2, r)^{1/2}+ \vartheta_{\om_{r_0}}(|f|, r),\label{eq:holdcont-k4}\\
 \wt k_\epsilon(r)&:= k_{\epsilon,1}(r)+  k_{\epsilon,2}(r),\label{eq:holdcont-bar-k}\\
  \wt k(r)&:= k_{1}(r)+  k_{2}(r). \label{eq:holdcont-bar-k}
 \end{align}
 
 \vv
 
If $k$ is defined as in Case \eqref{item:moser2} of \eqref{eq:def-Moser-k(r)-cd}, then $k = k_3+k_4$.  All the functions above with subscript $\epsilon$ are strictly increasing  and from their very definitions we have the following:

\vv

\begin{lemma}
If  $u$ satisfies 
\begin{equation}\label{eq:reg-local-bounds}
 \sup_{\om_r} |u| \lesssim \left( \avint_{\om_{2r}} |u|^2 \right)^{1/2} + k(2r),\quad \textup{for any} \,\,r \leq r_0/2.
 \end{equation}
 then, if $0<r_1 \leq r_0$, 
\begin{equation}\label{eq:bk-k3-u-k}
\wt{k}(r) \lesssim k_3(r)\left[ \left( \avint_{\om_{r_1}} |u|^2 \right)^{1/2}  + k(r_1) \right]+ k_4(r),\quad \textup{for any} \,\,r \leq r_1/2.
\end{equation}
\end{lemma}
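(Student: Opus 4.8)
The plan is to unwind the definitions of the various $k$-functions and reduce everything to bounding two types of terms: those involving $\sup_{\om_{r_0}}|u|$ (which appear in $k_{\epsilon,1}$ and $k_{\epsilon,2}$ through the factors $\vartheta'_{\om_{r_0}}(|d|,r)$ and $\vartheta'_{\om_{r_0}}(|b|^2,r)^{1/2}$) and those that are already part of $k(r)$ (the terms with $|f|$ and $|g|^2$). After passing to the limit $\epsilon\to 0$, the identity $\wt k(r/2)=k_1(r/2)+k_2(r/2)$ expands to
\begin{align*}
\wt k(r/2) &= \vartheta_{\om_{r_0}}(|f|,r/2) + \vartheta_{\om_{r_0}}(|g|^2,r/2)^{1/2} \\
&\quad + \big(\sup_{\om_{r_0}}|u|\big)\big( \vartheta_{\om_{r_0}}(|d|,r/2) + \vartheta_{\om_{r_0}}(|b|^2,r/2)^{1/2}\big).
\end{align*}
The first line is at most $k(r/2)\le k(r)$ since $\vartheta_{\om_{r_0}}(\cdot,\cdot)$ is non-decreasing in $r$ (and in fact $k(r/2)\le k(r)$). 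The second line factors as $\big(\sup_{\om_{r_0}}|u|\big)\,k_3(r/2)$ by the very definition \eqref{eq:holdcont-k3} of $k_3$.

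The heart of the matter is therefore to control $\sup_{\om_{r_0}}|u|$ — but wait, the statement actually concerns $\sup_{\om_r}|u|$ implicitly through \eqref{eq:reg-local-bounds}, so I should be careful: the clean way is to observe that the $\sup$ appearing in the $k_{\epsilon,i}$ is $\sup_{\om_{r_0}}$ while the hypothesis \eqref{eq:reg-local-bounds} gives a bound on $\sup_{\om_r}|u|$ for $r\le r_0/2$. The natural reading consistent with the surrounding text is that here we are working at scale $r_0$ replaced by $r$, i.e. we apply \eqref{eq:reg-local-bounds} and the definitions with the roles localized so that the relevant supremum is $\sup_{\om_r}|u|$. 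Granting that, I would simply insert \eqref{eq:reg-local-bounds} into the factorization above: since
$$
\sup_{\om_{r}}|u| \lesssim \Big(\avint_{\om_{2r}}|u|^2\Big)^{1/2} + k(2r),
$$
— hmm, but the target has $\avint_{\om_r}|u|^2$ and $k(r)$, not $\avint_{\om_{2r}}$ and $k(2r)$. So the correct move is to apply \eqref{eq:reg-local-bounds} at scale $r/2$ in place of $r$, which is legitimate since $r/2\le r\le r_0/2$ forces $r/2 \le r_0/2$; this yields $\sup_{\om_{r/2}}|u| \lesssim \big(\avint_{\om_{r}}|u|^2\big)^{1/2} + k(r)$. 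Combining this with the displayed factorization (with $\sup_{\om_{r/2}}$ in place of $\sup_{\om_{r_0}}$, as dictated by the localized reading) gives
$$
\wt k(r/2) \lesssim k(r) + k_3(r/2)\Big[\Big(\avint_{\om_r}|u|^2\Big)^{1/2} + k(r)\Big],
$$
which is exactly \eqref{eq:bk-k3-u-k} after collecting the $k(r)$ terms into $(k_3(r/2)+1)k(r)$.

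The only genuine subtlety — and the step I would flag as the main obstacle — is the bookkeeping of scales and of which supremum appears where: one must make sure that every invocation of \eqref{eq:reg-local-bounds} is at an admissible radius ($\le r_0/2$) and that the monotonicity $\vartheta_{\om_{r_0}}(\cdot,r/2)\le \vartheta_{\om_{r_0}}(\cdot,r)$ (equivalently $k(r/2)\le k(r)$, $k_3(r/2)\le k_3(r)$) is used in the right direction, together with the elementary fact that passing from $k_{\epsilon,i}$ to $k_i$ via $\epsilon\to 0$ is harmless because all quantities are monotone limits. Once the scales are pinned down, the proof is a one-line substitution; I would present it as: expand $\wt k(r/2)$ via \eqref{eq:lim-k1}–\eqref{eq:holdcont-bar-k}, factor out $k_3(r/2)$ from the $b,d$-terms and $\sup|u|$ from the others, bound the $\sup$ by \eqref{eq:reg-local-bounds} applied at radius $r/2$, and absorb the leftover $k(r/2)\le k(r)$ contributions, finishing with a harmless constant.
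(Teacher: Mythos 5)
Your argument is correct and is exactly the intended one: the paper offers no written proof (it asserts the lemma follows ``from the very definitions''), and your unwinding of $\wt k(r/2)=k(r/2)+\big(\sup|u|\big)k_3(r/2)$, followed by \eqref{eq:reg-local-bounds} applied at radius $r/2$ and the monotonicity $k(r/2)\le k(r)$, is precisely that computation. You also correctly flagged the one real issue --- that the definitions \eqref{eq:lim-k1}--\eqref{eq:lim-k2} literally carry $\sup_{\om_{r_0}}|u|$ while the conclusion requires the supremum at scale $r/2$ --- and your ``localized reading'' is the right resolution of what is in fact a small imprecision in the paper's statement rather than a gap in your proof.
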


\vv

\begin{theorem}[\bf Modulus of continuity in the interior]\label{thm:Holder-cont}
Let $0<r \leq r_0/2$ and  $B_{r}$ be a ball such that $\overline B_{r} \subset \om $. Assume that $|f|$, $|d|$, $|b|^2$, and $|g|^2 \in \khalf(B_{r_0})$, and either  $c \in L^{n,q}(B_{r_0})$, $q \in [ n, \infty)$, or $|c|^2 \in \mathcal{K}(B_{r_0})$.  If $u$ is a solution of \eqref{eq:Lu=f-divg} in $B_{r}$, then for every $\mu\in (0,1)$, there exists $\alpha \in (0,1)$ so that 
\begin{align}\label{eq:Holdercont}
|u(x)-u(y)| \lesssim & \left[ \left( \frac{|x-y|}{r} \right)^\alpha + k_3(|x-y|^\mu r^{1-\mu})  \right] \left[  \left( \frac{1}{r^n} \int_{B_r} |u|^2 \right)^{1/2} + k(r) \right] \\
&+ k_4(|x-y|^\mu r^{1-\mu} ),\notag
\end{align}
for all $x, y \in B_{r/2}$, where $ k_3(r) $  and $ k_4(r) $ are given by \eqref{eq:holdcont-k3}. and  \eqref{eq:holdcont-k4}.   Note that $\alpha$ and the implicit constants depend only on $\lambda$, $\Lambda$,  $C_{|f|, \om_{r_0}}, C_{|g|^2, \om_{r_0}}$ and either on $C_{s,q}$ and $\|c\|_{L^{n,q}(\om_r)}$, or  $C_s'$ and $\vartheta_{\om_{r_0}}(|b+c|^2, r)$.
\end{theorem}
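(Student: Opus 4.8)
The strategy is the classical De Giorgi--Nash oscillation decay argument, adapted to our setting via the weak Harnack inequality already proved (Theorem~\ref{thm:reverse-Holder-bu}). The preparatory step is to rewrite the equation $Lu=f-\divv g$ so that the absorbed lower-order terms $-\divv(bu)$ and $du$ become part of the new interior data: since $|b|^2,|d|\in\khalf(B_r)$ and $u$ is (by Theorem~\ref{thm:Moser} under condition \textup{(D)}) locally bounded, $u$ solves
\begin{equation*}
\wt L u=-\divv(A\nabla u)-c\nabla u=(f+du)-\divv(g-bu)=:\wt f-\divv\wt g,
\end{equation*}
for which $\wt L 1=0$, so that both the positivity and the negativity conditions hold trivially; moreover $|\wt f|$ and $|\wt g|^2$ lie in $\khalf(B_r)$ with Stummel--Kato-Dini control bounded in terms of $k_1,k_2$ and $\sup|u|$. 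This is exactly why the auxiliary quantities $k_1,k_2,k_3,\wt k$ of \eqref{eq:holdcont-k1}--\eqref{eq:holdcont-bar-k} were introduced, and why the preceding Lemma (the one giving \eqref{eq:bk-k3-u-k}) is available.

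\textbf{Oscillation decay on dyadic scales.} First I would fix $0<\rho\le r/2$ with $B_{2\rho}\subset B_r$, set $M(\rho)=\sup_{B_\rho}u$, $m(\rho)=\inf_{B_\rho}u$, $\osc(\rho)=M(\rho)-m(\rho)$. Both $M(2\rho)-u$ and $u-m(2\rho)$ are non-negative supersolutions of $\wt L(\cdot)=\pm\wt f\mp\divv\wt g$ in $B_{2\rho}$, so Theorem~\ref{thm:reverse-Holder-bu}(i), applied with a fixed exponent $p\in(0,\chi)$ to each of them and with radius $2\rho$, gives
\begin{align*}
\rho^{-n/p}\|M(2\rho)-u\|_{L^p(B_\rho)}&\lesssim \big(M(2\rho)-M(\rho)\big)+\wt k(\rho),\\
\rho^{-n/p}\|u-m(2\rho)\|_{L^p(B_\rho)}&\lesssim \big(m(\rho)-m(2\rho)\big)+\wt k(\rho).
\end{align*}
Adding the two and using $\|M(2\rho)-u\|_{L^p}+\|u-m(2\rho)\|_{L^p}\gtrsim \rho^{n/p}\,\osc(2\rho)$ (the elementary inequality $a+b\ge(a^p+b^p)^{1/p}$ after noting the two integrands sum to $\osc(2\rho)$ pointwise) yields, with the implicit constant absorbed,
\begin{equation*}
\osc(\rho)\le\gamma\,\osc(2\rho)+C\,\wt k(2\rho)
\end{equation*}
for some $\gamma=\gamma(\lambda,\Lambda,n,p,\dots)\in(0,1)$ \emph{independent of $\rho$} — this independence is the whole point of the scale-invariant Theorem~\ref{thm:reverse-Holder-bu} and is what distinguishes the statement from the literature. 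Then I would feed in \eqref{eq:bk-k3-u-k} (valid because local boundedness gives \eqref{eq:reg-local-bounds}) to replace $\wt k(2\rho)$ by $k_3(2\rho)\big(\fint_{B_r}|u|^2\big)^{1/2}+(k_3(2\rho)+1)k(2\rho)$, so that the source term decays with $\rho$ because $k_3(\rho)\to0$ and $k(\rho)\to 0$.

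\textbf{Iteration and the split-exponent bookkeeping.} Next I would iterate the recursion $\osc(2^{-j}R)\le\gamma\,\osc(2^{-(j-1)}R)+C\,h(2^{-(j-1)}R)$ where $h(\rho):=k_3(\rho)\big(\fint_{B_r}|u|^2\big)^{1/2}+(k_3(\rho)+1)k(\rho)$, with $R=r/2$ (so $\osc(R)\lesssim \big(r^{-n}\int_{B_r}|u|^2\big)^{1/2}+k(r)$ by local boundedness). A standard telescoping/Dini-type lemma (cf.\ \cite[Lemma 8.23]{GiTr}, using the Carleson--Dini control encoded in $\mathcal{K}_{\textup{Dini},2}$ and Lemma~\ref{lem:modulus-dini}) then gives, for every $j$,
\begin{equation*}
\osc(2^{-j}R)\lesssim \Big(\gamma^{\,j}+k_3(2^{-j}R)\Big)\Big(r^{-n}\!\int_{B_r}|u|^2\Big)^{1/2}+\big(1+k_3(2^{-j}R)\big)\,k(2^{-j}R),
\end{equation*}
where the $\gamma^j$ term, after writing $\gamma=2^{-\alpha}$ with $\alpha\in(0,1)$, becomes $(2^{-j})^\alpha=(2^{-j}R/R)^\alpha\sim(\rho/r)^\alpha$. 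Finally, to pass from dyadic radii to the modulus-of-continuity statement \eqref{eq:Holdercont} I would, for $x,y\in B_{r/2}$, choose the scale $\rho\approx|x-y|^\mu r^{1-\mu}$: on the one hand $|u(x)-u(y)|\le\osc(\rho)$ once $|x-y|\le\rho$ (true since $|x-y|^{1-\mu}\le r^{1-\mu}$ forces $|x-y|\le\rho$), and on the other hand $(\rho/r)^\alpha=(|x-y|/r)^{\mu\alpha}$, so replacing $\alpha$ by $\mu\alpha$ (still in $(0,1)$) and inserting $k_3(\rho)=k_3(|x-y|^\mu r^{1-\mu})$, $k(\rho)=k(|x-y|^\mu r^{1-\mu})$ (monotonicity of $k,k_3$, plus $k(\rho)\le k(|x-y|^\mu r^{1-\mu})$ since $\rho\le|x-y|^\mu r^{1-\mu}$ up to constants) delivers exactly \eqref{eq:Holdercont}. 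A rescaling to $r=1$ at the start, as in Theorem~\ref{thm:Moser}, keeps all constants independent of $r$.

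\textbf{Main obstacle.} The routine part is the oscillation iteration; the delicate point is proving the one-step decay $\osc(\rho)\le\gamma\,\osc(2\rho)+C\wt k(2\rho)$ with $\gamma<1$ and $C$ \emph{genuinely independent of $\rho$} — this hinges entirely on the scale-invariant weak Harnack inequality and on the fact that, after the $\wt L$-reformulation, the error terms $\wt f,\wt g$ keep Stummel--Kato-Dini norms controlled by $k_1,k_2$ with constants not blowing up as $\rho\to0$; the Dini condition \eqref{eq:Kato-dini} is precisely what makes $h(\rho)\to0$ fast enough for the telescoping sum to converge. Handling the case $|c|^2\in\mathcal{K}(B_r)$ versus $c\in L^{n,q}(B_r)$ only changes which embedding (Lemma~\ref{lem:Kato-emb-grad-om} versus Lemma~\ref{lem:Lor-Sobolev-emb}) feeds the refined Caccioppoli inequality behind Theorem~\ref{thm:reverse-Holder-bu}, and requires no new idea.
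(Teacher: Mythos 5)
Your proposal is correct and follows essentially the same route as the paper: rewrite the equation as $\wt L u=(f+du)-\divv(g-bu)$ with $\wt L1=0$ so that the weak Harnack inequality applies to $M-u$ and $u-m$, sum the two resulting estimates to obtain the one-step oscillation decay $\osc(\rho)\le\gamma\,\osc(2\rho)+C\wt k(2\rho)$ with scale-independent $\gamma<1$, iterate, and finish with \eqref{eq:bk-k3-u-k} and the choice of scale $\rho\approx|x-y|^{\mu}r^{1-\mu}$. The only cosmetic difference is that you phrase the weak Harnack step with $L^{p}$-quasinorms while the paper uses $L^{1}$-averages; both are instances of Theorem~\ref{thm:reverse-Holder-bu}.
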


\begin{proof}
Fix $r_1 \in (0,r_0/2)$ such that $B_{r_1} \subset \om $ and assume that $u$ is a weak solution of the equation $Lu = f -\divv g$ in $B_{r_1}$.
It is easy to see that $u$ is also a weak solution of the equation 
\begin{equation}\label{eq:holdcont-wtL}
  \wt Lu =- \divv A  \nabla u - c \nabla u = (f+ du) -\divv (g - b u).
\end{equation}
in $B_{r_1}$. Note that  $\wt L 1 = 0 $ and since  $\wt d = \wt b_i = 0$, $i=1, \dots, n$, we can use Theorems \ref{thm:Moser} and  \ref{thm:reverse-Holder-bu} with $\wt k$ as in \eqref{eq:holdcont-bar-k} to get
\begin{equation}\label{eq:harnack-bu}
\sup_{B_r} (u + \wt k(r)) \lesssim \avint_{B_{2r}} (u + \wt k(r)) \lesssim  \inf_{B_r} (u + \wt k(r)),\quad \textup{for any}\,\, r \leq  r_0/2.
\end{equation}

Now, let
$$
 M_0=\sup_{B_{r_1}} |u|, \quad M_r=\sup_{B_r} u \quad \textup{and}\quad m_r=\inf_{B_r} u,
$$
and since  $M_r-u$ and $u-m_r$ are non-negative solutions of \eqref{eq:holdcont-wtL} in $B_{r_0}$,  by \eqref{eq:weak-Harnack-int} for $r \leq r_0/2$, we obtain
\begin{align*}
\avint_{B_r} (M_r - u) &\leq C \left(M_r-M_{r/2} +\wt k(r/2)\right),\\
\avint_{B_r}  (u - m_r) &\leq C \left(m_{r/2}- m_r +\wt k(r/2)\right).
\end{align*}
Summing those two inequalities we get
$$
(M_r  - m_r) \leq C \left[(M_r- m_r)-(M_{r/2}- m_{r/2})+2\wt k(r/2)\right],
$$
which further implies
$$
(M_{r/2}- m_{r/2}) \leq \frac{C-1}{C} (M_r  - m_r) + 2\wt k(r/2).
$$
If we set $\hm(r)=\osc_{B_r} u= M_r  - m_r$ and $\gamma= 1-C^{-1} \in (0,1)$, the latter inequality can be written
$$
\hm(r/2) \leq \gamma \hm(r) + 2 \wt k(r/2),
$$
which implies that for any $\mu \in (0,1)$ and for $\alpha= -(1- \mu) \log \gamma/\log2 \in (0,1)$,  there exists a constant $C'>0$ depending only on $\gamma$  such that
$$
\hm(r) \lesssim \left(\frac{r}{r_1}\right)^\alpha \hm(r_1) + \wt k(r^\mu \,r_1^{1-\mu}),
$$
which, by \eqref{eq:bk-k3-u-k}, concludes the proof.
\end{proof}

\vv

The last goal of this section is to develop of a Wiener-type criterion for boundary regularity of solutions. We will follow the proof of Theorem 8.30 in \cite{GiTr}. Several modifications are required in our case and in particular, we would like to draw the reader's attention to the iteration argument at the end of the proof. In  \cite{GiTr} it is claimed that the inequality (8.81) on p. 208  can be iterated to produce the desired oscillation bound at the boundary. Unless the CDC is satisfied, it is not clear to us that the second term on the right hand-side of that inequality will converge after infinitely many iterations. In fact, the exact term one picks up after $m$ iterations is 
$$
\left(\chi(r/2^m) + \sum_{k=0}^{m-1} \chi(r/2^k) \prod_{j = k+1}^m (1-\chi(r/2^j)) \right) \osc_{\d \om \cap B_r} u =: S_m \osc_{\d \om \cap B_r} u.
$$
It seems that if we do not have additional information about the behavior of the sequence $a_k =\chi(r/2^k)$, we could choose different sequences $a_k$  so that $S_m$  either converges or diverges or even have multiple limit points. We resolve this issue by incorporating this term into the main oscillation  term. 

\vv

Let us first introduce some definitions. 
\begin{definition}
We say that a set $E$ is {\it thick} at $\xi \in E$ if
\begin{equation}
\int_{0}^1 \frac{\Cap(E \cap  \overline B_{r}(\xi), B_{2r}(\xi))}{r^{n-2}}\frac{dr}{r}= +\infty.
\end{equation}

If $\om \subset \R^n$ is an open set and for $\xi \in \d \om$ it holds that 
$$
{\Cap(\overline B_{r}(\xi) \setminus \om,B_{2r}(\xi) ) } \geq c_0\, {r^{n-2}}, \quad \textup{for all}\,\, r \in (0, \diam \d \om),
$$
for some $c\in (0,1)$ independent of $r$, we say that $\om$ satisfies the {\it capacity density condition \textup{(CDC)} at $\xi$}. If this holds for every  $\xi \in \d \om$ and a uniform constant $c$, we say that $\d \om$  has  the {\it capacity density condition}.
\end{definition}

\vv

\begin{theorem}[\bf Boundary oscillation]\label{thm:wiener-osc}
Let $r \leq r_0/2$ and  $B_{r}$ be a ball centered at $\xi \in \d \om$.  Assume also that $u$ is a  solution of \eqref{eq:Lu=f-divg} and $\vphi \in Y^{1,2}(\om) \cap C(\overline{\om})$ so that $u-\vphi$ vanishes on $\d \om \cap B_r$ in the Sobolev sense.  Then,  the following hold:
\begin{itemize}
\item[(i)] Let $|f|$, $|d|$, $|b|^2$, and $|g|^2 \in \khalf(\om_{r_0})$, and either  $c \in L^{n,q}(\om_{r_0})$, $q \in [ n, \infty)$, or $|c|^2 \in \mathcal{K}(\om_{r_0})$.   If $\om$ satisfies the \textup{(CDC)} at $\xi$,  then
\begin{align}\label{eq:bdry-Holder-cont}
|u(x)-u(y)| \lesssim & \left[ \left( \frac{|x-y|}{r} \right)^\alpha + k_3(|x-y|^\mu r^{1-\mu})  \right] \left[  \left( \frac{1}{r^n} \int_{\om_r} |u|^2 \right)^{1/2} + k(r) \right] \\
&+ k_4(|x-y|^\mu r^{1-\mu} ) + |\vphi(x)-\vphi(y)|,\,\notag
\end{align}
for all $x, y \in B_{r/2}$ and $0<r \leq r_0/2$. Here $k_3 $ and $ k_4(r) $ are given by \eqref{eq:holdcont-k3}. and  \eqref{eq:holdcont-k4},  and the implicit constants depend on the \textup{CDC}  constant $c_0$, $C_{|f|, \om_{r_0}}, C_{|g|^2, \om_{r_0}}$, $C_{|b|^2, \om_{r_0}}$, $C_{|d|, \om_{r_0}}$,  $\lambda$,  $\Lambda$,  and either $C_{s}$ and $\|c\|_{L^{n,q}(\om_r)}$ or  $C_s'$ and $\vartheta_{\om_{r_0}}(|c|^2, r)$.
\item[(ii)] Let $|f|, |d| \in \mathcal{K}_{\textup{Dini}, \delta}(\om_{r_0})$,  $|b|^2, |g|^2 \in \mathcal{K}_{\textup{Dini}, \delta/2}(\om_{r_0})$ for some $\delta\in(0,1)$,  and either  $c \in L^{n,q}(\om_{r_0})$, $q \in [n, \infty)$, or $|c|^2 \in \mathcal{K}(\om_{r_0})$.  For any $0 \leq \rho \leq r/2$, it holds
\begin{align*}
\osc_{B_\rho(\xi) \cap \om } & u \leq  \osc_{\d \om \cap B_{\rho}(\xi) } \vphi \\
&+\exp\left(- \frac{1}{C} \int_{2\rho}^r \frac{\Cap(\overline B_{s}(\xi)  \setminus \om)}{s^{n-2}}\, \frac{ds}{s} \right) \, \left(\osc_{B_{r}(\xi) \cap \om} u + \Big(\wt k(r) +\frac{\wt k(r_0/2)}{r_0/2}  r\Big)\right),\notag
\end{align*}
where $C>0$ depends on $\lambda$,  $\Lambda$,  $k_0$ as defined in \eqref{eq:k0}, $C_{|f|, \om_{r_0},\delta}, C_{|g|^2, \om_{r_0},\delta/2}$, $C_{|b|^2, \om_{r_0},\delta/2}$, $C_{|d|, \om_{r_0},\delta}$, and either $C_{s}$ and $\|c\|_{L^{n,q}(\om_r)}$ or  $C_s'$ and $\vartheta_{\om_{r_0}}(|c|^2, r)$.
\end{itemize}
\end{theorem}

\begin{proof}
If we set $B_r=B_r(\xi)$ we record that  $u$ is a solution of $Lu = f -\divv g$ in $B_{r} \cap \om$ and thus, a solution of \eqref{eq:holdcont-wtL}. Using the same notation as above, one can prove that for $\eta \in C^\infty_c(B_{r})$,
\begin{equation}\label{eq:wiener-caccio}
\|\eta  \nabla \umm \|_{L^{2}(B_{r})}  \lesssim  \| (\eta + |\nabla \eta|)  (\umm+ \wt k_\epsilon)\|_{L^2(B_{r})}.
\end{equation} 
This follows easily by inspection of the proofs of Theorem \ref{thm:bdry-exp-subCaccioppoli1} and Lemma \ref{lem:caccio-w-moser}. 

We fix $\eta$ so that $\eta =1$ in $B_{1/2}$, $0 \leq \eta \leq 1$ and $|\nabla \eta | \leq 2$. If we set $w = \eta (\umm+\wt  k_\epsilon(1))$, by \eqref{eq:wiener-caccio} and (the proof of) \eqref{eq:weak-Harnack-bdry}, we deduce that
\begin{align*}
\|\nabla w \|_{L^2(B_{1})}^2 &\lesssim  \| (\eta + |\nabla \eta|)  (\umm+ \wt k_\epsilon(1))\|_{L^2(B_{1})}^2\\
 &\lesssim  (m+\wt  k_\epsilon(1)) \int_{B_{1}} (\umm+ \wt k_\epsilon(1))\lesssim (m+\wt  k_\epsilon(1))  (\inf_{B_{1/2}} \umm+ \wt k_\epsilon(1/2)).
\end{align*}
If we rescale, the latter inequality is written as
$$
r^{2-n} \|\nabla w \|_{L^2(B_{r})}^2   \lesssim  (m+\wt  k_\epsilon(r)) (\inf_{B_{r/2}} \umm+ \wt k_\epsilon(r/2)).
$$

It is easy to see that $\frac{w}{m+\wt k_\epsilon(r)}$ is a function in the convex set $\mathbb{K}_{\overline B_{r/2} \setminus \om}$ in the definition of capacity. This observation along with the latter inequality implies that
$$
(m+\wt  k_\epsilon(r))^2 \Cap(\overline B_{r/2} \setminus \om) \lesssim r^{n-2} (m+\wt k_\epsilon(r))  (\inf_{B_{r/2}} \umm+ \wt k_\epsilon(r/2)).
$$
Therefore, since $\wt k_\epsilon(r) \geq 0$,
\begin{equation}\label{eq:wiener-cdc}
m \frac{\Cap(\overline B_{r/2} \setminus \om)}{(r/2)^{n-2}} \leq C   \large(\inf_{B_{r/2}} \umm + \wt k_\epsilon(r/2)\large).
\end{equation} 

If we set 
$$
\gamma(r/2)= \frac{\Cap(\overline B_{r/2} \setminus \om)}{C\,(r/2)^{n-2}}, \quad M=\sup_{B_r \cap \d \om} u, \quad \textup{and} \quad m=\inf_{B_r \cap \d \om} u,
$$
we can apply \eqref{eq:wiener-cdc} to the functions $M_r - u$ and $u-m_r$ to obtain
\begin{align*}
(M_r - M)\gamma(r/2)& \leq  M_r-M_{r/2} + \wt k_\epsilon(r/2)=(M_r-M)- (M_{r/2} -M)+ \wt k_\epsilon(r/2),\\
( m-m_r)\gamma(r/2) &\leq  m_{r/2}-m_r + \wt k_\epsilon(r/2)=(m-m_r)- (m-m_{r/2} )+ \wt k_\epsilon(r/2).
\end{align*}
Set
$$
\hm(r)= \osc_{\om \cap B_r} u - \osc_{\d\om \cap B_{r}} u,
$$ 
and sum the above inequalities to get 
\begin{align}\label{eq:wiener-iter-1}
\hm(r/2) \leq (1- \gamma(r/2))\hm(r)+ 2 \wt k_\epsilon(r/2).
\end{align} 

If $ \gamma(r) > c$, for every $r>0$,  we can write  \eqref{eq:wiener-iter-1}  as $\hm(r/2) \leq (1- c)\hm(r)+ 2 \wt k_\epsilon(r/2)$ and take limits as $\epsilon \to 0$. Then, we can repeat the iteration argument  in the proof of Theorem  \ref{thm:Holder-cont} to show \eqref{eq:bdry-Holder-cont}.  

If $ \gamma(r) $ is not uniformly bounded from below, then for $m \in \bN$, \eqref{eq:wiener-iter-1} can be iterated to obtain
\begin{align}\label{eq:wiener-iter-j}
\hm(2^{-m}r)& \leq \prod_{j=1}^{m} (1- \gamma(2^{-j}r))\, \hm(r)  +2 \sum_{j=1}^m\,\wt k_\epsilon(2^{-j} r)  \prod_{\ell=j+1}^{m}  (1- \gamma(2^{-\ell}r))\\
&=:\Sigma_1+\Sigma_2.\notag
\end{align}

To handle $\Sigma_2$ we adjust the argument in \cite[pp. 202-203]{MZ}.  Let us define 
\begin{equation}\label{eq:k0}
k_0^\frac{1}{1-\delta} :=\sup_{t \in (0,r_0)}\frac{\wt k_\epsilon(t)}{\wt k_\epsilon(2t)} <1,
\end{equation}
for some $\delta \in (0,1)$,  where we used Lemma \ref{lem:kato-uniformly<1} to deduce that $k_0 <1 $. Define also
$$
b(r)= \frac{\gamma(r)}{1+\gamma_1},  \quad \textup{where} \quad \gamma_1=   (1-k_0)^{-1}\sup_{r \in (0,r_0)} \gamma(r).
$$
Since $b(r) \leq 1- k_0$ for all $r \in (0,r_0)$,  $1-t \leq e^{-t}$ and  $b(r) \leq \gamma(r)$, we have
\begin{align}\label{eq:wiener-S2} 
\Sigma_2 &\leq 2 \prod_{k=1}^{m} e^{- b(2^{-k}r)}  \sum_{j=1}^m  \wt k_\epsilon(2^{-j } r)  \prod_{\ell=1}^{j}  (1- b(2^{-\ell}r))^{-1}\\
&=2 \prod_{k=1}^{m} e^{- b(2^{-k}r)}  \sum_{j=1}^m   \wt  k_\epsilon(2^{-j } r) k_0^{-j}\notag \\
& \leq  \exp\Big(- \sum_{k=1}^{m} b(2^{-k}r) \Big)  \sum_{j=1}^m   \wt k_\epsilon(2^{-j } r)   \prod_{\ell=1}^{j} \left(\frac{\wt k_\epsilon(2^{-\ell+1} r)}{\wt k_\epsilon(2^{-\ell} r)}\right)^{1-\delta} \notag\\
&=  \wt k_\epsilon(r)^{1-\delta}\, \exp\Big(- \sum_{k=1}^{m} b(2^{-k}r) \Big)  \sum_{j=1}^m   \wt k_\epsilon(2^{-j } r)^{\delta}  \notag\\
& \lesssim \wt k_\epsilon(r)^{1-\delta}\, \exp\Big(- \sum_{k=1}^{m} b(2^{-k}r) \Big)\, \wt k_\epsilon(r/2)^{\delta},\notag
\end{align}
where in the last inequality we used the fact that $|f|, |d| \in \mathcal{K}_{\textup{Dini}, \delta}(\om_{r_0})$ and $|b|^2, |g|^2 \in \mathcal{K}_{\textup{Dini}, \delta/2}(\om_{r_0})$ and the implicit constants depend on the  constants of the relevant Carleson-Dini conditions.  If we choose $\epsilon= \min(2\wt k(r_0/2)/r_0,1)$,  the latter quantity is dominated by 
\begin{equation}\label{eq:wiener-S2-bis}
\left( \wt k(r)+\frac{2\wt k(r_0/2)}{r_0}  r\right)\, \exp\Big(- \sum_{k=1}^{m} b(2^{-k}r) \Big).
\end{equation}
Arguing similarly, we get
\begin{align}\label{eq:wiener-S1}
\Sigma_1 &\leq \exp\Big(- \sum_{k=1}^{m} b(2^{-k}r) \Big) \hm(r).
\end{align}
Therefore, combining \eqref{eq:wiener-iter-j}, \eqref{eq:wiener-S2}, \eqref{eq:wiener-S2-bis}  and \eqref{eq:wiener-S1}, we infer that
\begin{align}\label{eq:wiener-S}
\hm(2^{-m}r) \leq  \left(\hm(r)+ C \Big(\wt k(r) +\frac{2\wt k(r_0/2)}{r_0}  r\Big) \right)\, \exp\Big(- \sum_{k=1}^{m} b(2^{-k}r) \Big).
\end{align}
It is easy to see that 
$$
\int_{2^{-m} r}^r b(s)\, \frac{ds}{s} \leq 2^{n-2} \sum_{j=0}^{m-1} b(2^{-j}r),
$$
which can be used in  \eqref{eq:wiener-S} along with $K_0 \, \gamma(s) := \frac{1-k_0}{1-k_0+c_n} \gamma(s) \leq b(s)$ (using the fact that $\Cap_2(\overline{B(\xi,s)}, B(\xi, 2s))=c_n s^{n-2}$ for any $s>0$) to obtain 
\begin{align}\label{eq:wiener-osc-discrete}
\hm(2^{-m}r) \leq  \left( \hm(r)+ 2 \,  \Big(\wt k(r) +\frac{2\wt k(r_0/2)}{r_0}  r\Big)  \right) \exp\left(-\frac{K_0}{2^{n-2}} \int_{2^{-m} r}^r \frac{\Cap(\overline B_{s} \setminus \om)}{s^{n-2}}\, \frac{ds}{s} \right).
\end{align}

For any $\rho \leq r \leq r_0/2$, there exists $m_0 \in \bN$ such that $2^{-m_0-1} r \leq \rho <2^{-m_0} r$. Thus,  by \eqref{eq:wiener-osc-discrete} we deduce that
\begin{align*}\label{eq:wiener-osc}
&\osc_{B_\rho \cap \om } u  \leq \osc_{\d \om \cap B_{\rho}} u\\
&+\exp\left(- \frac{K_0}{2^{n-2}} \int_{2\rho}^r \frac{\Cap(\overline B_{s} \setminus \om)}{s^{n-2}}\, \frac{ds}{s} \right) \, \left(\osc_{B_{r}\cap \om} u -\osc_{\d \om \cap B_{r}} u + 2 \,  \Big(\wt k(r) +\frac{2\wt k(r_0/2)}{r_0}  r\Big)   \right),
\end{align*}
which, by \eqref{eq:bk-k3-u-k},  concludes the proof of Theorem  \ref{thm:wiener-osc}, since $\osc_{\d \om \cap B_{r}} u \geq 0$ and $u =\vphi$ on $\d \om\cap B_{r}$ in the Sobolev sense.
\end{proof}

\vv


As a corollary of the previous theorem we obtain the following Wiener-type criterion for continuity of solutions up to the boundary as well as  a modulus of continuity under the CDC.

\begin{theorem}[\bf Boundary continuity]\label{thm:wiener-cdc}
Under the assumptions of Theorem \ref{thm:wiener-osc}, if $u$ is the unique solution of \ref{boundvar} the following hold:
\begin{itemize}
\item[(i)] If $\xi \in \d \om$ and $\R^n \setminus \om$ is  thick at $\xi$, then $  \lim_{\om \ni x \to \xi} u(x)= \vphi(\xi)$ continuously.
\item[(ii)]  If $\vphi$ is continuous with a  modulus of continuity and $\d \om$  has  the  CDC, then $u$ is continuous in $\overline{\om}$ with a modulus of continuity depending on the one of $\varphi$ as well as the Stummel-Kato modulus of continuity of the data and the coefficients in the definition of $\wt k$.
\end{itemize}
\end{theorem}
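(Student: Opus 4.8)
The plan is to derive both parts as straightforward consequences of the boundary oscillation estimate \eqref{eq:wiener-osc} in Theorem \ref{thm:wiener-osc}, combined with the interior regularity Theorem \ref{thm:Holder-cont} and the continuity of $\vphi$ on $\overline{\om}$. First I would fix $\xi \in \d\om$ and a radius $r\le r_0/2$ small enough that $|f|,|d|,|b|^2,|g|^2 \in \khalf(\om_r)$ and the relevant norms of $c$ are finite on $\om_r$; since $u$ is the solution of the Dirichlet problem \eqref{boundvar} it is locally bounded and lies in $Y^{1,2}(\om)$, so all hypotheses of Theorem \ref{thm:wiener-osc} are in force. For part (i), the key observation is that if $\R^n\setminus\om$ is thick at $\xi$, then $\int_{2\rho}^r \Cap(\overline B_s(\xi)\setminus\om)\,s^{2-n}\,\tfrac{ds}{s} \to +\infty$ as $\rho\to 0$ (this uses that $\Cap(\overline B_s(\xi)\setminus\om)\gtrsim \Cap(\overline B_s(\xi)\setminus\om, B_{2s}(\xi))\,$-comparability of global and condenser capacities at comparable scales, which is standard and follows from the scaling of capacity), hence the exponential factor in \eqref{eq:wiener-osc} tends to $0$. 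Since $k_3(r)$, $\osc_{B_r(\xi)\cap\om} u$ and $k(r)$ are all finite (the first by $|b|^2,|d|\in\khalf$, the second by local boundedness of $u$), the first term on the right of \eqref{eq:wiener-osc} vanishes as $\rho\to 0$; the remaining term $\osc_{\d\om\cap B_\rho(\xi)}\vphi \to 0$ by continuity of $\vphi$ at $\xi$. Therefore $\osc_{B_\rho(\xi)\cap\om}u \to 0$, and since $u=\vphi$ on $\d\om\cap B_r$ in the Sobolev sense and $\vphi(\xi)$ is the common boundary value, $\lim_{x\to\xi}u(x)=\vphi(\xi)$.

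For part (ii), the CDC gives a uniform lower bound $\Cap(\overline B_s(\xi)\setminus\om, B_{2s}(\xi))\ge c\,s^{n-2}$ for all $\xi\in\d\om$ and all $s\in(0,\diam\d\om)$, so the Wiener integral satisfies $\int_{2\rho}^r \Cap(\overline B_s(\xi)\setminus\om)\,s^{2-n}\,\tfrac{ds}{s} \gtrsim \log(r/2\rho)$. Plugging this into \eqref{eq:wiener-osc} turns the exponential factor into a power $(\rho/r)^{\alpha'}$ for some $\alpha'=\alpha'(c,C')>0$, yielding
\begin{equation}\label{eq:wiener-modulus-proof}
\osc_{B_\rho(\xi)\cap\om} u \lesssim k_3(r)\,\Big(\tfrac{\rho}{r}\Big)^{\alpha'}\Big(\osc_{B_r(\xi)\cap\om}u + k(r)\Big) + \osc_{\d\om\cap B_\rho(\xi)}\vphi,
\end{equation}
for every $\xi\in\d\om$ and $0\le\rho\le r/2$. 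This is a quantitative boundary modulus of continuity: the first term decays like a power of $\rho$ (with constants controlled by $\lambda,\Lambda$, the Stummel--Kato--Dini data in $\wt k$ and $\sup_\om|u|$), and the second is the modulus of continuity of $\vphi$. Combining \eqref{eq:wiener-modulus-proof} with the interior estimate \eqref{eq:Holdercont} of Theorem \ref{thm:Holder-cont} in the standard way — for $x,y\in\overline\om$ close, either both are comparably far from $\d\om$ and the interior estimate applies, or one is within distance $\sim|x-y|$ of some boundary point $\xi$ and \eqref{eq:wiener-modulus-proof} controls $\osc_{B_{C|x-y|}(\xi)\cap\om}u$ — produces a global modulus of continuity for $u$ on $\overline\om$ depending only on the one of $\vphi$, the Stummel--Kato moduli of $f,g,b,d$ entering $\wt k$, and the structural constants.

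The main obstacle I anticipate is purely bookkeeping rather than conceptual: one must verify that the capacity quantity appearing in \eqref{eq:wiener-osc}, namely $\Cap(\overline B_s(\xi)\setminus\om)$ taken with respect to the ambient space (or the large ball implicit in the proof of Theorem \ref{thm:wiener-osc}), is comparable up to dimensional constants to the condenser capacity $\Cap(\overline B_s(\xi)\setminus\om, B_{2s}(\xi))$ that enters the definitions of ``thick'' and of the CDC; this comparison at comparable scales is classical (see, e.g., the capacity estimates in \cite{HKM} or \cite{MZ}) and follows from monotonicity and the scaling $\Cap(tE, t\om)=t^{n-2}\Cap(E,\om)$, but it needs to be invoked cleanly so that the two formulations match. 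A secondary point requiring care is the patching of the interior and boundary moduli into a single global modulus on $\overline\om$; this is a routine covering/dichotomy argument, exactly as in the classical De Giorgi--Nash--Moser theory, but one should be careful that the constant $k_3(r)$ multiplying the decaying term in \eqref{eq:wiener-osc} is bounded as $r\to 0$ (indeed $k_3(r)\to 0$ since $|b|^2,|d|\in\khalf$), so it does not spoil the estimate.
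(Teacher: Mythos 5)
Your proposal is correct and follows exactly the route the paper intends: the paper states Theorem \ref{thm:wiener-cdc} as an immediate corollary of the oscillation estimate \eqref{eq:wiener-osc} without writing out a proof, and your argument (divergence of the Wiener integral kills the exponential factor for part (i); the CDC turns it into a power decay which, patched with the interior estimate \eqref{eq:Holdercont}, gives a global modulus for part (ii)) is precisely the standard derivation being invoked. The two points you flag — comparability of $\Cap(\cdot)$ with the condenser capacity $\Cap(\cdot,B_{2s})$ at comparable scales, and the interior/boundary patching — are indeed the only details to check, and you dispose of them correctly.
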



\vvv

\section{Dirichlet and obstacle problems in Sobolev space}\label{sec:dirichlet}

In this section we will need to assume  the following standing (global) assumptions:
\begin{align*}
|b|^2, |c|^2, |d| \in  \mathcal{K}'(\om) \quad \textup{or} \quad b, c \in L^{n,\infty}(\om), d \in L^{\frac{n}{2},\infty}(\om).
\end{align*}

\subsection{Weak maximum principle}

\begin{theorem}\label{thm:maxprin-bd}
Let $\Omega \subset \R^{n}$ be an open and connected set and assume that either $b+c \in L^{n,q}(\om)$, for $q \in [n, \infty)$, or $b+c \in \mathcal{K}'(\om)$. If  $u \in Y^{1,2}(\om)$ is a subsolution of $Lu =0$, then  the following hold:
\begin{itemize}
\item[(i)] If \eqref{negativity} holds then
\begin{equation}\label{eq:maxprin-bd}
\sup_{\om}u \leq \sup_{\partial \Omega}u^+.
\end{equation}
\item[(ii)] If \eqref{negativity2} holds and $u^+ \in Y_0^{1,2}(\om)$, then  
\begin{equation}\label{eq:maxprin-cd}
\sup_{\om}u \leq 0.
\end{equation}
\end{itemize}
\end{theorem}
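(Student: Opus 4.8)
The plan is to adapt the iteration scheme from the Caccioppoli proofs (Theorem \ref{thm:subCaccioppoli}/\ref{thm:Caccioppoli2}) together with the classical Stampacchia-type truncation argument for the weak maximum principle. We work with the test function built from a truncation of $u$ above its boundary supremum.

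\textbf{Step 1: Setup and reduction.} Let $\ell=\sup_{\d\om}u^+$ in case (i) (resp.\ $\ell=0$ in case (ii)). If $\ell=+\infty$ there is nothing to prove, so assume $\ell<\infty$. For $k\geq\ell$ consider $v=(u-k)^+$. By Definition \ref{def:boundary-sup-inf} and the choice of $k$ we have $v\in Y_0^{1,2}(\om)$ (in case (ii) this uses the hypothesis $u^+\in Y_0^{1,2}(\om)$ together with Lemma \ref{lem:sobolev-van-trace}(ii)). The goal is to show $v\equiv 0$, i.e.\ $\sup_\om u\leq \ell$; taking $k=\ell$ then finishes the proof, after using Lemma \ref{lem:sobolev-u-constant} on the connected set $\om$ to rule out a nonzero constant.

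\textbf{Step 2: The splitting-lemma iteration applied to $v$.} Since $v\in Y_0^{1,2}(\om)$, apply Lemma \ref{lem:main-splitting-Ln} (with $h=b+c$, $p=n$, $q\in[n,\infty)$, $a=\lambda/(8C_{s,q})$) or Lemma \ref{lem:main-splitting-Kato} (with $h=b+c$, $a=\lambda/(8C_s')$) to $v$, producing disjoint $\om_i$ and $v_i\in Y_0^{1,2}(\om)$, $1\le i\le\kappa$, satisfying \eqref{1exist}--\eqref{6exist}. Each $v_i\ge 0$ and $v_i$ has the same sign as $v$. Using $v_i$ as a test function in $\LL(u,v_i)\le 0$ (subsolution), and writing $b u\nabla v_i = b\nabla(uv_i)-b\,v_i\nabla u$, one gets, exactly as in the proof of Theorem \ref{thm:subCaccioppoli} (case \eqref{negativity}) or Theorem \ref{thm:Caccioppoli2} (case \eqref{negativity2}),
\begin{equation*}
\int_{\om_i}A\nabla v_i\cdot\nabla v_i \;\le\; \sum_{j=1}^{i}\int_{\om_j}(b+c)\nabla v_j\,v_i \qquad\text{(case (i))},
\end{equation*}
with the analogous inequality involving $\sum_{j=i}^{\kappa}$ in case (ii); here the ``good'' terms $\int b\nabla(uv_i)$ and $-\int d\,uv_i$ are $\ge 0$ by \eqref{negativity} (resp.\ \eqref{negativity2}) applied to the nonnegative test function $uv_i\in Y_0^{1,\frac{n}{n-1}}(\om)$ — this is legitimate by Remark \ref{rem:bd-cd} and Lemma \ref{lem:Lor-Sob-uw}, noting $\nabla v_i = \nabla u$ on $\{v_i\neq 0\}$. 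Then by ellipticity \eqref{eqelliptic1}, the embedding inequality \eqref{eq:Lorentz-Sobolev-emb} (resp.\ \eqref{eq:Kato-emb-grad-om}), the smallness $\|b+c\|_{L^{n,q}(\om_j)}\le a$, and Young's inequality, one obtains the recursion $\lambda\|\nabla v_i\|_{L^2}^2\le \tfrac{\lambda}{2}\|\nabla v_i\|_{L^2}^2+\tfrac{\lambda}{8}\big(\sum_{j<i}\|\nabla v_j\|_{L^2}\big)^2$, hence $x_i\le\sum_{j<i}x_j$ with $x_i=\|\nabla v_i\|_{L^2}$. Since $x_1\le 0$, induction gives $x_i=0$ for all $i$, so $\nabla v=\sum\nabla v_i=0$ a.e.

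\textbf{Step 3: Conclusion.} From $\nabla v=0$ and $v\in Y_0^{1,2}(\om)$, Lemma \ref{lem:sobolev-u-constant} gives $v\equiv 0$ on each connected component, hence $u\le k=\ell$ a.e.\ in $\om$, which is \eqref{eq:maxprin-bd} (resp.\ \eqref{eq:maxprin-cd}). I expect the main obstacle to be Step 2: getting the ``bad'' term to genuinely have coefficient $1$ in front of the partial sum (so the induction closes with a clean $x_i\le\sum_{j<i}x_j$ rather than $x_i\le c_0\|u\nabla\eta\|+\sum x_j$), and verifying carefully that the boundary-trace hypotheses make $uv_i$ an admissible nonnegative test function for \eqref{negativity}/\eqref{negativity2} — in case (ii) this is exactly where the extra assumption $u^+\in Y_0^{1,2}(\om)$ is used, and in case (i) one must check $(u-k)^+\in Y_0^{1,2}(\om)$ for $k\ge\sup_{\d\om}u^+$ directly from Definition \ref{def:boundary-sup-inf}. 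The ``homogeneous'' nature of the equation ($f=g=0$) makes the $y_0$-terms vanish, which is what forces $v\equiv 0$ rather than just a Caccioppoli bound.
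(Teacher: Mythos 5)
Your proposal is correct and follows essentially the same route as the paper: truncate $u$ above its boundary supremum to get $v=(u-\ell)^+\in Y_0^{1,2}(\om)$, apply the splitting Lemma \ref{lem:main-splitting-Ln} (or \ref{lem:main-splitting-Kato}) to $v$, test the subsolution inequality with each $v_i$, absorb the $(b+c)$-terms via the smallness of $\|b+c\|$ on each $\om_i$ to obtain the homogeneous recursion $x_i\le\sum_{j<i}x_j$ (resp.\ $x_i\le\sum_{j>i}x_j$ in case (ii)), and conclude $\nabla v=0$ and hence $v\equiv 0$ by Lemma \ref{lem:sobolev-u-constant}. The points you flag as potential obstacles (the coefficient $1$ in front of the partial sum and the admissibility of $uv_i$ as a test function for the negativity condition via Lemma \ref{lem:Lor-Sob-uw} and Remark \ref{rem:bd-cd}) are handled in the paper exactly as you anticipate.
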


\begin{proof}
Set $\ell=\sup_{\partial \om}u^+$ and define $w=(u-\ell)^+ \in Y^{1,2}_0(\om)$. We apply Lemma $\ref{lem:main-splitting-Ln}$ to $w$, for $p= n$, $q \in [n. \infty)$, $h= b+c$, and $a=\lambda /2C_{s,q}$, to find $w_i \in Y^{1,2}_0(\om)$ and $\om_i \subset \om$,  $1 \leq i \leq m$, satisfying  \eqref{1exist}--\eqref{6exist}. In light of $\eqref{3exist}$, as $w \geq 0$, we have that $w_i \in Y^{1,2}_0(\om)$ is also non-negative. Recall also that $ \nabla w_i = \nabla u$ in $\om_i$. We will now proceed as usual. Indeed, using that $u$ is a subsolution along with \eqref{eqelliptic1}, \eqref{negativity}, \eqref{6exist}, and \eqref{eq:Lorentz-Sobolev-emb}, we infer
\begin{align*}
\lambda \| \nabla w_i\|_{L^2(\om)}^2 &\leq \int_\om A \nabla w_i \nabla w_i = \int_\om A\nabla u \nabla w_i \leq  \int_\om (b+c) \nabla u w_i \\
&= \sum_{j=1}^i \int_\om (b+c) \nabla w_j w_i \\
& \leq a C_{s,q} \| \nabla w_i\|_{L^2(\om)}^2 + a C_{s,q}\| \nabla w_i\|_{L^2(\om)} \sum_{j=1}^{i-1}\| \nabla w_j\|_{L^2(\om)},
\end{align*}
which implies
$$
\| \nabla w_i\|_{L^2(\om)} \leq \sum_{j=1}^{i-1}\| \nabla w_j\|_{L^2(\om)}.
$$
By the induction argument in the proof of Theorem \ref{thm:subCaccioppoli}, we get that for any $i =1,2,\dots, \kappa$, $\| \nabla w_i\|_{L^2(\om)} =0$, which we may sum in $i$ and use the condition \eqref{4exist} to obtain
$\| \nabla w\|_{L^2(\om)} =0$. Since $w \in Y^{1,2}_0(\om)$, by Lemma \ref{lem:sobolev-u-constant}, $w=0$. Therefore, $u \leq \ell$, which concludes the proof of (i).

To prove of (ii), we argue as above for $w= u^+ \in Y^{1,2}_0(\om)$ (i.e., $\ell=0$) and use \eqref{negativity2} instead of \eqref{negativity}, to get
\begin{align*}
\lambda \| \nabla w_i\|_{L^2(\om)}^2 &\leq  \int_\om (b+c)  u \nabla w_i = \sum_{j=i}^\kappa \int_\om (b+c)  w_j \nabla w_i\\
& \leq a C_{s,q} \| \nabla w_i\|_{L^2(\om)}^2 + a C_{s,q}\| \nabla w_i\|_{L^2(\om)} \sum_{j=i+1}^\kappa \| \nabla w_j\|_{L^2(\om)}.
\end{align*}
Thus, 
$$
\| \nabla w_i\|_{L^2(\om)} \leq \sum_{j=i+1}^{\kappa}\| \nabla w_j\|_{L^2(\om)},
$$
which, by the induction argument in Theorem \ref{thm:Caccioppoli2}, implies $\| \nabla w\|_{L^2(\om)} =0$, and so, \eqref{eq:maxprin-cd} readily follows.

The proof when $b+c \in \mathcal{K}'(\om)$ is analogous and the required adjustments are the same  as in the proof of Theorem \ref{thm:subCaccioppoli}. Details are omitted.
\end{proof}

A direct consequence of the weak maximum principles proved above is the following comparison principle:

\begin{corollary}\label{cor:comparison-princ}
Let $\Omega \subset \R^{n}$ be an open and connected set and assume that  either \eqref{negativity} or \eqref{negativity2} holds.  Assume also either $b+c \in L^{n,q}(\om)$, for $q \in [n, \infty)$, or $b+c \in \mathcal{K}'(\om)$. If $u \in Y^{1,2}(\om)$  is a supersolution of \eqref{eq:Lu=f-divg} and $v \in Y^{1,2}(\om)$ is a subsolution of \eqref{eq:Lu=f-divg} such that
$(v-u)^+ \in Y_0^{1,2}(\om)$, then we have that
$$
v \leq u \,\,\textup{in} \,\,\om.
$$	
\end{corollary}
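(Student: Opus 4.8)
The plan is to reduce the comparison principle to the homogeneous weak maximum principle of Theorem \ref{thm:maxprin-bd} by exploiting the linearity of the bilinear form $\LL$. Set $w = v - u \in Y^{1,2}(\om)$. Since $v$ is a subsolution and $u$ a supersolution of $Lw = f - \divv g$, for every non-negative $\vphi \in C^\infty_c(\om)$ we have $\LL(v, \vphi) \leq \int_\om f\vphi + g\cdot\nabla\vphi \leq \LL(u,\vphi)$, hence $\LL(w,\vphi) = \LL(v,\vphi) - \LL(u,\vphi) \leq 0$. Thus $w$ is a subsolution of $Lw = 0$ in $\om$, and the hypotheses on $b+c$ (either $b+c \in L^{n,q}(\om)$ or $b+c \in \mathcal{K}'(\om)$) are precisely those required by Theorem \ref{thm:maxprin-bd}.

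Next I would split into the two cases. If \eqref{negativity2} holds, then by hypothesis $w^+ = (v-u)^+ \in Y^{1,2}_0(\om)$, so Theorem \ref{thm:maxprin-bd}(ii) applies directly and gives $\sup_\om w \leq 0$, i.e.\ $v \leq u$ in $\om$. If instead \eqref{negativity} holds, Theorem \ref{thm:maxprin-bd}(i) yields $\sup_\om w \leq \sup_{\d\om} w^+$; but since $w^+ \in Y^{1,2}_0(\om)$, taking $k = 0$ in Definition \ref{def:boundary-sup-inf} shows $(w - 0)^+ = w^+ \in Y^{1,2}_0(\om)$, whence $\sup_{\d\om} w \leq 0$. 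In either case $w \leq 0$ a.e.\ in $\om$, which is the claim.

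There is essentially no serious obstacle here: the real content lies in Theorem \ref{thm:maxprin-bd}, and the only points requiring a line of care are the cancellation of the interior data $f$ and $g$ under subtraction (immediate from bilinearity of $\LL$) and the identification $\sup_{\d\om}(v-u) \leq 0$ from $(v-u)^+ \in Y^{1,2}_0(\om)$ (immediate from the definition of the boundary supremum). One should also note that $\om$ is assumed open and connected, matching the hypotheses of Theorem \ref{thm:maxprin-bd}, so no reduction to connected components is needed.
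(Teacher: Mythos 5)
Your proof is correct and follows the same route as the paper: observe that $w=v-u$ is a subsolution of $Lw=0$ by bilinearity, then invoke Theorem \ref{thm:maxprin-bd} together with $(v-u)^+\in Y_0^{1,2}(\om)$ to get $\sup_\om(v-u)\leq 0$. Your additional remark identifying $\sup_{\d\om}(v-u)^+\leq 0$ from Definition \ref{def:boundary-sup-inf} in the case of \eqref{negativity} is exactly the step the paper leaves implicit.
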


\begin{proof}
Since $L(v-u) \leq 0$  and  $(v-u)^+ \in Y_0^{1,2}(\om)$, we apply  Theorem \ref{thm:maxprin-bd} (either (i) or (ii)) and obtain
$$\sup_{\om}(v-u) \leq 0,$$
which concludes our proof. 
\end{proof}

\vvv

\subsection{Dirichlet problem}
Let  $f: \om \to \R$, $g: \Omega \to \R^{n} $ and $\vphi: \om \to \R$, such that $f \in L^{2_*}(\Omega)$, $g \in L^2(\Omega)$, and $\vphi \in Y^{1,2}(\Omega)$. In this section we deal with the  {\it Dirichlet problem} 
\begin{equation}\label{boundvar}
\begin{cases}
Lu = f - \divv g  \\
u-\vphi\in Y^{1,2}_0(\Omega).
\end{cases}
\end{equation}
In particular, we show that it is well-posed assuming either \eqref{negativity} or \eqref{negativity2}. In fact, if we set $w=u-\vphi$, then, $w \in Y^{1,2}_0(\om)$, and (in the weak sense) it holds 
\begin{align*}
Lw&= Lu - L \vphi \\
&= ( f - c \nabla \vphi - d \vphi) - \divv(g + A \nabla \vphi  + b \vphi) \\
& =: \hat f -  \divv \hat g.
\end{align*}
Thus, \eqref{boundvar} is readily reduced to the following inhomogeneous Dirichlet problem with zero boundary data:
\begin{equation}\label{interiorvar}
\begin{cases}
Lu = f -  \divv g \\
u\in Y_0^{1,2}(\Omega).
\end{cases}
\end{equation}

\vv

Well-posedness of the Dirichlet problem \eqref{interiorvar}  with solutions $u \in W_0^{1,2}(\Omega)$ instead of $u \in Y_0^{1,2}(\Omega)$ in unbounded domains was shown in \cite[Theorem 1.4]{BM} for data $f, g \in L^2(\Omega)$, but with a stronger negativity assumption than $ \divv b + d \leq 0$. Namely, it was assumed that there exists $\mu < 0$ such that $ \divv b + d \leq \mu$. This was necessary exactly because  they required the solutions to be in $W_0^{1,2}(\Omega)$ as opposed to $Y_0^{1,2}(\Omega)$. It is worth mentioning that \eqref{negativity2} was not treated at all.

In the following theorem we follow the proof of  \cite[Theorem 1.4]{BM} adjusting the arguments to the weaker negativity assumption $ \divv b + d \leq 0$ and the Sobolev space $Y_0^{1,2}(\Omega)$. Moreover, our argument works for Lorentz spaces as well as the Stummel-Kato class.

\begin{theorem}\label{thm:existvar}
Let $\Omega \subset \R^{n}$ be an open and connected set and assume that either $b+c \in L^{n,q}(\om)$, for $q \in [n, \infty)$, or $|b+c|^2 \in \mathcal{K}'(\om)$. If  $g_i \in L^2(\Omega)$ for $1 \leq i \leq n$,  $f \in L^{2_*}(\om)$, and either \eqref{negativity} or \eqref{negativity2} holds, then the Dirichlet problem \eqref{interiorvar} has a unique solution $u \in Y_0^{1,2}(\Omega)$ satisfying
\begin{equation}\label{eq:Dir-quant-bound}
\| u\|_{Y^{1,2}(\om)}\lesssim \|f\|_{L^{2_*}(\om)} + \|g\|_{L^2(\om)},
\end{equation}
where the implicit constant depends only on $\lambda$, $\Lambda$, and either $C_{s,q}$ and $\|b+c\|_{L^{n,q}(\om)}$ or  $C_s'$ and $\vartheta_{\om}(|b+c|^2)$.
\end{theorem}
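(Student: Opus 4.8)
The plan is to follow the argument of \cite[Theorem 1.4]{BM}, with the coercivity that is unavailable here replaced by the splitting lemma, exactly in the spirit of the proof of Theorem~\ref{thm:subCaccioppoli}. As explained just before the statement it suffices to treat the homogeneous problem \eqref{interiorvar}. We regard $Y_0^{1,2}(\om)$ as a Hilbert space under $\langle u,v\rangle=\int_\om\nabla u\cdot\nabla v$, whose norm is equivalent to $\|\cdot\|_{Y^{1,2}(\om)}$ by the Sobolev inequality of Lemma~\ref{lem:Lor-Sobolev-emb-p}; by Lemma~\ref{lem:Lor-Sobolev-emb} (resp. Lemma~\ref{lem:Kato-emb-grad-om} when $|b+c|^2\in\mathcal{K}'(\om)$) together with the embedding behind \eqref{bilinear}, the form $\mathcal{L}$ extends to a bounded bilinear form on $Y_0^{1,2}(\om)$, so $\mathcal{L}(\cdot,v)$ is identified with an element of $Y^{-1,2}(\om)$, and conversely every $F\in Y^{-1,2}(\om)$ has the form $v\mapsto\int_\om fv+g\cdot\nabla v$ with $f\in L^{2_*}(\om)$ and $g\in L^2(\om)$.

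\textbf{Step 1 (a priori estimate).} I claim every $u\in Y_0^{1,2}(\om)$ with $Lu=f-\divv g$ satisfies \eqref{eq:Dir-quant-bound}. Assume first $b+c\in L^{n,q}(\om)$ and that \eqref{negativity} holds, and apply Lemma~\ref{lem:main-splitting-Ln} to $u$ with $h=b+c$, $p=n$ and $a=\lambda/(2C_{s,q})$, producing disjoint $\om_i\subset\om$ and $u_i\in Y_0^{1,2}(\om)$, $1\le i\le\kappa$, with $\|b+c\|_{L^{n,q}(\om_i)}\le a$, satisfying \eqref{1exist}--\eqref{6exist} and $\kappa\le a^{-q}\|b+c\|_{L^{n,q}(\om)}^q+1$. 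Since $u$ is a solution we may test against $u_i\in Y_0^{1,2}(\om)$; expanding $\mathcal{L}(u,u_i)$ and writing the first order terms as $b\cdot\nabla(uu_i)-(b+c)\cdot\nabla u\,u_i-d\,uu_i$, the nonnegative term $\int_\om\bigl(b\cdot\nabla(uu_i)-d\,uu_i\bigr)\ge0$ furnished by \eqref{negativity} (legitimate for $uu_i$ by Lemma~\ref{lem:Lor-Sob-uw} and Remark~\ref{rem:bd-cd}) may be dropped; using \eqref{eqelliptic1} on the left, \eqref{2exist} and \eqref{1bis-exist} to see $\int_\om A\nabla u\cdot\nabla u_i=\int_{\om_i}A\nabla u_i\cdot\nabla u_i$, the Sobolev inequality on the $f$- and $g$-terms, and \eqref{5exist} together with Lemma~\ref{lem:Lor-Sobolev-emb} on $\int_\om(b+c)\cdot\nabla u\,u_i=\sum_{j\le i}\int_{\om_j}(b+c)\cdot\nabla u_j\,u_i$, one arrives at
\begin{equation*}
\|\nabla u_i\|_{L^2(\om)}\le C\bigl(\|f\|_{L^{2_*}(\om)}+\|g\|_{L^2(\om)}\bigr)+\sum_{j=1}^{i-1}\|\nabla u_j\|_{L^2(\om)},
\end{equation*}
with $C=C(\lambda,\Lambda,C_{s,q})$, the coefficient $1$ in front of the sum being a consequence of the choice of $a$. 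The induction of Theorem~\ref{thm:subCaccioppoli} gives $\|\nabla u_i\|_{L^2(\om)}\le2^{i-1}C(\|f\|_{L^{2_*}}+\|g\|_{L^2})$, and summing over $i\le\kappa$, using \eqref{4exist} and Sobolev, yields \eqref{eq:Dir-quant-bound} with a constant depending only on $\lambda,\Lambda,C_{s,q}$ and $\|b+c\|_{L^{n,q}(\om)}$. If instead \eqref{negativity2} holds one groups the first order terms so that \eqref{negativity2} applies and uses \eqref{6exist} in place of \eqref{5exist}, running the backward induction of Theorem~\ref{thm:Caccioppoli2}; and if $|b+c|^2\in\mathcal{K}'(\om)$ one uses Lemma~\ref{lem:main-splitting-Kato} and Lemma~\ref{lem:Kato-emb-grad-om} instead of Lemma~\ref{lem:main-splitting-Ln} and Lemma~\ref{lem:Lor-Sobolev-emb}, exactly as in the Stummel--Kato part of Theorem~\ref{thm:subCaccioppoli}.

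\textbf{Step 2 (uniqueness) and Step 3 (existence).} If $u_1,u_2$ solve \eqref{interiorvar} with the same data, then $w=u_1-u_2\in Y_0^{1,2}(\om)$ solves $Lw=0$, and Step~1 with $f=g=0$ forces $w=0$. For existence set, for $t\in[0,1]$, $L_tu=-\divv(A\nabla u+t\,bu)-t\,c\cdot\nabla u-t\,du$, so $L_0=-\divv A\nabla$ and $L_1=L$; multiplying by $t\ge0$ preserves \eqref{negativity} (resp. \eqref{negativity2}) and does not increase $\|b+c\|_{L^{n,q}(\om)}$ (resp. $\vartheta_\om(|b+c|^2)$), so Step~1 gives $\|u\|_{Y^{1,2}(\om)}\le C\|L_tu\|_{Y^{-1,2}(\om)}$ for all $u\in Y_0^{1,2}(\om)$, uniformly in $t$. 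Each $L_t\colon Y_0^{1,2}(\om)\to Y^{-1,2}(\om)$ is a bounded linear operator depending affinely on $t$, and $L_0$ is onto because $a(u,v)=\int_\om A\nabla u\cdot\nabla v$ is bounded and coercive on $(Y_0^{1,2}(\om),\|\nabla\cdot\|_{L^2})$, so Lax--Milgram applies. By the method of continuity \cite{GiTr}, $L_1=L$ is onto, hence \eqref{interiorvar} is solvable; uniqueness is Step~2 and the quantitative bound is Step~1. (Alternatively one may exhaust $\om$ by bounded open sets $\om_k$, where $Y_0^{1,2}=W_0^{1,2}$, solve in each $\om_k$, and pass to the limit using the $k$-independent estimate of Step~1.)

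\textbf{Main difficulty.} The only genuine point is Step~1: without coercivity the term $\int_\om(b+c)\cdot\nabla u\,u$ (or $\int_\om(b+c)\cdot u\,\nabla u$ under \eqref{negativity2}) cannot be absorbed unless $\|b+c\|_{L^{n,q}(\om)}$ is small. The splitting lemma removes this obstruction by decomposing $u$ into finitely many $u_i$ on each of which $b+c$ has small norm; the cost is the harmless factor $2^\kappa$, with $\kappa$ controlled by $\|b+c\|_{L^{n,q}(\om)}$ alone, which is precisely what makes the constant in \eqref{eq:Dir-quant-bound} independent of $\om$.
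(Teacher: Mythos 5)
Your proposal is correct, and Steps 1--2 (the a priori bound via the splitting lemma and the induction of Theorems \ref{thm:subCaccioppoli}--\ref{thm:Caccioppoli2}, and uniqueness as its $f=g=0$ case) coincide with what the paper does. Where you genuinely diverge is Step 3. The paper proves existence by first working in a \emph{bounded} domain: it establishes a G\aa rding-type inequality $\LL(u,u)\geq \tfrac{\lambda}{2}\|\nabla u\|_{L^2}^2-\sigma\|u\|_{L^2}^2$ (splitting $b+c$ into a small piece plus a bounded, compactly supported $\zeta$), applies Lax--Milgram to $L_\sigma=L+\sigma$, and then removes $\sigma$ via the compactness of $W_0^{1,2}\hookrightarrow L^2$ and the Fredholm alternative, with injectivity supplied by Theorem \ref{thm:maxprin-bd}; it then passes to general $\om$ by exhausting with bounded $\om_j$, approximating $f$ by $f_k\in C^\infty_c$, and extracting two successive weak limits using the $j,k$-independent bound \eqref{eq:Dir-quant-bound}. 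Your method-of-continuity argument with $L_t=(1-t)(-\divv A\nabla)+tL$ replaces all of this: the family is affine in $t$, multiplication of $b,c,d$ by $t\in[0,1]$ preserves \eqref{negativity}/\eqref{negativity2} and does not increase $\|b+c\|_{L^{n,q}(\om)}$ or $\vartheta_\om(|b+c|^2)$ (hence not the constant of Step 1, which depends monotonically on these through $\kappa$), $L_0$ is coercive on $(Y_0^{1,2}(\om),\|\nabla\cdot\|_{L^2})$ even for unbounded $\om$, and any $F\in Y^{-1,2}(\om)$ is $-\divv g$ with $\|g\|_{L^2}\approx\|F\|_{Y^{-1,2}}$ by Riesz representation, so the operator-norm form of the a priori estimate follows from Step 1. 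This buys a shorter proof that works directly in unbounded domains, with no Rellich compactness, no Fredholm theory, and no diagonal/exhaustion argument; what it does not produce is the bounded-domain $W_0^{1,2}$ solvability statement \eqref{boundvar-W12}, which the paper obtains as a by-product and reuses in its approximation scheme for the Green's function.
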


\begin{proof}

To demonstrate that \eqref{eq:Dir-quant-bound} holds assuming that such a solution exists, it is enough to repeat the  argument  in the proof of Theorem \ref{thm:maxprin-bd} applying Lemma \ref{lem:main-splitting-Ln} to $u \in Y_0^{1,2}(\Omega)$. The difference is that we should  use that $u$ is a solution of \eqref{eq:Lu=f-divg} instead of a subsolution of $Lu =0$ and thus, we  pick up two terms related to the  interior data exactly as in the proofs of Theorems \ref{thm:subCaccioppoli} and \ref{thm:Caccioppoli2}. Similar (but easier) manipulations along with the same induction argument conclude \eqref{eq:Dir-quant-bound}. We omit the details.

\vv

To show that  \eqref{interiorvar} has a unique solution it is enough to apply the comparison principle given in Corollary \ref{cor:comparison-princ}.

 \vv
 
Existence of solutions of \eqref{interiorvar} is also based on \eqref{eq:Dir-quant-bound}.  We first assume that $\om$ is a bounded  domain and solve the variational problem \eqref{interiorvar} in $W^{1,2}_0(\om)$ with interior data $f \in L^2(\om) \cap L^{2_*}(\om)$ and $g \in L^2(\om)$.

Let $u \in W_0^{1,2}(\Omega)$ and note that by \eqref{eqelliptic1} and  $ \divv b + d \leq 0$ we have
\begin{align}\label{eq:s-accr-1}
\LL(u,u) &= \int_{\om}A\nabla u\nabla u + (b-c)u\nabla u -du^2 
\geq \lambda \| \nabla u\|^2_{L^2(\om)} - \int_\om (b+c) \cdot \nabla u \, u.
\end{align}

If $(b+c) \in L^{n,q}(\om)$, for $\delta>0$ sufficiently small to be chosen, we can find $\zeta \in L^{\infty}(\om)$ which support has finite Lebesgue measure, such that $\| (b+c)^2 - \zeta\|_{L^{n,q}(\om)}< \delta$. Thus, by \eqref{eq:Lorentz-Sobolev-emb}, 
\begin{align}\label{eq:s-accr-2}
\int_\om (b+c) \cdot \nabla u \, u &\leq C_{s,q}  \|b+c- \zeta\|_{L^{n,q}(\om)} \| \nabla u \|_{L^2(\om)}   \|  u \|_{L^{2^*}(\om)} +\int_\om \zeta \cdot \nabla u \, u\\
& \leq \delta C_{s,q} \| \nabla u \|^2_{L^2(\om)} +\int_\om \zeta \cdot \nabla u \, u.\notag
\end{align}
If $\ve>0$ small enough to be chosen, then by \eqref{eq:s-accr-1}, \eqref{eq:s-accr-2}, and Young inequality, we infer
$$
\LL(u,u) \geq (\lambda - \delta C_{s,q} - \frac{\ve}{2}) \| \nabla u\|^2_{L^2(\om)} - \frac{1}{2\ve} \int_\om |\zeta|^2 u^2.
$$
We now choose $\ve=\frac{\lambda}{4}$ and $\delta= \frac{\lambda}{4 C_{s,q}}$, and obtain
\begin{align}\label{eq:s-accr-3}
\LL(u,u) \geq \frac{\lambda}{2} \| \nabla u\|^2_{L^2(\om)} - \frac{2\|\zeta\|_{L^\infty(\om)}^2}{\lambda} \|  u \|^2_{L^2(\om)}=: \frac{\lambda}{2} \| \nabla u\|^2_{L^2(\om)} -\sigma  \|  u \|^2_{L^2(\om)}.
\end{align}

If $|b+c|^2 \in \mathcal{K}(\om)$, then we apply Cauchy-Schwarz and \eqref{eq:cor-Kato-emb-glob},
\begin{align}\label{eq:s-accr-2}
\int_\om (b+c)  \nabla u \, u &\leq  \left( \int_\om |b+c|^2 |u|^2 \right)^{1/2} \| \nabla u\|_{L^2(\om)} \\
& \leq \ve \| \nabla u \|^2_{L^2(\om)} + C_\ve  \| \nabla u\|_{L^2(\om)}  \| u\|_{L^2(\om)}\\
& \leq 2\ve \| \nabla u \|^2_{L^2(\om)} + C_\ve'\| u\|_{L^2(\om)}^2.\notag
\end{align}
If we choose $\ve=\frac{\lambda}{4}$, we get
$$
\LL(u,u) \geq \frac{\lambda}{2} \| \nabla u\|^2_{L^2(\om)} - C_\ve'\  \|  u \|^2_{L^2(\om)}=: \frac{\lambda}{2} \| \nabla u\|^2_{L^2(\om)} -\sigma  \|  u \|^2_{L^2(\om)}.
$$

Let us denote $H=L^2(\om)$, $V=W^{1,2}_0(\om)$ and its dual $V^*=W^{-1,2}(\om)$ and  define 
$$
L_\sigma w:= Lw +\sigma w.
$$
By  \eqref{eq:s-accr-3}, its associated bilinear form is clearly coercive and bounded in $V$. As $f \in  H$ and $g \in H$,  by Lax-Milgram theorem, there exists a unique solution to the problem 
\begin{equation}\label{boundvar-W12}
\begin{cases}
L_\sigma u = f - \divv g  \\
u \in V.
\end{cases}
\end{equation}
and so, $L_\sigma$ has a bounded inverse $L_\sigma^{-1}: V^* \to V$.

If $J: V \to V^*$ is an embedding  given by 
\begin{equation}\label{eq:Embedd}
J v = \int_\om u v, \quad v \in V,
\end{equation}
$I_2: V \to H$ is the natural embedding and $I_1: H\to V^*$ is an embedding given also by \eqref{eq:Embedd}, we can write  $J= I_1\circ I_2$.  It is clear that $J$ is compact as $I_2$ is compact and $I_1$ is continuous. 

The interior data naturally induces a linear functional on $V$ by
$$
F(v) = \int_{\om}fv + g \cdot \nabla v, \quad \textup{for}\,\, v \in V,
$$
so we wish to solve the equation $Lu =F$. This is  is equivalent to $L_\sigma u - \sigma  J u = F$, which in turn, can be written as
\begin{equation}\label{eq:inhom-Fredholm} 
u - \sigma L_\sigma^{-1} J u = L_\sigma^{-1}  F.
\end{equation}
But $L_\sigma^{-1} J$ is compact as $J$ is compact and $L_\sigma^{-1}$ is continuous. Thus, by the Fredholm alternative,  \eqref{eq:inhom-Fredholm} has a unique solution if and only if $w=0$ is the unique function in $V$ satisfying $w - \sigma L_\sigma^{-1} J w =0$ (or else $Lw=0$). But this readily follows from the weak maximum principle  in Theorem \ref{thm:maxprin-bd} and thus, a solution of \eqref{interiorvar} exists in bounded domains.

If $\om$ be an unbounded domain, we can find a sequence of function $f_k \in C^\infty_c(\om)$ such that $f_k \to f$ in $L^{2_*}(\om)$, and then for $j \in \bN$  define
$$
\om_j := \{ x \in \om \cap B(0, j): \dist(x, \d \om) > j^{-1} \}.
$$
Since $f_k \in L^2(\om)\cap L^{2_*}(\om)$ and $\om_j$ is a bounded open set,  by \eqref{boundvar-W12}, there exists $u_{k,j} \in W^{1,2}_0(\om_j) = Y^{1,2}_0(\om_j)$ such that $L u_{k,j} = f_k - \divv g$ in $\om_j$. If we extend $u_{k,j} $ by zero outside $\om_j$,  by \eqref{eq:Dir-quant-bound},  we will have
$$\| u_{k,j} \|_{Y^{1,2}(\om)} \lesssim  \|f_k\|_{L^{2_*}(\om)} + \|g\|_{L^2(\om)},$$
that is, $u_{k,j}$ is a uniformly bounded sequence in $Y_0^{1,2}(\om)$ with bounds independent of $j$ and $k$. Thus, since $Y_0^{1,2}(\om)$ is weakly compact, there exists a subsequence $\{u_{k, j_m}\}_{m \geq 1}$ converging weakly to a function $u_k \in Y^{1,2}_0(\om)$. Notice also that if $\vphi \in C^\infty_c(\om)$, then for $j$ large enough, it also holds $\vphi \in C^\infty_c(\om_j)$. Therefore, since $L u_{k, j} = f_k - \divv g$ in $\om_j$ for any $j \geq 0$, and $u_{k,j_m} \to u_k$ weakly in $Y_0^{1,2}(\om)$ as $m \to \infty$, we obtain
\begin{equation}\label{eq:var-Luj-zero-limit}
\langle f_k, \vphi\rangle +\langle g, \nabla \vphi \rangle =\LL(u_{k,j_m}, \vphi) \xrightarrow{m \to \infty} \LL(u_k, \vphi),\quad \textup{for all}\,\,  \vphi \in C^\infty_c(\om),
\end{equation}
i.e., $L u_{k} = f_k - \divv g$ in $\om$. In addition, since $u_k$ is the  weak limit of $u_{k,j_m}$, for $k$ large enough, it satisfies
$$\| u_{k} \|_{Y^{1,2}(\om)} \lesssim  \|f_k\|_{L^{2_*}(\om)} + \|g\|_{L^2(\om)} \lesssim  \|f\|_{L^{2_*}(\om)} + \|g\|_{L^2(\om)},$$
with implicit contants independent of $k$. Once again by the weak compactness of $Y_0^{1,2}(\om)$, we can find a subsequence $\{u_{k_m}\}_{m \geq 1}$ converging weakly to a function $u \in Y^{1,2}_0(\om)$. Thus, since $L u_{k} = f_k - \divv g$ in $\om$,  $u_{k_m} \to u$ weakly in $Y_0^{1,2}(\om)$ and $f_{k_m} \to f$ in $L^{2_*}(\om)$-norm, we obtain
$$
\LL(u, \vphi) =\langle f, \vphi \rangle +\langle\nabla g, \vphi \rangle, \quad \textup{for all}\,\,  \vphi \in C^\infty_c(\om).
$$
The proof is now concluded.
\end{proof}

An immediate corollary of the last  theorem in light of the considerations at the beginning of this section is the following:

\begin{theorem}\label{thm:Dirichlet-non-zero-bdrydata}
Let $\Omega \subset \R^{n}$ be an open and connected set and assume that either $b+c \in L^{n,q}(\om)$, for $q \in [n, \infty)$, or $|b+c|^2 \in \mathcal{K}'(\om)$. If  $\vphi \in Y^{1,2}(\Omega)$, $g_i \in L^2(\Omega)$ for $1 \leq i \leq n$,  $f \in L^{2_*}(\om)$, and either \eqref{negativity} or \eqref{negativity2} holds,  then the Dirichlet problem $\eqref{boundvar}$ has a unique solution $u \in Y^{1,2}(\Omega)$ satisfying 
\begin{equation}\label{eq:Dir-estimate-nonzero}
\| u\|_{Y^{1,2}(\om)}\leq  \|\vphi\|_{Y^{1,2}(\om)}+\|f\|_{L^{2_*}(\om)} + \|g\|_{L^2(\om)},
\end{equation}
with the implicit constant depending only on $\lambda$, $\Lambda$, and either $C_{s,q}$ and $\|b+c\|_{L^{n,q}(\om)}$  or  $C_s'$ and $\vartheta_{\om}(|b+c|^2)$.
\end{theorem}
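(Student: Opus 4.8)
The plan is to deduce the statement from Theorem~\ref{thm:existvar} by the standard reduction to zero boundary data, exactly along the lines sketched at the beginning of Section~\ref{sec:dirichlet}. First I would set $w=u-\vphi$. Since $\vphi\in Y^{1,2}(\om)$, we have (in the weak sense) $Lw=Lu-L\vphi=(f-c\,\nabla\vphi-d\,\vphi)-\divv(g+A\,\nabla\vphi+b\,\vphi)=:\hat f-\divv\hat g$, so that problem \eqref{boundvar} for $u$ is equivalent to problem \eqref{interiorvar} for $w$ with interior data $\hat f,\hat g$, provided the latter are admissible, i.e. $\hat g\in L^2(\om;\R^n)$ and $\hat f$ may be paired against $Y^{1,2}_0(\om)$ with a bound governed by the $L^{2_*}$-type norm.

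The verification that $\hat f$ and $\hat g$ are admissible, with
$$\|\hat f\|_{L^{2_*}(\om)}+\|\hat g\|_{L^2(\om)}\lesssim \|\vphi\|_{Y^{1,2}(\om)}+\|f\|_{L^{2_*}(\om)}+\|g\|_{L^2(\om)},$$
is the only place where the global hypotheses on $b,c,d$ are used. The term $A\nabla\vphi$ lies in $L^2$ because $A$ is bounded, and $f,g$ are given; for the lower order contributions $c\nabla\vphi$, $d\vphi$, $b\vphi$ I would invoke the embedding machinery already developed: in the case $b,c\in L^{n,\infty}(\om)$, $d\in L^{n/2,\infty}(\om)$ one uses the Lorentz--H\"older inequality \eqref{eq:Lor-Holder} together with the Sobolev embedding $\vphi\in L^{2^*}(\om)$ and the discussion in Remark~\ref{rem:bd-cd} (and Lemma~\ref{lem:Lor-Sob-uw}); in the case $|b|^2,|c|^2,|d|\in\mathcal{K}'(\om)$ one uses Lemma~\ref{lem:Kato-emb-grad-om} (and Lemma~\ref{lem:Kato-emb-grad-Br}). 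This step is routine, but it is where one must be slightly careful about which Lorentz/Kato index is actually available, since $\vphi$ lies only in $Y^{1,2}(\om)$ and not necessarily in $Y^{1,2}_0(\om)$.

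Once admissibility is established, Theorem~\ref{thm:existvar} applies to the data $\hat f,\hat g$ and produces a unique $w\in Y^{1,2}_0(\om)$ solving \eqref{interiorvar} with $\|w\|_{Y^{1,2}(\om)}\lesssim\|\hat f\|_{L^{2_*}(\om)}+\|\hat g\|_{L^2(\om)}$, the implicit constant depending only on $\lambda$, $\Lambda$ and either $C_{s,q}$ and $\|b+c\|_{L^{n,q}(\om)}$ or $C_s'$ and $\vartheta_{\om}(|b+c|^2)$. Setting $u:=w+\vphi$ then gives $u\in Y^{1,2}(\om)$ with $u-\vphi=w\in Y^{1,2}_0(\om)$ and $Lu=f-\divv g$, and combining $\|u\|_{Y^{1,2}(\om)}\le\|w\|_{Y^{1,2}(\om)}+\|\vphi\|_{Y^{1,2}(\om)}$ with the admissibility bound yields \eqref{eq:Dir-estimate-nonzero}. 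For uniqueness, if $u_1,u_2$ both solve \eqref{boundvar} then $u_1-u_2\in Y^{1,2}_0(\om)$ and $L(u_1-u_2)=0$, so applying the weak maximum principle of Theorem~\ref{thm:maxprin-bd} (equivalently Corollary~\ref{cor:comparison-princ}) to $u_1-u_2$ and to $u_2-u_1$ forces $u_1=u_2$.

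I expect the main (and essentially the only) obstacle to be the admissibility check just described: ensuring that the products of the possibly singular coefficients with $\vphi$ and $\nabla\vphi$ land in the right spaces with scale-invariant bounds, since all of the substantive work --- the a priori estimate \eqref{eq:Dir-quant-bound}, existence via Lax--Milgram combined with the Fredholm alternative and an exhaustion argument, and the maximum principle --- has already been carried out in Theorem~\ref{thm:existvar}, Theorem~\ref{thm:maxprin-bd} and Corollary~\ref{cor:comparison-princ}.
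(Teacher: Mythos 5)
Your proposal is exactly the paper's argument: the paper presents this theorem as ``an immediate corollary'' of Theorem~\ref{thm:existvar} via the reduction $w=u-\vphi$, $Lw=\hat f-\divv\hat g$ spelled out at the start of Section~\ref{sec:dirichlet}, followed by the triangle inequality for the norm bound and the comparison principle for uniqueness. The admissibility check you flag (that $b\vphi$, $c\nabla\vphi$, $d\vphi$ land in the right spaces when $\vphi$ is only in $Y^{1,2}(\om)$) is indeed the only nontrivial point, and the paper leaves it implicit as well.
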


\vvv

\subsection{Obstacle problem}\label{sec:obstacle}

In this subsection, we let   $\Omega$ be a bounded and open set, and assume that either \eqref{negativity} or \eqref{negativity2} is satisfied, and also that either $b+c \in L^{n,q}(\om)$, for $q \in [n, \infty)$, or $|b+c|^2 \in \mathcal{K}'(\om)$ holds. 


\begin{definition}
Let  $\psi, \phi \in W^{1,2}(\Omega)$ such that $\phi \geq \psi$ on $\d \om$ in the $W^{1,2}$ sense. Let us also define the convex set
\begin{equation}\label{eq:convex-obstacle}
\bK:=\{v \in W^{1,2}(\Omega): v \geq \psi \,\,\textup{on} \,\,\Omega \,\textup{ in the $W^{1,2}$ sense and} \,\, v - \phi \in W^{1,2}_0(\Omega)\}.
\end{equation}
We say that $u$ is a \textit{solution to the obstacle problem} in $\Omega$ with obstacle $\psi$ and boundary values $\phi$ and we write $u \in  \mathcal{K}_{\psi, \phi}(\om)$, if $u \in \bK$ and 
\begin{equation}\label{eq:solution-obstacle}
\mathcal{L}(u, v-u) \geq 0, \,\, \textup{for all}\, v \in \bK.
\end{equation}
\end{definition}

This problem can be reduced to the one with zero boundary data as follows: Let us define the convex set
\begin{equation}\label{eq:convex-obstacle-zerobdry}
\bK_0:=\{w \in W^{1,2}_0(\Omega): w \geq \psi-\phi \,\,\textup{on} \,\, \om \,\textup{  in the $W^{1,2}$ sense}\}.
\end{equation}
Suppose that $u \in \mathcal{K}_{\psi, \phi}(\om)$ and write
\begin{align*}
u&= u_0 + \phi, \quad\textup{for}\,\,v_0 \in \bK_0\\
v&= v_0 + \phi,\quad\textup{for} \,\,v_0 \in \bK_0.
\end{align*}
Thus, 
$$ \mathcal{L}(u_0,v_0 -u_0) \geq \langle  f ,v_0 -u_0 \rangle - \mathcal{L}(\phi,v_0 -u_0),$$
and since $\langle F,  \eta  \rangle:=\langle  f, \eta \rangle - \mathcal{L}(\phi,\eta)$,  $\eta \in W^{1,2}_0(\om)$, defines an element $F \in W^{-1,2}(\om)$, it is enough to prove the following theorem:

\begin{theorem}\label{thm:v.i.-unilateral-lower}
Let  $ \psi$ be measurable such that $\psi \leq 0$ on $\d \om$ in the $W^{1,2}$ sense. Define
\begin{equation}\label{eq:convex-psi-obstacle-zerobdry}
\bK_\psi:=\{w \in W^{1,2}_0(\Omega): w \geq \psi \,\,\textup{in} \,\, \om  \,\textup{  in the $W^{1,2}$ sense}\}.
\end{equation}
Given $F \in W^{-1,2}(\om)$, there exists a unique $u \in \bK_\psi$ such that
\begin{equation}\label{eq:v.i.-super}
\mathcal{L}(u, v-u) \geq \langle F,  v-u \rangle, \quad \textup{for all}\,\, v \in \bK_\psi.
\end{equation}
Moreover, $u$ is the minimal among all $w \in W^{1,2}(\om)$ that are supersolutions of $L w =F$ and satisfy $w \geq \psi$ in $\om$ and $w \geq 0 $ on $\d \om$ in the $W^{1,2}$ sense.
\end{theorem}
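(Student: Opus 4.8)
The plan is to obtain existence and uniqueness of the solution to the variational inequality \eqref{eq:v.i.-super} by the standard penalization (or truncation) method, then to establish the minimality property by a comparison argument based on the weak maximum principle. First I would reduce to the coercive case. As in the proof of Theorem \ref{thm:existvar}, the bilinear form $\mathcal{L}$ need not be coercive on $V:=W^{1,2}_0(\om)$, but for $\sigma>0$ large enough (depending on $\lambda$, $\Lambda$, and the Lorentz or Stummel-Kato norm of $b+c$ via \eqref{eq:Lorentz-Sobolev-emb} or \eqref{eq:cor-Kato-emb-glob}, exactly the computation in \eqref{eq:s-accr-3}) the perturbed form $\mathcal{L}_\sigma(u,v):=\mathcal{L}(u,v)+\sigma\int_\om uv$ is bounded and coercive on $V$. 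Since $\om$ is bounded, $V=W^{1,2}_0(\om)=Y^{1,2}_0(\om)$ and the embedding $V\hookrightarrow L^2(\om)$ is compact. For the coercive form $\mathcal{L}_\sigma$ and the closed convex set $\bK_\psi$, the Lions--Stampacchia theorem gives, for each $h\in V^*$, a unique $u_\sigma\in\bK_\psi$ with $\mathcal{L}_\sigma(u_\sigma,v-u_\sigma)\ge\langle h,v-u_\sigma\rangle$ for all $v\in\bK_\psi$.

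Next I would run a Fredholm-type fixed point argument to remove the $\sigma$-shift, paralleling the passage in Theorem \ref{thm:existvar} from \eqref{boundvar-W12} to \eqref{eq:inhom-Fredholm}. Define the solution operator $T:L^2(\om)\to\bK_\psi$ sending $w$ to the unique solution $u$ of $\mathcal{L}_\sigma(u,v-u)\ge\langle F+\sigma w, v-u\rangle$ for all $v\in\bK_\psi$; this $T$ is well-defined by the previous paragraph, and by the standard Lipschitz estimate for variational inequalities (testing with the two solutions cross-wise and using coercivity) $T$ maps bounded sets of $L^2$ to bounded sets of $V$, hence $T\circ I_2$ (with $I_2:V\hookrightarrow L^2$ the compact embedding) is a compact operator on $L^2(\om)$; a solution of \eqref{eq:v.i.-super} is exactly a fixed point of $T$ restricted appropriately. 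One then applies Schauder's theorem (or the Leray--Schauder principle) once an a priori bound on all possible fixed points is available: if $u$ solves \eqref{eq:v.i.-super}, test with $v=\max(u,0)=u-u^-\in\bK_\psi$ (legitimate since $u^-\le\psi^-\le 0$ so subtracting $u^-$ keeps the function $\ge\psi$; here I use Lemma \ref{lem:Sobolev-maxmin} and Lemma \ref{lem:sobolev-van-trace}), which gives $\mathcal{L}(u,-u^-)\ge\langle F,-u^-\rangle$, i.e. $\mathcal{L}(u^-,u^-)\le -\langle F,u^-\rangle$ after sign bookkeeping, and combined with the $\mathcal{L}_\sigma$-coercivity estimate this bounds $\|u^-\|_{Y^{1,2}(\om)}$; a similar test with $v=u+(\psi-u)^+$ controls $\|u\|_{Y^{1,2}(\om)}$ in terms of $\|\psi^+\|_{W^{1,2}(\om)}+\|F\|_{V^*}$. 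Uniqueness follows from the comparison/monotonicity built into the variational inequality: if $u_1,u_2$ both solve \eqref{eq:v.i.-super}, test the inequality for $u_1$ with $v=\max(u_1,u_2)$ and the one for $u_2$ with $v=\min(u_1,u_2)$, add, and use Lemma \ref{lem:Sobolev-maxmin} together with the coercivity of $\mathcal{L}_\sigma$ after absorbing the $\sigma$-term, exactly as in Corollary \ref{cor:comparison-princ}; one concludes $(u_1-u_2)^+$ and $(u_2-u_1)^+$ have vanishing gradient, hence are $0$ by Lemma \ref{lem:sobolev-u-constant}.

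Finally, for the minimality statement: let $w\in W^{1,2}(\om)$ be any supersolution of $Lw=F$ with $w\ge\psi$ in $\om$ and $w\ge 0$ on $\d\om$ in the Sobolev sense. I claim $u\le w$ where $u$ is the solution of \eqref{eq:v.i.-super}. Set $v:=\min(u,w)$. Since $u\ge\psi$ and $w\ge\psi$ we have $v\ge\psi$; and $v-u=-(u-w)^+$, which lies in $W^{1,2}_0(\om)$ because $u\in W^{1,2}_0(\om)$ and $w\ge 0\ge$ the boundary trace of $u$ on $\d\om$ forces $(u-w)^+$ to vanish on $\d\om$ (Lemma \ref{lem:sobolev-van-trace}); hence $v\in\bK_\psi$. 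Plugging this $v$ into \eqref{eq:v.i.-super} gives $\mathcal{L}(u,-(u-w)^+)\ge\langle F,-(u-w)^+\rangle$, that is $\mathcal{L}(u,(u-w)^+)\le\langle F,(u-w)^+\rangle$. On the other hand $(u-w)^+\in W^{1,2}_0(\om)$ is a non-negative admissible test function for the supersolution $w$, so $\mathcal{L}(w,(u-w)^+)\ge\langle F,(u-w)^+\rangle$. Subtracting, $\mathcal{L}(u-w,(u-w)^+)\le 0$, and since $\nabla(u-w)=\nabla(u-w)^+$ on $\{u>w\}$ (Lemma \ref{lem:Sobolev-maxmin}) while the lower-order terms are handled precisely by the splitting argument of Theorem \ref{thm:maxprin-bd} (apply Lemma \ref{lem:main-splitting-Ln} or Lemma \ref{lem:main-splitting-Kato} to $(u-w)^+$ and either \eqref{negativity} or \eqref{negativity2}), we conclude $\nabla(u-w)^+=0$ a.e., hence $(u-w)^+\equiv 0$ by Lemma \ref{lem:sobolev-u-constant}, i.e. $u\le w$. (That $u$ itself is a supersolution of $Lu=F$ satisfying the constraint is immediate: for non-negative $\varphi\in C^\infty_c(\om)$ one has $u+\varphi\in\bK_\psi$, so \eqref{eq:v.i.-super} yields $\mathcal{L}(u,\varphi)\ge\langle F,\varphi\rangle$; and $u\in\bK_\psi$ gives $u\ge\psi$, $u-0\in W^{1,2}_0(\om)$.) The main obstacle I anticipate is the a priori bound needed to apply the fixed point theorem and the careful verification that all the test functions produced by truncation genuinely lie in $\bK_\psi$ and in $W^{1,2}_0(\om)$ — the lower-order terms $b,c,d$, being only in $L^{n,q}$ or of Stummel-Kato type, must be absorbed through the splitting lemmas rather than by any smallness, just as in the proofs of Theorems \ref{thm:subCaccioppoli} and \ref{thm:maxprin-bd}; once that bookkeeping is in place the argument is routine.
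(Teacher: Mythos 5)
The paper disposes of this theorem in one line: by the weak maximum principle (Theorem \ref{thm:maxprin-bd}), the statement follows from Theorem 4.27 of \cite{Tr} and its corollary. You instead attempt a self-contained proof, which is welcome in principle; your uniqueness argument (cross-testing with $\max(u_1,u_2)$ and $\min(u_1,u_2)$ to reduce to $\mathcal{L}\bigl((u_2-u_1)^+,(u_2-u_1)^+\bigr)\le 0$) and your minimality argument (testing with $v=\min(u,w)$ and comparing with the supersolution inequality for $w$) are essentially correct, provided that at the final step you conclude $\nabla(u_2-u_1)^+=0$ via the splitting iteration of Theorem \ref{thm:maxprin-bd} rather than via ``coercivity of $\mathcal{L}_\sigma$'' --- the $-\sigma\|\cdot\|_{L^2}^2$ term cannot be absorbed, which is the whole point of the splitting lemmas.

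The genuine gap is in the existence step. Your fixed-point scheme does not close: Schauder requires $T\circ I_2$ to map some closed ball of $L^2(\om)$ into itself, but the Lions--Stampacchia estimate only gives $\|Tw\|_{V}\lesssim \|F\|_{V^*}+\sigma\|w\|_{L^2}+\|\psi^+\|_{V}$, which does not yield an invariant ball; the Leray--Schauder alternative requires a priori bounds along the homotopy $u=tT(u)$, $t\in(0,1)$, which you do not establish and which is not even well posed as stated since $tT(u)$ need not lie in $\bK_\psi$; and the Fredholm alternative is a linear tool that does not apply to the variational inequality. Moreover, the two a priori bounds you do sketch are defective: the test function $v=u+(\psi-u)^+$ is identically $u$ (since $u\ge\psi$) and gives no information, and the bound on $\|u^-\|$ cannot come from ``the $\mathcal{L}_\sigma$-coercivity estimate'' for the same absorption reason as above --- one must test with the individual pieces $w_i$ of the splitting of $u^-$ (each $u+w_i\in\bK_\psi$ because $w_i\ge 0$) and run the induction of Theorem \ref{thm:existvar}. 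The mechanism that actually proves existence --- and the reason the paper's one-line proof invokes Theorem \ref{thm:maxprin-bd} --- is that the weak maximum principle makes the coercive solution operator $T$ \emph{order-preserving}, so that existence follows by monotone iteration between an ordered sub/supersolution pair; this is precisely the content of Troianiello's Theorem 4.27. Without either that monotonicity argument or a correctly quantified a priori bound for a well-defined homotopy, your existence proof is incomplete.
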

\begin{proof}
By  the weak maximum principle proved in Theorem \ref{thm:maxprin-bd}, our theorem  follows from Theorem 4.27 in \cite{Tr} and the Corollary right after it.
\end{proof}

An important consequence of this theorem is the following:
\begin{corollary}\label{cor:min(u,v)}
Let $\om \subset \R^n$ be an open set (not necessarily bounded). If $u$ and $v$ are  supersolutions of $Lw =F$ in $\om$, then $\min(u,v)$ is a supersolution of the same equation.
\end{corollary}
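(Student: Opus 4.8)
The plan is to reduce the global statement to the bounded-domain case handled by Theorem \ref{thm:v.i.-unilateral-lower}, exploiting that being a supersolution is a local property. Fix $w=\min(u,v)$; by Lemma \ref{lem:Sobolev-maxmin} we know $w\in Y^{1,2}_{\loc}(\om)$ with $\nabla w=\nabla u$ a.e.\ on $\{u\le v\}$ and $\nabla w=\nabla v$ a.e.\ on $\{v\le u\}$, so $w$ is an admissible competitor locally. It suffices to show $\LL(w,\zeta)\ge \int_\om f\zeta+g\cdot\nabla\zeta$ for every non-negative $\zeta\in C^\infty_c(\om)$, and since such a $\zeta$ is supported in some ball $B$ with $\overline B\subset\om$, we may work in a fixed bounded open set $\om'$ with $\supp\zeta\Subset\om'\Subset\om$.

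First I would set up the local obstacle problem on $\om'$: take $\psi:=\max(u,v)$... no — take the obstacle to be $w$ itself restricted appropriately. More precisely, on a slightly smaller ball, let $\phi$ be a smooth cutoff interpolation so that $u$ and $v$ both lie above $w$ with the correct boundary behaviour, and consider $\bK_\psi$ with $\psi=w-\phi$ (shifting to zero boundary data as in the reduction preceding Theorem \ref{thm:v.i.-unilateral-lower}). Both $u$ and $v$ are supersolutions of $Lw=f-\divv g$ in $\om'$ that lie above the obstacle and above $w$ on $\d\om'$ in the $W^{1,2}$ sense, so by the minimality assertion of Theorem \ref{thm:v.i.-unilateral-lower} the solution $\tilde u$ to the obstacle problem satisfies $\tilde u\le u$ and $\tilde u\le v$ in $\om'$, hence $\tilde u\le w$. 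On the other hand $\tilde u\ge \psi$ by construction, and since the obstacle is exactly $w$ (up to the boundary shift), $\tilde u\ge w$ as well, so $\tilde u=w$ in $\om'$. Because $\tilde u$ solves the variational inequality \eqref{eq:v.i.-super} and competitors $v=\tilde u+t\zeta$ with $t>0$, $\zeta\ge 0$, $\zeta\in C^\infty_c(\om')$ are admissible (they only increase above the obstacle), we get $\LL(\tilde u,\zeta)\ge\langle F,\zeta\rangle$, i.e.\ $w$ is a supersolution in $\om'$; letting $\om'$ exhaust $\om$ gives the global claim.

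The main obstacle I anticipate is the careful choice of the boundary data $\phi$ on the auxiliary bounded domain so that (a) $\phi\ge\psi$ on $\d\om'$ in the $W^{1,2}$ sense as required in Theorem \ref{thm:v.i.-unilateral-lower}, (b) both $u$ and $v$ genuinely qualify as admissible supersolutions lying above the obstacle on $\d\om'$, and (c) the solution of the obstacle problem coincides with $w$ and not merely with something $\le w$. The clean way around this is to avoid introducing $\phi$ at all by choosing $\om'$ so that $\partial\om'$ does not interact with $\supp\zeta$: fix balls $B\Subset B'\Subset\om$ with $\supp\zeta\subset B$, set the obstacle on $B'$ to be $\psi=w$ with boundary data $\phi=w$ itself (note $w\in W^{1,2}(B')$ since $B'$ is bounded), so trivially $w\in\bK_{\psi,\phi}(B')$ and $w$ is its own obstacle; then $u,v\in W^{1,2}(B')$ are supersolutions with $u,v\ge w=\psi$ in $B'$ and $u,v\ge w=\phi$ on $\d B'$, whence the minimal supersolution $\tilde u\le\min(u,v)=w\le\tilde u$, so $\tilde u=w$. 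The variational inequality then yields $\LL(w,\zeta)\ge\langle F,\zeta\rangle$ for $\zeta\ge 0$ supported in $B$, which is precisely the supersolution property. I would also remark that connectedness is not needed since everything is local, and that the reduction to zero boundary data and the invocation of \cite[Theorem 4.27]{Tr} are exactly as in the proof of Theorem \ref{thm:v.i.-unilateral-lower}.
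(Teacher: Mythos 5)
Your argument is correct and is essentially the paper's own: the paper reduces the unbounded case to bounded subdomains (where a nonnegative test function is supported) and for the bounded case invokes Theorem \ref{thm:v.i.-unilateral-lower} together with \cite[Theorem 6.6]{KrS}, whose proof is exactly your obstacle-problem construction with $\psi=\phi=\min(u,v)$ and the minimality assertion forcing $\tilde u=\min(u,v)$. You have merely written out the cited argument in full and replaced the paper's ``partition of unity'' remark with the cleaner observation that each test function lives in some ball $B\Subset\om$.
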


\begin{proof}
If $\om$ is bounded, the proof is a consequence of Theorem \ref{thm:v.i.-unilateral-lower} and can be found in  \cite[Chapter II, Theorem 6.6]{KrS}. Let $\om$ be an unbounded open set and assume that $u$ and $v$ are  supersolutions of $Lw =F$ in $\om$. Since they are supersolutions of the same equation  in any bounded open set $D \subset \om$, $\min(u,v)$ is a supersolution in any such $D$ as well. Using a partition of unity, this yields that $\min(u,v)$ is a supersolution in $\om$.
\end{proof}


The proof of the following theorem can be found for instance  in  \cite[Chapter II, Theorem 6.9]{KrS}.
\begin{theorem}\label{thm:coinc-set}
Let $u$ be the unique solution obtained in Theorem \ref{thm:v.i.-unilateral-lower} for $\psi \in W^{1,2}(\om)$.  Then there exists a non-negative Radon measure so that
\begin{equation}\label{eq:cap-meas}
Lu = f+ \mu, \quad \textup{in} \,\, \om,
\end{equation}
with 
$$
\supp (\mu) \subset I:= \om \setminus \{x \in \om: u(x) > \psi(x)\}.
$$
In particular, 
$$
Lu = f \quad \textup{in}\,\, \{x \in \om: u(x) > \psi(x)\}.
$$
\end{theorem}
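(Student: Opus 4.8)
\textbf{Proof plan for Theorem \ref{thm:coinc-set}.}
The plan is to follow the classical scheme for obstacle problems (as in \cite[Theorem 6.9]{KrS}), adapting it to the present setting where solutions live in $W^{1,2}_0(\om)$, the bilinear form $\LL$ is not coercive, and the lower order coefficients are merely in $L^{n,q}(\om)$ or have $|b+c|^2\in\mathcal{K}'(\om)$. First I would produce the measure. For a non-negative test function $\vphi\in C^\infty_c(\om)$, the function $v=u+\vphi$ lies in $\bK_\psi$ (since $\vphi\geq 0$, so $v\geq u\geq\psi$, and $v-u=\vphi\in W^{1,2}_0(\om)$). Plugging it into the variational inequality \eqref{eq:v.i.-super} gives $\LL(u,\vphi)\geq\langle F,\vphi\rangle$, i.e. $\LL(u,\vphi)-\langle f,\vphi\rangle - \langle g,\nabla\vphi\rangle\geq 0$ for all $0\leq\vphi\in C^\infty_c(\om)$ (writing $F$ as $f-\divv g$). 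Thus the linear functional $\Lambda(\vphi):=\LL(u,\vphi)-\langle f,\vphi\rangle-\langle g,\nabla\vphi\rangle$ is non-negative on non-negative smooth compactly supported functions; by the Riesz representation theorem for positive functionals it is given by integration against a non-negative Radon measure $\mu$ on $\om$. This yields $Lu=f+\mu$ in the distributional sense, which is \eqref{eq:cap-meas}.

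The second step is to locate the support of $\mu$, i.e. to show $\mu$ vanishes on the open set $U:=\{x\in\om: u(x)>\psi(x)\}$. Here I would use a two-sided perturbation: on a small ball $B\Subset U$ where $u>\psi+\epsilon$ for some $\epsilon>0$ (after passing to the precise, lower semicontinuous representative of $u$ guaranteed by the regularity theory of Section \ref{sec:local-est}, since $u$ is a supersolution with $\sup$ finite locally), any $\vphi\in C^\infty_c(B)$ with $\|\vphi\|_\infty$ small enough satisfies $u\pm\vphi\geq\psi$, hence $v=u\pm\vphi\in\bK_\psi$. Feeding both $v=u+\vphi$ and $v=u-\vphi$ into \eqref{eq:v.i.-super} gives $\LL(u,\vphi)-\langle F,\vphi\rangle\geq 0$ and $\leq 0$, so $\Lambda(\vphi)=0$ for all such $\vphi$, and a scaling/density argument removes the smallness restriction. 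Therefore $\mu\rest B=0$, and since $U$ is covered by such balls, $\supp\mu\subset\om\setminus U=I$. The final assertion $Lu=f$ in $U$ is then immediate from \eqref{eq:cap-meas} and $\supp\mu\cap U=\emptyset$.

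The main obstacle I anticipate is not the algebra of the perturbation argument but the measurability/representative issue: the set $\{u>\psi\}$ must be understood with a good pointwise representative of $u$ (and $\psi$), and one needs that $u-\psi>0$ on a set that is genuinely open (up to capacity) so that the localization to balls $B\Subset U$ makes sense. This is exactly where the interior regularity developed earlier pays off: by Theorem \ref{thm:reverse-Holder-bu} (weak Harnack) together with the lower semicontinuous representative from the theorem preceding \eqref{eq:reg-local-bounds}, the supersolution $u$ has a well-defined lower semicontinuous representative, and if $\psi$ is also (quasi-)continuous the coincidence set $I$ is defined up to a set of capacity zero, on which $\mu$ is absolutely continuous (since $\mu$ does not charge sets of zero capacity — a standard consequence of $\mu\in W^{-1,2}(\om)$ combined with its positivity). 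I would invoke \cite[Theorem 6.9]{KrS} for the technical packaging of these facts, noting that the only input from our setting it requires — the weak maximum principle and the solvability of the variational inequality in Theorem \ref{thm:v.i.-unilateral-lower} — has already been established, so the proof reduces to checking that the Riesz representation and perturbation steps above go through verbatim. I omit the routine details.
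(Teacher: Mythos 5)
Your proposal is correct and is essentially the argument the paper relies on: the paper simply cites \cite[Theorem 6.9]{KrS}, and what you have written out (one-sided perturbation plus Riesz representation to produce $\mu\geq 0$, then two-sided perturbations on balls compactly contained in $\{u>\psi\}$ to kill $\mu$ there, with the lower semicontinuous representative and the fact that $\mu$ does not charge sets of zero capacity handling the representative issue) is precisely the content of that reference. No gap.
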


\vvv


\section{Green's Functions in unbounded domains}\label{sec:green}

Here we construct  the Green's function associated with an elliptic operator given by $\eqref{operator}$ satisfying  either negativity assumption following the approach of Hofmann and Kim  \cite{HK} along with its variation due to Kang and Kim \cite{KK}.

\subsection{Construction of Green's functions} 

Before we start, we should mention  that the equation formal  adjoint operator of $L$ is given by
 \begin{equation}\label{adjoint-operator}
 L^t u = -\divv (A \cdot \nabla u - c u) + b \cdot \nabla u - du=0,
\end{equation}
with corresponding bilinear form
\begin{equation}\label{adjoint-bilinear}
\LL^t(u, \vphi ) = \int_{\om}(A^t\nabla u -cu)\nabla \vphi  -(du-b\nabla u)\vphi.
\end{equation}
Moreover,  if $\LL$ satisfies \eqref{negativity}, then its adjoint satisfies \eqref{negativity2} and vice versa.

In the current section, we will require the following conditions to hold:
\begin{align*}
|b|^2, |c|^2, |d| \in  \mathcal{K}'(\om) \quad \textup{or} \quad b, c \in L^{n,\infty}(\om), d \in L^{\frac{n}{2},\infty}(\om).
\end{align*}

\begin{theorem}\label{thm:green-contruction}
Let $\om \subset \R^n$ be an open and connected set and $L$ be an operator given by \eqref{operator} so that \eqref{negativity2} holds. For a fixed $y \in \om$, there exists the Green's function $G(x, y)\geq 0$ for a.e. $x \in \om \setminus\{y\}$ with the following properties:
\begin{enumerate}
\item $G(\cdot, y) \in Y^{1,2}(\om \setminus B_r(y))$ for  all  $r>0$ and vanishes on $\d \om$.\label{item:green2}
\item If $f \in L^{\frac{n}{2},1}(\om) $ and  $g \in  L^{n,1}(\om)$, we have that
\begin{equation}\label{eq:Green-1}
u(x)=\int_\om G(y,x) \,f(y) \,dy + \int_\om \nabla_y G(y, x) \,g(y) \,dy,
\end{equation}
is a solution of $L^t u =f - \divv g$  in $\om$ and $u \in Y^{1,2}_0(\om)$ satisfying $\| u\|_{L^\infty(\om)}\lesssim \|f\|_{L^{\frac{n}{2},1}(\om) } + \|g\|_{L^{n,1}(\om) } $.\label{item:green3}
\item For any other Green's function $\widehat G(x,y)$ satisfying (3), it holds $G(x,y)=\widehat G(x,y)$ for a.e. $ x \in \om \setminus \{y\}$.\label{item:green4}
\item $G(\cdot, y) \in W^{1,1}_\loc(\om)$ and  for any $\eta_y \in C^\infty_c(B_r(y))$ such that  $\eta_y=1$ in $B_{r/2}(y)$, for $r>0$, it holds that \label{item:green1}
\begin{equation}\label{eq:green-phi}
\mathcal{L}(G(\cdot, y), (1-\eta_y)\vphi)=0, \quad \textup{for any}\,\, \vphi \in C^\infty_c(\om).
\end{equation}
\end{enumerate}
If we set $d_y= \dist(y, \d \om)$ \textup{(}$d_y=\infty$ if $\om=\R^n$\textup{)}, the following bounds are satisfied:
\begin{align}
\| G(\cdot, y) \|_{Y^{1,2}(\om \setminus B_r(y))} \lesssim r^{1-\frac{n}{2}},&\quad\textup{for any}\,\, r>0,\label{eq:Green-2}\\
\| G(\cdot, y) \|_{L^{p}(B_r(y))} \lesssim_p  r^{2-n+\frac{n}{p}}, &\quad\textup{for all}\,\, r<d_y \,\,\textup{and}\,\, p \in \big[1, \frac{n}{n-2} \big),\label{eq:Green-3}\\
\| \nabla G(\cdot, y) \|_{L^{p}(B_r(y))} \lesssim_p  r^{1-n+\frac{n}{p}}, &\quad\textup{for all}\,\, r<d_y , \,\,\textup{and}\,\, p \in \big[1, \frac{n}{n-1} \big),\label{eq:Green-3-bis}\\
|\{x\in \om: G(x,y) >t \}| \lesssim t^{-\frac{n}{n-2}}, &\quad \textup{for all}\,\, t>0,\label{eq:Green-4}\\
|\{x\in \om: \nabla_x G(x,y) >t \}| \lesssim t^{-\frac{n}{n-1}}, &\quad \textup{for all}\,\, t> 0,\label{eq:Green-5}
\end{align}
The implicit constants depend only on $\lambda$,  $\Lambda$, and either $C_{s,q}$ and $\|b+c\|_{L^{n,q}(\om)}$, or $C_s'$ and $\vartheta_\om(|b+c|^2)$.
If we also assume  $|b+c|^2 \in \khalf(\om)$, then 
\begin{equation}
G(x, y) \lesssim \frac{1}{|x-y|^{n-2}},\quad\textup{for all} \,\,x \in \om\setminus \{y\}. \label{eq:Green-pointwise}
\end{equation}
where the implicit constant depends also on $C_{|b+c|^2, \om}$.
\vv

If $|b+c|^2 \in \khalf(\om)$, we can  construct the Green's function $G^t(x, y)$ associated with the operator $L^t$ which is non-negative for a.e.  $x\in \om \setminus \{ y\}$ and  satisfies the analogous properties \eqref{item:green2}-\eqref{item:green1} and the bounds \eqref{eq:Green-2}-\eqref{eq:Green-pointwise}. The implicit constants depend on $\lambda$,  $\Lambda$, $C_s'$ and $C_{|b+c|^2, \om}$, and, in the pointwise bounds, on $\|b+c\|_{L^{n,q}(\om)}$, or $C_s'$ and $\vartheta_\om(|b+c|^2)$ as well.  Moreover, if $b, c \in L^{n,q}(\om)$, $d \in L^{\frac{n}{2},q}(\om)$, for  $q \in [n, \infty)$, or $|b|^2, |c|^2, |d| \in  \mathcal{K}'(\om)$,  it  holds that
\begin{align}\label{eq:Green-symmetry}
G^t(x,y)=G(y,x), \quad \textup{ for a.e.}\,\,  (x,y) \in \om^2\setminus \{x \neq y\},
\end{align}
and
\begin{equation}\label{eq:Green-1-bis}
u(x)=\int_\om G^t(x, y) \,f(y) \,dy + \int_\om \nabla_y G^t(x, y) \,g(y) \,dy,  \,\,\textup{for all} \, x \in \om.
\end{equation}
\end{theorem}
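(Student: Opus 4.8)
The construction of $G(\cdot,y)$ follows the Hofmann--Kim scheme \cite{HK} with the Kang--Kim variant \cite{KK}: first mollify the lower order coefficients to obtain operators $L_\delta$ with continuous solutions (using Remark \ref{rem:molifier-moser-indep} so that all the constants in the De Giorgi/Nash/Moser theorems are uniform in $\delta$), then for a small averaging radius $\rho>0$ solve $L_\delta^t u_{\delta,\rho} = \avint_{B_\rho(y)}$ via the well-posedness theorem (Theorem \ref{thm:existvar}), and finally pass to the limit. The approximate Green's functions $G_{\delta,\rho}(\cdot,y):=u_{\delta,\rho}$ are, by construction, nonnegative (weak maximum principle, Theorem \ref{thm:maxprin-bd}, applicable since \eqref{negativity2} for $L$ corresponds to \eqref{negativity} for $L^t$), and the uniform bounds \eqref{eq:Green-2}--\eqref{eq:Green-5} are obtained exactly as in \cite{HK,KK}: test the equation against $G_{\delta,\rho}$ truncated, use Caccioppoli's inequality (Theorems \ref{thm:subCaccioppoli}, \ref{thm:boundCaccioppoli}, whose constants are scale invariant and $\delta$-uniform), local boundedness (Theorem \ref{thm:Moser}), and the Sobolev/Lorentz embeddings; the weak-type estimates follow from the energy bound on dyadic annuli combined with local boundedness. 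A weak limit in $Y^{1,2}(\om\setminus B_r(y))$ for each $r$, via a diagonal argument over $\delta\to0$ then $\rho\to0$, produces $G(\cdot,y)$ satisfying \eqref{item:green2}, \eqref{item:green1}, \eqref{eq:Green-2}--\eqref{eq:Green-5}. Property \eqref{item:green3} is the duality identity: for $f\in L^{n/2,1}$, $g\in L^{n,1}$, the solution $u$ of $L^t u = f-\divv g$ (which exists and is bounded because $L^{n/2,1}\subset\mathcal K'$, $L^{n,1}\subset L^{n,1}$, and one applies Theorem \ref{thm:existvar} together with the local boundedness to get the $L^\infty$ bound) is recovered by pairing against $G_{\delta,\rho}$ and letting the parameters go to $0$; uniqueness \eqref{item:green4} follows because any two Green's functions give the same functional on a dense class. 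The pointwise bound \eqref{eq:Green-pointwise} under $|b+c|^2\in\khalf(\om)$ comes from the local boundedness estimate \eqref{eq:int-localbddness-u+}/\eqref{eq:bdry-localbddness-u+} applied on $B_{|x-y|/2}(x)$ to $G(\cdot,y)$ (a solution there, away from $y$) combined with \eqref{eq:Green-3}; this is where the extra hypothesis is genuinely needed, in view of Example \ref{ex:c1}.

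For the adjoint Green's function $G^t(\cdot,y)$: when $|b+c|^2\in\khalf(\om)$, subsolutions of $L^tu=0$ are locally bounded (Theorem \ref{thm:Moser} under condition $\textup{(N)}$ for $L^t$, which is $\textup{(N)}$ with the roles of $b,c$ swapped and $\eqref{negativity}$ for $L^t$), so the entire construction above applies verbatim to $L^t$ in place of $L$, yielding $G^t(\cdot,y)\geq0$ with the analogous properties \eqref{item:green2}--\eqref{item:green1} and bounds \eqref{eq:Green-2}--\eqref{eq:Green-pointwise}. The representation \eqref{eq:Green-1-bis} for $G^t$ is the exact analogue of \eqref{eq:Green-1}, proved the same way.

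\textbf{The symmetry identity \eqref{eq:Green-symmetry} is the main obstacle.} The natural approach is: fix $x_0\neq y_0$ and two small radii, and compute $\mathcal{L}(G(\cdot,y_0),\,\cdot\,)$ and $\mathcal{L}^t(G^t(\cdot,x_0),\,\cdot\,)$ against each other, using \eqref{eq:green-phi} for both; formally this gives $\avint_{B_\rho(x_0)}G(\cdot,y_0)=\avint_{B_{\rho'}(y_0)}G^t(\cdot,x_0)$, and letting $\rho,\rho'\to0$ and invoking the Lebesgue differentiation theorem yields $G(x_0,y_0)=G^t(y_0,x_0)$. The difficulty --- and the reason the author singles it out and thanks Sakellaris for spotting a gap --- is that this pairing is only legitimate at the level of the \emph{approximate} (mollified) operators, where solutions are continuous; one must justify the passage to the limit in $\delta$ carefully, showing that $G_\delta(\cdot,y)\to G(\cdot,y)$ and $G_\delta^t(\cdot,x)\to G^t(\cdot,x)$ strongly enough (e.g.\ in $Y^{1,2}_{\loc}$ away from the poles, plus $L^1_{\loc}$ near them, using the uniform bounds \eqref{eq:Green-3}--\eqref{eq:Green-3-bis} and Corollary \ref{cor:Kato-emb-glob-moli} / Lemma \ref{lem:kato-approx-0} for the convergence of the coefficient terms) to take the limit in the bilinear pairing. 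This is precisely where the hypothesis $b,c\in L^{n,q}$, $d\in L^{n/2,q}$ (or $|b|^2,|c|^2,|d|\in\mathcal K'$) enters: it guarantees that the mollified coefficients converge in the right norms and that the approximating operators $L_\delta$ still enjoy \eqref{negativity2} (Remark \ref{rem:molifier-moser-indep}, via \cite[Lemma 6.9]{KSa}), so that the interior continuity needed for the identity at each $\delta$ is available. I would carry out the limiting argument on a fixed compact set $K\subset\om\setminus\{x_0,y_0\}$ first, then handle the contributions of small balls around $x_0$ and $y_0$ separately using \eqref{eq:Green-3} and the fact that $\nabla\eta$ is supported in an annulus. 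Once $G^t(x,y)=G(y,x)$ is established for a.e.\ pair, \eqref{eq:Green-1-bis} follows by combining it with \eqref{eq:Green-1} applied to $L$.
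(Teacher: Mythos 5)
Your plan follows the same Hofmann--Kim/Kang--Kim scheme as the paper and correctly isolates the symmetry identity \eqref{eq:Green-symmetry} as the delicate point, to be handled by mollifying the coefficients and exploiting continuity of solutions of the approximating operators before passing to the limit in the bilinear pairing. One structural difference: the paper does \emph{not} mollify at the outset. It constructs $G_\rho(\cdot,y)$ directly for the original operator $L$ via Theorem \ref{thm:existvar} (so there is only one approximation parameter $\rho$ in the main construction), and introduces the mollified operators $L_k$ on exhausting subdomains $\om_k$ \emph{only} to prove $G^t(x,y)=G(y,x)$; your two-parameter $(\delta,\rho)$ diagonalization is workable but forces you to re-justify all the limits twice. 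Both routes buy the same theorem; the paper's ordering keeps the identification of the limit (via uniqueness of the Dirichlet problem and the comparison principle) cleaner.

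There is one step in your outline that would fail as written. You propose to obtain the energy and weak-type bounds \eqref{eq:Green-2}--\eqref{eq:Green-5} "exactly as in \cite{HK}," invoking local boundedness together with Caccioppoli on annuli. In \cite{HK} the passage from the $L^1$ annulus bound (which comes from duality and local boundedness of \emph{adjoint} solutions) to the $L^2$ bound needed on the right-hand side of Caccioppoli goes through a pointwise bound on $G_\rho$ itself near $\partial B_r(y)$. But local boundedness of subsolutions of $Lu=0$ (as opposed to $L^tu=0$) is exactly what is \emph{not} available under \eqref{negativity2} alone, by Example \ref{ex:c1}; it is only recovered under the extra hypothesis $|b+c|^2\in\khalf(\om)$, which the first part of the theorem does not assume. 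The paper circumvents this with the auxiliary reverse H\"older inequality on annuli (Lemma \ref{lem:reverseHold}, adapted from \cite[Lemma 3.19]{KSa}), which converts the $L^1$ bound on $\om\cap(B_{2r}\setminus B_{r/4})$ directly into the $L^2$ bound on $\om\cap(B_r\setminus B_{r/2})$ without any pointwise control of $G_\rho$. You need this lemma (or an equivalent substitute) to make \eqref{eq:Green-2}, and hence the whole weak-compactness argument, go through in the stated generality.
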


\begin{proof}
Given a point $y \in \om$, if $\om_\rho(y)= \om \cap B_\rho(y)$, we define 
$$
f_{\rho}(x,y) =|B_{\rho}(y)|^{-1} {\bf 1}_{\om_\rho(y)}(x), \quad x\in \om.
$$
Since $L$ satisfies \eqref{negativity2} and $f_\rho(\cdot, y) \in L^\infty(\om)$ with bounded support, we may apply Theorem \ref{thm:existvar} (ii) to find a function $G_\rho(\cdot, y) \in Y^{1,2}_0(\om)$ so that 
\begin{equation}\label{eq:green-rho-var}
\mathcal{L}(G_\rho(\cdot, y), \varphi)= \int f_{\rho}(\cdot,y) \, \varphi, 
\end{equation}
for any $\vphi \in C^\infty_c(\om)$, with global bounds
\begin{align}\label{eq:Green-rho-Dirbds}
\|G_\rho(\cdot, y)\|_{Y^{1,2}(\om)} &\lesssim |B_\rho(y)|^{\frac{2-n}{2n}}.
\end{align}
Note that $G_{\rho}(\cdot,y) \in Y^{1,2}_0(\om)$ and is an $L$-supersolution. If we apply the maximum principle given in Theorem \ref{thm:maxprin-bd} (ii),we get that  $G_{\rho}(\cdot,y) \geq 0$ in $\om$. 

Let now $f \in L^\infty(\om)$ and $g \in L^\infty(\om)$ so that $|\supp(f)|+|\supp(g)|<\infty$. Then, by Theorem \ref{thm:existvar}, there exists $u \in Y^{1,2}_0(\om)$ such that 
\begin{equation}\label{eq:green-dual-var}
\mathcal{L}^t(u, \psi)= \int f \psi +\int g \nabla \psi \quad  \textup{for all}\,\, \psi \in C^\infty_c(\om),
\end{equation}
satisfying 
\begin{align}\label{eq:green-dual-u-bds}
\|u\|_{Y^{1,2}(\om)} \lesssim \| f \|_{L^{2_*}(\om)} &+\| g \|_{L^{2}(\om)}\\
& \leq |\supp(f)|^{\frac{n+2}{2n}} \|f\|_{L^\infty(\om)}+ |\supp(g)|^{\frac{1}{2}} \|g\|_{L^\infty(\om)}.\notag
\end{align}

Remark here that, by the density of $C^\infty_c(\om)$ in $Y^{1,2}_0(\om)$, both \eqref{eq:green-rho-var} and \eqref{eq:green-dual-var} can be extended to test functions $\varphi \in Y^{1,2}_0(\om)$. So, if we set $\varphi= u$ in \eqref{eq:green-rho-var} and $\psi = G_{\rho}(\cdot,y)$ in \eqref{eq:green-dual-var}, we obtain that
\begin{equation}\label{eq:green-rho-dual-average}
 \int G_{\rho}(x,y) f(x)\,dx +\int \nabla_x G_{\rho}(x,y) g(x)\,dx= \avint_{\om_\rho(y)} u(x)\,dx.
\end{equation}

For $r>0$ fixed, assume that $\supp(f) \subset \om_r(y)$, $g=0$, and let $\rho < r/2$. Since $u_f$  is in $Y^{1,2}(\om_r(y))$, vanishes on $B_r(y) \cap \d \om$, and satisfies $L^t u_f = f$ in $\om_r(y)$, by  Theorem  \ref{thm:Moser} (1) with $M=0$, we obtain
\begin{align*}
\|u_f\|_{L^\infty(\om_{\frac{r}{2}}(y))} &\lesssim  r^{-\frac{n}{2}} \|u_f \|_{L^2(\om_r(y))} + r^2 \|f\|_{L^\infty(\om_r(y))}\lesssim  r^2 \|f\|_{L^\infty(\om_r(y))},
\end{align*}
where in the penultimate inequality we used H\"older inequality and \eqref{eq:green-dual-u-bds}. Similarly, if $f=0$,  $\supp(g) \subset \om_r(y)$, and $\rho < r/2$, since $u_g \in Y^{1,2}(\om_r(y))$ that vanishes on $B_r(y) \cap \d \om$ and  $L^t u_g = -\divv g$ in $\om_r(y)$, 
\begin{align*}
\|u_g\|_{L^\infty(\om_{\frac{r}{2}}(y))} &\lesssim  r^{-\frac{n}{2}} \|u_g \|_{L^2(\om_r(y))} + r \|g\|_{L^\infty(\om_r(y))}\lesssim  r \|g\|_{L^\infty(\om_r(y))}.
\end{align*}

By \eqref{eq:green-rho-dual-average}, duality considerations, and the latter two estimates,  we have that  for all  $r>0$ and $\rho < r/2$,
\begin{align}
\|G_\rho(\cdot, y)\|_{L^1(\om_r(y))} &\lesssim  r^2,\label{eq:green-rho-L1}\\
\|\nabla G_\rho(\cdot, y)\|_{L^1(\om_r(y))} &\lesssim  r.\label{eq:green-rho-nabla-L1}
\end{align}
In fact, arguing similarly, we can prove that for all  $r>0$, $\rho < r/2$, and $q \in [1, \frac{n}{n-2})$,
 \begin{align}
\|G_\rho(\cdot, y)\|_{L^q(\om_r(y))}& \lesssim  r^{2-n +\frac{n}{q}},\label{eq:green-rho-Lq}\\
\|\nabla G_\rho(\cdot, y)\|_{L^q(\om_r(y))}& \lesssim  r^{1-n +\frac{n}{q}}.\label{eq:green-rho-Lq}
\end{align}

To avoid an early use of the pointwise bounds and thus, of the assumption $|b+c|^2 \in \khalf(\om)$, we will need the following auxiliary lemma. 
\begin{lemma}\label{lem:reverseHold}
 Let $\Omega \subset \R^{n}$ be an open set and $L$ be the operator given by \eqref{operator} that satisfies either \eqref{negativity} or \eqref{negativity2}. Let $B_s=B(x,s)$ be a ball of radius $s$ centered at $x \in \om$ such that $3B_s \subset \om$ and $u \in Y^{1,2}(\om \setminus B_s)$ be a solution of $Lu =0$ in $\om \setminus B_s$  that vanishes on $\d \om$. Then for any $r \geq 4s$ we have
 \begin{equation}\label{eq:ReverseHolder}
 \int_{\om \cap (B_{2r} \setminus B_{r/3})} |u|^2 \lesssim\frac{1}{r^n} \left( \int_{\om \cap (B_{3r} \setminus B_{r/4})} |u| \right)^2,
 \end{equation}
 where the implicit constants depend only on $\lambda$,  $\Lambda$, $\|b+c\|_{L^n(\om;\, \R^n)}$, and $C_{s,q}$.
\end{lemma}

\begin{proof}
The proof can be found in \cite[Lemma 3.19]{KSa} with the difference that we use Theorems \ref{thm:boundCaccioppoli} instead of  \cite[Lemma 3.18]{KSa} that only holds for $r \leq 1$.
\end{proof}

For fixed $r >0$ and $\rho \in (0, r/6)$ we let $\eta \in C^\infty(\R^n)$ so that 
$$
0 \leq \eta \leq 1, \quad \eta \equiv 1\,\, \textup{on}\,\, \R^n \setminus B_r(y), \quad  \eta \equiv 0\,\, \textup{on}\,\, B_{r/2}(y), \quad \textup{and}\quad  |\nabla \eta| \leq \frac{4}{r}.
$$
Thus, by Theorem \ref{thm:boundCaccioppoli}, since $L G_\rho(\cdot,y) = 0$, in $\om \setminus B_{r/2}(y)$,
\begin{align}\label{eq:green-rho-gradL2-away}
\|\nabla G_\rho (\cdot, y) \|^2_{L^{2}(\om \setminus B_r(y))} &\leq \int_\om |\eta \nabla G_\rho(\cdot, y) |^2  \overset{\eqref{eq:bCacciop}}{\lesssim} \int_\om |  G_\rho(\cdot, y) \nabla \eta |^2 \\
&\lesssim \frac{1}{r^2} \int_{\om \cap (B_{r}(y) \setminus B_{r/2}(y))} G_\rho(\cdot, y)^2 \notag\\
&\overset{\eqref{eq:ReverseHolder}}{\lesssim}\frac{1}{r^{n+2}} \left(\int_{\om \cap (B_{2r}(y) \setminus B_{r/4}(y))} G_\rho(\cdot, y) \right)^2 \overset{\eqref{eq:green-rho-L1}}{\lesssim} r^{2-n},\notag
\end{align}
which, in turn, by Sobolev embedding theorem, implies that for $0<\rho < r/6$,
\begin{equation}\label{eq:green-rho-L2*-away}
\|G_\rho(\cdot, y) \|_{L^{2^*}(\om \setminus B_r(y))} \leq \|G_\rho(\cdot, y)  \eta\|_{L^{2^*}(\om)} \lesssim \|\nabla(G_\rho(\cdot, y)  \eta)\|_{L^{2^*}(\om)} \lesssim r^{1-\frac{n}{2}}.
\end{equation}

On the other hand, for $\rho \geq r/6$,  by \eqref{eq:Green-rho-Dirbds}, we have that
\begin{align}
&\|G_\rho (\cdot, y) \|_{Y^{1,2}(\om \setminus B_r(y))} \leq \|G_\rho(\cdot, y) \|_{Y^{1,2}(\om)} \lesssim |B_{\rho/6}(y)|^{\frac{2-n}{n}} \lesssim r^{2-n}.
\label{eq:green-rho-Y12-away2}
\end{align}

Therefore, if we apply \eqref{eq:green-rho-gradL2-away}, \eqref{eq:green-rho-L2*-away}, and \eqref{eq:green-rho-Y12-away2}, we obtain that for any $r>0$, there exists a constant $C(r)$  depending on $r$ so that  
\begin{align*}
&\|G_\rho (\cdot, y) \|_{Y^{1,2}(\om \setminus B_r(y))} \leq C(r),
\end{align*}
uniformly in  $\rho>0$. So, by a diagonalization argument and weak compactness of $Y^{1,2}_0$, there exists a sequence  $\{\rho_m\}_{m=1}^\infty$ that converges to zero as $m \to \infty$ such that for all $r>0$,
\begin{align}
&G_{\rho_m}(\cdot,y) \rightharpoonup G(\cdot, y) \,\, \textup{in}\,\, Y_0^{1,2}(\om \setminus B_{r}(y)), \quad \textup{as}\,\,m \to \infty,\label{eq:green-convergY12}
\end{align}
where $G(\cdot,y)  \in Y_0^{1,2}(\om \setminus B_r(y))$. Moreover, by \eqref{eq:green-rho-Y12-away2},
\begin{align}
&\|G(\cdot, y) \|_{Y^{1,2}(\om \setminus B_{r}(y))} \lesssim r^{2-n},\quad\textup{for all}\,\, r>0\label{eq:green-boundY12}.
\end{align}

If we follow the proof of inequalities (3.21) and (3.23)  in \cite{HK} using the the same considerations that lead to the proof of the estimates for $G_\rho(\cdot, y)$ away from the pole, we can show that 
\begin{align}\label{eq:green-rho-Lorentz1}
&|\{x\in \om: G_\rho(x,y) >s \}| \lesssim s^{-\frac{n}{n-2}}, \quad \textup{for all}\,\, s> 0,\\
&|\{x\in \om: \nabla_x G_\rho(x,y) >s \}| \lesssim s^{-\frac{n}{n-1}}, \quad \textup{for all}\,\, s>0,\label{eq:green-rho-Lorentz2}
\end{align}
uniformly in $\rho>0$.  This yields that $G_\rho(\cdot,y) \in L^{\frac{n}{n-2}, \infty}(\om)$  and $\nabla G_\rho(\cdot,y) \in L^{\frac{n}{n-1}, \infty}(\om)$ with bounds independent of $\rho$. 

Moreover, in light of  \eqref{eq:green-rho-Lorentz1} and \eqref{eq:green-rho-Lorentz2}, we can mimic the proof of inequalities (3.24) and (3.26)  in \cite{HK} and infer that for any $\rho>0$ and $r<d_y $,
\begin{align}
&\| G_{\rho}(\cdot,y)\|_{L^p(B_r(y))} \lesssim r^{2-n+\frac{n}{p}}, \quad p \in (0, \tfrac{n}{n-2}),\label{eq:green-rho-Lp}\\
&\| \nabla G_{\rho}(\cdot,y)\|_{L^p(B_r(y))} \lesssim r^{1-n+\frac{n}{p}},  \quad p \in (0, \tfrac{n}{n-1}).\label{eq:green-rho-nabla-Lp}
\end{align}
In particular, 
\begin{align}\label{eq:green-rho-boundW1p}
&\|G_\rho(\cdot,y)\|_{W^{1,p}(B_r(y))} \leq C(r,p),\quad r<d_y , \,\,p \in [1, \tfrac{n}{n-1}),
\end{align}
uniformly in $\rho>0$. Thus,  fixing $p \in (1, \tfrac{n}{n-1})$, by a diagonalization argument, we can find  a subsequence of $\rho_m$ in \eqref{eq:green-convergY12} (which we still denote by $\rho_m$ for simplicity) so that
\begin{align}\label{eq:green-convergW1p}
G_{\rho_m}(\cdot,y) &\rightharpoonup \tilde G(\cdot, y) \,\, \textup{in}\,\,W^{1,p}(B_r(y))\quad \textup{as}\,\,m \to \infty,
\end{align}
for all $ r<d_y $. We also have that  $\tilde G(\cdot, y)$ satisfies \eqref{eq:Green-3} and \eqref{eq:Green-3-bis} for this particular $p$. Since $G(\cdot, y)=\tilde G(\cdot, y)$ in $B(y, d_y) \setminus B(y, d_y/2)$, we can extend $\tilde G(\cdot, y)$ by $ G(\cdot, y)$ to the entire $\om$ by setting $G(\cdot, y)=\tilde G(\cdot, y)$. 

Let $\om_t= \{x \in \om: G(x,y) > t \}$, $p=\frac{n}{n-2}$, $\ve \in (0, p-1)$. If we apply Chebyshev inequality, and then use that the $L^p$-norms are weakly lower semicontinuous and $|\om_t|<\infty$, by \eqref{eq:Green-2} and \eqref{eq:Green-3}, we have
\begin{align*}
t^{p-\ve} |\om_t| &\lesssim  \| G(\cdot, y)\|^{p-\ve}_{L^{p-\ve}(\om_t)} \leq \liminf_{m \to \infty} \| G_{\rho_m}(\cdot, y)\|^{p-\ve}_{L^{p-\ve}(\om_t)}\\
&\leq \liminf_{m \to \infty}   \frac{p}{\ve} | \om_t|^{\frac{\ve}{p}}\| G_{\rho_m}(\cdot, y)\|^{p-\ve}_{L^{p,\infty}(\om)} \overset{\eqref{eq:green-rho-Lorentz1}}{\leq}   \frac{p}{\ve} | \om_t|^{\frac{\ve}{p}} C^{p -\ve}.
\end{align*}
Letting $ \ve \to p-1$,  we get $|\om_t|^{\frac{1}{p}} \lesssim 1$ which proves \eqref{eq:Green-4}. A similar reasoning  proves  \eqref{eq:Green-5}. Moreover,
\begin{align}\label{eq:green-convergLorentz}
G_{\rho_m}(\cdot,y) &\overset{*}{\rightharpoonup}  G(\cdot, y) \,\, \textup{in}\,\,L^{\frac{n}{n-2}, \infty}(\om)\quad \textup{as}\,\,m \to \infty,\\
\nabla G_{\rho_m}(\cdot,y) &\overset{*}\rightharpoonup  \nabla  G(\cdot, y) \,\, \textup{in}\,\,L^{\frac{n}{n-1}, \infty}(\om)\quad \textup{as}\,\,m \to \infty.\label{eq:green-grad-convergLorentz}
\end{align}

Therefore, by \eqref{eq:green-rho-var} and \eqref{eq:green-rho-dual-average}, in view of \eqref{eq:green-convergLorentz}, \eqref{eq:green-grad-convergLorentz}, and \eqref{eq:green-convergY12},  we can prove \eqref{eq:green-phi} and also,  \eqref{eq:Green-1} for $f \in L^\infty(\om)$ and $g \in L^\infty(\om)$ so that $|\supp(f)|+|\supp(g)|<\infty$ (a detailed but more involved argument can be found after equation \eqref{Green:bk2-conv}). To show  that \eqref{eq:Green-1} holds in general, it is enough to use that simple functions are dense in $L^{p,q}(\om)$ if $q \neq \infty$ along with \eqref{eq:Green-4} and  \eqref{eq:Green-5}. Details are left to the reader.

The proof of inequalities (3.30) and (3.31)  in \cite{HK} gives us  \eqref{eq:Green-3} and  \eqref{eq:Green-3-bis} for any $p$ (in the stated range).

We will now demonstrate  that for a fixed $y \in \om$, $G(\cdot, y)  \geq 0$ a.e.  in $\om\setminus \{y\}$. Assume that $\sigma_n$ is the sequence converging to zero for which $G_{\sigma_n}(\cdot, y)$ converge to $G(\cdot, y)$ in the sense of  \eqref{eq:green-convergY12} and \eqref{eq:green-convergW1p}. If necessary, we can pass to a subsequence so that $\sigma_n <  \min(|x-y|, d_y)/10$. Fix $x \in \om$ so that $x \neq y$ and let $\rho_m$ be a sequence converging to zero  so that $\rho_m \leq \min(|x-y|, d_x)/10$. Therefore, since $ G_{\sigma_n}(\cdot, y) \geq 0$ in $ \om$, we have that
\begin{equation*}
0 \leq \avint_{B_{\rho_m}(x)} G_{\sigma_n}(\cdot, y) \longrightarrow \avint_{B_{\rho_m}(x)}G(\cdot, y), \,\, \textup{as}\,\, n \to \infty,
\end{equation*}
where we used \eqref{eq:green-convergY12} in the case $B_{\rho_m}(y) \subset \om \setminus B_r(x)$ for some $r>0$ and \eqref{eq:green-convergW1p} in the case  $B_{\rho_m}(x) \cap B_{\sigma_n}(y) \neq \emptyset$.
By Lebesgue differentiation theorem, if we let $m \to \infty$, we infer that $G(x, y) \geq 0$ for a.e. $x \in \om\setminus\{y\}$.

To prove uniqueness of the Green's function, we assume that $ \widehat G(\cdot, y)$ is another Green's function for the same operator. Then for $f \in C^\infty_c(\om)$ and $g=0$, we have that for fixed $y \in \om$, 
$$
 \int_\om \widehat G(\cdot, y) \, f =\widehat u(y) \in Y^{1,2}_0(\om) \quad \textup{and}\quad L^t \widehat u =f.
$$
By the comparison principle Corollary \ref{cor:comparison-princ}, $u = \widehat u$  in $\om$ and so,
$$
\int_\om G(\cdot, y) \, f =\int_\om \widehat G(\cdot, y) \, f.
$$
Since $f \in C^\infty_c(\om)$ is arbitrary, this  readily implies that $ G(x, y)=\widehat G(x, y)$  for a.e.  $x \in\om \setminus \{y\}$. 

So far, we have not used the local boundedness of solutions of $L^t u=0$ and thus, the assumption  $|b+c|^2 \in \khalf(\om)$. It is only for the pointwise bounds we will need it.  Indeed, let $x,y \in \om$, $x \neq y$ and set $r= |x-y|/4$. Then,  \eqref{eq:green-phi} yields that $LG(\cdot,y)=0$ away from $y$. So, by Theorem \ref{thm:Moser} and \eqref{eq:Green-2} for $ p=2$, we obtain
\begin{align}\label{eq:green-ptwse}
|G(x,y)| &\leq \sup_{\om_r(x)} |G(\cdot,y)| \lesssim r^{-n/2}  \|G(\cdot,y)\|_{L^2(\om_r(x))}\\
&\lesssim r^{-n/2} r^{2-n/2} \approx |x-y|^{2-n}.\notag
\end{align}

Notice that, under the additional assumption $|b+c|^2 \in \khalf(\om)$, we can apply the  previous considerations  to  construct the Green's function $G^t(\cdot, y)$ associated with the operator $L^t$ with all the properties above. The only thing that remains to be shown is that $G^t(x,y)= G(y,x)$ for a.e. $(x,y) \in \om^2\setminus \{x=y\}$.  We will first prove it in the case that solutions of $Lu=0$ and  $L^t u=0$ are locally H\"older continuous in $\om\setminus\{x\}$ and $\om \setminus\{y\}$ respectively. In this case,  all the properties that hold a.e. in  $\om \setminus \{ \textup{pole}\}$, because of  the continuity therein, will actually hold everywhere in $\om \setminus \{ \textup{pole}\}$. 

To this end, let $\sigma_n$ and $\rho_m$  be the sequences converging to zero for which $G_{\sigma_n}(\cdot, x)$ and $G^t_{\rho_m}(\cdot, y)$ converge to $G(\cdot, x)$ and $G^t(\cdot, y)$ in the sense of  \eqref{eq:green-convergY12}, \eqref{eq:green-convergW1p}, and \eqref{eq:green-convergLorentz}. If necessary, we may further pass to subsequences so that 
$$\sigma_n < \min(|x-y|, d_x)/10 \quad \textup{and}\quad\rho_m \leq \min(|x-y|, d_y)/10.$$
Because $G_{\sigma_n}(\cdot, x)$ and $G_{\rho_m}^t(\cdot, y)$ are locally H\"older continuous in $\om \setminus\{x\}$ and $\om \setminus\{y\}$ respectively, with constants uniform in $\sigma_n$ and $\rho_m$ and, by Theorem \ref{thm:Moser}, they are uniformly bounded on compact subsets of the respective domains, we may pass to subsequences so that 
\begin{align}\label{eq:Green-unif-conv}
G_{\sigma_n}(\cdot, x) &\to G(\cdot ,x) \,\,  \textup{unifomly on compact subsets of }\, \om\setminus\{x\},\\
G^t_{\rho_m}(\cdot, y) &\to G^t(\cdot ,y) \,\,  \textup{unifomly on compact subsets of }\, \om\setminus\{y\}.\label{eq:Green-t-unif-conv}
\end{align}

We now use $G^t_{\rho_m}(\cdot, y)$ and  $G_{\sigma_n}(\cdot, x)$ as test functions in their very definitions to obtain
\begin{align*}
 \avint_{B_{\sigma_n}(x)}  G^t_{\rho_m}(\cdot, y)&= \mathcal{L}( G_{\sigma_n}(\cdot, x), G^t_{\rho_m}(\cdot, y))\\ &= \mathcal{L}^t(  G^t_{\rho_m}(\cdot, y), G_{\sigma_n}(\cdot, x))=  \avint_{B_{\rho_m}(y)} G_{\sigma_n}(\cdot, x).
\end{align*}
By Lebesgue's differentiation theorem and continuity of $G_{\sigma_n}(\cdot, x)$ in $\om \setminus \{x\}$, 
$$
\lim_{m \to \infty}  \avint_{B_{\rho_m}(y)} G_{\sigma_n}(\cdot, x) = G_{\sigma_n}(y, x),
$$
which, in view of \eqref{eq:Green-unif-conv}, yields that 
$$
\lim_{n \to \infty}  \lim_{m \to \infty}  \avint_{B_{\rho_m}(y)} G_{\sigma_n}(\cdot, x) = G(y, x) \quad \textup{for all}\,y \in \om \setminus \{x\}.
$$
On the other hand, the weak convergence of $G^t_{\rho_m}(\cdot, y)$ in $Y^{1,2}(\om \setminus B_r(y))$ for any $r>0$ implies
$$
\lim_{m \to \infty}  \avint_{B_{\sigma_n}(x)}  G^t_{\rho_m}(\cdot, y) =  \avint_{B_{\sigma_n}(x)}  G^t(\cdot, y),
$$
from which, by Lebesgue differentiation theorem and the continuity of $G^t(\cdot, y)$ in $\om \setminus \{y\}$, we deduce that
$$
\lim_{n \to \infty} \lim_{m \to \infty}  \avint_{B_{\sigma_n}(x)}  G^t_{\rho_m}(\cdot, y) =  G^t(x, y) \quad \textup{for all}\,x \in \om \setminus \{y\}.
$$
Therefore, $G(x,y)=G^t(y,x)$ for all  $(x, y)  \in \om^2 \setminus \{x=y\}$, which, combined with \eqref{eq:Green-1}, implies \eqref{eq:Green-1-bis}. 

We are now ready to remove the H\"older continuity assumption. 
Set
$$
\om_k = \{ x\in\om: d(x,\d\om)>k^{-1}\} \cap B(0,k),
$$
which are  open sets such that $\cup_{k \geq 1} \om_k = \om$. Let $\psi \in C^\infty_c(\R^n)$ so that 
$$
0 \leq \psi \leq 1,\,\, \psi = 0\,\, \textup{in}\,\, \R^n \setminus B(0,1) \,\, \textup{and}\,\, \int \psi =1.
$$
For $k \in \N$, set $\psi_k(x)= k^{n} \psi(k x)$ and define $b_k = (b\,{\bf 1}_{\om_k}) \ast \psi_k$, $c_k = (c\,{\bf 1}_{\om_k}) \ast \psi_k$ and $d_k = (d\,{\bf 1}_{\om_k}) \ast \psi_k$.

Define 
$$
L_k u = -\divv A\nabla u - \divv (b_k u) - c_k \nabla u- d_k u.
$$

If we fix $x \neq y \in \om$, there exists $k_0$ large enough such that $x, y \in \om_k$ for every $k \geq k_0$ and in particular, $x$ and $y$ are in the same connected component of $\om_k$.  Therefore, Remark \ref{rem:molifier-moser-indep} applies, and since, for such $k$, Theorem \ref{thm:Moser} holds for $L_k $ in $\om_k$ with bounds independent of $k$, we can  construct the Green's functions $G_k(\cdot, y)$ and $G_k^t(\cdot, x)$ associated with $L_k$  and $L^t_k$ in $\om_k$ as above, with the additional property that $G_k(\cdot,x)$ and $G_k^t(\cdot, y)$ are locally H\"older continuous away from $x$ and $y$ respectively. In the last part we used Theorem \ref{thm:Holder-cont}, which applies in this situation, since $b_k, c_k, d_k \in L^\infty$ with compact support and thus, $|b_k|^2, |c_k|^2, |d_k| \in \khalf(\om_k)$ (with implicit constants depending in the domain). Extend both  $G_k(\cdot,x)$ and $G_k^t(\cdot, y)$ by zero outside $\om_k$ and note that \eqref{eq:Green-2}-\eqref{eq:Green-5} hold in $\om$ with constants independent of $k$ (see  Remark \ref{rem:molifier-moser-indep}). Therefore, repeating essentially the arguments concerning the convergence of $G_\rho$ and the inheritance of the bounds from  $G_\rho$, we can find ${G}(\cdot, y)$ which is non-negative a.e. in $\om \setminus \{y\}$ and vanishes on $\d \om$. Additionally, it satisfies \eqref{eq:Green-2}-\eqref{eq:Green-5}, and, after passing to a subsequence, 
\begin{align}
G_k(\cdot,y)& \rightharpoonup  {G}(\cdot, y) \,\, \textup{in}\,\, Y^{1,2}(\om \setminus B_{r}(y))\,\, \text{for all}\,\,r>0, \label{eq:green-k-convergY12-r}\\
G_k(\cdot,y) &\rightharpoonup {G}(\cdot, y) \,\, \textup{in}\,\,W^{1,p}(B_{r}(y)),\,\, \text{for all}\,\,r<d_y, \label{eq:green-k-convergW1p-r}\\
G_{k}(\cdot,y) &\overset{*}{\rightharpoonup}    G(\cdot, y) \,\, \textup{in}\,\,L^{\frac{n}{n-2}, \infty}(\om),\label{eq:green-k-convergLorentz}\\
\nabla G_{k}(\cdot,y) &\overset{*}\rightharpoonup  \nabla    G(\cdot, y) \,\, \textup{in}\,\,L^{\frac{n}{n-1}, \infty}(\om)\label{eq:green-k-grad-convergLorentz},\\
G_{k}(\cdot,y) &\rightarrow G(\cdot,y) \,\, \textup{a.e. in } \, \om.\label{eq:green-k-convergpointwise}
\end{align}
The  considerations  above apply to $G_k^t$ as well.

Let  $f \in L^\infty(\om)$ and  $g \in L^\infty_\om)$ which supports have finite Lebesgue measure. Thus, by virtue of \eqref{eq:Green-1}, we have that   
\begin{equation}\label{eq:green-uk-gtk}
u_k(y) = \int_{\om} {G}_k( \cdot, y)\, f+\int_{\om} \nabla {G}_k(\cdot,y)\, g.
\end{equation}
Since $u_k \in Y^{1,2}_0(\om_k)$, we can extend it by $0$ outside $\om_k$. Recall that $u_k$ satisfies $L_k^t u_k=f-\divv g$ in $\om_k$ and also
$$
\|u_k\|_{Y^{1,2}(\om)} =\|u_k\|_{Y^{1,2}(\om_k)} \lesssim \|f\|_{L^{2_*}(\om_k)}+\|g\|_{L^{2}(\om_k)}\leq \|f\|_{L^{2_*}(\om)}+\|g\|_{L^{2}(\om)},
$$
where the implicit constant is independent of $k$.  If we take limits in \eqref{eq:green-uk-gtk} as $k \to \infty$ and use \eqref{eq:green-k-convergLorentz} and \eqref{eq:green-k-grad-convergLorentz} for $G_k^t(\cdot, y)$, we can show that for all $y \in \om$,
\begin{align}\label{eq:green-uk-u}
\lim_{k \to \infty} u_k(y)&=\lim_{k \to \infty} \int_{\om} {G}_k(x,y)\, f(x)\,dx +\lim_{k \to \infty} \int_{\om} \nabla {G}_k(x,y)\, g(x)\,dx \\
&= \int_{\om} {G}( x, y)\, f(x)\,dx +\int_{\om} \nabla {G}(x,y)\, g(x)\,dx =: u(y).\notag
\end{align}
Therefore, since $u_k \to u$ pointwisely in $\om$ and $u_k$ is a uniformly bounded sequence in $Y_0^{1,2}(\om)$, it holds that $u_k \rightharpoonup u\,\,\textup{in}\,\,  Y^{1,2}(\om)$ and $u \in Y^{1,2}_0(\om)$. For a proof see for instance  \cite[Theorem 1.32]{HKM}.  We will show that   $u$ is the unique solution of the Dirichlet problem $L^t u =f$ and $u \in Y^{1,2}_0(\om)$. If $\vphi \in C^\infty_c(\om)$, there exists $k_1 \geq k_0$  such that $\vphi \in C^\infty_c(\om_k)$ for every $k \geq k_1$. Thus,
$$
\mathcal{L}^t_{k, \om} (u_k, \vphi)= \mathcal{L}^t_{k, \om_k} (u_k, \vphi) = \int_{  \om_k} f \vphi +  \int_{\om_k} g \nabla \vphi=\int_{  \om} f \vphi +  \int_{ \om} g \nabla \vphi.
$$

To pass to the limit, we need to  treat each of the terms of the bilinear form separately. We first write
\begin{equation*}
\int_\om b_k \nabla u_k \phi=\int_\om (b_k-b) \nabla u_k \phi+ \int_\om b \nabla u_k \phi = I_{b,1}^k+I_{b,2}^k.
\end{equation*}
If $b\in L^{n,q}(\om)$, by Lemma \ref{Lorentz-molifier} we have that $b_k \to b$ in $L^{n,q}(\om)$,  which, combined with  \eqref{eq:Lorentz-Sobolev-emb} and the uniform $Y^{1,2}$-bound of $u_k$, yields that $ \lim_{k \to \infty} I_{b,1}^k =0$. To prove that 
\begin{equation}\label{Green:bk2-conv}
 \lim_{k \to \infty} I_{b,2}^k = \int_\om b \nabla u \phi,
\end{equation}
  it is enough to notice that, by H\"older inequality in Lorentz spaces and Lemma \ref{lem:Lor-Sobolev-emb}, $b \phi \in L^2(\om)$, and then use that $\nabla u_k \rightharpoonup \nabla u$ in $L^2(\om)$. If $|b|^2\in \mathcal{K}'(\om)$, we combine Cauchy-Schwarz inequality, Lemma \ref{lem:Kato-emb-grad-om}, the uniform $Y^{1,2}$-bound of $u_k$, and Lemma \ref{lem:kato-approx-0}, to show $I_{b,1}^k \to 0$. By \eqref{eq:Kato-emb-grad-om}, we have that  $b \phi \in L^2(\om)$, and thus, \eqref{Green:bk2-conv} follows from the weak-$L^2$ convergence of $\nabla u_k$ to $\nabla u$.   Let us now prove the limit for the one involving $d_k$. To this end, write
  \begin{equation*}
\int_\om d_k  u_k \phi=\int_\om (d_k-d) u_k \phi+ \int_\om d  u_k \phi = I_{d,1}^k+I_{d,2}^k.
\end{equation*}
 If $d \in L^{\frac{n}{2},q}(\om)$,  $d_k \to d$ in $L^{\frac{n}{2},q}(\om)$,  which, by  H\"older inequality for Lorentz spaces, \eqref{eq:Lor-inclusion}, \eqref{eq:Lor-Sob-uw}, and the uniform $Y^{1,2}$-bound of $u_k$, yields that $ \lim_{k \to \infty} I_{d,1}^k =0$. Moreover, as $u_k \to u$ pointwisely, we can apply the dominated convergence theorem to obtain
 \begin{equation}\label{Green:dk2-conv}
 \lim_{k \to \infty} I_{d,2}^k = \int_\om d u \phi.
\end{equation}
If $ |d| \in \mathcal{K}'(\om)$, we first apply Cauchy-Schwarz inequality, and then use Lemma \ref{lem:Kato-emb-grad-om} and the uniform $Y^{1,2}$-bound of $u_k$. Finally, in view of Lemma \ref{lem:kato-approx-0}, we can take limits as $k \to \infty$ to conclude that  $\lim_{k \to \infty} I_{d,1}^k$. The  proof of \eqref{Green:dk2-conv} follows by dominated convergence. The integral involving $c_k$  can be treated very similarly and the details are left to the reader. We have thus proved that 
$$
\mathcal{L}^t_{ \om} (u, \vphi)=\lim_{k \to \infty} \mathcal{L}^t_{ k,\om} (u_k, \vphi)=\int_{  \om} f \vphi +  \int_{ \om} g \nabla \vphi,
$$
which, in turn, yields that $u$ is the unique solution of the Dirichlet problem $L^t u =f - \divv g$ and $u \in Y^{1,2}_0(\om)$.

Let us now recall thatfrom the first part of the proof (before the approximation) we can construct a Green's function $\widehat G(\cdot, y)$ associated with $L$ so that the function
$$
\widehat u(y)=\int_\om \widehat G(x,y) \,f(x) \,dx + \int_\om \nabla_x \widehat G(x, y) \,g(x) \,dx,
$$
is also a solution of the Dirichlet problem $L^t \widehat u =f - \divv g$ and $\widehat u \in Y^{1,2}_0(\om)$. But since there is only one such solution we must have $u = \widehat u$, which, as we showed before,  implies that $G(x,y)=\widehat G(x,y)$, for a.e. $x \in \om \setminus \{y\}$. As we have shown that \eqref{eq:Green-2} holds for $\widehat G(x,y)$, it also holds for $G(x,y)$.

The same arguments are  valid if we replace ${G}$ by ${G}^t$ and $L$ by $L^t$ (and vice versa), implying that
\begin{align}\label{eq:green-uk-t-u}
\lim_{k \to \infty} u_k^t(x)&=\lim_{k \to \infty} \int_{\om} {G^t}_k(y, x)\, f(y)\,dy  + \lim_{k \to \infty}  \int_\om \nabla_y {G^t}_k(y, x)\, f(y)\,dy\\
&= \int_{\om} {G^t}( y, x)\, f(y)\,dy+ \int_\om \nabla_y {G^t}(y, x)\, f(y)\,dy=: u^t(x),
\end{align}
and after passing to a subsequence, $u^t_k \rightharpoonup u^t\,\,\textup{in}\,\,  Y^{1,2}(\om)$, $u^t \in Y^{1,2}_0(\om)$, and $L u^t =f$ in $\om$.

For $f, g \in C^\infty_c(\om)$  we set 
\begin{align*}
u_{f,k}(y)&= \int G_k(x,y) \,f(x)\,dx \quad \textup{and}\quad u^t_{g,k}(x)= \int G^t_k(y,x) \,g(y)\,dy;\\
u_{f}(y)&= \int {G}(x,y) \,f(x)\,dx \quad \textup{and}\quad u^t_{g}(x)= \int {G}^t(y,x) \,g(y)\,dy.
\end{align*}
Recall that
\begin{equation*}
u_{f,k} \rightharpoonup u_f\,\,\textup{in}\,\,  Y^{1,2}(\om)\quad \textup{and}\quad u_f \in Y^{1,2}_0(\om),
\end{equation*}
and 
\begin{equation*}
u^t_{g,k} \rightharpoonup u_g^t\,\,\textup{in}\,\,  Y^{1,2}(\om)\quad \textup{and}\quad u^t_g \in Y^{1,2}_0(\om).
\end{equation*}
By Fubini theorem and $G^t_k(x,y)=G_k(y,x)$ for all $(x,y) \in \om^2 \setminus \{x=y\}$,  we have that
\begin{align}\label{eq:green-ukfg=uktgf}
\int u_{f,k}(y) \,g(y)\,dy &= \int g(y) \int G_k(x,y) \,f(x)\,dx\,dy \\
&= \int f(x) \int G^t_k(y,x) \,g(y)\,dy\,dx= \int u^t_{g,k}(x) \,f(x)\,dx.\notag
\end{align}
If we take limits as $k \to \infty$ in \eqref{eq:green-ukfg=uktgf},
$$
\int u_{f}(y) \,g(y)\,dy = \int u^t_{g}(x) \,f(x)\,dx,
$$
which  implies
$$
\int \int {G}(x,y) \,f(x) \,g(y)\,dx\,dy = \int \int {G}^t_(y,x) \,g(y) \,f(x)\,dy \,dx.
$$
Since $f, g \in C^\infty_c(\om)$ are arbitrary, we conclude that ${G}^t(x,y) ={G}(y,x) $ for a.e. $(x,y) \in \om^2 \setminus \{x=y\}$. 

Once we have that \eqref{eq:Green-3} holds, the proof of \eqref{eq:Green-pointwise} is the same as in \eqref{eq:green-ptwse}, while  \eqref{eq:Green-1} follows by  density.
\end{proof}

\begin{remark}
If  $\vphi \in C^\infty_c(\om)$ and it holds that $b \nabla \vphi \in L^{\frac{n}{2},1}(\om)$, $c  \vphi \in L^{n,1}(\om)$, and $d  \vphi \in L^{\frac{n}{2},1}(\om)$, then we can show that 
$$
\mathcal{L}(G(\cdot, y), \vphi)= \vphi(y).
$$
This is straightforward if we use \eqref{eq:Green-4} and  \eqref{eq:Green-5}.
\end{remark}

\vv

Finally, we can prove that, under certain restrictions, the Green's function has pointwise  lower bounds as well.

\begin{lemma}
Let $\Omega \subset \R^{n}$ be an open and connected set and $Lu=-\divv (A\nabla u + b u)$ be an elliptic operator so that  $b \in \khalf(\om)$. Let $x,y \in \om$, $x \neq y$,  such that $2|x-y| < \dist(\{x,y\}, \partial \om)$. If we set $r= |x-y| /4$, then the Green's functions $G$  constructed in Theorem \ref{thm:green-contruction} satisfy the following lower bound: 
\begin{align}
G(x,y) &\gtrsim \frac{1}{|x-y|^{n-2}},\label{eq:green-lowerbd}\\
G^t(x,y) &\gtrsim \frac{1}{|x-y|^{n-2}}. \label{eq:green-lowerbd}
  \end{align}
  \end{lemma}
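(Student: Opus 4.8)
The strategy is the classical one for obtaining lower bounds on Green's functions via the Harnack inequality, as in Littman--Stampacchia--Weinberger and Gr\"uter--Widman. The hypothesis is tailored to the situation: since $c=0$ and $d=0$, the operator is $Lu=-\divv(A\nabla u + bu)$, and the cleaner point is that its formal adjoint $L^t v = -\divv(A^t \nabla v) + b\cdot\nabla v$ has no zeroth-order term, so $L^t 1 = 0$; moreover $b+c = b \in \khalf(\om)$, so condition $\textup{(D)}$ (with $c=d=0$) is satisfied and the full Harnack inequality (not just the weak one) is available for non-negative solutions of $L^t w=0$ on balls well inside $\om$. Note also the $bd$ negativity condition $\int (d\vphi - b\cdot\nabla\vphi)\le 0$ reduces here to $-\int b\cdot\nabla\vphi\le 0$; to apply Theorem~\ref{thm:green-contruction} one works with the operator $\wt L u = -\divv(A\nabla u) - b\cdot\nabla u$ after incorporating $-\divv(bu)$ into the data, exactly as in the discussion preceding Theorem~\ref{thm:bdry-exp-subCaccioppoli1-bis}; since $|b|^2\in\khalf(\om)$ this is legitimate and $G(\cdot,y)$ is then a non-negative solution of $LG(\cdot,y)=0$ away from $y$ with $G(\cdot,y)\in Y^{1,2}$ locally.

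First I would fix $x,y$ with $r:=|x-y|/4$ and $2|x-y|<\dist(\{x,y\},\d\om)$, so that $B_{4r}(y)\subset\om$ and $x\in \d B_{4r}(y)$ lies comfortably inside $\om$. The key quantitative input is the mass estimate: from \eqref{eq:Green-1} applied with $f=f_\rho(\cdot,y)=|B_\rho(y)|^{-1}\chi_{B_\rho(y)}$ and $g=0$ — equivalently from the construction, where $G_\rho(\cdot,y)$ solves $\mathcal{L}(G_\rho(\cdot,y),\vphi)=\avint_{B_\rho(y)}\vphi$ — one tests against a suitable cutoff to produce a lower bound on $\int_{B_{2r}(y)\setminus B_r(y)} G(\cdot,y)$, or more directly one shows $\avint_{\partial B_{s}(y)} G(\cdot,y)\,d\sigma \gtrsim s^{2-n}$ for $s$ in a range around $|x-y|$. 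Concretely: choose $\eta\in C_c^\infty(B_{3r}(y))$ with $\eta\equiv 1$ on $B_{2r}(y)$, $0\le\eta\le1$, $|\nabla\eta|\lesssim 1/r$; use $\eta$ as a test function in the defining relation for $G_\rho(\cdot,y)$ (valid once $\rho<r/2$ so that $\eta\equiv1$ on $\supp f_\rho$) to get $1 = \mathcal{L}(G_\rho(\cdot,y),\eta) = \int_{B_{3r}(y)\setminus B_{2r}(y)} (A\nabla G_\rho(\cdot,y) + b\,G_\rho(\cdot,y))\cdot\nabla\eta$; then bound the right-hand side by $\big(\|\nabla G_\rho\|_{L^2} + (\textrm{Kato term})\|G_\rho\|\big)\cdot |\nabla\eta|$ over the annulus and combine with the Caccioppoli inequality (Theorem~\ref{thm:boundCaccioppoli}/\ref{thm:int-exp-subCaccioppoli1}) on that annulus and H\"older to conclude $r^{-n}\int_{B_{3r}(y)\setminus B_{2r}(y)} G_\rho(\cdot,y) \gtrsim r^{2-n}$, uniformly in small $\rho$; pass to the limit $\rho\to0$ (weak-$*$ convergence \eqref{eq:green-convergLorentz}). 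Hence there is a point $z$ in the annulus $B_{3r}(y)\setminus B_{2r}(y)$, or better an averaging ball $B_{r/2}(z)$ inside $\om\setminus B_{r}(y)$, on which $\avint_{B_{r/2}(z)} G(\cdot,y) \gtrsim r^{2-n} = c\,|x-y|^{2-n}$.

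Next I would run a Harnack chain. Since $G(\cdot,y)$ is a non-negative solution of $LG(\cdot,y)=0$ in $\om\setminus B_{r/2}(y)$ (hence, interpreting via $\wt L$ and $|b|^2\in\khalf$, a non-negative supersolution and subsolution to which Theorems~\ref{thm:Moser} and \ref{thm:reverse-Holder-bu} with $k\equiv0$ apply, giving the Harnack inequality on balls $B_{2t}\subset\om\setminus B_{r/2}(y)$ at scale $t\approx r$), one connects $z$ to $x$ by a bounded chain of overlapping balls of radius $\approx r$, all contained in the annulus $\{r/2 < |\cdot - y| < 8r\}\cap\om$ (possible because both $z$ and $x$ lie on spheres of radius $\approx r$ about $y$, and this spherical shell, intersected with $\om$, has bounded geometry away from $y$; the number of balls is an absolute constant since it depends only on the ratio of radii, not on $r$). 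Harnack across the chain gives $G(x,y) \gtrsim \avint_{B_{r/2}(z)} G(\cdot,y) \gtrsim |x-y|^{2-n}$, where the implicit constant depends only on $n,\lambda,\Lambda$ and the $\khalf$-data $C_{|b|^2,\om}$, $\vartheta_\om(|b|^2)$. This proves \eqref{eq:green-lowerbd} for $G$; the estimate for $G^t$ follows by the identical argument applied to $L^t$ (whose lower-order structure, $L^t v = -\divv(A^t\nabla v) + b\cdot\nabla v$, is of the same type, with $b+c$ replaced accordingly and the negativity condition swapped), or directly from $G^t(x,y)=G(y,x)$ via \eqref{eq:Green-symmetry} once one notes $G(y,x)\gtrsim|x-y|^{2-n}$ by symmetry of the hypothesis in $x,y$.

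\textbf{Main obstacle.} The delicate point is the mass lower bound near the pole — i.e. making rigorous that $\int_{\textrm{annulus}} G(\cdot,y) \gtrsim r^{2-n}$ with constants independent of $\rho$ and scale-invariant. One must be careful that the Kato/lower-order contributions in the test-function identity for $G_\rho$ do not swallow the main term: this is where $|b|^2\in\khalf(\om)$ (rather than merely $\mathcal{K}(\om)$) and the scale-invariant embedding Corollary~\ref{cor:Kato-emb-glob} are essential, precisely so that on an annulus at scale $r$ the term $\vartheta'_\om(|b|^2,r)^{1/2}$ times the $L^2$-norm of $G_\rho$ over that annulus is controlled and can be absorbed or shown to be of lower order relative to $r^{2-n}$ after the rescaling $y\mapsto(x-y)/|x-y|$. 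The rest — the Harnack chain and the passage to the limit — is routine given the a priori estimates already established in the paper.
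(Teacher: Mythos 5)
Your proposal is correct in outline and, for the main step, coincides with the paper's argument: both proofs extract the mass bound $\int_{\textup{annulus}} G_\rho(\cdot,y)\gtrsim r^{2}$ by testing the defining identity $\mathcal{L}(G_\rho(\cdot,y),\eta)=\avint_{B_\rho(y)}\eta=1$ against a cutoff and controlling $\int (A\nabla G_\rho+bG_\rho)\cdot\nabla\eta$ via H\"older, Sobolev, Caccioppoli and the reverse H\"older estimate of Lemma \ref{lem:reverseHold}. Where you diverge is in how the mass is transported to the point $x$ and in how $G^t$ is treated. For the first, you run a Harnack chain in the punctured shell $\{r/2<|\cdot-y|<8r\}$, which requires the full two-sided Harnack inequality for solutions of $LG=0$; the paper instead makes a single application of the weak Harnack inequality \eqref{eq:weak-Harnack-int}, exploiting that $G(\cdot,y)\ge0$ is a supersolution \emph{across the pole} (since $LG=\delta_y\ge 0$), so one ball containing both the annulus and $x$ suffices. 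The paper's route is slightly more economical: it needs only the supersolution half of the theory and avoids any discussion of chain geometry, and it is also safer with respect to the additive error terms $\wt k$ that appear in the Harnack inequality when $-\divv(bu)$ is folded into the data under condition $\textup{(D)}$ — terms which are harmless in a single application to the homogeneous equation but which you would need to track through each link of a multiplicative chain. For $G^t$, your primary suggestion (deduce the bound from $G^t(x,y)=G(y,x)$ and the symmetry of the hypothesis in $x,y$) is legitimate and cleaner than what the paper does; the paper instead runs a duality argument, testing $G^t_\rho(\cdot,y)$ against $\eta v$ for an auxiliary non-negative solution $v$ of $Lv=0$ with $v(y)>0$, and passing to the limit via Rellich--Kondrachov.

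One caveat on your fallback option for $G^t$: rerunning the test-function identity directly for $L^t$ does not go through as stated, because $L^t v=-\divv(A^t\nabla v)+b\cdot\nabla v$ places the drift on the gradient of the solution, so the identity produces the term $\int b\cdot\nabla G^t_\rho\,\eta$ supported on all of $\supp\eta$ — including the pole, where $\nabla G^t_\rho$ is only in $L^p$ for $p<\tfrac{n}{n-1}$ and cannot be paired with $b$ by the Kato embedding. This is precisely the obstruction the paper's auxiliary-function argument is designed to circumvent; since you also propose the symmetry route, your proof does not depend on this step.
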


\begin{proof}
Let us fix $x, y \in \om$ with $x \neq y$. If we set $r = \frac{|x-y|}{4}$ and let $\eta \in C_0^{\infty}(B_r(y))$ be a bump function so that 
$$
0 \leq \eta \leq 1, \quad \eta =1 \,\, \textup{in}\,\, B_{\frac{r}{2}}(y),\,\, \textup{and}\quad |\nabla \eta| \lesssim \frac{1}{r}.
$$ 
Then using it as a test function we have that
\begin{align*}
1=\eta(y)&= \mathcal{L}(G(\cdot,y), \eta) = \int_\om A \nabla G(\cdot,y) \nabla \eta + \int_\om b G(\cdot,y) \nabla \eta\\
& \lesssim \frac{1}{r} \|\nabla G(\cdot,y)\|_{L^1(B_r(y) \setminus B_{\frac{r}{2}}(y))} + \frac{1}{r} \|b\|_{L^n(\om)} \|G(\cdot,y)\|_{L^{\frac{n}{n-1}}(B_r(y) \setminus B_{\frac{r}{2}}(y))}\\
& \lesssim \frac{1}{r^2} \| G(\cdot,y)\|_{L^1(B_{2r}(y) \setminus B_{\frac{r}{8}}(y))},
\end{align*}
where we used H\"older, Sobolev and Caccioppoli inequality, along with Lemma \ref{lem:reverseHold}. Thus, from \eqref{eq:weak-Harnack-int}, we have that $G(x,y)\gtrsim \frac{1}{|x-y|^{n-2}}$.

Let $v\in Y^{1,2}(\om)$ be a nonnegative function such that $Lv =0$ and $v(y) > 0$, and let  $\eta$ be the bump function defined above. Then, if we assume $\rho \leq \min\left(\frac{|x-y|}{10}, \frac{d_y}{10}, \frac{d_x}{10}\right)$,
\begin{align*}
\avint_{B_\rho(y)} \eta \,v&= \LL^t(G^t_\rho(\cdot, y),\eta \,v )\\
&= \int_{\om} A^t \nabla G^t_\rho(\cdot,y)\,\nabla \eta \,v - A^t\nabla \eta \nabla v \,G^t_\rho(\cdot,y)+ A\nabla v \nabla(G^t_\rho(\cdot,y) \eta)\\
&+ \int_\om b\,\nabla v G^t_\rho(\cdot,y) \eta - \int_{\om}  b\, \nabla \eta \,G^t_\rho(\cdot,y) \,v \\
& = \int_{\om}A^t \nabla G^t_{\rho}(\cdot,y)\nabla \eta\, v
-A^t\nabla \eta\nabla v \,G^t_{\rho}(\cdot,y)-b\,\nabla \eta \,G^t_{\rho}(\cdot,y)\,v\\
 &=:I_1-I_2-I_3,
\end{align*}
where we used that $G^t_{\rho}(\cdot,y) \eta$ is a test function and $Lv=0$. We will only estimate $I_3$ since $I_1$ and $I_2$ can be handled similarly.
\begin{align*}
|I_3| &\lesssim \frac{1}{r}\|b+c\|_{L^n(B_r(y) \setminus B_{\frac{r}{2}}(y))}\, \|G^t_{\rho}(\cdot,y)\|_{L^2(B_r(y) \setminus B_{\frac{r}{2}}(y))}\,\|v\|_{L^{2^*}(B_r(y) \setminus B_{\frac{r}{2}}(y))}\\
&\lesssim \frac{1}{r^{2}}\, \|G^t_{\rho}(\cdot,y)\|_{L^2(B_{r}(y) \setminus B_{\frac{r}{2}}(y))}\,\| v\|_{L^{2}(B_{\frac{3r}{2}}(y) \setminus B_{\frac{3r}{8}}(y))},
\end{align*}
where in the first inequality we used H\"older inequality and in the second one the local bonudedness of $v$. If $\rho_m$ is the sequence obtained in \eqref{eq:green-convergY12}, then by Rellich-Kondrachov theorem and a diagonalization argument, we may pass to a subsequence so that 
$$
 G^t_{\rho_m}(\cdot, y) \to G^t(\cdot, y), \quad \textup{strongly in}\,\, L^2(B_r(y) \setminus B_{\frac{r}{2}}(y)).
$$
Thus, if we take $m \to \infty$, by  Lemma \ref{lem:reverseHold}, for a.e. $y \in \om$,
\begin{align*}
v(y) &= \eta(y) \, v(y)=\lim_{m \to \infty} \avint_{B_{\rho_m}(y)} \eta \,v \\
  &\lesssim  \lim_{m \to \infty} \frac{1}{r^{2}}\, \|G^t_{\rho_m}(\cdot,y)\|_{L^2(B_{r}(y) \setminus B_{\frac{r}{2}}(y))}\,\| v\|_{L^{2}(B_{\frac{3r}{2}}(y) \setminus B_{\frac{3r}{8}}(y))}\\
&=\frac{1}{r^{2}}\, \|G^t(\cdot,y)\|_{L^2(B_{r}(y) \setminus B_{\frac{r}{2}}(y))}\,\| v\|_{L^{2}(B_{\frac{3r}{2}}(y) \setminus B_{\frac{3r}{8}}(y))}\\
&\lesssim  \frac{1}{r^{n+2}}\, \|G^t(\cdot,y)\|_{L^1(B_{2r}(y) \setminus B_{\frac{r}{4}}(y))}\,\| v\|_{L^{1}(B_{2r}(y) \setminus B_{\frac{r}{4}}(y))}.
\end{align*}
So, by \eqref{eq:weak-Harnack-int} and Remark \eqref{rem:c-d-moser}, we get
$$
v(y)  \lesssim   |x-y|^{n-2}\,G^t(x,y) \, v(y),
$$
which implies \eqref{eq:green-lowerbd}.
\end{proof}

\end{document}